\documentclass[aap,preprint]{imsart}
\makeatletter
\def\journal@name{Preprint}
\makeatother
\RequirePackage[T1]{fontenc}
\RequirePackage{amsthm,amsmath,amsfonts,amssymb,mathtools}
\RequirePackage{bm,booktabs,array}
\RequirePackage[numbers]{natbib}
\RequirePackage[hidelinks]{hyperref}
\hypersetup{pdfauthor={Ziqian Lin and Huangang Lu},
  pdftitle={Sharp detection and triangle likelihood limits for quadratic spherical kernel graphs}}
\startlocaldefs
\theoremstyle{plain}
\newtheorem{theorem}{Theorem}[section]
\newtheorem{proposition}[theorem]{Proposition}
\newtheorem{lemma}[theorem]{Lemma}
\newtheorem{corollary}[theorem]{Corollary}

\theoremstyle{definition}

\numberwithin{equation}{section}
\numberwithin{table}{section}

\newcommand{\Sph}{\mathbb S}
\newcommand{\E}{\mathbb E}
\newcommand{\Pp}{\mathsf P}
\newcommand{\Qq}{\mathsf Q}
\newcommand{\TV}{\operatorname{TV}}
\newcommand{\tr}{\operatorname{tr}}
\newcommand{\HS}{\mathrm{HS}}

\newcommand{\Kop}{\mathsf K}
\newcommand{\Adj}{\bm A}
\newcommand{\cH}{\mathcal H}

\newcommand{\Normal}{\mathcal N}

\endlocaldefs

\begin{document}
\begin{frontmatter}
\title{Sharp detection and triangle likelihood limits for quadratic spherical kernel graphs}
\runtitle{Detection of quadratic spherical geometry}
\begin{aug}
\author[A]{\fnms{Ziqian}~\snm{Lin}\ead[label=e1]{linziqian@fzu.edu.cn}}
\author[B]{\fnms{Huangang}~\snm{Lu}\ead[label=e2]{luhuangang@pku.edu.cn}}
\address[A]{School of Mathematics and Statistics, Fuzhou University\printead[presep={,\ }]{e1}}
\address[B]{School of Physics, Peking University\printead[presep={,\ }]{e2}}
\end{aug}
\begin{abstract}
We study whether quadratic spherical geometry can be detected from a
single observed graph. Latent positions are independent and uniform on
\(\mathbb S^{d-1}\); conditional on them, edges are independent with
connection probability \(K_{d,\beta}(t)=[1+\beta(dt^2-1)/(d-1)]/2\),
where \(0<|\beta|\le1\). We test this model against \(G(n,1/2)\).
Writing \(N=(d-1)(d+2)/2\) for the degree-two harmonic multiplicity,
we prove that the total variation distance tends to one when
\(N\ll n^{3/4}|\beta|^{3/2}\) and to zero when
\(N\gg n^{3/4}|\beta|^{3/2}\). At criticality, if \(N\to\infty\) and
\(\Delta_n=\sqrt{\binom n3}\,|\beta|^3/N^2\to\Delta\in(0,\infty)\),
the signed triangle captures all asymptotic testing information and the
total variation distance tends to \(2\Phi(\Delta/2)-1\), where \(\Phi\)
is the standard normal distribution function. Polynomial finite-sample
error bounds hold under polynomial signal scaling. The proof controls
the likelihood of the dependent quadratic feature law through spectral
recursions, covariance truncation and interpolation from a symmetric prior.
\end{abstract}
\begin{keyword}[class=MSC]
\kwdgroup[type=primary]{\kwd{60C05}\kwd{05C80}}
\kwdgroup[type=secondary]{\kwd{62G10}}
\end{keyword}
\begin{keyword}
\kwd{Random geometric graphs}
\kwd{Spherical harmonics}
\kwd{Detection thresholds}
\kwd{Signed triangles}
\kwd{Likelihood ratios}
\kwd{Total variation}
\end{keyword}
\end{frontmatter}

\tableofcontents
\section{Introduction}
\label{sec:introduction}

\subsection{Hidden geometry as a matched-marginal testing problem}

Many network models attach an unobserved position to each vertex and make
nearby or similar vertices more likely to connect.  This mechanism appears in
latent-space models for social networks \cite{HoffRafteryHandcock2002}, random
dot-product graphs \cite{AthreyaEtAl2018}, and classical random geometric
graphs, where an edge is present when two latent points lie within a fixed
distance \cite{Penrose2003}.  Random connection models replace the hard
distance threshold by a connection probability
\cite{DettmannGeorgiou2016}.  The spherical soft model considered here is a
high-dimensional instance of this latter construction and belongs to the
smooth random geometric graph framework of \cite{LiuRaczLatent2023,MaoWuXu2026}.

Recovering latent coordinates or estimating a connection function presupposes
a more basic statistical fact: the observed graph law must be detectably
different from a law with no latent geometry.  Detection is weaker than either
estimation problem and therefore supplies an information-theoretic baseline.
Below its threshold, no graph-based procedure can even certify that geometry
is present.  Above the threshold, one may meaningfully ask for more detailed
structural information.

We formulate this baseline as a matched-marginal test.  Let
\(X_1,\ldots,X_n\) be independent and uniformly distributed on
\(\Sph^{d-1}\), and let \(K:[-1,1]\to[0,1]\) be a measurable connection
function.  Conditional on the latent points, the upper-triangular entries of
the adjacency matrix \(\Adj=(a_{ij})\) are independent and satisfy
\[
  \mathbb P(a_{ij}=1\mid X_1,\ldots,X_n)
  =
  K(\langle X_i,X_j\rangle).
\]
Write \(\Pp_{n,d,K}\) for the resulting graph law and
\[
  p=\mathbb E K(\langle X_1,X_2\rangle)
\]
for its marginal edge density.  The null law
\(\Qq_{n,p}=G(n,p)\) has independent
\(\operatorname{Bernoulli}(p)\) edges.  Thus the two models agree on the
law of every individual edge.  Any evidence for geometry must be contained in
the dependence created when several observed edges reuse the same latent
vertices.

For a test \(\varphi\), with \(\varphi=1\) declaring the geometric
alternative,
\[
  \inf_\varphi
  \bigl\{
    \Qq_{n,p}(\varphi=1)
    +
    \Pp_{n,d,K}(\varphi=0)
  \bigr\}
  =
  1-\TV(\Pp_{n,d,K},\Qq_{n,p}).
\]
Consequently, total variation tending to zero means that no test
asymptotically distinguishes the graph laws, while total variation tending to
one is equivalent to the existence of a test with both errors tending to
zero.  A limit strictly between these endpoints describes the finite amount
of information retained by a critical limiting experiment.  Our aim is to
determine all three behaviors for a canonical even harmonic model.

\subsection{The quadratic layer and the cube-trace scale}
\label{subsec:quadratic-spectral-scale}

The spectral origin of the testing threshold becomes visible after centering
and normalizing the connection function.  Assume \(0<p<1\) and put
\[
  \kappa(t)=\frac{K(t)-p}{\sqrt{p(1-p)}}
\]
and let \(\Kop_\kappa\) be the integral operator on
\(L^2(\Sph^{d-1},\sigma_d)\) with kernel
\((x,y)\mapsto\kappa(\langle x,y\rangle)\).  If
\(Y_{ij}=(a_{ij}-p)/\sqrt{p(1-p)}\), then conditional edge independence and
kernel composition give, for every simple cycle on distinct vertices,
\begin{equation}
  \mathbb E_{\Pp_{n,d,K}}
  \prod_{r=1}^{\ell}Y_{i_ri_{r+1}}
  =
  \tr(\Kop_\kappa^\ell),
  \qquad i_{\ell+1}=i_1.
  \label{eq:intro-cycle-trace}
\end{equation}
Section~\ref{subsec:spherical-spectral-background} explains this identity; the full derivation is in Section~S5 of the Supplementary Material. The corresponding expectation under the matched null is zero. The triangle
is the shortest simple cycle, so its mean shift is governed by
\(\tr(\Kop_\kappa^3)\).  Summing over all triangles gives alternative
mean \(\binom n3\tr(\Kop_\kappa^3)\) and exact null variance
\(\binom n3\).  Consequently, the squared mean shift divided by the null
variance is
\[
  \binom n3\tr^2(\Kop_\kappa^3)
  =
  \frac{n^3}{6}\bigl(1+O(n^{-1})\bigr)
  \tr^2(\Kop_\kappa^3).
\]
Thus, up to the constant factor \(6\), the squared signal-to-noise scale is
\(n^3\tr^2(\Kop_\kappa^3)\), which Mao, Wu, and Xu
\cite{MaoWuXu2026} proposed as the spectral scale for
distinguishing smooth spherical geometric graphs from their matched
Erd\H{o}s--R\'enyi laws.

For arbitrary kernels, the two directions suggested by this scale involve
different difficulties.  A divergent signed-triangle signal gives a
candidate statistic, although its alternative variance must still be
controlled.  A vanishing triangle signal, by contrast, says only that one
statistic fails; it does not by itself rule out information in longer cycles
or in localized graph structure.  The present paper isolates a single
harmonic block, thereby removing cancellation between different spectral
components while retaining a genuinely non-Gaussian latent feature law.

Specifically, let
\[
  Z_{2,d}(t)=\frac{dt^2-1}{d-1},
  \qquad
  K_{d,\beta}(t)=\frac{1+\beta Z_{2,d}(t)}2,
  \qquad 0<|\beta|\le1.
\]
Here \(Z_{2,d}\) is the degree-two zonal harmonic normalized by
\(Z_{2,d}(1)=1\); the formula follows from the spherical-harmonic addition
theorem \cite[Chapter~1]{DaiXu2013}.
The degree-two harmonic space has multiplicity
\[
  N=N_{2,d}=\frac{(d-1)(d+2)}2.
\]
Since \(Z_{2,d}\) is centered, the edge density is \(p=1/2\); throughout
the pure model we therefore write
\[
  \Pp_n=\Pp_{n,d,K_{d,\beta}},
  \qquad
  \Qq_n=G(n,1/2).
\]
The centered operator has the single nonzero eigenvalue \(\beta/N\), with
multiplicity \(N\), and hence
\[
  \tr(\Kop_\kappa^\ell)
  =
  \frac{\beta^\ell}{N^{\ell-1}},
  \qquad \ell\ge2.
\]
The model is even and nonmonotone, so it has no degree-one component that
could reduce detection to a linear correlation.  At the same time,
\(Z_{2,d}(\langle x,y\rangle)\) admits an exact inner-product
representation in the space of traceless symmetric matrices.  This
combination makes the quadratic layer a canonical nontrivial setting in which
the spectral prediction can be tested at the level of the full graph
likelihood. The main difficulty is ruling out every test when the
cube-trace signal vanishes. At criticality, one must also determine whether
the full graph contains information beyond the triangle.

\subsection{Results and interpretation}

Our first result proves both strict sides of the spectral prediction.  For any
parameter sequence with \(d_n\in\{2,3,\ldots\}\) and
\(0<|\beta_n|\le1\), set \(N=N_{2,d_n}\) and abbreviate
\(\beta=\beta_n\).  Then
\begin{equation}
\begin{aligned}
  N\ll n^{3/4}|\beta|^{3/2}
  &\quad\Longrightarrow\quad
  \TV(\Pp_n,\Qq_n)\to1,\\
  N\gg n^{3/4}|\beta|^{3/2}
  &\quad\Longrightarrow\quad
  \TV(\Pp_n,\Qq_n)\to0.
\end{aligned}
  \label{eq:intro-pure-dichotomy}
\end{equation}
For this single harmonic layer, the cube-trace scale is explicit:
\[
  n^3\tr^2(\Kop_\kappa^3)
  =
  \frac{n^3\beta^6}{N^4}
  =
  \left(
    \frac{n^{3/4}|\beta|^{3/2}}{N}
  \right)^4.
\]
Thus the first regime in \eqref{eq:intro-pure-dichotomy} makes the squared
signal-to-noise scale tend to infinity, while the second makes it tend to
zero.  Under both graph laws, each edge has marginal mean \(1/2\), so its
centered, variance-one version is
\(Y_{ij}=(a_{ij}-1/2)/\sqrt{(1/2)(1-1/2)}=2a_{ij}-1\).
Define the signed-triangle sum
\[
  T_n=\sum_{i<j<k}Y_{ij}Y_{jk}Y_{ki}.
\]
Unlike the ordinary triangle count, this statistic includes every vertex
triple.  Each triple is weighted by the product of its three centered edge
signs \(Y_{ij},Y_{jk},Y_{ki}\in\{-1,1\}\), which explains the term
``signed triangle.''
In the first regime of \eqref{eq:intro-pure-dichotomy}, the exact moment
calculation yields
\[
  \operatorname{Var}_{\Qq_n}(T_n)
  =o\!\left((\E_{\Pp_n}T_n)^2\right),
  \qquad
  \operatorname{Var}_{\Pp_n}(T_n)
  =o\!\left((\E_{\Pp_n}T_n)^2\right).
\]
Thus an oriented signed-triangle test has vanishing type-I and type-II errors,
which makes the first implication constructive.  The second implication is
information-theoretic: it rules out every graph statistic on the opposite
strict side and closes the polynomial gap left by general polynomial-kernel
bounds in this degree-two layer.

We next identify the likelihood limit at the boundary in
\eqref{eq:intro-pure-dichotomy}.  Assume \(N\to\infty\) and define
\[
  \Delta_n
  =
  \frac{\sqrt{\binom n3}\,|\beta|^3}{N^2},
  \qquad
  W_n
  =
  \frac{\operatorname{sgn}(\beta)}
       {\sqrt{\binom n3}}
  T_n.
\]
If \(\Delta_n\to\Delta\in(0,\infty)\), then \(W_n\) converges to
\(\Normal(0,1)\) under \(\Qq_n\) and to \(\Normal(\Delta,1)\) under
\(\Pp_n\).  The proof also constructs latent events of probability
\(1-o(1)\) for which the conditioned likelihood ratio
\(L_n^{\mathcal E}\) satisfies
\begin{equation}
  \log L_n^{\mathcal E}
  =
  \Delta_nW_n-\frac12\Delta_n^2+o_{\Qq_n}(1),
  \qquad
  \TV(\Pp_n,\Qq_n)
  \longrightarrow
  2\Phi(\Delta/2)-1,
  \label{eq:intro-high-multiplicity-LAN}
\end{equation}
where \(\Phi\) is the standard normal distribution function. For each
specified critical alternative, a threshold test based on \(W_n\) attains
the limiting minimum sum of type-I and type-II errors among all graph
tests. The likelihood expansion establishes this optimality by showing that
no additional likelihood fluctuation survives beyond the triangle.
The TV limit is the distance between \(\Normal(0,1)\) and
\(\Normal(\Delta,1)\); together with the likelihood expansion, this gives
the one-dimensional local asymptotic normality (LAN) structure used here.
When \(\beta^2\asymp n^{-r}\),
\(0\le r<1\), we also give a polynomial finite-sample bound for this Gaussian
approximation and its log-likelihood remainder.

Table~\ref{tab:intro-regimes} summarizes these conclusions.

\begin{table}[t]
\caption{The strict detection regimes and the critical likelihood limit.}
\label{tab:intro-regimes}
\centering
\small
\renewcommand{\arraystretch}{1.18}
\begin{tabular}{@{}>{\raggedright\arraybackslash}p{0.21\textwidth}
                    >{\raggedright\arraybackslash}p{0.35\textwidth}
                    >{\raggedright\arraybackslash}p{0.35\textwidth}@{}}
\toprule
Regime & Assumptions & Statistical conclusion \\
\midrule
Detection & \(N/(n^{3/4}|\beta|^{3/2})\to0\) & \(\TV(\Pp_n,\Qq_n)\to1\). \\
Indistinguishability & \(N/(n^{3/4}|\beta|^{3/2})\to\infty\) & \(\TV(\Pp_n,\Qq_n)\to0\). \\
Critical window & \(N\to\infty\), \(\Delta_n\to\Delta\in(0,\infty)\)
 & One-dimensional Gaussian shift.
   \(\TV\to2\Phi(\Delta/2)-1\). \\
\bottomrule
\end{tabular}
\end{table}

\subsection{Overview of the proofs}

Triangle moments identify the candidate threshold. The main proof then
controls the full likelihood at two levels: its conditioned second moment
tends to one in the indistinguishable regime, whereas its exact nontrivial
limit identifies the optimal testing risk at criticality.

\paragraph*{Detection}
The signed-triangle sum has null mean zero and alternative mean
\(\binom n3\tr(\Kop_\kappa^3)\).  Distinct triangle terms are uncorrelated
under the null; under the alternative, their covariance vanishes unless they
share an edge.  The resulting variance formulas show that, when
\(N\ll n^{3/4}|\beta|^{3/2}\), the variance under either law is negligible
relative to the squared mean shift.  Chebyshev's inequality then gives a
consistent test.

\paragraph*{Indistinguishability}
For \(N\gg n^{3/4}|\beta|^{3/2}\), the quadratic feature representation
rewrites the likelihood second moment
as an interaction between two independent latent configurations.  Rare
configurations can dominate this integral, so we condition the prior on an
event \(\mathcal E_n\) of probability \(1-o(1)\) that controls the empirical
covariance and Gram-matrix norms.  A spectral deletion recursion handles
\(N/(n\beta^2)\to\infty\).  For \(N/(n\beta^2)=O(1)\), we first control a
symmetric feature prior and then interpolate back to the quadratic feature
law, using covariance truncation and a cavity expansion to bound the error.
Both arguments yield \(\E_{\Qq_n}(L_n^{\mathcal E})^2=1+o(1)\) for the
conditioned likelihood ratio.  This implies
\(\TV(\Pp_n^{\mathcal E_n},\Qq_n)=o(1)\), while
\(\TV(\Pp_n,\Pp_n^{\mathcal E_n})\le\mathbb P(\mathcal E_n^c)=o(1)\)
removes the conditioning.

\paragraph*{The critical likelihood}
Assume \(N\to\infty\) and \(\Delta_n\to\Delta\in(0,\infty)\).  We now
need the exact limiting second moment.  The covariance cutoff must remain
compatible with the recursion and have negligible error under the exponential
weight.  Concentration on the product of spheres supplies a tail bound strong
enough to meet both requirements.

We then expand the conditioned second moment over edge sets.  For each fixed
number of edges, conditioning can be removed at an \(o(1)\) cost.  Centering
eliminates vertices of degree one, isotropy contracts vertices of degree two,
and quadratic-form moment bounds with H\"older's inequality
show that each fixed connected graph other than a triangle contributes
\(o(1)\).
A uniform bound on the tail of the expansion allows the contributions of
vertex-disjoint triangles to be summed, giving
\(\E_{\Qq_n}(L_n^{\mathcal E})^2\to e^{\Delta^2}\).

A de Jong central limit argument \cite{deJong1990} identifies the Gaussian
shift carried by \(W_n\) under the two graph laws.  Its limiting likelihood
ratio has the same second moment \(e^{\Delta^2}\).
Lemma~\ref{lem:hilbert-likelihood-projection} uses equality in conditional
Jensen's inequality to identify every joint subsequential limit of
\((W_n,L_n^{\mathcal E})\) with this Gaussian likelihood. The lemma
applies to general dominated experiments whenever a statistic converges
under both laws and its limiting likelihood has a finite second moment
equal to the limit of the full likelihood second moments.
This provides a criterion for transferring a statistic's limit law to the
optimal testing risk. Here its application depends on the quadratic
moment and concentration estimates developed earlier.
Removing the conditioning yields \eqref{eq:intro-high-multiplicity-LAN}.
Quantitative normal approximation
\cite{DoblerPeccati2017} and quantitative bounds for the edge-set expansion
give the finite-sample estimates.

\subsection{Relation to previous work}

The quadratic problem connects sharp geometric detection thresholds with
critical likelihood limits.

\paragraph*{Geometric graph testing}
Bubeck, Ding, Eldan, and R\'acz
\cite{BubeckDingEldanRacz2016} established the dense hard spherical
threshold \(d\asymp n^3\): signed triangles detect below this scale, and
a Wishart--GOE comparison gives indistinguishability above it.
Brennan, Bresler, and Nagaraj
\cite{BrennanBreslerNagaraj2020} obtained density-dependent
total-variation bounds, and Liu, Mohanty, Schramm, and Yang
\cite{LiuMohantySchrammYang2024} studied the sparse spherical model.
More recently, Du, Mao, Sun, Wu, and Xu \cite{DuMaoSunWuXu2026}
obtained the conjectured density-dependent indistinguishability bound
subject to \(d\ge(1+\epsilon)n\), for any fixed \(\epsilon>0\).

The role of spectral information also appears in anisotropic Gaussian
geometric graphs.  Eldan and Mikulincer \cite{EldanMikulincer2020}
identified detection scales through moments of the covariance spectrum;
Brennan, Bresler, and Huang \cite{BrennanBreslerHuang2024} established
the sharp threshold, confirming the optimality of signed triangles there.
Triangle optimality depends on the model: Bangachev and Bresler
\cite{BangachevBresler2024} showed that signed four-cycles outperform
triangles in \(L_\infty\) torus geometry and are optimal among low-degree
tests, while their random-algebraic framework
\cite{BangachevBresler2025} includes connections with vanishing triangle
signal.  These results motivate identifying the kernel structure that makes
triangles optimal.  Here the centered connection kernel consists of a single
degree-two spherical harmonic layer, observed through a soft Bernoulli channel.

\paragraph*{Soft kernels and the polynomial gap}
Liu and R\'acz studied noisy hard graphs and monotone inner-product connection
functions \cite{LiuRacz2023,LiuRaczLatent2023}.  Our quadratic kernel is
even and nonmonotone.  Mao, Wu, and Xu \cite{MaoWuXu2026} developed
posterior-overlap methods for smooth kernels.  Their Conjecture~1 and the
discussion following their equation~(7) explicitly predict the quadratic
scale \(d\asymp n^{3/8}\) for
\(K(t)=(1+t^2-1/d)/2\), which is our model with \(\beta=(d-1)/d\).
The detectable direction follows the signed-triangle method analyzed in
their Theorem~7; Section~\ref{sec:cycles} gives the exact quadratic moments.

The remaining gap concerns indistinguishability.  For any fixed
\(0<\epsilon<1/6\) and \(n^{-1+2\epsilon}\le|\beta|\le1\),
\cite[Theorem~8]{MaoWuXu2026} gives the sufficient condition
\(d\gg n^{3/4}|\beta|^{3/4}\).\footnote{Indeed,
\(\kappa(t)=\beta d\,t^2/(d-1)-\beta/(d-1)\).  In the notation of that
theorem, take \(L=2\), \(B=3\), and \(r=\sqrt{|\beta|}\).
The coefficient bounds hold for \(d\ge2\), and the remaining dimension
conditions follow from the stated parameter range and sufficient condition.}
Theorem~\ref{thm:pure-dichotomy} proves indistinguishability under
\[
  N_{2,d}\gg n^{3/4}|\beta|^{3/2},
\]
which matches the signed-triangle boundary.  For \(|\beta|\asymp1\),
this improves the sufficient dimension scale from \(d\gg n^{3/4}\) to
\(d\gg n^{3/8}\). The new threshold result is the matching
indistinguishability bound. The critical likelihood analysis goes further:
it identifies the information retained at the boundary and proves the
asymptotic optimality of the triangle test there. The finite-sample bounds
quantify these critical conclusions for the pure quadratic family.

\paragraph*{Matrix comparison and critical limits}
The Wishart--GOE literature provides a related picture of the transition
from geometric dependence to independent noise.  Bubeck and Ganguly
\cite{BubeckGanguly2018} extended matrix comparison beyond Gaussian entries.
R\'acz and Richey \cite[Theorem~1.2]{RaczRichey2019} computed the limiting
total variation in the critical window \(d/n^3\to c\in(0,\infty)\),
using a likelihood expansion with a Gaussian limit.  Our critical profile
has the same Gaussian-shift form.  Its derivation requires control of the
quadratic feature law and the Bernoulli observation channel.  As shown in
Lemma~\ref{lem:feature-lift}, the features lie on an orthogonal-group orbit
in the space of traceless symmetric matrices; their coordinates are dependent,
and only the random edges are observed.  Existing Wishart comparisons therefore do not directly
yield our graph likelihood limit.  The likelihood analysis proves that the
signed triangle captures all asymptotic testing information in this critical
window and quantifies the approximation error.

\subsection{Organization}

Sections~\ref{sec:model} and \ref{sec:results} give the model and results;
Section~\ref{sec:cycles} proves detection. Section~\ref{sec:quadratic}
reduces indistinguishability to likelihood bounds, and
Sections~\ref{sec:dense-likelihood} and \ref{sec:likelihood-interpolation}
develop compensated deletion and interpolation.
Section~\ref{sec:critical-window} identifies the critical likelihood through
the graph expansion, two-model CLT and projection;
Section~\ref{sec:quantitative-main} gives the finite-sample bounds.

The main article retains the estimates needed to control rare configurations
under the likelihood weight and to sum the critical graph expansion.
Appendices~\ref{app:complete-preliminaries} and
\ref{app:complete-spectral-recursions} provide the feature estimates and
sparse recursion. Appendices~\ref{app:complete-adaptive-even} and
\ref{app:complete-buffered-cavity} control deletion remainders,
covariance-boundary errors and cubic moments.
Appendix~\ref{app:critical-likelihood-analysis} closes the uniform critical
bound with a common truncation sequence, and
Appendix~\ref{app:quantitative-LAN} proves quantitative likelihood projection.
The Supplementary Material collects standard feature, matrix and likelihood
calculations, spectral background, critical truncation constructions,
repeated scale checks, and quantitative versions of the graph-expansion
and triangle-approximation proofs. References with an S prefix point there.

\paragraph*{Notation}
For a positive integer \(m\), write \([m]=\{1,\ldots,m\}\), and let
\(\Phi\) denote the standard normal distribution function.  For nonnegative
quantities \(a\) and \(b\), the relation \(a\lesssim b\) means
\(a\le Cb\) for a constant independent of the asymptotic indices.  We use
\(a\gtrsim b\) for the reverse inequality, \(a\asymp b\) when both bounds
hold, and \(a\ll b\) when \(a/b\to0\).  Lowercase \(c\) and uppercase \(C\) may denote positive constants that change from line to line,
and a subscript records any permitted dependence.

The notation \(o_{\Qq_n}(1)\) means convergence to zero in
\(\Qq_n\)-probability.  We write \(I_N\) for the \(N\times N\) identity
matrix and \(\mathbb S_N^+\) for the cone of real symmetric positive
semidefinite \(N\times N\) matrices.  The norms
\(\|\cdot\|_{\HS}\) and \(\|\cdot\|_{\mathrm{op}}\) are the
Hilbert--Schmidt and operator norms, respectively.  In matrix coordinates,
the Hilbert--Schmidt norm is the Frobenius norm.  Finally,
\(x_+=\max\{x,0\}\).
\section{The quadratic model and testing notation}
\label{sec:model}

We now fix the pure degree-two model. Let \(d\ge2\), let \(\sigma_d\)
be uniform probability measure on \(\Sph^{d-1}\), and sample
\(X_1,\ldots,X_n\) independently from \(\sigma_d\). Put
\[
 Z_{2,d}(t)=\frac{dt^2-1}{d-1},\qquad
 N_{2,d}=\frac{(d-1)(d+2)}2.
\]
The connection function is
\begin{equation}
 K_{d,\beta}(t)=\frac{1+\beta Z_{2,d}(t)}2,
 \qquad 0<|\beta|\le1.
 \label{eq:pure-quadratic-model}
\end{equation}
Since \(|Z_{2,d}(t)|\le1\), this is a valid connection probability.
Conditional on the latent points, the upper-triangular entries of the
symmetric adjacency matrix \(\Adj=(a_{ij})\), with \(a_{ii}=0\), are
independent and satisfy
\[
 a_{ij}\mid X_1,\ldots,X_n
 \sim\operatorname{Bernoulli}\bigl(K_{d,\beta}(\langle X_i,X_j\rangle)\bigr),
 \qquad i<j.
\]
This is the pure second-harmonic soft random geometric graph from
\cite[Section~2.5]{MaoWuXu2026}. Since
\(\mathbb E\langle X_1,X_2\rangle^2=1/d\), its marginal edge probability
is \(p=1/2\). We use
\begin{equation}
 N:=N_{2,d},\qquad
 \Pp_n:=\Pp_{n,d,K_{d,\beta}},\qquad
 \Qq_n:=\Qq_{n,1/2}=G(n,1/2).
 \label{eq:pure-model-abbreviations}
\end{equation}
Under the Erd\H{o}s--R\'enyi law \(\Qq_n\), all upper-triangular entries
are independent \(\operatorname{Bernoulli}(1/2)\) variables
\cite{JansonLuczakRucinski2000}. The centered kernel and edge variables are
\(\kappa_{d,\beta}=\beta Z_{2,d}\) and \(Y_{ij}=2a_{ij}-1\).
The operator \(\Kop_{\kappa_{d,\beta}}\) has eigenvalue \(\beta/N\)
with multiplicity \(N\), and all its other eigenvalues vanish.
Section~\ref{subsec:spherical-spectral-background} derives this spectrum
from the feature representation and records the cycle--trace identity.
The general harmonic background is in Section~S5
of the Supplementary Material.

\subsection{Testing and likelihood conventions}
\label{subsec:testing-convention}

For probability laws \(P,Q\) on graphs with vertex set \([n]\), use
\[
 \TV(P,Q)=\sup_E|P(E)-Q(E)|=\frac12\int|dP-dQ|.
\]
The infimum of the sum of testing errors is \(1-\TV(P,Q)\)
\cite[Chapter~2]{Tsybakov2009}. We call the laws asymptotically
indistinguishable when their TV distance tends to zero, and say that strong
detection is possible when it tends to one. If \(P\ll Q\), write
\(L=dP/dQ\). Since \(\mathbb E_QL=1\), the chi-squared divergence and
Cauchy--Schwarz bound are
\begin{equation}
\begin{aligned}
 \chi^2(P\|Q)&=\mathbb E_Q(L-1)^2=\mathbb E_QL^2-1,\\
 \TV(P,Q)&=\tfrac12\mathbb E_Q|L-1|
 \le\tfrac12\sqrt{\chi^2(P\|Q)}.
\end{aligned}
 \label{eq:tv-chi-square-bound}
\end{equation}
For a latent-prior event \(\mathcal E\) of positive probability,
\(\Pp^{\mathcal E}\) denotes the graph law obtained by conditioning that
prior, \(\mathbb P(\mathcal E)\) its prior probability, and
\(L^{\mathcal E}=d\Pp^{\mathcal E}/d\Qq\) the conditioned likelihood
ratio. We suppress model indices when they are clear.
For log-likelihood statements, we use the convention \(\log0=-\infty\).

For a random variable \(Z\), \(\mathcal L_P(Z)\) is its law under \(P\).
We write \(\Normal(m,\Sigma)\) for a normal law with mean \(m\) and
covariance \(\Sigma\); in one dimension the second argument is the variance.
All limits are as \(n\to\infty\), allowing \(d=d_n\), \(\beta=\beta_n\),
and hence \(N=N_{2,d_n}\) to vary.
The signed triangle is computed from the adjacency matrix without knowing
\(d,N\), or \(|\beta|\). A specified sign of \(\beta\) orients the
one-sided test; if both signs are admitted, its absolute value gives a
two-sided test. The critical likelihood and TV formulas refer to a specified
alternative.
\section{Main results}
\label{sec:results}

The degree-two layer admits a sharp dichotomy away from the critical scale.
Theorem~\ref{thm:pure-dichotomy} states this dichotomy.
Theorem~\ref{thm:growing-critical-main} identifies the critical likelihood
limit and gives a finite-sample approximation under polynomial signal scaling.

In the pure quadratic model, the centered kernel is supported on a single
degree-two block, so the cubic trace has no cancellation between harmonic
blocks.  In this setting, the cube-trace criterion reduces to comparing
\(N=N_{2,d}\) with
\(n^{3/4}|\beta|^{3/2}\).  Theorem~\ref{thm:pure-dichotomy} proves the
corresponding dichotomy whenever one of these quantities asymptotically
dominates the other.

\begin{theorem}[Pure quadratic sharp dichotomy away from the critical window]
\label{thm:pure-dichotomy}
Let $\Pp_n$ be generated by the pure degree-two single-layer kernel
\(\kappa_{d,\beta}=\beta Z_{2,d}\), equivalently by the connection function
in \eqref{eq:pure-quadratic-model}, and let $\Qq_n=G(n,1/2)$.  Set
\[
  N=N_{2,d}=\frac{(d-1)(d+2)}2.
\]
Then
\[
  \frac{N}{n^{3/4}|\beta|^{3/2}}\to0
  \quad\Longrightarrow\quad
  \TV(\Pp_n,\Qq_n)\to1,
\]
and
\[
  \frac{N}{n^{3/4}|\beta|^{3/2}}\to\infty
  \quad\Longrightarrow\quad
  \TV(\Pp_n,\Qq_n)\to0.
\]
\end{theorem}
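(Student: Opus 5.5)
The two implications call for different methods. The detection side is a second-moment computation for the signed triangle \(T_n\). The indistinguishability side requires a conditioned chi-squared bound; I expect that to be the main work.

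\emph{Detection.} Under \(\Qq_n\), \(\E T_n=0\), and distinct triangle products are orthogonal, so \(\operatorname{Var}_{\Qq_n}T_n=\binom n3\). Under \(\Pp_n\), the cycle--trace identity \eqref{eq:intro-cycle-trace} gives \(\E_{\Pp_n}T_n=\binom n3\beta^3/N^2\). For \(\operatorname{Var}_{\Pp_n}T_n\) I would expand \(\E_{\Pp_n}[Y_{ij}Y_{jk}Y_{ki}Y_{ab}Y_{bc}Y_{ca}]\) according to the overlap pattern, using conditional independence of edges and \(Y^2=1\).

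Triangles sharing no edge contribute covariance zero. If they share no vertex, the product factorizes. If they share exactly one vertex, integrating out that vertex and using isotropy gives the product of traces. If two triangles share an edge, that edge squares to one, and the remaining four-cycle contributes \(\tr(\Kop^4)=\beta^4/N^3\). There are \(O(n^4)\) such pairs, so their contribution is \(O(n^4\beta^4/N^3)\). The coincident pairs contribute \(\binom n3\).

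Hence \(\operatorname{Var}_{\Pp_n}T_n\lesssim n^3+n^4\beta^4/N^3\). Comparing with \((\E T_n)^2\asymp n^6\beta^6/N^4\), the first term is negligible exactly when \(N\ll n^{3/4}|\beta|^{3/2}\). The second ratio is \(N/(n^2\beta^2)\), which is also \(o(1)\) since \(N\ll n^{3/4}\). Chebyshev's inequality applied to the threshold test \(\operatorname{sgn}(\beta)T_n>\tfrac12|\E_{\Pp_n}T_n|\) then gives \(\TV\to1\).

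\emph{Indistinguishability.} I would use the feature lift: \(\beta Z_{2,d}(\langle x,y\rangle)=(\beta/N)\langle\phi(x),\phi(y)\rangle\) with \(\phi(x)\) a traceless symmetric matrix of norm \(\sqrt N\). Then
\[
\E_{\Qq}L^2=\E_{X,X'}\prod_{i<j}\bigl(1+\tfrac{\beta^2}{N^2}\langle\phi_i,\phi_j\rangle\langle\phi'_i,\phi'_j\rangle\bigr).
\]
This is dominated by \(\E\exp\bigl(\tfrac{\beta^2}{2N^2}\|\sum_i\phi_i\otimes\phi'_i\|_{\HS}^2-\text{diagonal}\bigr)\). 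Rare configurations (large empirical covariance \(\sum_i\phi_i\phi_i^\top\)) blow this up, so I would condition the prior on an event \(\mathcal E_n\) bounding the operator norms of the empirical covariances and Gram matrices. By \eqref{eq:tv-chi-square-bound} and \(\TV(\Pp,\Pp^{\mathcal E})\le\mathbb P(\mathcal E^c)\), it then suffices to show \(\E_{\Qq}(L^{\mathcal E})^2=1+o(1)\).

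On \(\mathcal E_n\) I would separate two regimes. When \(N\gg n\beta^2\), the interaction is small per vertex. A vertex-deletion recursion, integrating out \(\phi_n,\phi'_n\) one at a time and tracking the quadratic form of the rest, should give a multiplicative factor \(1+O(\cdot)\) per step, with the product tending to one. The leading cumulative term is the triangle term \(n^3\beta^6/N^4\to0\). When \(N\lesssim n\beta^2\), I would compare with a Gaussian (symmetric) feature prior, for which the second moment is an explicit Wishart-type determinant. I would show this determinant equals \(\exp(O(n^3\beta^6/N^4))(1+o(1))\), and then interpolate to the true orbit law, bounding the error with cavity-type estimates on the truncated covariance.

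The main obstacle is this second regime. The fourth-moment corrections between the spherical feature law and the Gaussian law must be shown to be negligible relative to \(1\) under the exponential weight, uniformly on the truncation event, rather than merely comparable to the triangle signal.
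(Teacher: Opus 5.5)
Your detection half matches the paper (Propositions~\ref{prop:quadratic-triangle-moments} and~\ref{prop:single-positive}), with one small correction. The convergence $N/(n^2\beta^2)\to0$ does not follow from $N\ll n^{3/4}$ alone. You need the lower bound $|\beta|\gtrsim n^{-1/2}$, which is forced by $N\ge1$ and the hypothesis; then $N/(n^2\beta^2)\le n^{-5/4}|\beta|^{-1/2}\le n^{-1}$.

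The genuine gap is in the dense branch $N\lesssim n\beta^2$. Write $x_{ij}=\beta^2B_{ij}(q^\ast)B_{ij}(q)$. The domination $\prod_{i<j}(1+x_{ij})\le\exp(\sum_{i<j}x_{ij})$ silently replaces the Bernoulli channel by a Gaussian one.

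\emph{Why the bound is too lossy.} The quantity $\E\exp(\sum_{i<j}x_{ij})$ is exactly $1+\chi^2$ for the model with observations $Y_{ij}=\beta B_{ij}(q)+g_{ij}$ and Gaussian noise $g_{ij}$. Its log-MGF has quadratic term $\tfrac12\operatorname{Var}=\tfrac12\binom n2\beta^4/N^2\approx\beta^4n^2/(4N^2)$, which is bounded below when $N\lesssim n\beta^2$. Concretely, the Gaussian determinant you have in mind is $\det(I_n-\beta^2N^{-2}\operatorname{off}(G'))^{-N/2}$, with $G'$ the first-replica Gram matrix. Its quadratic term is $\beta^4\|\operatorname{off}(G')\|_{\HS}^2/(4N^3)\approx\beta^4n^2/(4N^2)$, not $O(n^3\beta^6/N^4)$.

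Worse, for $n^{3/4}|\beta|^{3/2}\ll N\ll n\beta^2$ the Gaussian-channel model is strongly detectable by $\sum_{i<j}(Y_{ij}^2-1)$, whose signal-to-noise ratio is of order $n\beta^2/N$. So your dominating quantity diverges, on any prior event of probability $1-o(1)$. No Gaussian-versus-spherical feature comparison can repair a loss incurred before that comparison.

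\emph{What is needed instead.} For $\pm1$ edges one has $Y_{ij}^2=1$, which is why each edge appears only once in the product; you must keep that cancellation. Use $\log(1+x)\le x-x^2/2+C|x|^3$ to bound the product by $e^{K_A}$, where $K_A=\sum_{i<j}(A_{ij}B_{ij}-\tfrac12A_{ij}^2B_{ij}^2)$. The cubic error $\sum_{i<j}|A_{ij}B_{ij}|^3\lesssim\beta^6n^2/N^3=o(1)$ is controlled on the truncation event by sixth row moments.

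This quartic compensation destroys any closed-form determinant. The paper therefore runs a compensated deletion recursion on the symmetrized law $\bar\mu=(\mu+\mu^-)/2$. It updates the linear interaction by the hollow--Schur step while only restricting the inherited compensation, and sums the resulting defects through the trace potential $\mathcal P_N(A)=\sum_{\ell\ge3}\tr(A^\ell)/(2\ell N^{\ell-1})$. The triangle scale $\beta^6n^3/N^4$ enters only there.

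It then interpolates along $\mu_t=\bar\mu+t(\mu-\mu^-)/2$. Since $\bar\mu$ and $\mu$ share all even moments, $Z'(t)$ contains only the odd cubic term, which is handled as a canonical $U$-statistic with kernel $\mathcal K_3$. A Gaussian endpoint would also bring even-moment and feature-norm discrepancies.

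Your sparse branch is fine in spirit, as in Proposition~\ref{prop:sparse-signal-complete}. There, however, the leading term of the exponential bound is this same quadratic term, of order $(n\beta^2/N)^2\to0$, not the triangle term.
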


By \eqref{eq:single-layer-traces},
\[
  n^3\tr^2(\Kop_\kappa^3)
  =\frac{n^3\beta^6}{N^4}
  =\left(\frac{n^{3/4}|\beta|^{3/2}}{N}\right)^4.
\]
Equivalently, within the pure quadratic model,
Theorem~\ref{thm:pure-dichotomy} proves both implications of the cube-trace
criterion:
\[
\begin{aligned}
  n^3\tr^2(\Kop_\kappa^3)\to\infty
  &\quad\Longrightarrow\quad \TV(\Pp_n,\Qq_n)\to1,\\
  n^3\tr^2(\Kop_\kappa^3)\to0
  &\quad\Longrightarrow\quad \TV(\Pp_n,\Qq_n)\to0.
\end{aligned}
\]
The two implications have different statistical content.  The first is
witnessed by the polynomial-time signed-triangle statistic, whose mean shift
is larger than its standard deviation under both models.  The second
controls the full likelihood ratio and therefore rules out every test, not
only tests based on a bounded collection of subgraph counts.  Thus the same
scale governs both detectability and indistinguishability.  The constant-order
case requires the separate likelihood analysis below.

At the critical scale \(N\asymp n^{3/4}|\beta|^{3/2}\), neither strict
implication determines the testing distance.  The next theorem identifies
its limit through the normalized signed triangle.

\begin{theorem}[Critical profile and finite-sample bounds]
\label{thm:growing-critical-main}
Let \(\Pp_n\), \(\Qq_n\), and \(N=N_{2,d}\) be as in
Theorem~\ref{thm:pure-dichotomy}, and allow \(d=d_n\) and
\(\beta=\beta_n\) to depend on \(n\).
\par\smallskip
\noindent\emph{(i) Asymptotic total-variation limit.} Assume
\[
  N\to\infty,
  \qquad
  \Delta_n
  :=
  \frac{\sqrt{\binom n3}\,|\beta_n|^3}{N^2}
  \longrightarrow\Delta\in(0,\infty).
\]
Then
\[
  \TV(\Pp_n,\Qq_n)
  \longrightarrow
  2\Phi(\Delta/2)-1.
\]

\par\smallskip
\noindent\emph{(ii) Uniform finite-sample approximation.}\par
\noindent Recall the normalized signed-triangle statistic
\[
  W_n
  :=
  \operatorname{sgn}(\beta_n)
  \frac{1}{\sqrt{\binom n3}}
  \sum_{i<j<k}Y_{ij}Y_{jk}Y_{ki}.
\]
Fix \(0<a<b<\infty\), \(0\le r<1\), and
\(0<c_-\le c_+<\infty\).  If
\[
  a\le\Delta_n\le b,
  \qquad
  c_-n^{-r}\le\beta_n^2\le c_+n^{-r},
\]
then there are constants \(\gamma,C>0\) and \(n_0\), depending only on
\(a,b,r,c_-,c_+\), such that, for every \(n\ge n_0\),
\[
  \left|
    \TV(\Pp_n,\Qq_n)-\{2\Phi(\Delta_n/2)-1\}
  \right|
  \le Cn^{-\gamma}.
\]
Moreover, if \(L_n=d\Pp_n/d\Qq_n\), then, for every \(0<t\le1\),
\[
  \Qq_n\left\{
    \left|\log L_n-(\Delta_nW_n-\Delta_n^2/2)\right|>t
  \right\}
  \le Ct^{-2}n^{-\gamma}.
\]
\end{theorem}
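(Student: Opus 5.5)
The proof of Theorem~\ref{thm:growing-critical-main} follows the route sketched in the overview: identify the exact limit of a conditioned likelihood second moment, prove a two-model central limit theorem for \(W_n\), and combine them through a likelihood-projection argument. First we construct latent events \(\mathcal E_n\) with \(\mathbb P(\mathcal E_n^c)=o(1)\). They control the empirical covariance of the quadratic features (traceless symmetric matrices from Lemma~\ref{lem:feature-lift}) in operator norm, together with the relevant Gram-matrix norms. The cutoff is chosen by concentration on the product of spheres, so that it is compatible with the exponential weight in the second moment. With \(L_n^{\mathcal E}=d\Pp_n^{\mathcal E_n}/d\Qq_n\), we write
\[
\E_{\Qq_n}(L_n^{\mathcal E})^2=\E\prod_{i<j}\bigl(1+\beta^2 Z(\langle X_i,X_j\rangle)Z(\langle X_i',X_j'\rangle)\bigr)
\]
over two independent conditioned configurations, with \(Z=Z_{2,d}\). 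Expanding the product over edge sets \(H\) expresses the second moment as \(\sum_H \beta^{2|E(H)|}\E\prod_{e\in H}Z_eZ'_e\).

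For each fixed edge set the conditioning costs \(o(1)\). Centering of \(Z\) kills every graph with a degree-one vertex. Isotropy contracts degree-two vertices, each contraction producing a factor \(1/N\). A triangle therefore contributes \(\beta^6\cdot N\cdot N^{-4}\) per copy, and summing over \(\binom n3\) triangles gives exactly \(\Delta_n^2\). Any other fixed connected graph with minimum degree at least two is shown to be \(o(1)\) by quadratic-form moment bounds and H\"older's inequality, using \(N\to\infty\) and \(\Delta_n\) bounded. Vertex-disjoint triangles then combine into \(\sum_k \Delta_n^{2k}/k!\).

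The main obstacle is a bound on the tail of the expansion that is uniform in the number of edges, so that the fixed-graph limits can be summed. In the regime \(N\gg n\beta^2\) we would obtain it from the spectral deletion recursion. In the regime \(N=O(n\beta^2)\) we would use interpolation from the symmetric (Gaussian-like) feature prior, with covariance truncation and a cavity expansion. Both are run with a common truncation sequence on \(\mathcal E_n\). The target conclusion is \(\E_{\Qq_n}(L_n^{\mathcal E})^2\to e^{\Delta^2}\).

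Next we prove the two-model CLT. Under \(\Qq_n\), \(W_n\) is a degenerate degree-three Rademacher chaos whose fourth moment tends to \(3\), so de Jong's theorem \cite{deJong1990} gives \(W_n\Rightarrow\Normal(0,1)\). Under \(\Pp_n\) the mean is \(\Delta_n\) and the variance tends to one by the moment computation of Section~\ref{sec:cycles}. Asymptotic normality there follows from the same fourth-moment criterion computed under the alternative, or by transferring it through contiguity, since the second moment is bounded.

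Finally we apply Lemma~\ref{lem:hilbert-likelihood-projection}. The limiting Gaussian likelihood \(e^{\Delta w-\Delta^2/2}\) has second moment \(e^{\Delta^2}\), which matches the limit of \(\E_{\Qq_n}(L_n^{\mathcal E})^2\). Equality in conditional Jensen then forces every joint subsequential limit of \((W_n,L_n^{\mathcal E})\) to satisfy \(L=e^{\Delta W-\Delta^2/2}\). This yields the expansion
\[
\log L_n^{\mathcal E}=\Delta_nW_n-\tfrac12\Delta_n^2+o_{\Qq_n}(1)
\]
and hence the TV limit \(2\Phi(\Delta/2)-1\) for the conditioned law. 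We remove the conditioning with \(\TV(\Pp_n,\Pp_n^{\mathcal E_n})\le\mathbb P(\mathcal E_n^c)\), which proves (i).

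For (ii), every step is made quantitative under \(\beta^2\asymp n^{-r}\). The Dobler--Peccati fourth-moment bounds \cite{DoblerPeccati2017} give polynomial rates for the normal approximation of \(W_n\). The graph expansion yields
\[
\E_{\Qq_n}(L_n^{\mathcal E})^2=e^{\Delta_n^2}+O(n^{-\gamma}),
\]
and \(\mathbb P(\mathcal E_n^c)\le n^{-\gamma}\). A quantitative projection estimate bounds \(\E_{\Qq_n}\bigl(L_n^{\mathcal E}-e^{\Delta_nW_n-\Delta_n^2/2}\bigr)^2\) by the second-moment gap plus the CLT error. Chebyshev's inequality then gives the \(t^{-2}n^{-\gamma}\) bound on the log-likelihood remainder, and the TV estimate follows from the same \(L^2\) bound.
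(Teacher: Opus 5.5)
Your outline has the same architecture as the paper: an edge-set expansion of the conditioned second moment, a two-model triangle CLT, Lemma~\ref{lem:hilbert-likelihood-projection}, and then quantitative versions. The gaps are in the steps that carry the weight.

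\emph{Tail of the expansion.} Running the deletion recursion and interpolation at the true amplitude bounds only the total $\mathcal Z_n^{\mathcal E}(1)=\sum_k\mathfrak c_{k,n}$. At criticality it bounds it only by a constant, because the trace potential $\mathcal P_N(A)$ and the accumulated remainders are $O(1)$ rather than $o(1)$. A bounded total does not stop mass from escaping to large $k$. So, combined with the fixed-order limits, you get only $\liminf_n\E_{\Qq_n}(L_n^{\mathcal E})^2\ge e^{\Delta^2}$, not the upper bound the projection lemma needs.

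The missing idea is positivity plus amplification. The coefficients satisfy $\mathfrak c_{k,n}=\beta^{2k}\sum_{|F|=k}\{\E G_F(q)\}^2\ge0$ (Lemma~\ref{lem:nonnegative-replica-coefficients}). The recursion and interpolation must be run with $A_s=s\beta^2B^\ast$ to give $\sup_{s\le s_\star}\mathcal Z_n^{\mathcal E}(s)\le C_{a,b}$ for some $s_\star>1$, whence $\sum_{k>K}\mathfrak c_{k,n}\le C_{a,b}s_\star^{-K}$. Your split into $N\gg n\beta^2$ versus $N=O(n\beta^2)$ is moot here, since criticality forces $n\beta^2\asymp N^{4/3}\gg N$.

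\emph{Conditioning.} $\mathbb P(\mathcal E_n^c)=o(1)$ is far too weak to remove the indicators at fixed order. Each $\mathfrak c_{k,n}$ sums over $\asymp n^{2k}$ edge sets, and each squared moment can move by up to $2\mathbb P(\mathcal E_n^c)$. So $\mathbb P(\mathcal E_n^c)$ must beat every fixed power of $n$. The paper takes events at tail level $x_M\asymp M\log n$ and shows the amplified generating-function bound holds with constants independent of $M$. It then chooses $M_n\to\infty$ diagonally to get $\mathbb P(\mathcal E_n^c)\le n^{-M_n}$. In part~(ii) it takes $M\asymp\log n$, giving $e^{-c\log^2 n}$, because degrees up to $K\asymp\log N$ are retained. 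Your $\mathbb P(\mathcal E_n^c)\le n^{-\gamma}$ in~(ii) fails for the same reason.

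\emph{CLT under $\Pp_n$.} Neither of your routes works. De Jong's theorem, like any fourth-moment criterion, concerns degenerate $U$-statistics of independent variables. Under $\Pp_n$ the edge signs are dependent through the latent points and $W_n$ is a mixture, for which fourth-moment matching does not imply normality. The contiguity route is circular: Le Cam's third lemma needs the joint $\Qq_n$-limit of $(W_n,L_n^{\mathcal E})$, which is exactly what Lemma~\ref{lem:hilbert-likelihood-projection} produces \emph{from} the $\Pp_n$-limit of $W_n$.

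The paper instead conditions on $q$ and uses the conditionally independent, centered variables $\widetilde Y_{ij}=Y_{ij}-\beta B_{ij}$ to write $T_n=S_n+R_{1,n}+R_{2,n}+R_{3,n}$. It then:
\begin{itemize}
\item applies de Jong (and D\"obler--Peccati in part~(ii)) conditionally to the cubic chaos $S_n$, after deleting edges with $1-\beta^2B_{ij}^2<1/2$;
\item shows the lower chaoses $R_{1,n},R_{2,n}$ are negligible in $L^2$;
\item extracts the shift from $R_{3,n}=\beta^3\sum_{i<j<k}B_{ij}B_{jk}B_{ki}$, whose normalized mean is exactly $\Delta_n$ and whose variance is controlled by Lemma~\ref{lem:triangle-overlap-moments}.
\end{itemize}

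Two smaller corrections. First, the per-copy triangle activity is $\beta^6t_d(C_3)^2=\beta^6N^{-4}$, not $\beta^6N\cdot N^{-4}$. Second, Wasserstein bounds cannot control $\E_{\Qq_n}e^{2\Delta_nW_n}$. So in the quantitative projection the Gaussian likelihood must be replaced by a Lipschitz truncation at level $\asymp\sqrt{\log(1/\eta)}$, which is what produces the $\eta^{1/4}$ rate.
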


Here the signed triangle is the statistic
\(T_n=\sum_{i<j<k}Y_{ij}Y_{jk}Y_{ki}\), rather than the ordinary triangle
count.  The variable \(W_n\) is its null-variance normalization, oriented by
\(\operatorname{sgn}(\beta_n)\).  Under the stated assumptions, \(W_n\)
carries all limiting testing information, and under polynomial signal scaling
the approximation is uniform at a polynomial rate.
\section{Signed triangles in the quadratic layer}
\label{sec:cycles}

The cycle--trace identity \eqref{eq:cycle-trace-identity} identifies the
signed triangle as the shortest simple cycle with a nonzero mean under the
quadratic alternative. We compute its exact moments and prove the detectable implication of
Theorem~\ref{thm:pure-dichotomy}.

Put
\[
 Y_{ij}=2a_{ij}-1,
 \qquad
 T_n=\sum_{1\le i<j<k\le n}Y_{ij}Y_{jk}Y_{ki}.
\]
For the degree-two kernel, \(\Kop_\kappa\) has eigenvalue \(\beta/N\)
with multiplicity \(N=N_{2,d}\).  Hence
\begin{equation}
 \tau:=\tr(\Kop_\kappa^3)=\frac{\beta^3}{N^2},
 \qquad
 \zeta:=\tr(\Kop_\kappa^4)=\frac{\beta^4}{N^3}.
 \label{eq:quadratic-triangle-traces}
\end{equation}

\subsection{Exact signed-triangle moments}
\label{subsec:triangle-moment-calculation}

\begin{proposition}[Exact quadratic triangle moments]
\label{prop:quadratic-triangle-moments}
Under \(\Qq_n=G(n,1/2)\),
\[
 \E_{\Qq_n}T_n=0,
 \qquad
 \operatorname{Var}_{\Qq_n}(T_n)=\binom n3.
\]
Under the quadratic geometric law \(\Pp_n\),
\begin{align}
 \E_{\Pp_n}T_n
 &=\binom n3\tau,                                      \label{eq:quadratic-triangle-mean}\\
 \operatorname{Var}_{\Pp_n}(T_n)
 &=\binom n3(1-\tau^2)
   +12\binom n4(\zeta-\tau^2).                         \label{eq:quadratic-triangle-variance}
\end{align}
\end{proposition}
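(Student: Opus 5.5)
We expand $T_n$ and its square over triples, and reduce every expectation to a kernel-composition identity through conditional edge independence. Under $\Qq_n$ the variables $Y_{ij}$ are independent Rademacher signs. Each triangle product $Y_{ij}Y_{jk}Y_{ki}$ is a product of three distinct independent signs, so it has mean zero. For two triples $\{i,j,k\}\neq\{i',j',k'\}$, the product of the two triangle terms contains some edge with odd multiplicity, because two distinct triangles share at most one edge. Hence these cross terms vanish, while each diagonal term equals $Y^2Y^2Y^2=1$. This gives $\E_{\Qq_n}T_n=0$ and $\operatorname{Var}_{\Qq_n}(T_n)=\binom n3$.

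Under $\Pp_n$ we condition on $X_1,\dots,X_n$. The conditional mean of $Y_{ij}$ is $\kappa(\langle X_i,X_j\rangle)=\beta Z_{2,d}(\langle X_i,X_j\rangle)$, and distinct edges are conditionally independent. Consequently, for any edge multiset, the conditional expectation of the product is the product over edges of odd multiplicity of $\kappa(\langle X_u,X_v\rangle)$, since $Y^2=1$. The mean formula \eqref{eq:quadratic-triangle-mean} is then the cycle--trace identity \eqref{eq:intro-cycle-trace} with $\ell=3$, together with \eqref{eq:quadratic-triangle-traces}.

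For the second moment we classify ordered pairs of triples $(t,t')$ by the overlap $|t\cap t'|\in\{0,1,2,3\}$.

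If $t=t'$, the product is $1$; there are $\binom n3$ such terms.

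If $|t\cap t'|\le1$, the two triangles share no edge. The product has conditional expectation $\prod_{e\in t}\kappa_e\prod_{e\in t'}\kappa_e$. When the triangles are vertex-disjoint, this factorizes in expectation by independence of the $X$'s, giving $\tau^2$. When they share exactly one vertex $v$, I would condition on $X_v$. By rotation invariance, $\E[\kappa_{vj}\kappa_{jk}\kappa_{kv}\mid X_v]$ does not depend on $X_v$, so it equals $\tau$ and the expectation again factorizes to $\tau^2$. Thus all these terms contribute exactly $\tau^2$ each, the same as $(\E T)^2$ per pair, and they cancel in the variance.

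If $|t\cap t'|=2$, the triangles share exactly one edge $\{i,j\}$, and the union is a $4$-cycle $i\!-\!k\!-\!j\!-\!l\!-\!i$ on four distinct vertices, because the shared edge appears squared. Its expectation is $\tr(\Kop_\kappa^4)=\zeta$ by \eqref{eq:intro-cycle-trace}. Each set of four vertices supports $\binom42=6$ choices of shared edge, and counting ordered pairs $(t,t')$ contributes a factor of $2$, for $12\binom n4$ such pairs.

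Subtracting $(\E_{\Pp_n}T_n)^2=\binom n3^2\tau^2$, we must account for the terms with $t=t'$ and $|t\cap t'|=2$ that were not included among the $\tau^2$ terms. The diagonal contributes $\binom n3(1-\tau^2)$ and the edge-sharing pairs contribute $12\binom n4(\zeta-\tau^2)$, which yields \eqref{eq:quadratic-triangle-variance}.

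The only genuinely non-mechanical step is the one-shared-vertex case, which requires the rotation-invariance argument. Beyond that, the main care needed is in bookkeeping the counts of ordered pairs; the trace identities are already available from \eqref{eq:intro-cycle-trace} and \eqref{eq:quadratic-triangle-traces}.
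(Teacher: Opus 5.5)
Your proposal is correct and follows essentially the same route as the paper. Both use null orthogonality via an edge of multiplicity one, and under the alternative the cycle--trace identity, rotational invariance for triangles sharing one vertex, reduction of an edge-sharing pair to a $4$-cycle with mean $\zeta$, and the $12\binom n4$ count of ordered edge-sharing pairs.
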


\begin{proof}
Under the null, distinct triangle monomials are orthogonal: their product
contains an edge that occurs exactly once unless the two triangles coincide.
Every monomial has square one, which gives the null formulas.

Under \(\Pp_n\), \eqref{eq:cycle-trace-identity} gives mean \(\tau\) for
each triangle.  If two triangles share at most one vertex, integrating the
other latent points gives \(\tau\) for each cycle, uniformly in the possible
shared vertex by rotational invariance; their product expectation is therefore
\(\tau^2\).  The only remaining case is a pair sharing one edge.  If the
triangles are \((1,2,3)\) and \((1,2,4)\), then \(Y_{12}^2=1\), and conditional
edge independence followed by integration over \(X_3\) and \(X_4\) gives
\[
 \E_{\Pp_n}
 \{Y_{12}Y_{23}Y_{31}Y_{12}Y_{24}Y_{41}\}
 =\tr(\Kop_\kappa^4)=\zeta.
\]
Thus an edge-sharing pair has covariance \(\zeta-\tau^2\).  There are
\(12\binom n4\) ordered pairs of distinct triangles sharing an edge, which
proves \eqref{eq:quadratic-triangle-variance}.
\end{proof}

\subsection{Detection from the mean shift}
The variance has only two contributions: individual triangles and pairs
sharing an edge. We compare both with the squared mean.

\begin{proposition}[Quadratic detectable direction]
\label{prop:single-positive}
If
\[
 \frac{n^3\beta^6}{N^4}\longrightarrow\infty,
\]
then the signed-triangle test oriented by \(\operatorname{sgn}(\beta)\) is
strongly consistent and
\[
 \TV(\Pp_n,\Qq_n)\longrightarrow1.
\]
\end{proposition}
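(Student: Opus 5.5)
We use the exact moments of Proposition~\ref{prop:quadratic-triangle-moments} and a Chebyshev threshold test. Without loss of generality take \(\beta>0\); otherwise replace \(T_n\) by \(-T_n\), which leaves both variances unchanged and flips the sign of the mean. The mean shift is \(\mu_n:=\E_{\Pp_n}T_n=\binom n3\tau=\binom n3\beta^3/N^2\). Since \(\binom n3\asymp n^3\), the hypothesis gives
\[
\frac{\mu_n^2}{\binom n3}\asymp\frac{n^3\beta^6}{N^4}\longrightarrow\infty .
\]
Consider the test \(\varphi=\mathbf 1\{T_n>\mu_n/2\}\). Chebyshev's inequality gives
\[
\Qq_n(\varphi=1)\le\frac{4\operatorname{Var}_{\Qq_n}(T_n)}{\mu_n^2},
\qquad
\Pp_n(\varphi=0)\le\frac{4\operatorname{Var}_{\Pp_n}(T_n)}{\mu_n^2}.
\]
The null bound is \(4\binom n3/\mu_n^2\to0\) by the display above. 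It therefore remains to show \(\operatorname{Var}_{\Pp_n}(T_n)=o(\mu_n^2)\).

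By \eqref{eq:quadratic-triangle-variance} and \(\tau^2\ge0\),
\[
\operatorname{Var}_{\Pp_n}(T_n)\le\binom n3+12\binom n4\zeta .
\]
The first term is \(o(\mu_n^2)\) as before. For the second term, \(\zeta=\beta^4/N^3\), so
\[
\frac{\binom n4\zeta}{\mu_n^2}\asymp\frac{n^4\beta^4/N^3}{n^6\beta^6/N^4}=\frac{N}{n^2\beta^2}.
\]
This is the only step needing care: we must show that \(N/(n^2\beta^2)\to0\) follows from the hypothesis. Write \(x:=N/(n^{3/4}|\beta|^{3/2})\), so the hypothesis is \(x\to0\). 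Then
\[
\frac{N}{n^2\beta^2}=x\,n^{-5/4}|\beta|^{-1/2}.
\]
We need a lower bound on \(|\beta|\) to control \(|\beta|^{-1/2}\), and the hypothesis supplies one. Since \(N\ge2\) for \(d\ge2\), we have \(n^{3/4}|\beta|^{3/2}\ge N/x\to\infty\). In particular \(|\beta|^{3/2}\ge n^{-3/4}\) eventually, that is, \(|\beta|\ge n^{-1/2}\), and hence \(|\beta|^{-1/2}\le n^{1/4}\). This gives
\[
\frac{N}{n^2\beta^2}\le x\,n^{-1}\longrightarrow0 .
\]
Both error probabilities therefore vanish. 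Since \(\TV(\Pp_n,\Qq_n)\ge1-\Qq_n(\varphi=1)-\Pp_n(\varphi=0)\), we conclude \(\TV(\Pp_n,\Qq_n)\to1\), and the oriented signed-triangle test is strongly consistent.
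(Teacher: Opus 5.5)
Your proof is correct and follows the paper's argument. Both use the exact moments of Proposition~\ref{prop:quadratic-triangle-moments}, Chebyshev's inequality for the half-mean threshold test, and the observation that the hypothesis forces $|\beta|\ge n^{-1/2}$ eventually, which makes the edge-sharing term $N/(n^2\beta^2)$ in the alternative variance negligible (your bound $x\,n^{-1}$ is marginally sharper than the paper's $n^{-1}$).
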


\begin{proof}
Let \(\mu_n=\E_{\Pp_n}T_n\).  Proposition
\ref{prop:quadratic-triangle-moments} and
\eqref{eq:quadratic-triangle-traces} give
\[
 \mu_n^2\asymp\frac{n^6\beta^6}{N^4},
 \qquad
 \operatorname{Var}_{\Qq_n}(T_n)\asymp n^3,
 \qquad
 \operatorname{Var}_{\Pp_n}(T_n)
 =O\!\left(n^3+\frac{n^4\beta^4}{N^3}\right).
\]
Consequently,
\[
 \frac{\operatorname{Var}_{\Qq_n}(T_n)}{\mu_n^2}
 =O\!\left(\frac{N^4}{n^3\beta^6}\right)=o(1)
\]
and
\[
 \frac{\operatorname{Var}_{\Pp_n}(T_n)}{\mu_n^2}
 =O\!\left(
   \frac{N^4}{n^3\beta^6}+\frac{N}{n^2\beta^2}
 \right).
\]
The assumption implies eventually
\(N\le n^{3/4}|\beta|^{3/2}\).  Since \(N\ge1\), it also implies
\(|\beta|\ge n^{-1/2}\), and therefore
\[
 \frac{N}{n^2\beta^2}
 \le n^{-5/4}|\beta|^{-1/2}
 \le n^{-1}\longrightarrow0.
\]
Both variances are thus \(o(\mu_n^2)\).  Chebyshev's inequality applied to
\[
 \varphi_n
 =\mathbf 1\!\left\{
   \operatorname{sgn}(\mu_n)T_n>\frac{|\mu_n|}{2}
 \right\}
\]
shows that
\(\Qq_n(\varphi_n=1)+\Pp_n(\varphi_n=0)\to0\).  The variational identity for
testing error and total variation completes the proof.
\end{proof}

Since \(N^4/(n^3\beta^6)\to0\) is equivalent to
\(N/(n^{3/4}|\beta|^{3/2})\to0\), the proposition proves precisely the
detectable half of the strict quadratic boundary.  If the sign of \(\beta\)
is not specified, replacing the oriented rejection event by its two-sided
analogue based on \(|T_n|\) gives the same strong-detection conclusion.
\section{Likelihood analysis and indistinguishability}
\label{sec:quadratic}

We prove the indistinguishability implication of
Theorem~\ref{thm:pure-dichotomy} through the full likelihood ratio.
The quadratic feature representation turns its second moment into an
interaction between two independent latent configurations. Conditioning the
prior controls rare configurations, after which the sparse and dense
recursions cover the required parameter range.

\subsection{The quadratic feature representation}
\label{subsec:quadratic-feature-lift}

Use the traceless symmetric matrix space
\(\mathbb V_d=\{H\in\mathbb R^{d\times d}:H^\top=H,\ \tr H=0\}\),
of dimension \(N\), with the Hilbert--Schmidt inner product.
We also write \(V=\mathbb V_d\). Define
\[
 \varphi_d(x)=\sqrt{\frac d{d-1}}
       \left(xx^\top-\frac1dI_d\right),\qquad q_i=\varphi_d(X_i).
\]
Lemma~\ref{lem:feature-lift} verifies
\[
\begin{gathered}
 \langle\varphi_d(x),\varphi_d(y)\rangle_{\HS}
 =Z_{2,d}(\langle x,y\rangle),\\
 \|q_i\|_{\HS}=1,\qquad
 \mathbb E q_i=0,\qquad
 \mathbb E(q_i\otimes q_i)=I_{\mathbb V_d}/N.
\end{gathered}
\]
Let \(\mu\) denote the law of \(q_i\).
Here \(q\otimes q\) is the rank-one operator
\(H\mapsto\langle q,H\rangle_{\HS}q\). These identities make the model a
random dot-product graph with a centered isotropic feature law.
For \(d\ge3\), the features lie on a proper orthogonal-group orbit in
the unit sphere of \(\mathbb V_d\). Thus the uniform-spherical theorem
\cite[Theorem~3]{MaoWuXu2026} does not apply by replacing the latent
dimension with \(N\). The moment and concentration estimates in
Appendix~\ref{app:complete-preliminaries} retain this orbit structure.

\subsection{Likelihood second moments and prior truncation}
\label{subsec:quadratic-second-moment}

Write \(q=(q_1,\ldots,q_n)\), and let \(B=B(q)\) be its hollow Gram matrix:
\(B_{ii}=0\) and \(B_{ij}=\langle q_i,q_j\rangle_{\HS}\) for \(i\ne j\).
The conditional edge likelihood factor is \(1+\beta Y_{ij}B_{ij}\), so
integrating over the latent configuration gives
\begin{equation}
  L_n(\Adj)=\frac{d\Pp_n}{d\Qq_n}(\Adj)
  =
  \E_q\prod_{i<j}
  \left(1+\beta Y_{ij}B_{ij}\right).
  \label{eq:pure-mixture-likelihood}
\end{equation}
Let \(q^\ast\) be an independent copy of \(q\), with Gram matrix
\(B^\ast=B(q^\ast)\). Squaring the latent integral introduces these two
replicas. Independence and centering of the edge signs under \(\Qq_n\)
then give
\begin{equation}
  \E_{\Qq_n}\left[
    L_n(\Adj)^2
  \right]
  =
  \E_{q,q^\ast}\prod_{i<j}
  \left(
    1+\beta^2
    \langle q_i,q_j\rangle_{\HS}
    \langle q_i^\ast,q_j^\ast\rangle_{\HS}
  \right).
  \label{eq:pure-second-moment-replica}
\end{equation}
Lemma~\ref{lem:truncated-likelihood} derives the corresponding identity with
prior conditioning; setting its event to the full prior space gives these
two formulas. For fixed \(q^\ast\), the remaining expectation is a partition
function with interaction matrix \(A=\beta^2B^\ast\).

Rare configurations may dominate the unconditioned second moment. It
suffices instead to find a prior event \(\mathcal E_n\) with probability
\(1-o(1)\) such that \(\mathbb E_{\Qq_n}(L_n^{\mathcal E})^2=1+o(1)\).
Here \(L_n^{\mathcal E}=d\Pp_n^{\mathcal E_n}/d\Qq_n\).
Indeed, prior conditioning changes the graph law by at most
\(\mathbb P(\mathcal E_n^c)\) in total variation. The triangle inequality
and \eqref{eq:tv-chi-square-bound} yield
\[
 \TV(\Pp_n,\Qq_n)
 \le\mathbb P(\mathcal E_n^c)
   +\tfrac12\sqrt{\mathbb E_{\Qq_n}(L_n^{\mathcal E})^2-1}=o(1).
\]

\begin{proposition}[Pure quadratic indistinguishability]
\label{prop:pure-negative}
If
\[
  \frac{N}{n^{3/4}|\beta|^{3/2}}\to\infty,
\]
then
\[
  \TV(\Pp_n,\Qq_n)\to0.
\]
\end{proposition}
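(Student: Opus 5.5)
The plan is to bound the conditioned second moment. By the reduction just given, it suffices to construct a prior event \(\mathcal E_n\) with \(\mathbb P(\mathcal E_n^c)=o(1)\) and to show \(\E_{\Qq_n}(L_n^{\mathcal E})^2=1+o(1)\). Following \eqref{eq:pure-second-moment-replica}, this second moment equals
\[
 \mathbb P(\mathcal E_n)^{-2}\,
 \E_{q,q^\ast}\Bigl[\mathbf 1_{\mathcal E_n}(q)\mathbf 1_{\mathcal E_n}(q^\ast)
 \prod_{i<j}\bigl(1+\beta^2B_{ij}B^\ast_{ij}\bigr)\Bigr].
\]
I would take \(\mathcal E_n\) to control three quantities: the empirical covariance \(\|\sum_i q_i\otimes q_i-(n/N)I\|_{\mathrm{op}}\), the operator norm of the Gram matrix \(\|B\|_{\mathrm{op}}\lesssim 1+\sqrt{n/N}+n/N\), and the entries \(\max_{i\ne j}|B_{ij}|\lesssim\sqrt{\log n/N}\). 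All three hold with probability \(1-o(1)\) by matrix concentration for the isotropic, bounded features \(q_i\) supplied by Lemma~\ref{lem:feature-lift}. Next, write \(\log(1+x)=x-x^2/2+O(x^3)\). The product becomes \(\exp\{\beta^2\sum_{i<j}B_{ij}B^\ast_{ij}-\frac{\beta^4}{2}\sum B_{ij}^2B_{ij}^{\ast2}+\dots\}\). The linear term is \(\frac{\beta^2}{2}\tr(BB^\ast)=\frac{\beta^2}{2}\langle\sum_i q_i\otimes q_i^\ast,\sum_j q_j\otimes q_j^\ast\rangle\) minus diagonal corrections. In other words, the exponent is a quadratic form in the tensor \(M=\sum_i q_i\otimes q_i^\ast\in V\otimes V\), which is a sum of \(n\) i.i.d.\ centered vectors in dimension \(N^2\) with covariance \(I/N^2\).

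I would then split into two regimes, as the overview announces. When \(N/(n\beta^2)\to\infty\), I would use a deletion recursion. I remove vertex \(n\) and integrate \(q_n,q_n^\ast\) conditionally on the rest. The factor \(\prod_{j<n}(1+\beta^2B_{nj}B^\ast_{nj})\) has conditional mean \(1+O(\cdot)\), computed via the isotropy \(\E q\otimes q=I/N\). The leading correction is \(\frac{\beta^4}{N^2}\tr(\hat\Sigma_{n-1}\hat\Sigma^\ast_{n-1})\)-type terms, where \(\hat\Sigma=\sum_{j<n}q_j\otimes q_j\). On \(\mathcal E_n\) these are \(\approx\beta^4 n^2/N^3\) from the triangle (cube-trace) contribution. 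Summing over \(n\) steps gives a total exponent of order \(n^3\beta^6/N^4+n\beta^4 \cdot(\text{lower order})\), which is \(o(1)\) under the hypothesis. In the dense regime \(N/(n\beta^2)=O(1)\), the recursion corrections are no longer individually small. Here I would first replace \(\mu\) by a symmetric prior, for example Gaussian features \(g_i\sim\Normal(0,I_V/N)\) or uniform on the sphere of \(V\). For that prior the replica integral is a Gaussian/Wishart computation with explicit answer \(\prod(1-\lambda_k)^{-1/2}\)-type determinants. Their logarithm expands as \(\sum_{\ell\ge3}\tr\text{-terms}\), of which only the \(\ell=3\) term \(\asymp n^3\beta^6/N^4\) could matter, and it is \(o(1)\). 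I would then interpolate \(q_i^{(s)}\) between \(\mu\) and the symmetric law, vertex by vertex or via a smart path. The derivative is controlled by third and fourth cumulant differences of \(\mu\) versus the symmetric law, which are \(O(N^{-2})\) relative to isotropic ones. Covariance truncation on \(\mathcal E_n\) keeps the exponential weight bounded, and a cavity expansion bounds the error.

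The main obstacle is the dense interpolation step. The exponential weight \(\exp(\beta^2\langle M,M^\ast\rangle)\) has large fluctuations when \(n\beta^2\gtrsim N\), so rare configurations with atypical empirical covariance can dominate. The truncation must therefore be chosen so that it is simultaneously typical under \(\mu\) and under the symmetric prior, and preserved along the interpolation path. I would first try a truncation at \(\|\hat\Sigma-(n/N)I\|_{\mathrm{op}}\le C\sqrt{n\log n}/N+C\log n\). I would then bound the boundary error using concentration on the product of spheres, which gives sub-exponential tails for the \(\mu\)-features.
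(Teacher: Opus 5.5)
Your outline follows the paper's architecture: the split at \(N/(n\beta^2)\), prior truncation, the compensated exponent, a symmetric endpoint whose answer is the trace potential \(\sum_{\ell\ge3}\operatorname{tr}(A^\ell)/(2\ell N^{\ell-1})\), and interpolation back to \(\mu\). The sparse branch is workable in spirit, with one correction. The per-step correction there is the random interaction \(\beta^4N^{-2}\sum_{j<k}B_{jk}B^\ast_{jk}\) among the surviving vertices, not \(\operatorname{tr}(\hat\Sigma\hat\Sigma^\ast)\), and it must be absorbed into the next coefficients rather than bounded by its size. The paper simply uses \(1+u\le e^u\) and a random-bipartition MGF bound \(\Psi_n(\beta^2B^\ast)\lesssim\beta^4\|B^\ast\|_{\HS}^2/N\lesssim n\beta^2/N\).

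The genuine gap is the dense interpolation, where all the difficulty sits, and the mechanism you propose for it is false. The quadratic features are far from Gaussian in low-rank directions, since \(\langle q_j,q\rangle=Z_{2,d}(\langle X_j,X\rangle)\approx(\chi^2_1-1)/d\). Hence \(\mathbb E\langle q_j,q\rangle^3=\gamma_db_d\asymp N^{-3/2}\) is of full isotropic order, and \(\mathbb E\langle q_j,q\rangle^4\) exceeds the Gaussian value \(3/N^2\) by a constant factor. A Gaussian or unit-sphere endpoint therefore adds even-moment discrepancies along the path. The paper avoids them by starting from \(\bar\mu=(\mu+\mu^-)/2\), so that only the odd part moves and each cavity score begins with the cubic term \(T_i\). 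Even then, per-vertex control does not close. The natural pointwise bound on the good row event is \(|T_i|\lesssim(\|A_{i\cdot}\|_2^2/N)^{3/2}\), up to the threshold factor. With \(h_n=N/(n^{3/4}|\beta|^{3/2})\), summing gives \(\sum_i(\|A_{i\cdot}\|_2^2/N)^{3/2}\asymp|\beta|^{3/2}n^{1/4}/h_n^3\), which diverges for \(\beta\asymp1\), \(N=n^{4/5}\). Typical unweighted sizes do not help either. The weight \(e^{K_A}\) has second moment of order \(e^{c\|A\|_{\HS}^2/N}\) with \(\|A\|_{\HS}^2/N\asymp\beta^4n^2/N^2\gtrsim1\), so the tilted measure is far from the product measure.

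The missing idea is to aggregate over the deleted vertex before estimating. The sum \(\sum_iT_i\) is the canonical order-three \(U\)-statistic \(\mathcal T_g=\sum_{j<k<\ell}g_{jk\ell}\mathcal K_3(q_j,q_k,q_\ell)\), with \(g=\sum_iA_{i\cdot}^{\otimes3}\), plus a repeated-index term. One must prove \(\sup_t\|\mathcal T_g\|_{L^p(\mu_t^{\otimes n})}=o(1)\) for \(p\asymp\|A\|_{\HS}^2/N\). This is then transferred to the weight by H\"older against \(\mathbb E[\mathbf 1_{\mathcal C_n(\Lambda+1)}e^{2K_A}]\le e^{C\|A\|_{\HS}^2/N}\), and only then does Gronwall give \(Z(1)\le1+o(1)\). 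The paper proves the \(L^p\) bound with Adamczak's inequality, which needs:
\begin{itemize}
\item partition norms of \(g\) controlled through \(\|A\|_{\mathrm{op}}\), which is where aggregation gains;
\item a Bernstein bound on \(\max|g_{jk\ell}|\) exploiting cancellation over \(i\);
\item kernel partition norms from the irreducibility of the \(O(d)\) action on \(V\);
\item a direct trilinear estimate.
\end{itemize}

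Two further problems remain. First, at the endpoint a determinant does not suffice. The compensation \(-\tfrac12\sum_{i<j}A_{ij}^2B_{ij}^2\) that removes the \(\ell=2\) term is quartic in Gaussian features. The paper instead obtains the trace potential for \(\bar\mu\) from the compensated hollow--Schur deletion recursion. Second, your truncation scales are wrong, because the features have sub-exponential tails at scale \(1/d\). The quantity \(\max_{i\ne j}|B_{ij}|\) is of order \(\log n/\sqrt N\), not \(\sqrt{\log n/N}\). Also, \(\|\sum_iq_i\otimes q_i-(n/N)I\|_{\mathrm{op}}\) is at least of order \(\sqrt{n/N}\), since its squared Hilbert--Schmidt norm is about \(n\). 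So both of your proposed events fail with probability tending to one in the example above. The paper only caps \(\|\sum_iq_i\otimes q_i\|_{\mathrm{op}}\le\Lambda+1\) with \(\Lambda\asymp n/N+T\), which is exactly what the one-site MGF condition \(\sqrt\Lambda R_2(A)/d\le c\) needs.
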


\begin{proof}
At indices for which \(\beta=0\), the two graph laws coincide.  Otherwise set
\(R_n=N/(n\beta^2)\).  If \(R_n\to\infty\),
Proposition~\ref{prop:sparse-signal-complete} gives the conclusion.  If
\(R_n=O(1)\), define
\(h_n=N/(n^{3/4}|\beta|^{3/2})\).  The hypothesis gives \(h_n\to\infty\),
whereas \(R_n=O(1)\) gives \(N\lesssim n\beta^2\).  Therefore
the conditional second-moment estimate in
Proposition~\ref{prop:dense-branch-second-moment}, proved in
Sections~\ref{sec:dense-likelihood} and~\ref{sec:likelihood-interpolation}, applies.  Hence the truncated
\(\chi^2\)-divergence tends to zero.  Since
\(\mathbb P(q\notin\mathcal E_n)=o(1)\), Lemma~\ref{lem:truncated-likelihood} and
\eqref{eq:tv-chi-square-bound} give \(\TV(\Pp_n,\Qq_n)\to0\).

For a general sequence, every subsequence has a further subsequence on which
either \(R_n\to\infty\) or \(R_n=O(1)\).  The two cases above give
total-variation convergence on every such further subsequence, which proves the
claim for the original sequence.
\end{proof}

\begin{proof}[Proof of Theorem~\ref{thm:pure-dichotomy}]
If
$N/(n^{3/4}|\beta|^{3/2})\to0$, then
$n^3\beta^6/N^4\to\infty$, so
Proposition~\ref{prop:single-positive} gives $\TV\to1$.
If
$N/(n^{3/4}|\beta|^{3/2})\to\infty$, then
Proposition~\ref{prop:pure-negative} gives $\TV\to0$.
\end{proof}

Theorem~\ref{thm:pure-dichotomy} therefore shows that the same dimensionless
quantity \(n^3\beta^6/N^4\) governs both strict regimes.  This quantity has the
order of the squared triangle mean divided by its null variance.  Hence, for
the pure quadratic model, the signed-triangle threshold agrees with the
information-theoretic threshold away from the critical scale.

\section{The symmetric endpoint of the dense likelihood bound}
\label{sec:dense-likelihood}

\subsection{Conditional target and interpolation}

By Lemma~\ref{lem:truncated-likelihood}, the dense branch of
Proposition~\ref{prop:pure-negative} reduces to the following uniform bound
after fixing the first latent replica.

\begin{proposition}[Dense-branch conditional second moment]
\label{prop:dense-branch-second-moment}
Suppose that
\begin{equation}
  \frac{N}{n^{3/4}|\beta|^{3/2}}\longrightarrow\infty,
  \qquad N\lesssim n\beta^2.
  \label{eq:dense-master-regime}
\end{equation}
There exists a sequence of measurable subsets \(\mathcal E_n\) of the latent
configuration space such that \(\mathbb P(q\in\mathcal E_n)=1-o(1)\) and,
uniformly over
\(q^\ast\in\mathcal E_n\),
\begin{equation}
  \E_q\left[
    \mathbf1_{\{q\in\mathcal E_n\}}
    \prod_{i<j}
    \left\{1+\beta^2
      \langle q_i,q_j\rangle_{\HS}
      \langle q_i^\ast,q_j^\ast\rangle_{\HS}
    \right\}
  \right]\le1+o(1).
  \label{eq:pure-negative-conditional-target}
\end{equation}
\end{proposition}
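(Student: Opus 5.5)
We plan to prove Proposition~\ref{prop:dense-branch-second-moment} as the paper's overview indicates: first for a symmetric (Gaussian or uniform-spherical) feature prior, then by interpolating back to the quadratic feature law \(\mu\). Fix \(q^\ast\) and write \(A=\beta^2B^\ast\). Define the conditioning event \(\mathcal E_n\) by requiring, for the configuration \(q\), that:
\begin{itemize}
  \item the empirical covariance \(\frac1n\sum_i q_i\otimes q_i\) is within \(\epsilon_n/N\) of \(I_V/N\) in operator norm;
  \item \(\|B\|_{\mathrm{op}}\lesssim 1+n/N\);
  \item \(\|B\|_{\HS}^2\lesssim n^2/N\);
  \item \(\tr(B^3)\) is close to its mean \(n^3/N^2\) up to fluctuations \(O(n^{3/2}/N)\).
\end{itemize}
Matrix concentration for the bounded isotropic vectors \(q_i\) (the appendix estimates on the orbit) gives \(\mathbb P(\mathcal E_n)=1-o(1)\). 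On \(\mathcal E_n\) we then use \(\log(1+x)\le x-x^2/2+x^3/3\). The integrand is therefore bounded by \(\exp\{\frac12\langle A,B\rangle-\frac14\langle A^{\circ2},B^{\circ2}\rangle+\cdots\}\). We work with the quadratic form \(\frac12\sum_{i,j}A_{ij}\langle q_i,q_j\rangle=\frac12\tr\bigl(Q^\top A Q\bigr)\), where \(Q\) is the \(n\times N\) feature matrix.

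For the symmetric endpoint, replace the rows of \(Q\) by i.i.d.\ \(\Normal(0,I_N/N)\) vectors \(g_i\). The Gaussian integral is then explicit:
\[
\E\exp\Bigl\{\tfrac12\tr(G^\top AG)\Bigr\}=\det(I-A/N)^{-N/2}.
\]
Here \(\|A\|_{\mathrm{op}}/N\lesssim\beta^2(1+n/N)/N\), which is small in the dense branch since \(n\beta^2\gtrsim N\) only gives \(\beta^2n/N^2\lesssim\beta^2 n/N\cdot N^{-1}\). Where this is not small, we split off the top part by the deletion recursion. Expanding,
\[
\log\det(I-A/N)^{-N/2}=\sum_k \frac{\tr(A^k)}{2kN^{k-1}}.
\]
The \(k=1\) term vanishes because \(A\) is hollow. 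The \(k=2\) term, \(\beta^4\|B^\ast\|_{\HS}^2/(4N)\approx\beta^4n^2/N^2\), must be cancelled by the second-order compensator \(-\frac14\beta^4\sum_{ij}B_{ij}^2B^{\ast2}_{ij}\); this is where the exact edge likelihood, rather than its linearization, matters. The \(k=3\) term is \(\beta^6\tr(B^{\ast3})/N^2\approx\beta^6n^3/N^4=h_n^{-4}\to0\). Higher \(k\) terms are controlled geometrically by \((\beta^2\|B^\ast\|_{\mathrm{op}}/N)^{k-3}\). This yields \(1+o(1)\) for the symmetric prior, uniformly over \(q^\ast\in\mathcal E_n\).

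The interpolation step replaces \(g_i\) by \(q_i\) one row at a time, a cavity/Lindeberg swap. For the swap at row \(i\), condition on the other rows and expand the exponent, which is linear in \(q_i\) through \(v_i=\sum_j A_{ij}q_j\), plus the quadratic compensator. Since \(q_i\) and \(g_i\) share mean zero and covariance \(I/N\), their first two moments match. The difference is therefore governed by the third and higher moments of \(\langle q_i,v_i\rangle\), together with the fact that the Gaussian has random norm while \(\|q_i\|=1\). The covariance truncation in \(\mathcal E_n\) keeps \(\|v_i\|^2\lesssim \beta^4 n/N\) uniformly, which makes each cubic error \(O((\beta^4n/N)^{3/2}N^{-1/2}\cdot\text{weight})\).

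The main obstacle is the interpolation error. Summed over \(n\) rows and compared with the exponential weight, these errors must still be \(o(1)\), right up to the threshold \(h_n\to\infty\). Naive bounds lose polynomial factors, and the swapped Gaussian rows may leave the truncation event. We would try an adaptive truncation that is buffered, so that indicator changes cost only \(\mathbb P(\mathcal E_n^c)\)-type errors weighted via Cauchy--Schwarz against the already-proved symmetric bound. We would also exploit the odd symmetry \(q\mapsto\) orbit, under which the third moments of \(\langle q,H\rangle\) involve \(\tr(H^3)\) only, to gain the extra cancellation needed.
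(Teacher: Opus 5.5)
Your outline matches the paper's architecture: truncate the prior, dominate the product by the compensated weight \(e^{K_A}\), bound a symmetric endpoint, then interpolate. Both load-bearing estimates are missing, however.

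\textbf{The symmetric endpoint.} The Gaussian determinant evaluates only the moment generating function of the linear part \(\sum_{i<j}A_{ij}B_{ij}\). Its \(k=2\) term \(\tr(A^2)/(4N)\asymp(n\beta^2/N)^2\) is bounded below in the dense branch, since \(N\lesssim n\beta^2\). It can also diverge, e.g.\ like \(\sqrt n/h_n^2\) when \(|\beta|\asymp1\). The compensator \(-\tfrac12\sum_{i<j}A_{ij}^2B_{ij}^2\) is a random function of the same features that the linear term tilts. You therefore cannot compute the linear MGF and subtract the compensator's mean afterwards; ``must be cancelled'' is the statement to be proved.

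Your heuristic does predict the right outcome, \(\log Z(0)\le\mathcal P_N(A)+o(1)\), where \(\mathcal P_N\) is exactly the \(k\ge3\) tail of your expansion. Reaching it, though, takes a real mechanism. The paper integrates one feature at a time under \(\bar\mu=(\mu+\mu^-)/2\), using \(\E_me^{D_m-Q_m^P/2}=\E_m[e^{-Q_m^P/2}\cosh D_m]\le1+\delta_m+L_m+\widetilde{\mathcal E}_m\). It absorbs \(L_m\) into the hollow--Schur update of the linear matrix, while the compensation matrix is only restricted. The defects \(\delta_m=(r_m^2-p_m)/(2N)\) then telescope into \(\mathcal P_N(A)\) via Lemma~\ref{lem:trace-potential-step}. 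Adaptive localization events \(\mathcal G^{\mathrm{rec}}\) and the covariance-truncated MGF of Theorem~\ref{thm:l2-spectral-recursion} control the remainders and the discarded configurations.

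Your truncation also needs repair. Relative closeness of \(\frac1n\sum_iq_i\otimes q_i\) to \(I_V/N\) fails with high probability whenever \(N/n\) stays bounded away from zero, because the relative deviation is at least of order \(\sqrt{N/n}\). The dense branch allows this, e.g.\ \(|\beta|\asymp1\), \(N\asymp n\). Moreover, operator-norm control of the covariance gives only \(\|v_i\|_{\HS}^2\le\Lambda\|A_{i\cdot}\|_2^2\) with \(\Lambda\gtrsim n/N\). It does not give the uniform bound \(\|v_i\|_{\HS}^2\lesssim\beta^4n/N\) that you use. The paper obtains near-typical row fields only on adaptive row events \(\mathcal G_i\), whose complements are paid for by weighted tail bounds. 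It also adds sixth-row, maximum-entry and \(G_\infty\) bounds to the outer event.

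\textbf{The interpolation.} You correctly flag this as the main obstacle but leave it open, and its resolution is the core of the proof. Even the sharp per-row bound \(\gamma_d|\tr(v_i^3)|\lesssim N^{-3/2}\|v_i\|_{\HS}^3\) sums over rows to order \(\beta^6n^{5/2}/N^3=h_n^{-4}N/\sqrt n\). This diverges, for instance, when \(|\beta|\asymp1\) and \(N=n^{4/5}\). Observing that third moments involve only \(\tr(H^3)\) does not change this count. Three ideas are missing.
\begin{itemize}
\item[(i)] In the odd part of each one-coordinate score, the compensator cancels every term with two equal indices in \(D_i^3/6-D_iQ_i/2\). What remains is the cubic source \(T_i\).
\item[(ii)] Summing over the deleted vertex turns \(\sum_iT_i\) into a canonical order-three \(U\)-statistic \(\mathcal T_g\) in the remaining features, with coefficients \(g_{jk\ell}=\sum_iA_{ij}A_{ik}A_{i\ell}\). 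Its smallness is stochastic: the kernel \(\mathcal K_3\) has \(L^2\) norm \(\asymp N^{-9/4}\) against a supremum \(\asymp N^{-3/2}\). Decoupling and Adamczak's inequality, using partition norms of \(g\) and of the third-moment tensor, then bound \(\|\mathcal T_g\|_{L^p}\) for \(p\asymp\|A\|_{\HS}^2/N\).
\item[(iii)] H\"older's inequality against the spectral second moment \(\E[\mathbf1_{\mathcal C_n(\Lambda+1)}e^{2K_A}]\le e^{C\|A\|_{\HS}^2/N}\) transfers this unweighted bound to the likelihood weight. This yields \(|Z'(t)|\le o(1)\max\{1,Z(t)\}\), and Gronwall closes the argument.
\end{itemize}

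The endpoint choice matters too. A Gaussian start makes each swap also see fourth-moment, norm and cutoff-event discrepancies. Along \(\mu_t=\bar\mu+t\eta\), by contrast, every even functional has a fixed law, including \(q\otimes q\) and hence the covariance cutoff. So \(Z'(t)\) involves only the odd part \(\eta\), and the cutoff mismatch is confined to the buffered boundary term of Lemma~\ref{lem:buffered-cavity-complete}.
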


Recall the hollow Gram matrix \(B(u)\) from
Section~\ref{subsec:quadratic-second-moment}, and put \(A(u)=\beta^2B(u)\).
Fix \(q^\ast\in\mathcal E_n\), with \(\mathcal E_n\) defined below, and
write \(B^\ast=B(q^\ast)\), \(B=B(q)\), and \(A=\beta^2B^\ast\).
In this section, Section~\ref{sec:likelihood-interpolation}, and Appendices~\ref{app:complete-adaptive-even}--\ref{app:quantitative-LAN},
\(A\), \(A_s\), and \(A^{(m)}\) refer to the fixed first replica;
\(A(u)\) retains its argument for a generic configuration, and \(B=B(q)\)
belongs to the replica being integrated. The compensated exponent is
\begin{equation}
 K_A=\sum_{i<j}\left(A_{ij}B_{ij}-\tfrac12A_{ij}^2B_{ij}^2\right).
 \label{eq:dense-compensated-exponent}
\end{equation}
Choose \(C_{\mathrm{mat}}\) so that Proposition~\ref{prop:row-gram-environment}
gives \(\|A\|_{\HS}^2\le C_{\mathrm{mat}}\beta^4n^2/N\). For sufficiently
large absolute constants \(K_0,C_{\mathrm{cov}}\), set
\begin{equation}
 T=K_0\left\{C_{\mathrm{mat}}\frac{\beta^4n^2}{N^2}+\log(nN)\right\},
 \qquad \Lambda=C_{\mathrm{cov}}\left(\frac nN+T\right).
 \label{eq:adaptive-cutoff-parameters}
\end{equation}
For an index set \(I\), write
\[
 \mathcal C_I(\lambda)=\left\{\left\|\sum_{i\in I}q_i\otimes q_i\right\|_{\mathrm{op}}\le\lambda\right\},
 \qquad \mathcal C_m(\lambda)=\mathcal C_{[m]}(\lambda).
\]
The deterministic parameters \(T,\Lambda\) depend only on \(n,N,\beta\), and
\(T\ge K_0\{\|A\|_{\HS}^2/N+\log(nN)\}\).
Define \(\mathcal E_n\) as the intersection of the covariance event
\(\mathcal C_n(\Lambda+1)\) from Section~\ref{sec:covariance-truncation},
the event giving the bounds of Proposition~\ref{prop:row-gram-environment}
for \(A(u)\), and the maximum-coefficient event of Lemma~\ref{lem:gmax}.
Those two results and Lemma~\ref{lem:uniform-covariance-tail} give
\(\mathbb P(\mathcal E_n)=1-o(1)\). We use the matrix bounds
\eqref{eq:environment-FR}--\eqref{eq:environment-L} with the constants fixed
by these events. The row and entry bounds control local remainders; the
Hilbert--Schmidt and operator bounds control the trace potential and
exponential weights. The sixth row moment is used in
Section~\ref{sec:return-simple-edge-likelihood}, which
proves the uniform comparison
\begin{equation}
 \mathbf1_{\{q\in\mathcal E_n\}}\prod_{i<j}(1+A_{ij}B_{ij})
 \le e^{o(1)}\mathbf1_{\mathcal C_n(\Lambda+1)}e^{K_A}.
 \label{eq:dense-product-comparison}
\end{equation}
To bound its expectation, interpolate from the symmetric feature law:
\[
 \bar\mu=\frac{\mu+\mu^-}{2},\qquad
 \eta=\frac{\mu-\mu^-}{2},\qquad \mu_t=\bar\mu+t\eta,\quad 0\le t\le1,
\]
where \(\mu^-\) is the law of \(-q\) for \(q\sim\mu\). Define
\begin{equation}
 Z(t)=\mathbb E_{\mu_t^{\otimes n}}
       [\mathbf1_{\mathcal C_n(\Lambda+1)}e^{K_A}].
 \label{eq:adaptive-interpolating-partition}
\end{equation}
The proof has three steps:
\[
\begin{aligned}
 Z(0)&\le1+o(1)
 &&\text{(Proposition~\ref{prop:symmetric-endpoint})},\\
 |Z'(t)|&\le o(1)\max\{1,Z(t)\}
 &&\text{(Sections~\ref{sec:buffered-cavity-identity}--\ref{sec:differential-inequality})},\\
 Z(1)&\le1+o(1)\ \Longrightarrow\ \eqref{eq:pure-negative-conditional-target}
 &&\text{(Section~\ref{sec:return-simple-edge-likelihood})}.
\end{aligned}
\]
Gronwall's inequality connects the second and third steps. Lemma
\ref{lem:truncated-likelihood} and \eqref{eq:tv-chi-square-bound} then give
\(\TV(\Pp_n,\Qq_n)\to0\). This section proves the symmetric endpoint;
Section~\ref{sec:likelihood-interpolation} controls the interpolation and
returns to the exact likelihood product. The supporting moment and
concentration estimates are proved in Appendices~\ref{app:complete-adaptive-even}
and~\ref{app:complete-buffered-cavity}.

Unless stated otherwise, all \(o(1)\) terms in these two sections and their supporting appendices are
uniform for \(q^\ast\in\mathcal E_n\) and \(t\in[0,1]\), along arbitrary
sequences satisfying \eqref{eq:dense-master-regime} with \(0<|\beta|\le1\).
For deterministic \(A\), this means uniformity over
\eqref{eq:environment-FR}--\eqref{eq:environment-L} and, when invoked,
Lemma~\ref{lem:gmax}. All implicit constants are independent of the varying
parameters unless a dependence is displayed. In these two sections, expectations concern the second replica. Write
\(h_n=N/(n^{3/4}|\beta|^{3/2})\); then \(h_n\to\infty\) and
\(n\beta^2\gtrsim h_n^4\).

\subsection{The compensated deletion recursion}

We bound the symmetric endpoint by integrating \(q_n,q_{n-1},\ldots,q_2\)
successively. Integrating one feature creates an interaction among the
remaining features, which determines the next coefficient matrix. The
negative quadratic term cancels the leading local variance, but its
coefficients must be tracked separately from the updated interaction.
For symmetric hollow matrices \(A,P\) of the same size, with
\(P_{ij}\ge0\), define
\begin{equation}
 K_{A,P}(q)=\sum_{i<j}A_{ij}B_{ij}-\tfrac12\sum_{i<j}P_{ij}B_{ij}^2.
 \label{eq:two-matrix-exponent}
\end{equation}
Here \(A\) records the linear interaction and \(P\) the quadratic
compensation. We derive their update from a conditional second-moment
calculation, then identify the errors that must be summed to close the
recursion.

\subsubsection{Conditional integration and the matrix update}
\label{sec:two-matrix-compensation}

Let \(\operatorname{off}(M)\) set the diagonal of a square matrix to zero,
and put \(R_0=R_2(A)=\max_i\|A_{i\cdot}\|_2\).
Start with \(A^{(n)}=A\) and \(P^{(n)}=A^{\circ2}\), where
\(A^{\circ2}=(A_{ij}^2)_{ij}\), so that
\(K_{A^{(n)},P^{(n)}}=K_A\). At stage \(m\), write
\[
  A^{(m)}
  =
  \begin{pmatrix}
  \widehat A^{(m)}&a^{(m)}\\
  (a^{(m)})^\top&0
  \end{pmatrix},
\]
where \(\widehat A^{(m)}\) is the leading
\((m-1)\times(m-1)\) principal block and
\(a_j^{(m)}=A_{jm}^{(m)}\) for \(j<m\). The retained quadratic
coefficients are unchanged by deletion: obtain \(P^{(m-1)}\) by
restricting \(P^{(m)}\) to \([m-1]\). The update of \(A^{(m)}\) will
follow from the conditional calculation below. Let
\[
  r_j^{(m)\,2}
  =
  \sum_{\substack{k\le m\\k\ne j}}
  (A_{jk}^{(m)})^2,
\]
so \(r_j^{(m)\,2}\) is the squared \(\ell_2\)-norm of row \(j\) of
\(A^{(m)}\).  We abbreviate $r_m=r_m^{(m)}$ for the eliminated row.

At a fixed deletion step, write $a_j=a_j^{(m)}=A_{jm}^{(m)}$,
$r_m^2=\sum_{j<m}a_j^2$, and set
\begin{equation}
  H_m=\sum_{j<m}a_jq_j,\qquad D_m=\langle H_m,q_m\rangle_{\HS}.
  \label{eq:adaptive-local-field-defs}
\end{equation}
Throughout this subsection, $\mathbb E_m$ integrates only
$q_m\sim\bar\mu$, conditional on $q_1,\ldots,q_{m-1}$.  Define
\[
\begin{gathered}
  p_m=\sum_{j<m}P_{jm}^{(m)},\qquad
  Q_m^P=\sum_{j<m}P_{jm}^{(m)}B_{jm}^2,\qquad
  W_m^P=\tfrac12(D_m^2-Q_m^P),\\
  \delta_m=\frac{r_m^2-p_m}{2N},\qquad
  L_m=\frac1N\sum_{j<k<m}a_ja_kB_{jk}.
\end{gathered}
\]
The scalar \(\delta_m\) measures the difference between the current
squared row norm and the inherited compensation mass. The term \(L_m\)
is an interaction among retained features. Unit feature norms and isotropy give
$\|H_m\|_{\HS}^2=r_m^2+2NL_m$,
$\mathbb E_mD_m^2=\|H_m\|_{\HS}^2/N$, and
$\mathbb E_mQ_m^P=p_m/N$.  Hence
\begin{equation}
  \mathbb E_mW_m^P=\frac{r_m^2-p_m}{2N}+L_m=\delta_m+L_m.
  \label{eq:conditional-compensation-mean}
\end{equation}
The conditional mean thus contains the retained interaction \(L_m\).
To absorb it into the next exponent, define
\begin{equation}
  A^{(m-1)}
  =
  \operatorname{off}\left(
  \widehat A^{(m)}+\frac1Na^{(m)}(a^{(m)})^\top
  \right).
  \label{eq:hollow-schur}
\end{equation}
This hollow--Schur update adds exactly \(L_m\) to the retained linear
exponent, so
\begin{equation}
  K_{\widehat A^{(m)},P^{(m-1)}}+L_m
  =K_{A^{(m-1)},P^{(m-1)}}.
  \label{eq:two-matrix-identity}
\end{equation}
Separating the interactions with $q_m$ therefore gives
\begin{equation}
  K_{A^{(m)},P^{(m)}}=K_{A^{(m-1)},P^{(m-1)}}
  +D_m-\tfrac12Q_m^P-L_m.
  \label{eq:two-matrix-recursion}
\end{equation}

Under $q_m\mapsto-q_m$, $D_m$ is odd and $Q_m^P$ is even.  Symmetry of
$\bar\mu$ and $\cosh x\le e^{x^2/2}$ yield
\[
  \mathbb E_m e^{D_m-Q_m^P/2}
  =\mathbb E_m[e^{-Q_m^P/2}\cosh D_m]\le\mathbb E_m e^{W_m^P}.
\]
Using $e^w\le1+w+Cw^2e^{w_+}$ and
\eqref{eq:conditional-compensation-mean}, we obtain
\begin{equation}
\begin{aligned}
  \widetilde{\mathcal E}_m&:=C\mathbb E_m[(W_m^P)^2e^{(W_m^P)_+}],\\
  \mathbb E_m e^{D_m-Q_m^P/2}&\le1+\delta_m+L_m+\widetilde{\mathcal E}_m.
\end{aligned}
  \label{eq:even-one-step}
\end{equation}
Combining this with \eqref{eq:two-matrix-recursion} and $1+x\le e^x$ gives
\begin{equation}
\begin{aligned}
  \mathbb E_m e^{K_{A^{(m)},P^{(m)}}}
  &=e^{K_{A^{(m-1)},P^{(m-1)}}-L_m}\mathbb E_m e^{D_m-Q_m^P/2}\\
  &\le e^{K_{A^{(m-1)},P^{(m-1)}}+\delta_m+\widetilde{\mathcal E}_m}.
\end{aligned}
  \label{eq:one-step-exponent-recursion}
\end{equation}
The Schur correction thus absorbs \(L_m\); the terms left to sum are
\(\delta_m\) and \(\widetilde{\mathcal E}_m\).
Because \(P^{(m-1)}\) is only restricted while \(A^{(m-1)}\) is updated,
\(P^{(m-1)}\) generally differs from \((A^{(m-1)})^{\circ2}\).
This explains the use of two matrices in \eqref{eq:two-matrix-exponent}.

\subsubsection{Remainder control and iteration}

The defect \(\delta_m\) is deterministic once the first replica is fixed;
\(\widetilde{\mathcal E}_m\) still depends on the retained features.
We bound the latter by a deterministic majorant on a good event before
iterating \eqref{eq:one-step-exponent-recursion}.

Since $Q_m^P\ge0$,
$(W_m^P)^2e^{(W_m^P)_+}\le C(D_m^4+(Q_m^P)^2)e^{D_m^2/2}$.
To control this remainder, choose thresholds $k_{m,\mathrm{ad}}\ge1$ and set
\begin{equation}
  \mathcal G_m=\{\|H_m\|_{\HS}^2\le k_{m,\mathrm{ad}}r_m^2\},\qquad
  \mathcal G^{\mathrm{rec}}=\bigcap_{m=2}^n\mathcal G_m.
  \label{eq:recursive-good-event}
\end{equation}
The event \(\mathcal G_m\) depends only on \(q_1,\ldots,q_{m-1}\),
so it can be imposed before integrating \(q_m\).
For $r_m=0$, set $k_{m,\mathrm{ad}}=1$; then $H_m=0$ and $\mathcal G_m$
is the whole space.  On $\mathcal G_m$, both $\|H_m\|_{\HS}^2$ and
$\|H_m\|_{\mathrm{op}}^2$ are at most $k_{m,\mathrm{ad}}r_m^2$.  Define
\[
  U_m=k_{m,\mathrm{ad}}^2(r_m^2/N)^2,\qquad
  V_m=(p_m/N)^2,\qquad \mathcal E_m=C_{\mathrm{rem}}(U_m+V_m),
\]
where $C_{\mathrm{rem}}$ is a sufficiently large absolute constant.
The terms \(U_m\) and \(V_m\) control, respectively, the \(D_m^4\)
and \((Q_m^P)^2\) contributions to the remainder.
Sections~\ref{sec:adaptive-local-fields}--\ref{sec:adaptive-exponential-remainders}
construct the thresholds with exceptional
probability at most $ne^{-T}$ on $\mathcal C_n(\Lambda)$, prove
$\sum_m\mathcal E_m=o(1)$ and
$\max_m k_{m,\mathrm{ad}}r_m^2/d=o(1)$, and establish
$\widetilde{\mathcal E}_m\le\mathcal E_m$ on $\mathcal G_m$.
The three estimates needed to close the recursion are therefore
\begin{equation}
\begin{gathered}
 \sum_{m=2}^n\mathcal E_m=o(1),\qquad \sum_{m=2}^n\delta_m=o(1),\\
 \mathbb E_{\bar\mu^{\otimes n}}[
   \mathbf1_{\mathcal C_n(\Lambda)}
   \mathbf1_{(\mathcal G^{\mathrm{rec}})^c}e^{K_A}]=o(1).
\end{gathered}
\label{eq:adaptive-proof-targets}
\end{equation}
The first is supplied by the remainder estimates just cited.
Section~\ref{sec:compensation-trace-potential} sums \(\delta_m\) by a
telescoping trace potential. For the third, the good-event probability
bound is combined with the spectral second moment from
Theorem~\ref{thm:l2-spectral-recursion}.
Section~\ref{sec:symmetric-endpoint-closure} then iterates the one-step
inequality, adds the discarded contribution, and extends the estimate
from \(\Lambda\) to \(\Lambda+1\), obtaining \(Z(0)\le1+o(1)\).

\subsection{The compensation defect and its trace potential}
\label{sec:compensation-trace-potential}

It remains to sum the compensation defect $\delta_m=(r_m^2-p_m)/(2N)$.
We reduce it to quadratic forms and identify their cancellation through
a telescoping trace potential.  Expanding the hollow--Schur update gives
\[
  \|A^{(m-1)}\|_{\HS}^2-\|A^{(m)}\|_{\HS}^2
  =-2r_m^2+\frac2N(a^{(m)})^\top\widehat A^{(m)}a^{(m)}
  +\frac{r_m^4-\sum_{j<m}(a_j^{(m)})^4}{N^2}.
\]
Sum over $m$ and use $A^{(1)}=0$ and
\textup{(S3.7)} to obtain
\begin{equation}
  \sum_{m=2}^n\delta_m
  =\sum_{m=2}^n\frac{(a^{(m)})^\top\widehat A^{(m)}a^{(m)}}{2N^2}
  +\operatorname{Rem}_n,
  \label{eq:delta-quadratic-reduction}
\end{equation}
where
\[
  0\le\operatorname{Rem}_n
  :=\sum_{m=2}^n\frac{r_m^4-\sum_{j<m}(a_j^{(m)})^4}{4N^3}
  \le\frac1{4N}\sum_{m=2}^n(r_m^2/N)^2=o(1)
\]
by \textup{(S3.3)}.  The quadratic-form sum is
controlled by the following potential.

For $\|M\|_{\mathrm{op}}<N$, define
\begin{equation}
  \mathcal P_N(M)
  :=
  -\frac N2\log\det(I-M/N)
  -\frac12\operatorname{tr}M
  -\frac1{4N}\operatorname{tr}(M^2)
  =
  \sum_{\ell\ge3}
  \frac{\operatorname{tr}(M^\ell)}{2\ell N^{\ell-1}}.
  \label{eq:trace-potential-definition}
\end{equation}

\begin{lemma}[One-step trace-potential approximation]
\label{lem:trace-potential-step}
Let $M_0\in\mathbb R^{\ell\times\ell}$ be symmetric and hollow, let
$a=(a_1,\ldots,a_\ell)^\top\in\mathbb R^\ell$, and set
\[
  M=\begin{pmatrix}M_0&a\\a^\top&0\end{pmatrix},
  \qquad
  M'=\operatorname{off}(M_0+aa^\top/N),
\]
and suppose that
$\|M_0\|_{\mathrm{op}}/N\le1/4$ and $\|a\|_2^2/N^2\le1/16$.  Then
\[
\begin{aligned}
  \left|
  \frac{a^\top M_0a}{2N^2}
  -\bigl\{\mathcal P_N(M)-\mathcal P_N(M')\bigr\}
  \right|
  \le
  \frac C{N^3}
  \left(
  \|a\|_2^4+
  \max_j\|M_0e_j\|_2^2\,\|a\|_2^2
  \right).
\end{aligned}
\]
\end{lemma}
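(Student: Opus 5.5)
The plan is to compute both potentials exactly through a Schur-complement determinant identity, so that all first-order terms cancel and only explicitly bounded error terms remain. Since $M$ and $M'$ are hollow, $\operatorname{tr}M=\operatorname{tr}M'=0$, and only the log-determinant and the $\operatorname{tr}(\cdot^2)$ parts of \eqref{eq:trace-potential-definition} contribute. Write
\[
  I-M/N=\begin{pmatrix}I-M_0/N&-a/N\\-a^\top/N&1\end{pmatrix}.
\]
Taking the Schur complement with respect to the scalar $(2,2)$ entry gives
\[
  \det(I-M/N)=\det\bigl(I-S/N\bigr),\qquad S:=M_0+aa^\top/N .
\]
Now $S=M'+D$ with $D=\operatorname{diag}(a_j^2/N)$. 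The hypotheses give $\|S\|_{\mathrm{op}}/N\le1/4+1/16$ and $\|M'\|_{\mathrm{op}}/N\le\|S\|_{\mathrm{op}}/N+\|D\|_{\mathrm{op}}/N\le 3/8$. Hence both resolvents $(I-S/N)^{-1}$ and $(I-M'/N)^{-1}$ exist and have operator norm $O(1)$, and all logarithms below are well defined.

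Next I would compute the Hilbert--Schmidt parts exactly. We have $\operatorname{tr}M^2=\operatorname{tr}M_0^2+2\|a\|_2^2$. Also
\[
  \operatorname{tr}M'^2=\|\operatorname{off}(S)\|_{\HS}^2
  =\operatorname{tr}M_0^2+\frac2N a^\top M_0a+\frac{\|a\|_2^4-\sum_j a_j^4}{N^2}.
\]
Therefore
\[
  -\frac{\operatorname{tr}M^2-\operatorname{tr}M'^2}{4N}
  =-\frac{\|a\|_2^2}{2N}+\frac{a^\top M_0a}{2N^2}+O\!\left(\frac{\|a\|_2^4}{N^3}\right).
\]
It then remains to show that the log-determinant part contributes $+\|a\|_2^2/(2N)$ up to admissible error.

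For the log-determinant part, set $G=(I-M'/N)^{-1}$. Then
\[
  \log\det(I-S/N)-\log\det(I-M'/N)=\log\det\bigl(I-GD/N\bigr).
\]
I would expand this as $-\operatorname{tr}(GD)/N+O(\|D\|_{\HS}^2/N^2)$, using $\|GD/N\|_{\mathrm{op}}\le C\|a\|_2^2/N^2\le C/16$. The quadratic error term is bounded by $\sum_j a_j^4/N^4$, which becomes $O(\|a\|_2^4/N^3)$ after multiplying by $N/2$.

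The key step is the resolvent term. I would expand $G=I+M'/N+(M')^2G/N^2$. The identity term gives $\operatorname{tr}D=\|a\|_2^2/N$, which cancels the $-\|a\|_2^2/(2N)$ above after multiplication by $-N/2$. The term linear in $M'$ contributes $\operatorname{tr}(M'D)=0$ because $M'$ is hollow; this is where the $\operatorname{off}$ in the update is essential. The remainder is
\[
  \frac{1}{N^2}\operatorname{tr}\bigl((M')^2GD\bigr).
\]
I expect this remainder to be the main obstacle. Writing the trace entrywise with $D$ diagonal, and using $\|G\|_{\mathrm{op}}=O(1)$ together with $G=I+O(M'/N)$ to handle the off-diagonal coupling, I would bound it by
\[
  \frac{C}{N^3}\sum_j a_j^2\,\|M'e_j\|_2^2 .
\]
After multiplying by $N/2$ this gives an error of order $N^{-3}\|a\|_2^2\max_j\|M'e_j\|_2^2$. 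Finally, $\|M'e_j\|_2\le\|M_0e_j\|_2+|a_j|\|a\|_2/N$, so $\|M'e_j\|_2^2\le2\|M_0e_j\|_2^2+2\|a\|_2^4/N^2$. Since $\|a\|_2^2/N^2\le 1/16$, the last term contributes only $O(\|a\|_2^4/N^3)$. This yields exactly the two error terms in the statement. If the entrywise bound on $\operatorname{tr}((M')^2GD)$ proves awkward, I would expand $G$ one more order, which reduces every term to $D$-weighted diagonal entries of powers of $M'$ times a bounded operator.
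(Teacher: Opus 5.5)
Your proof is correct, but it takes a different route from the paper's. I checked the bookkeeping: the exact trace identities, the positivity of $\det(I-GD/N)$, and the bound $\|M'\|_{\mathrm{op}}/N\le3/8$ all hold, and together they give the stated error.

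\textbf{The paper's route.} The paper takes the Schur complement with respect to the $M_0$ block, $\det(I-M/N)=\det(I-M_0/N)(1-s)$ with $s=N^{-2}a^\top\mathcal R a$. This computes $\mathcal P_N(M)-\mathcal P_N(M_0)$ through the scalar $s$. It then treats $\mathcal P_N(M')-\mathcal P_N(M_0)$ separately, by a first-order Taylor expansion in the direction $E=M'-M_0$. That step uses the Fr\'echet derivative $\tfrac12\tr(\mathcal R_{\ge2}E)$ and a second-derivative bound along the segment $M_0+tE$. The terms $\pm a^\top\mathcal R_{\ge2}a/(2N)$ cancel between the two pieces, leaving only $\frac1{2N}\sum_ja_j^2(\mathcal R_{\ge2})_{jj}$.

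\textbf{Your route.} You instead eliminate the scalar corner. This gives the exact identity $\det(I-M/N)=\det(I-S/N)$, where $S=M_0+aa^\top/N=M'+D$. At the log-determinant level, $M$ and $M'$ therefore differ only by deletion of the diagonal $D$. The quadratic traces are computed exactly. The only approximations are $\log\det(I-X)=-\tr X+O(\|X\|_{\HS}^2)$ for $X=GD/N$ and one resolvent identity, with hollowness of $M'$ killing $\tr(M'D)$.

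\textbf{Comparison.} Your argument is more elementary: it needs no second-derivative estimate along a path. It also makes transparent why the hollow--Schur update is the natural one: it is the exact Schur complement with its diagonal removed. The price is that your error comes out in terms of $\|M'e_j\|_2$. You convert it correctly, since $M'e_j=M_0e_j+(a_j/N)(a-a_je_j)$.

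\textbf{The step you flagged.} The step you call the main obstacle is immediate. $G$ commutes with $M'$, so $((M')^2G)_{jj}=(M'e_j)^\top G(M'e_j)$ and hence $|((M')^2G)_{jj}|\le\|G\|_{\mathrm{op}}\|M'e_j\|_2^2\le\tfrac85\|M'e_j\|_2^2$. This is the same commutation trick the paper uses for $(\mathcal R_{\ge2})_{jj}$.

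\textbf{Minor slip.} Once the remainder sits inside $\tr(GD)$, it enters the final expression with factor $\tfrac12$, not $\tfrac N2$. Your stated order $N^{-3}\|a\|_2^2\max_j\|M'e_j\|_2^2$ is nonetheless right.
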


\begin{proof}
Let $\mathcal R=(I-M_0/N)^{-1}$ and
$s=N^{-2}a^\top\mathcal Ra$.  The assumptions imply
$\|\mathcal R\|_{\mathrm{op}}\le4/3$ and $s\le1/12$.  The block
determinant formula gives
\[
  \det(I-M/N)=\det(I-M_0/N)(1-s).
\]
Since
$\operatorname{tr}(M^2)-\operatorname{tr}(M_0^2)=2\|a\|_2^2$,
\begin{equation}
  \mathcal P_N(M)-\mathcal P_N(M_0)
  =
  -\frac N2\log(1-s)-\frac{\|a\|_2^2}{2N}.
  \label{eq:trace-potential-block}
\end{equation}
Define the higher-order resolvent remainder by
\[
  \mathcal R_{\ge2}\coloneqq\mathcal R-I-\frac{M_0}N.
\]
Because
$\mathcal R=I+M_0/N+\mathcal R_{\ge2}$, the linear part of the logarithm in
\eqref{eq:trace-potential-block} satisfies
\[
\begin{aligned}
  \frac N2s-\frac{\|a\|_2^2}{2N}
  &=\frac{a^\top M_0a}{2N^2}
  +\frac{a^\top\mathcal R_{\ge2}a}{2N}.
\end{aligned}
\]
Moreover, $|{-\log(1-s)-s}|\le Cs^2$, so the nonlinear logarithmic
remainder is at most $CNs^2\le C\|a\|_2^4/N^3$.  We have therefore shown
\begin{equation}
  \frac{a^\top M_0a}{2N^2}
  -\bigl\{\mathcal P_N(M)-\mathcal P_N(M_0)\bigr\}
  =-\frac{a^\top\mathcal R_{\ge2}a}{2N}+O(\|a\|_2^4/N^3).
  \label{eq:trace-potential-first-comparison}
\end{equation}

Now write
\[
  E=M'-M_0
  =\frac{aa^\top}{N}-\operatorname{diag}\left(\frac{a_1^2}{N},
  \ldots,\frac{a_\ell^2}{N}\right).
\]
The Fr\'echet derivative of the trace potential $\mathcal P_N$ at $M_0$ is
\begin{equation}
  D\mathcal P_N(M_0)[E]
  =\frac12\operatorname{tr}(\mathcal R_{\ge2}E)
  =\frac{a^\top\mathcal R_{\ge2}a}{2N}
  -\frac1{2N}\sum_ja_j^2(\mathcal R_{\ge2})_{jj}.
  \label{eq:trace-potential-derivative}
\end{equation}
The path $M_0+tE$, $0\le t\le1$, has operator norm at most $5N/16$, and
  \(\|E\|_{\HS}\le \|a\|_2^2/N\).  Along this path, differentiating the resolvent
shows that
\[
  |D^2\mathcal P_N(M_0+tE)[E,E]|
  \le\frac C N\|E\|_{\HS}^2
  \le\frac{C\|a\|_2^4}{N^3}.
\]
Taylor's formula therefore adds only $O(\|a\|_2^4/N^3)$ to
\eqref{eq:trace-potential-derivative}.

It remains to bound the diagonal term.  Since
$\mathcal R_{\ge2}=(M_0/N)^2\mathcal R$ and $M_0$ commutes with $\mathcal R$,
\[
  |(\mathcal R_{\ge2})_{jj}|
  \le\frac C{N^2}\|M_0e_j\|_2^2.
\]
Hence
\[
  \frac1{2N}\sum_ja_j^2|(\mathcal R_{\ge2})_{jj}|
  \le\frac C{N^3}
  \max_j\|M_0e_j\|_2^2\,\|a\|_2^2.
\]
Adding the Taylor expansion of $\mathcal P_N(M')-\mathcal P_N(M_0)$ to
\eqref{eq:trace-potential-first-comparison} gives
\[
  \frac{a^\top M_0a}{2N^2}
  -\bigl\{\mathcal P_N(M)-\mathcal P_N(M')\bigr\}
  =
  -\frac1{2N}\sum_ja_j^2(\mathcal R_{\ge2})_{jj}
  +O(\|a\|_2^4/N^3).
\]
The preceding bound on the diagonal term proves the stated estimate.
\end{proof}

Since \(\|A\|_{\HS}^2/N^2=o(1)\) by
\textup{(S3.2)}, Proposition~\ref{prop:global-row-quartic} applies for all sufficiently large \(n\) and gives
$\|\widehat A^{(m)}\|_{\mathrm{op}}\le N/\sqrt{20}<N/4$ and
$r_m^2\le N^2/20<N^2/16$.  We may therefore apply
Lemma~\ref{lem:trace-potential-step} with
$(M,M_0,a,M')=(A^{(m)},\widehat A^{(m)},a^{(m)},A^{(m-1)})$.
The row and quartic bounds of the same proposition give the explicit
summed error
\begin{equation}
\begin{aligned}
  &\sum_{m=2}^n\frac C{N^3}
  \left(r_m^4+\max_{j<m}\|\widehat A^{(m)}e_j\|_2^2\,r_m^2\right)\\
  &\qquad\le\frac{CR_0^2\|A\|_{\HS}^2}{N^3}
  \exp\{C\|A\|_{\HS}^2/N^2\}=o(1).
\end{aligned}
  \label{eq:trace-potential-summed-error}
\end{equation}
Here \textup{(S3.3)} and
$\|A\|_{\HS}^2/N^2=o(1)$ give the last equality.  Since $A^{(1)}=0$,
the potential differences telescope:
\begin{equation}
  \sum_{m=2}^n\frac{(a^{(m)})^\top\widehat A^{(m)}a^{(m)}}{2N^2}
  =\mathcal P_N(A)+o(1).
  \label{eq:trace-potential-telescope}
\end{equation}
Finally, $|\operatorname{tr}(A^\ell)|\le
\|A\|_{\HS}^2\|A\|_{\mathrm{op}}^{\ell-2}$ and
$\|A\|_{\mathrm{op}}/N\le1/\sqrt{20}$ bound the potential series by
$C\|A\|_{\HS}^2\|A\|_{\mathrm{op}}/N^2$.
Substituting \eqref{eq:environment-FR} and \eqref{eq:environment-L} gives
\[
\begin{aligned}
  |\mathcal P_N(A)|
  &\lesssim\frac{\beta^6n^3}{N^4}
  +\frac{\beta^6n^{5/2}\sqrt{\log N}}{N^{7/2}}
  +\frac{\beta^6n^2\log N}{N^3}\\
  &=h_n^{-4}+\frac{|\beta|^{3/4}\sqrt{\log N}}{h_n^{7/2}n^{1/8}}
  +\frac{|\beta|^{3/2}\log N}{h_n^3n^{1/4}}=o(1),
\end{aligned}
\]
because $|\beta|\le1$, $N\lesssim n$, and $h_n\to\infty$.
Equations~\eqref{eq:delta-quadratic-reduction} and
\eqref{eq:trace-potential-telescope} now imply
\begin{equation}
  \sum_{m=2}^n\delta_m=o(1).
  \label{eq:delta-sum-small}
\end{equation}

\subsection{Closure of the symmetric endpoint}
\label{sec:symmetric-endpoint-closure}

\begin{proposition}[Symmetric compensated endpoint]
\label{prop:symmetric-endpoint}
Under \eqref{eq:dense-master-regime}, uniformly on the event in
Proposition~\ref{prop:row-gram-environment} and over
\(\Lambda'\in[\Lambda,\Lambda+1]\),
\[
  \mathbb E_{\bar\mu^{\otimes n}}\left[
  \mathbf1_{\mathcal C_n(\Lambda')}
  e^{K_A}
  \right]\le1+o(1).
\]
\end{proposition}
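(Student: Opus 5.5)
The plan is to iterate the one-step inequality \eqref{eq:one-step-exponent-recursion} from \(m=n\) down to \(m=2\), restricted to the good event \(\mathcal G^{\mathrm{rec}}\), and then add back the discarded contribution. First take \(\Lambda'=\Lambda\) and split
\[
 \mathbb E_{\bar\mu^{\otimes n}}[\mathbf1_{\mathcal C_n(\Lambda)}e^{K_A}]
 \le \mathbb E_{\bar\mu^{\otimes n}}[\mathbf1_{\mathcal G^{\mathrm{rec}}}e^{K_A}]
 +\mathbb E_{\bar\mu^{\otimes n}}[\mathbf1_{\mathcal C_n(\Lambda)}\mathbf1_{(\mathcal G^{\mathrm{rec}})^c}e^{K_A}],
\]
where the covariance indicator is dropped from the first term, since the integrand is nonnegative. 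The second term is \(o(1)\) by the third estimate in \eqref{eq:adaptive-proof-targets}.

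For the first term I integrate out \(q_n,q_{n-1},\ldots\) one at a time. Set \(\mathcal G^{\le m}=\bigcap_{k\le m}\mathcal G_k\). The event \(\mathcal G_m\) is measurable in \(q_1,\ldots,q_{m-1}\), and so is \(\mathcal G^{\le m-1}\). So at stage \(m\), conditional on \(q_1,\dots,q_{m-1}\), the indicator \(\mathbf1_{\mathcal G^{\le m}}\) is constant. On \(\mathcal G_m\) the remainder satisfies \(\widetilde{\mathcal E}_m\le\mathcal E_m\), so \eqref{eq:one-step-exponent-recursion} gives
\[
 \mathbb E_m[\mathbf1_{\mathcal G^{\le m}}e^{K_{A^{(m)},P^{(m)}}}]\le \mathbf1_{\mathcal G^{\le m-1}}e^{K_{A^{(m-1)},P^{(m-1)}}+\delta_m+\mathcal E_m}.
\]
Here \(\delta_m\) and \(\mathcal E_m\) are deterministic given the fixed replica \(A\), because the thresholds \(k_{m,\mathrm{ad}}\) are deterministic. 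Inducting down to \(m=2\) and using \(A^{(1)}=0\), \(K_{A^{(1)},P^{(1)}}=0\), the first term is at most \(\exp\{\sum_m\delta_m+\sum_m\mathcal E_m\}\). This is \(e^{o(1)}=1+o(1)\) by \eqref{eq:delta-sum-small} and the first estimate of \eqref{eq:adaptive-proof-targets}.

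It remains to pass from \(\Lambda\) to \(\Lambda'\in(\Lambda,\Lambda+1]\), uniformly. The recursive part never used the covariance cutoff, so the only issue is the discarded term with \(\mathcal C_n(\Lambda')\) in place of \(\mathcal C_n(\Lambda)\). I will re-examine how the third estimate of \eqref{eq:adaptive-proof-targets} is proved. It is Cauchy--Schwarz between \(\bar\mu^{\otimes n}((\mathcal G^{\mathrm{rec}})^c\cap\mathcal C_n(\Lambda))\le ne^{-T}\) and the spectral second moment \(\mathbb E[\mathbf1_{\mathcal C_n}e^{2K_A}]\) from Theorem~\ref{thm:l2-spectral-recursion}.

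Both inputs should be stable under enlarging the cutoff by \(1\), since \(\Lambda\ge C_{\mathrm{cov}}T\gg1\). The threshold construction only needs \(\|\sum_{i}q_i\otimes q_i\|_{\mathrm{op}}\lesssim\Lambda\), and the constants absorb \(\Lambda+1\le2\Lambda\). The concrete step is to run the threshold construction with \(2\Lambda\) from the start, so that monotonicity in \(\Lambda'\) gives uniformity for free.

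The main obstacle is this bookkeeping: checking that the exceptional-probability bound \(ne^{-T}\) and the \(L^2\) spectral bound tolerate the enlarged cutoff with unchanged \(o(1)\) rates. The bound must also be uniform over the environment event of Proposition~\ref{prop:row-gram-environment}. All bounds used depend on \(A\) only through \eqref{eq:environment-FR}--\eqref{eq:environment-L}, so the needed uniformity in \(A\) follows.
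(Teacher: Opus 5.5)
Your proposal is correct and follows the paper's proof. You iterate the one-step bound on $\mathcal G^{\mathrm{rec}}$ to obtain $\exp\{\sum_m(\delta_m+\mathcal E_m)\}$, bound the complementary term by Cauchy--Schwarz between the $ne^{-T}$ bad-row probability and the truncated spectral second moment of Theorem~\ref{thm:l2-spectral-recursion} for $2A$ (whose hypotheses the paper checks in \eqref{eq:dense-recursion-smallness}), and extend to $\Lambda'\in[\Lambda,\Lambda+1]$ using $\Lambda'\le2\Lambda$; the only differences are that you drop the covariance indicator in the good part (legitimate, since neither the one-step inequality nor $\widetilde{\mathcal E}_m\le\mathcal E_m$ on $\mathcal G_m$ uses the cutoff) where the paper carries $\mathcal C_m(\Lambda)\subseteq\mathcal C_{m-1}(\Lambda)$ through the recursion, and that you fix the thresholds at cutoff $2\Lambda$ and use monotonicity instead of redefining them for each $\Lambda'$.
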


\begin{proof}
We iterate \eqref{eq:one-step-exponent-recursion} on the recursive good
event and use the spectral MGF to bound its complement.  Define
\begin{equation}
  \mathcal Z_m^{\mathrm{comp}}(A^{(m)},P^{(m)})
  =\mathbb E_{\bar\mu^{\otimes m}}\left[
  \mathbf1_{\mathcal C_m(\Lambda)}
  \prod_{\ell=2}^m\mathbf1_{\mathcal G_\ell}
  e^{K_{A^{(m)},P^{(m)}}}\right].
  \label{eq:compensated-recursive-partition}
\end{equation}
Since $\mathcal G_m$ is measurable in $q_1,\ldots,q_{m-1}$ and
$\mathcal C_m(\Lambda)\subseteq\mathcal C_{m-1}(\Lambda)$, integrating
$q_m$ first and using $\widetilde{\mathcal E}_m\le\mathcal E_m$ yields
\[
  \mathcal Z_m^{\mathrm{comp}}(A^{(m)},P^{(m)})
  \le e^{\delta_m+\mathcal E_m}
  \mathcal Z_{m-1}^{\mathrm{comp}}(A^{(m-1)},P^{(m-1)}).
\]
Iteration to $A^{(1)}=P^{(1)}=0$, with
\eqref{eq:even-remainders} and \eqref{eq:delta-sum-small}, gives
\begin{equation}
  \mathbb E_{\bar\mu^{\otimes n}}\left[
  \mathbf1_{\mathcal C_n(\Lambda)}\mathbf1_{\mathcal G^{\mathrm{rec}}}e^{K_A}\right]
  \le\exp\left\{\sum_{m=2}^n(\delta_m+\mathcal E_m)\right\}\le1+o(1).
  \label{eq:symmetric-endpoint-good-part}
\end{equation}

For the complement, write $\mathcal C=\mathcal C_n(\Lambda)$ and
$\mathcal G=\mathcal G^{\mathrm{rec}}$, with expectations under
$\bar\mu^{\otimes n}$.  The bounds in
\textup{(S3.1)}--\textup{(S3.2)}
and the definition of $\Lambda$ imply
\begin{equation}
  \frac{\|2A\|_{\mathrm{op}}}{N}\le\frac{2\|A\|_{\HS}}{N}
  \lesssim h_n^{-2},\qquad
  \frac{(\Lambda+1)R_0^2}{N}
  \lesssim h_n^{-4}+\frac{\log(nN)}{h_n^2\sqrt n}=o(1).
  \label{eq:dense-recursion-smallness}
\end{equation}
For the second bound, the contributions of $n/N$, $T$, and $1$ in
$\Lambda+1\lesssim n/N+T$ are bounded using
$T\lesssim(n\beta^2)^{1/2}/h_n^2+\log(nN)$ and
$h_n^4\lesssim n\beta^2\le n$.
Since $R_2(2A)=2R_0$ and $d^2\asymp N$, these bounds verify both
hypotheses of Theorem~\ref{thm:l2-spectral-recursion}, even at cutoff
$\Lambda+1$.  Dropping the nonpositive compensation gives
\[
  \mathbb E[\mathbf1_{\mathcal C}e^{2K_A}]
  \le\mathbb E\left[\mathbf1_{\mathcal C}
  e^{2\sum_{i<j}A_{ij}B_{ij}}\right]
  \le\exp\{C\|A\|_{\HS}^2/N\}.
\]
Cauchy--Schwarz and \eqref{eq:all-bad-rows-probability} now yield
\begin{equation}
  \log\mathbb E[\mathbf1_{\mathcal C}\mathbf1_{\mathcal G^c}e^{K_A}]
  \le\frac{C\|A\|_{\HS}^2}{2N}+\tfrac12\log n-\tfrac12T\longrightarrow-\infty,
  \label{eq:symmetric-endpoint-bad-part}
\end{equation}
because $T\ge K_0\{\|A\|_{\HS}^2/N+\log(nN)\}$ with $K_0$ sufficiently large.

For $\Lambda'\in[\Lambda,\Lambda+1]$, redefine $\theta(a)$ and
$k_{m,\mathrm{ad}}$ with cutoff $\Lambda'$.  Since $\Lambda\ge1$ and
$\Lambda'/\Lambda\le2$, the threshold estimates, remainder bounds, and
bad-row probability remain uniform; \eqref{eq:dense-recursion-smallness}
also holds at every such cutoff.  Repeating the two preceding bounds
therefore proves the claimed uniform endpoint estimate.

\end{proof}

Taking \(\Lambda'=\Lambda+1\) in
Proposition~\ref{prop:symmetric-endpoint} and setting \(t=0\) in
\eqref{eq:adaptive-interpolating-partition} gives
\[
  Z(0)\le1+o(1).
\]
Thus Proposition~\ref{prop:symmetric-endpoint} is precisely the
symmetric-endpoint conclusion identified in the proof map for
Proposition~\ref{prop:dense-branch-second-moment}.

\section{Interpolation and completion of indistinguishability}
\label{sec:likelihood-interpolation}

We retain the fixed matrix \(A\), cutoff, interpolation and uniformity
convention of Section~\ref{sec:dense-likelihood}. Starting from its bound
\(Z(0)\le1+o(1)\), we control \(Z'(t)\) by deleting one coordinate.
The odd local expansion has a cubic leading term. Summing over the deleted
coordinate produces a canonical weighted \(U\)-statistic; its high moments,
combined with a second moment of the likelihood weight, yield a uniform
differential inequality. Gronwall's inequality and an exact-product
comparison then prove Proposition~\ref{prop:dense-branch-second-moment}.

\subsection{Fixed interpolation and coordinate deletion with an enlarged cutoff}
\label{sec:buffered-cavity-identity}

In \eqref{eq:adaptive-interpolating-partition}, only the product measure
$\mu_t^{\otimes n}$ depends on $t$.  Differentiation replaces one factor
$\mu_t$ by $\eta$.  To integrate that coordinate conditionally, use
$q_i\otimes q_i\preceq I$ and hence
\[
  \mathcal C_{-i}(\Lambda)\subseteq\mathcal C_n(\Lambda+1)
  \subseteq\mathcal C_{-i}(\Lambda+1).
\]
Thus the full cutoff disappears from the $q_i$-integral on
$\mathcal C_{-i}(\Lambda)$; the discrepancy is confined to a covariance
boundary event.  Lemma~\ref{lem:uniform-covariance-tail} and
\eqref{eq:adaptive-cutoff-parameters} give
\begin{equation}
  \sup_{i,\,0\le t\le1}\mathbb P_{\mu_t^{\otimes(n-1)}}
  \{\mathcal C_{-i}(\Lambda)^c\}\le e^{-cT}.
  \label{eq:covariance-tail-dense}
\end{equation}
We combine this tail with a common second-moment bound below.

For each \(i\), separate the terms in \(K_A\) that involve \(q_i\) by defining
\[
\begin{aligned}
  K_{-i}
  &:={}
  \sum_{\substack{j<k\\j,k\ne i}}
  \left(A_{jk}B_{jk}-\frac12A_{jk}^2B_{jk}^2\right),\\
  D_i&:=\sum_{j\ne i}A_{ij}B_{ij},
  &Q_i&:=\sum_{j\ne i}A_{ij}^2B_{ij}^2.
\end{aligned}
\]
Then \(K_A=K_{-i}+D_i-Q_i/2\).  Conditional on \(q_{-i}\), define the
\(q_i\)-integrals against \(\mu_t\) and \(\eta\), respectively, by
\[
  Z_i^{\mathrm{loc}}(t):=\mathbb E_{\mu_t,i}e^{D_i-Q_i/2},
  \qquad
  O_i:=\int e^{D_i-Q_i/2}\,d\eta(q_i).
\]
The integrand in \(Z_i^{\mathrm{loc}}(t)\) is positive, so
\(Z_i^{\mathrm{loc}}(t)>0\).  Moreover,
\(d\mu_t/dt=\eta\) gives \((Z_i^{\mathrm{loc}})'(t)=O_i\).  Hence the one-coordinate score is
the logarithmic derivative
\[
  \rho_i(t):=\frac{d}{dt}\log Z_i^{\mathrm{loc}}(t)
  =\frac{O_i}{Z_i^{\mathrm{loc}}(t)}.
\]

Lemma~\ref{lem:buffered-cavity-complete} now expresses \(Z'(t)\) as a sum of
these conditional one-coordinate scores.  The discrepancy between the full
and reduced covariance cutoffs becomes the remainder
\(\operatorname{Rem}_{\mathrm{cov}}(t)\).

\begin{lemma}[Coordinate deletion with an enlarged cutoff]
\label{lem:buffered-cavity-complete}
Uniformly in $0\le t\le1$,
\begin{equation}
\begin{split}
  Z'(t)
  =
  \mathbb E_{\mu_t^{\otimes n}}\left[
  \mathbf1_{\mathcal C_n(\Lambda+1)}
  e^{K_A}\sum_{i=1}^n\rho_i(t)
  \right]
  +\operatorname{Rem}_{\mathrm{cov}}(t),
  \label{eq:buffered-cavity-identity}
\end{split}
\end{equation}
where
\[
  \sup_{0\le t\le1}|\operatorname{Rem}_{\mathrm{cov}}(t)|=o(1).
\]
The identity also has the following finite-parameter form, independent of
the dense-regime choice of cutoff.  For any \(\Lambda\) such that
\(2A\) satisfies the hypotheses of
Theorem~\ref{thm:l2-spectral-recursion} at cutoff \(\Lambda+1\), put
\[
  p_\Lambda:=\sup_{i,\,0\le t\le1}
     \mathbb P_{\mu_t^{\otimes(n-1)}}
        \{\mathcal C_{-i}(\Lambda)^c\}.
\]
Taking its smallness constant sufficiently small also ensures the
one-coordinate MGF hypothesis, and
\begin{equation}
  \sup_t|\operatorname{Rem}_{\mathrm{cov}}(t)|
  \le 2n p_\Lambda^{1/2}
    \exp\left\{
       \frac C N\|A\|_{\HS}^2
       +\frac{C(\Lambda+1)}N R_2(A)^2
    \right\}.
  \label{eq:buffered-cavity-general-remainder}
\end{equation}
\begin{samepage}
More generally, let $\mathcal U_i$ be an event measurable with respect to
$q_{-i}$.  If, for every $i$ and $t$,
\[
  \mathbb P_{\mu_t^{\otimes(n-1)}}\left\{
    \mathcal C_{-i}(\Lambda+1)\cap\mathcal U_i
  \right\}
  \le e^{-u}
\]
for some $u\ge0$, then
\[
\begin{aligned}
  \sum_i\left|
  \int
  \mathbf1_{\mathcal C_n(\Lambda+1)}
  \mathbf1_{\mathcal U_i}e^{K_A}
  \,d\eta(q_i)\prod_{j\ne i}d\mu_t(q_j)
  \right|
  \le
  Cn\exp\left\{
    \frac C{2N}\|A\|_{\HS}^2-\frac u2
  \right\}.
\end{aligned}
\]
\end{samepage}
\end{lemma}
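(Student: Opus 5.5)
We differentiate under the integral and then isolate coordinate \(i\). Since \(Z(t)=\int \mathbf1_{\mathcal C_n(\Lambda+1)}e^{K_A}\prod_j d\mu_t(q_j)\) and \(d\mu_t/dt=\eta\), the product rule gives
\[
Z'(t)=\sum_{i=1}^n\int \mathbf1_{\mathcal C_n(\Lambda+1)}e^{K_A}\,d\eta(q_i)\prod_{j\ne i}d\mu_t(q_j).
\]
Differentiation under the integral is legitimate: on \(\mathcal C_n(\Lambda+1)\) the integrand is bounded for fixed \(n\), because \(|B_{ij}|\le1\), and \(\eta\) is a finite signed measure.

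For each \(i\), split \(\mathbf1_{\mathcal C_n(\Lambda+1)}=\mathbf1_{\mathcal C_{-i}(\Lambda)}+\bigl(\mathbf1_{\mathcal C_n(\Lambda+1)}-\mathbf1_{\mathcal C_{-i}(\Lambda)}\bigr)\). By the inclusions \(\mathcal C_{-i}(\Lambda)\subseteq\mathcal C_n(\Lambda+1)\subseteq\mathcal C_{-i}(\Lambda+1)\), the bracket is nonnegative and dominated by \(\mathbf1_{\mathcal C_{-i}(\Lambda+1)}\mathbf1_{\mathcal C_{-i}(\Lambda)^c}\).

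On the main piece the indicator depends only on \(q_{-i}\). Writing \(K_A=K_{-i}+D_i-Q_i/2\), the \(q_i\)-integral against \(\eta\) equals \(\mathbf1_{\mathcal C_{-i}(\Lambda)}e^{K_{-i}}O_i=\mathbf1_{\mathcal C_{-i}(\Lambda)}e^{K_{-i}}Z_i^{\mathrm{loc}}(t)\rho_i(t)\). Because \(\rho_i\) is a function of \(q_{-i}\) alone, we reverse the step: \(e^{K_{-i}}Z_i^{\mathrm{loc}}\rho_i=\int e^{K_A}\rho_i\,d\mu_t(q_i)\). The main piece therefore becomes \(\mathbb E_{\mu_t^{\otimes n}}[\mathbf1_{\mathcal C_{-i}(\Lambda)}e^{K_A}\rho_i]\). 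Replacing \(\mathcal C_{-i}(\Lambda)\) by \(\mathcal C_n(\Lambda+1)\) costs a term of the same boundary type as before, now with integrand \(e^{K_A}\rho_i\) under \(\mu_t\). Summing over \(i\) yields \eqref{eq:buffered-cavity-identity}, where \(\operatorname{Rem}_{\mathrm{cov}}\) collects both kinds of boundary terms.

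The second boundary type can be brought back to the first. Since \(\rho_i=O_i/Z_i^{\mathrm{loc}}\) with \(Z_i^{\mathrm{loc}}>0\), we have \(\int e^{K_A}\rho_i\,d\mu_t(q_i)=e^{K_{-i}}O_i\). Hence both kinds of terms are of the form \(\int\mathbf1_{\mathcal C_{-i}(\Lambda+1)}\mathbf1_{\mathcal U_i}e^{K_{-i}}\bigl(\int \mathbf1_{(\cdot)}e^{D_i-Q_i/2}\,d|\eta|(q_i)\bigr)\prod_{j\ne i}d\mu_t\), with \(\mathcal U_i=\mathcal C_{-i}(\Lambda)^c\) and up to the possibly remaining full-cutoff indicator. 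It thus suffices to prove the general statement with \(\mathcal U_i\).

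The general bound is the main step. We bound \(|\eta|\le\bar\mu\) (indeed \(|\eta|\le(\mu+\mu^-)/2\)) and \(\mu_t\le 2\bar\mu\) coordinatewise. The density factor \(2^{n-1}\) would be fatal, so instead we keep \(\mu_t\) and note that \(|\eta|\le\mu_t+\mu_t^-\)-type bounds suffice only for the \(i\)th coordinate, giving at most a constant factor. Applying Cauchy--Schwarz in the full measure then gives
\[
\Bigl|\int\mathbf1_{\mathcal C_n(\Lambda+1)}\mathbf1_{\mathcal U_i}e^{K_A}\,d\eta\textstyle\prod d\mu_t\Bigr|\le C\,\mathbb P\{\mathcal C_{-i}(\Lambda+1)\cap\mathcal U_i\}^{1/2}\Bigl(\mathbb E\bigl[\mathbf1_{\mathcal C_{-i}(\Lambda+1)}e^{2K_{-i}}\bigl(\mathbb E_i e^{2D_i}\bigr)\bigr]\Bigr)^{1/2}.
\]
Here the compensation has been dropped, since \(-Q/2\le0\). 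The one-coordinate MGF \(\mathbb E_ie^{2D_i}=\mathbb E_ie^{2\langle H_i,q_i\rangle}\) is controlled on \(\mathcal C_{-i}(\Lambda+1)\): there \(\|H_i\|_{\HS}^2\le(\Lambda+1)R_2(A)^2\), and sub-Gaussianity of the feature (Appendix~\ref{app:complete-preliminaries}) gives a bound \(\exp\{C(\Lambda+1)R_2(A)^2/N\}\). This is where the smallness constant enters, ensuring the one-coordinate MGF hypothesis holds. The remaining \(\mathbb E[\mathbf1_{\mathcal C}e^{2\sum A_{jk}B_{jk}}]\), for \(2A\) restricted to \([n]\setminus\{i\}\), is bounded by \(\exp\{C\|A\|_{\HS}^2/N\}\) via Theorem~\ref{thm:l2-spectral-recursion} at cutoff \(\Lambda+1\). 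That theorem is stated for \(\bar\mu\); its use under \(\mu_t\) needs its recursion to depend only on isotropy and on the tail of \(\langle H,q\rangle\), which hold for every \(\mu_t\). I expect this transfer, together with the uniformity in \(t\), to be the delicate point.

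Summing over \(i\) gives \(Cn\exp\{C\|A\|_{\HS}^2/(2N)-u/2\}\), and \eqref{eq:buffered-cavity-general-remainder} follows in the same way with \(e^{-u}=p_\Lambda\). In the dense regime, \eqref{eq:covariance-tail-dense} gives \(p_\Lambda\le e^{-cT}\). Also, \(\|A\|_{\HS}^2/N\le T/K_0\), \(\log n\le T/K_0\), and \((\Lambda+1)R_0^2/N=o(1)\) by \eqref{eq:dense-recursion-smallness}. Hence the remainder is at most \(\exp\{\log n+CT/K_0-cT/2+o(1)\}\to0\) once \(K_0\) is large, uniformly in \(t\).
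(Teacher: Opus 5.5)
Your proposal is correct and follows essentially the same route as the paper. It uses the same split of $\mathbf 1_{\mathcal C_n(\Lambda+1)}$ along $\mathcal C_{-i}(\Lambda)$, the same common majorant $\mathbb E_{-i}[\mathbf 1_{\mathcal C_{-i}(\Lambda+1)\cap\mathcal U_i}e^{K_{-i}}\mathbb E_{\bar\mu,i}e^{D_i-Q_i/2}]$ for both boundary terms, and the same Cauchy--Schwarz step combining Lemma~\ref{lem:local-subgaussian} with Theorem~\ref{thm:l2-spectral-recursion} for $2A_{-i,-i}$. The transfer to $\mu_t$ that you flag as delicate is not needed, because Theorem~\ref{thm:l2-spectral-recursion} is already stated uniformly for independent coordinates with laws $\mu_{t_i}$; for the general $\mathcal U_i$ bound, just absorb $(\Lambda+1)R_2(A)^2/N$ into the constant using the row hypothesis and $d^2\asymp N$.
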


The proof in Section~\ref{sec:covariance-boundary-proof} derives the identity
by conditional integration and bounds both boundary errors using the
same spectral second moment. The finite-parameter assertion will also be
used at the critical scale.

\subsection{The cubic term in the local score}
\label{sec:main-local-score}
Write
\[
 \mathcal K_3(q_1,q_2,q_3)
 =\mathbb E_{q\sim\mu}\prod_{a=1}^3\langle q,q_a\rangle_{\HS}.
\]
The third-moment identity \eqref{eq:third-moment-tensor} identifies this
kernel with \(\gamma_d\operatorname{tr}_{\mathrm{sym}}(q_1q_2q_3)\).
We localize each row before expanding its score.

For each \(i\), let \(a^{(i)}=(A_{ij})_{j\ne i}\) be the \(i\)-th row with
its diagonal entry removed, and define its weighted feature sum by
\[
  H_i:=\sum_{j\ne i}A_{ij}q_j.
\]
Since \(A_{ii}=0\), we have \(\|a^{(i)}\|_2=\|A_{i\cdot}\|_2\).
Choose the local tail level
\[
  T_{\mathrm{loc}}
  :=K_1\left(\frac{\|A\|_{\HS}^2}{N}+1\right)\log n.
\]
When \(\|A_{i\cdot}\|_2>0\), set
\(k_{i,\mathrm{ad}}:=k_{\mathrm{ad}}(a^{(i)},T_{\mathrm{loc}})\) using the threshold
from Lemma~\ref{lem:adaptive-threshold} with covariance bound
\(\Lambda+1\).  When \(\|A_{i\cdot}\|_2=0\), set \(k_{i,\mathrm{ad}}:=1\).  In either
case, define
\[
  \mathcal G_i=
  \{\|H_i\|_{\HS}^2\le k_{i,\mathrm{ad}}\|A_{i\cdot}\|_2^2\}.
\]
Then, uniformly in \(i\) and \(t\in[0,1]\),
\begin{equation}
  \mathbb P_{\mu_t^{\otimes(n-1)}}\left\{
  \mathcal C_{-i}(\Lambda+1)\cap\mathcal G_i^c
  \right\}\le e^{-T_{\mathrm{loc}}}.
  \label{eq:bad-row-probability}
\end{equation}
\begin{lemma}[One-coordinate score expansion]
\label{lem:cavity-score-expansion}
For all sufficiently large \(n\), on \(\mathcal G_i\), uniformly in
\(0\le t\le1\),
\[
  \rho_i(t)=T_i+\varepsilon_i(t),
\]
where
\begin{equation}
  T_i=
  \sum_{\substack{j<k<\ell\\j,k,\ell\ne i}}
  A_{ij}A_{ik}A_{i\ell}\mathcal K_3(q_j,q_k,q_\ell)
  -\frac13\sum_{j\ne i}A_{ij}^3\mathcal K_3(q_j,q_j,q_j),
  \label{eq:local-cubic-source}
\end{equation}
and
\begin{equation}
  |\varepsilon_i(t)|
  \le
  C\left(
    \frac{k_{i,\mathrm{ad}}\|A_{i\cdot}\|_2^2}{N}
    +\frac{\|A\|_{\max}\|A_{i\cdot}\|_2^2}{N^{3/2}}
  \right)^2.
  \label{eq:score-error}
\end{equation}
The pointwise expansion and \eqref{eq:score-error} also hold for any
deterministic row thresholds \(k_{i,\mathrm{ad}}\ge1\) satisfying
\begin{equation}
  \max_i\frac{k_{i,\mathrm{ad}}\|A_{i\cdot}\|_2^2}{N}\le c_\ast,
  \qquad
  \max_i\frac{k_{i,\mathrm{ad}}\|A_{i\cdot}\|_2^2}{d}\le c_\ast,
  \label{eq:local-score-finite-conditions}
\end{equation}
with an absolute, sufficiently small \(c_\ast>0\), using the corresponding
events \(\mathcal G_i\). Extend \(\varepsilon_i(t)\) by zero on
\(\mathcal G_i^c\). For the dense-regime parameters of this section,
\[
  \sum_i\sup_t|\varepsilon_i(t)|=o(1).
\]
\end{lemma}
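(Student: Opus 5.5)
Since \(\rho_i(t)=O_i/Z_i^{\mathrm{loc}}(t)\), the plan is to expand the numerator and the denominator separately in the local field \(D_i=\langle H_i,q_i\rangle_{\HS}\) and the compensation \(Q_i\). Write
\[
 F(q_i)=e^{D_i-Q_i/2}.
\]
Because \(\eta=(\mu-\mu^-)/2\) is a signed measure that is odd under \(q_i\mapsto-q_i\), the \(O_i\)-integral keeps only the odd part \(e^{-Q_i/2}\sinh D_i\). Expanding gives
\[
 O_i=\int\Bigl(D_i-\tfrac12D_iQ_i+\tfrac16D_i^3\Bigr)\,d\eta+(\text{fifth-order terms}).
\]
The first two pieces vanish. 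For \(\int D_i\,d\eta\), this uses \(\mathbb E q=0\) under \(\mu\) together with the oddness of \(\eta\). For \(\int D_iQ_i\,d\eta\), the integrand \(D_iQ_i\) is cubic in \(q_i\), so it equals the \(\mu\)-moment. Expanding \(\sum_{j,k,\ell}A_{ij}A_{ik}^2\,\mathbb E\,\langle q,q_j\rangle\langle q,q_k\rangle^2\) produces \(\mathcal K_3\) terms. I will collect these with those from \(D_i^3/6\).

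Writing \(D_i^3=\sum_{j,k,\ell}A_{ij}A_{ik}A_{i\ell}\langle q_i,q_j\rangle\langle q_i,q_k\rangle\langle q_i,q_\ell\rangle\), the distinct-index part gives exactly the triple sum in \(T_i\). The coincident-index parts combine with the \(-\tfrac12D_iQ_i\) contribution (the \(j=k\) diagonal). Here \(\tfrac16\cdot3-\tfrac12=0\) kills the terms with exactly two indices equal. The fully diagonal terms give \(\tfrac16-\tfrac12=-\tfrac13\), which is the correction \(-\tfrac13\sum_jA_{ij}^3\mathcal K_3(q_j,q_j,q_j)\). This bookkeeping is the algebraic heart of \eqref{eq:local-cubic-source}.

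For the denominator, use evenness again. Since \(\mu_t=\bar\mu+t\eta\), we get
\[
 Z_i^{\mathrm{loc}}=\mathbb E_{\bar\mu}\bigl[e^{-Q_i/2}\cosh D_i\bigr]+tO_i.
\]
By isotropy, \(\mathbb E_{\bar\mu}[D_i^2-Q_i]=0\) up to the cross term \(\tfrac1N\sum_{j\ne k}A_{ij}A_{ik}B_{jk}\), which is of order \(\|H_i\|_{\HS}^2/N\). Hence \(Z_i^{\mathrm{loc}}=1+O(k_{i,\mathrm{ad}}\|A_{i\cdot}\|_2^2/N)\) on \(\mathcal G_i\), and \(\rho_i=O_i(1+O(\cdot))\). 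The product of this correction with the cubic term, and the term \(tO_i^2\), are both quadratic in the small parameters. So the error structure of \eqref{eq:score-error} should emerge: one factor from \(\|H_i\|^2/N\), and the \(\|A\|_{\max}\|A_{i\cdot}\|^2/N^{3/2}\) factor from the diagonal cubic moments. The latter uses \(|\mathcal K_3(q_j,q_j,q_j)|\lesssim N^{-3/2}\)-type bounds from \eqref{eq:third-moment-tensor}.

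The main obstacle is bounding the Taylor remainders uniformly, namely fifth and higher moments under \(\mu\) weighted by \(e^{|D_i|}\). For this I will first try the orbit concentration of \(\langle q,H\rangle_{\HS}\) from Appendix~\ref{app:complete-preliminaries}. On \(\mathcal G_i\) we have \(\|H_i\|_{\mathrm{op}}^2\le k_{i,\mathrm{ad}}\|A_{i\cdot}\|_2^2\), and the \(d\)-condition in \eqref{eq:local-score-finite-conditions} should give sub-Gaussian tails for \(D_i\) with variance proxy \(\|H_i\|_{\HS}^2/N\) and a sub-exponential part governed by \(\|H_i\|_{\mathrm{op}}/d\). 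That makes the exponential moment finite and \(\mathbb E|D_i|^5e^{|D_i|}\lesssim(\|H_i\|^2/N)^{5/2}\), which is dominated by the square. Only \eqref{eq:local-score-finite-conditions} enters this argument, so it covers the deterministic-threshold version. Finally, the summation \(\sum_i\sup_t|\varepsilon_i|\) follows by bounding \(\sum_ik_{i,\mathrm{ad}}^2\|A_{i\cdot}\|^4/N^2\) with the adaptive-threshold and row-quartic estimates already used for \(\sum_m\mathcal E_m=o(1)\), plus an analogous \(\|A\|_{\max}\) row bound under Lemma~\ref{lem:gmax}.
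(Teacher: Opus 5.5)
Your proposal is correct and follows essentially the paper's proof. The shared steps are the odd/even split of $e^{D_i-Q_i/2}$ under $\eta$ and $\bar\mu$, the third-order expansion whose $\tfrac16\cdot3-\tfrac12=0$ and $\tfrac16-\tfrac12=-\tfrac13$ cancellations give $T_i$, and the isotropic expansion of $\mathbb E_{\bar\mu}[e^{-Q_i/2}\cosh D_i]$ yielding $Z_i^{\mathrm{loc}}(t)\ge1/2$. The summation is likewise the same: you rerun the adaptive-threshold calculation of Lemma~\ref{lem:cubic-adaptive-remainder} at level $T_{\mathrm{loc}}\lesssim T\log n$, where you should check, as the paper does, that the extra logarithms are absorbed.

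The differences are minor. You weight the Taylor remainders by $e^{|D_i|}$ and use plain exponential moments from Lemma~\ref{lem:local-subgaussian}, which is lighter than the paper's $e^{W_+}$ weight with Lemma~\ref{lem:square-exponential-moments}. The paper bounds $|T_i|\lesssim(k_{i,\mathrm{ad}}\|A_{i\cdot}\|_2^2/N)^{3/2}$ through $\mathcal K_3(H_i,q_j,q_j)=\gamma_db_d\varepsilon_j\langle H_i,q_j\rangle_{\HS}$, so the $\|A\|_{\max}$ term you extract from the diagonal moments is not actually needed. Two small corrections: $\int D_iQ_i\,d\eta=\mathbb E_{\mu,i}[D_iQ_i]$ does not vanish, and the $\|A\|_{\max}$ input is \eqref{eq:environment-M}, not Lemma~\ref{lem:gmax}.
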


To see the cancellation underlying the score expansion, note that
integration against \(\eta\) extracts the odd part of the local weight.
Since \(\mathbb E_{\mu,i}D_i=0\), its leading term is
\(\mathbb E_{\mu,i}(D_i^3/6-D_iQ_i/2)\). Put $a_j=A_{ij}$ and $B_j=B_{ij}$, with indices
different from $i$.  Expanding the two products cancels all terms with
exactly two equal indices:
\[
  \frac16\left(\sum_j a_jB_j\right)^3
  -\frac12\left(\sum_j a_jB_j\right)\left(\sum_j a_j^2B_j^2\right)
  =\sum_{j<k<\ell}a_ja_ka_\ell B_jB_kB_\ell-\frac13\sum_j a_j^3B_j^3.
\]
Conditional expectation gives exactly $T_i$ in
\eqref{eq:local-cubic-source}.  
The error bound and division by the positive local denominator are proved
in Section~\ref{sec:two-sided-local-expansion}. In particular, the error
is defined on the entire latent space by zero extension, while the score
expansion is asserted only on \(\mathcal G_i\). The discarded rows are
controlled separately in Section~\ref{sec:differential-inequality}.

\subsection{Aggregation of the cubic scores}
\label{sec:full-row-aggregation}

After the local expansion, the leading score is the complete
sum \(\sum_iT_i\), still indexed by the deleted vertex \(i\).  Reordering its
terms indexed by pairwise distinct
$j,k,\ell$ converts them into a canonical weighted $U$-statistic of order
three.  Lemma~\ref{lem:full-row-coefficient-norms} records the coefficient
norms needed to apply the $U$-statistic moment inequality to that statistic.

Define
\begin{equation}
  g_{jk\ell}
  =
  \sum_{i=1}^nA_{ij}A_{ik}A_{i\ell},
  \qquad
  \mathcal T_g
  =
  \sum_{j<k<\ell}
  g_{jk\ell}\mathcal K_3(q_j,q_k,q_\ell).
  \label{eq:full-row-tensor}
\end{equation}
Because $A_{ii}=0$, summing \eqref{eq:local-cubic-source} over the deleted
index gives the exact identity
\begin{equation}
  \sum_iT_i
  =\mathcal T_g
  -\frac13\sum_{i\ne j}A_{ij}^3
  \mathcal K_3(q_j,q_j,q_j).
  \label{eq:full-row-aggregation}
\end{equation}

The kernel is canonical under each \(\mu_t\), since
\(\mathbb E_{\mu_t}q=0\). We use the exponent
\(p_{\mathrm H}=2\vee K_{\mathrm H}\|A\|_{\HS}^2/N\), with
\(K_{\mathrm H}\) a sufficiently large absolute constant. The coefficient
and kernel bounds in Appendix~\ref{app:complete-buffered-cavity}, applied
to Adamczak's inequality in Section~\ref{sec:adamczak-exponent-audit}, give
\begin{equation}
 \sup_{0\le t\le1}\|\mathcal T_g\|_{L^{p_{\mathrm H}}(\mu_t^{\otimes n})}=o(1),
 \qquad N^{-3/2}\sum_{i,j}|A_{ij}|^3=o(1).
 \label{eq:dense-master-odd-ledger}
\end{equation}
These are \eqref{eq:Tg-Lp-small} and \eqref{eq:repeated-index-small}.
The full coefficient array must be aggregated before taking its norm;
the mixed three-block estimate in Lemma~\ref{lem:mixed-cubic-singletons}
uses the operator norm of \(A\) to control this term. The nineteen scale
substitutions are recorded in Table~S3.1
of the Supplementary Material.
We now transfer these unweighted estimates to the likelihood weight.

\subsection{Differential inequality}
\label{sec:differential-inequality}

We combine the unweighted moment bounds with a spectral second moment of
the likelihood weight.  Equation~\eqref{eq:dense-recursion-smallness}
verifies the hypotheses of Theorem~\ref{thm:l2-spectral-recursion} for
$2A$ at cutoff $\Lambda+1$.  Since
$2K_A\le2\sum_{i<j}A_{ij}B_{ij}$, this gives
\begin{equation}
  Z^{(2)}(t):=\mathbb E_{\mu_t^{\otimes n}}
  [\mathbf1_{\mathcal C_n(\Lambda+1)}e^{2K_A}]
  \le\exp\{C\|A\|_{\HS}^2/N\}.
  \label{eq:weighted-second-moment}
\end{equation}
Recall $p_{\mathrm H}=2\vee K_{\mathrm H}\|A\|_{\HS}^2/N$ and set
$r=p_{\mathrm H}/(p_{\mathrm H}-1)$.  With expectations under
$\mu_t^{\otimes n}$, H\"older's inequality and interpolation give
\begin{equation}
\begin{aligned}
  \mathbb E[\mathbf1_{\mathcal C_n(\Lambda+1)}e^{K_A}|Y|]
  &\le\bigl(\mathbb E[\mathbf1_{\mathcal C_n(\Lambda+1)}e^{rK_A}]
  \bigr)^{1/r}\|Y\|_{p_{\mathrm H}}\\
  &\le Z(t)^{1-2/p_{\mathrm H}}Z^{(2)}(t)^{1/p_{\mathrm H}}\|Y\|_{p_{\mathrm H}}
  \le C\max\{1,Z(t)\}\|Y\|_{p_{\mathrm H}}.
\end{aligned}
  \label{eq:weighted-holder-closure}
\end{equation}
Here $Z^{(2)}(t)^{1/p_{\mathrm H}}\le e^{C/K_{\mathrm H}}$ and
$0\le1-2/p_{\mathrm H}\le1$.

To remove the exceptional rows, define
\[
  \mathcal E_{\mathrm{bad}}(t):=\mathbb E\left[
  \mathbf1_{\mathcal C_n(\Lambda+1)}e^{K_A}
  \sum_i\mathbf1_{\mathcal G_i^c}\{\rho_i(t)-T_i\}\right].
\]
We first keep the dependence on the row-tail level explicit. Under the
spectral hypotheses for \(2A\) at cutoff \(\Lambda+1\),
\eqref{eq:bad-row-probability} implies
\begin{equation}
\begin{aligned}
 |\mathcal E_{\mathrm{bad}}(t)|
 \le{}&Cn\exp\{C\|A\|_{\HS}^2/N-T_{\mathrm{loc}}/2\}\\
 &+Cn\left(\frac nN\right)^{3/2}R_2(A)^3
 e^{-T_{\mathrm{loc}}/(2p_{\mathrm H})}\max\{1,Z(t)\}.
\end{aligned}
 \label{eq:bad-row-finite-bound}
\end{equation}
Indeed, \(|\rho_i(t)|Z_i^{\mathrm{loc}}(t)=|O_i|
\le\mathbb E_{\bar\mu,i}e^{D_i-Q_i/2}\). Conditional integration and
Cauchy--Schwarz, using \eqref{eq:covariance-remainder-second-moment},
bound the score contribution by
\[
 n\exp\left\{\frac{C\|A\|_{\HS}^2+C(\Lambda+1)R_2(A)^2}{N}
                  -\frac{T_{\mathrm{loc}}}{2}\right\}.
\]
The spectral row hypothesis and \(d^2\asymp N\) bound the cutoff term
in this exponent by an absolute constant. The cubic contribution follows
from \eqref{eq:bad-row-cubic-Lp} and \eqref{eq:weighted-holder-closure}.
This proves \eqref{eq:bad-row-finite-bound} without an asymptotic assumption.

For the present parameters, \eqref{eq:environment-FR} bounds the cubic
prefactor by \(C|\beta|^6n^4/N^3\le Cn^4\), while
\(T_{\mathrm{loc}}/p_{\mathrm H}\ge c_{K_{\mathrm H}}K_1\log n\).
Taking \(K_1\) sufficiently large therefore gives
\begin{equation}
  |\mathcal E_{\mathrm{bad}}(t)|\le o(1)\max\{1,Z(t)\}.
  \label{eq:bad-row-weighted-error}
\end{equation}

Combining Lemmas~\ref{lem:buffered-cavity-complete} and
\ref{lem:cavity-score-expansion} with \eqref{eq:full-row-aggregation} yields
\[
\begin{aligned}
  |Z'(t)|\le{}&|\operatorname{Rem}_{\mathrm{cov}}(t)|
  +|\mathcal E_{\mathrm{bad}}(t)|
  +\mathbb E[\mathbf1_{\mathcal C_n(\Lambda+1)}e^{K_A}|\mathcal T_g|]\\
  &+Z(t)\left\{\sum_i\sup_{0\le s\le1}|\varepsilon_i(s)|
  +CN^{-3/2}\sum_{i,j}|A_{ij}|^3\right\}.
\end{aligned}
\]
The third term is $o(1)\max\{1,Z(t)\}$ by
\eqref{eq:weighted-holder-closure} and \eqref{eq:Tg-Lp-small}; the braces
are $o(1)$ by Lemma~\ref{lem:cavity-score-expansion} and
\eqref{eq:repeated-index-small}.  The other two terms have already been
bounded, so uniformly in $0\le t\le1$,
\begin{equation}
  |Z'(t)|\le o(1)\max\{1,Z(t)\}.
  \label{eq:Z-differential}
\end{equation}
Proposition~\ref{prop:symmetric-endpoint} gives $Z(0)\le1+o(1)$.
Writing the uniform error above as $\delta_n\to0$ and using
$\max\{1,Z(t)\}\le1+Z(t)$, Gronwall gives
$1+Z(t)\le(1+Z(0))e^{\delta_nt}$.  Hence
\begin{equation}
  Z(1)\le1+o(1).
  \label{eq:Z-one}
\end{equation}

\subsection{Return to the exact likelihood}
\label{sec:return-simple-edge-likelihood}

The symmetric-endpoint and interpolation bounds are now available.  It remains to record the quantitative consequences of the event
\(\mathcal E_n\) defined at the beginning of
Section~\ref{sec:dense-likelihood}, transfer the bound on \(Z(1)\) to
the exact product in \eqref{eq:pure-negative-conditional-target}, and remove
the truncation.

\begin{proof}[Proof of Proposition~\ref{prop:dense-branch-second-moment}]

Recall the matrix bounds of Proposition~\ref{prop:row-gram-environment} and write
\(\pi_n=\mathbb P(q\in\mathcal E_n)=1-o(1)\).

For completeness, the auxiliary localization events used inside the two
conditional arguments are different from this outer truncation.  Adaptive
deletion uses the hollow--Schur recursion in \eqref{eq:hollow-schur} and the
event \(\mathcal G^{\mathrm{rec}}\) in
\eqref{eq:recursive-good-event}.  Coordinate interpolation uses the row events
\(\mathcal G_i\) defined before \eqref{eq:bad-row-probability}.  Thus every
cutoff appearing in the proof is fixed before the final likelihood comparison.

The estimates assembled in \eqref{eq:Z-one} give \(Z(1)\le1+o(1)\).
The useful scale identities are
\begin{equation}
  \frac{\beta^6n^3}{N^4}=h_n^{-4},\qquad
  \frac{|\beta|^6n^2}{N^3}
  =\frac{|\beta|^{3/2}}{h_n^3n^{1/4}},\qquad
  n\beta^2\gtrsim h_n^4.
  \label{eq:dense-master-scale-identities}
\end{equation}
They follow from the definition of \(h_n\) and
\eqref{eq:dense-master-regime}.

By Lemma~\ref{lem:truncated-likelihood}, the exact truncated second moment for
two independent replicas is
\[
  \frac1{\pi_n^2}
  \mathbb E_{q^\ast,q}\left[
  \mathbf1_{\{q^\ast\in\mathcal E_n\}}
  \mathbf1_{\{q\in\mathcal E_n\}}
  \prod_{i<j}(1+A_{ij}B_{ij})
  \right].
\]
The sixth-row-moment conclusion of
Proposition~\ref{prop:row-gram-environment}, applied to each replica, gives on
\(q^\ast,q\in\mathcal E_n\)
\[
  \sum_{i<j}|B_{ij}^\ast|^6\lesssim\frac{n^2}{N^3},
  \qquad
  \sum_{i<j}|B_{ij}|^6\lesssim\frac{n^2}{N^3}.
\]
Hence Cauchy--Schwarz gives, uniformly for \(q^\ast,q\in\mathcal E_n\),
\[
  \sum_{i<j}|A_{ij}B_{ij}|^3
  \le
  |\beta|^6
  \left(\sum_{i<j}|B_{ij}^\ast|^6\right)^{1/2}
  \left(\sum_{i<j}|B_{ij}|^6\right)^{1/2}
  \lesssim\frac{|\beta|^6n^2}{N^3}=o(1).
\]
Here the last convergence follows from
\(|\beta|^6n^2/N^3=|\beta|^{3/2}/(h_n^3n^{1/4})\) and
the definition of \(h_n\).
If one factor in the product below is zero, the desired comparison is
immediate.  Otherwise every factor is positive, and \(|A_{ij}B_{ij}|\le1\)
because \(|\beta|\le1\) and both \(B_{ij}^\ast\) and \(B_{ij}\) are inner
products of unit Hilbert--Schmidt vectors.  We may therefore apply
\(\log(1+x)\le x-x^2/2+C|x|^3\), valid for \(-1<x\le1\), to each
\(x=A_{ij}B_{ij}\).  Summing over \(i<j\), exponentiating, and using
the $o(1)$ cubic remainder above gives
\[
  \prod_{i<j}(1+A_{ij}B_{ij})
  \le e^{K_A+o(1)}
\]
uniformly for \(q^\ast,q\in\mathcal E_n\).  Since
\(q\in\mathcal E_n\) implies \(q\in\mathcal C_n(\Lambda+1)\), this gives the pointwise
comparison
\[
  \mathbf1_{\{q\in\mathcal E_n\}}
  \prod_{i<j}(1+A_{ij}B_{ij})
  \le e^{o(1)}
  \mathbf1_{\mathcal C_n(\Lambda+1)}e^{K_A}.
\]
The error is uniform over \(q^\ast\in\mathcal E_n\).  Taking expectation over
the second replica and using \eqref{eq:Z-one} proves
\eqref{eq:pure-negative-conditional-target}, and hence proves
Proposition~\ref{prop:dense-branch-second-moment}.
\end{proof}

Together with Lemma~\ref{lem:truncated-likelihood}, the probability bound
\(\pi_n=1-o(1)\) converts Proposition~\ref{prop:dense-branch-second-moment}
into a vanishing truncated \(\chi^2\)-divergence.  This is the dense-branch
input used in the main-text proof of Proposition~\ref{prop:pure-negative}.
\section{The pure quadratic critical window}
\label{sec:critical-window}

This section proves part~(i) of Theorem~\ref{thm:growing-critical-main}. On the scale
\(N\asymp n^{3/4}|\beta|^{3/2}\) with \(N\to\infty\), the normalized
signed-triangle mean remains of constant order.  Identifying the testing
distance therefore requires an approximation to the full likelihood ratio.
We first isolate the triangle contribution to the exact second moment,
then prove the two-model CLT and identify the limiting likelihood.
Section~\ref{sec:quantitative-main} gives the finite-sample bounds.

\subsection{Replica coefficients and the truncation input}
\label{subsec:growing-critical}

Recall the signed triangle and put
\[
  T_n=\sum_{i<j<k}Y_{ij}Y_{jk}Y_{ki},
  \qquad
  \Delta_n=\frac{\sqrt{\binom n3}\,|\beta|^3}{N^2},
  \qquad
  W_n=\operatorname{sgn}(\beta)\frac{T_n}{\sqrt{\binom n3}}.
\]

The parameter \(\Delta_n\) is the mean of the signed triangle normalized by
its null standard deviation.  It diverges in the detectable strict regime
and vanishes in the indistinguishable strict regime.  The results below concern
sequences for which \(\Delta_n\) remains bounded above and away from zero.  On
such a scale, a central limit theorem for \(W_n\)
describes the signed-triangle statistic but does not by itself determine the
total variation between the full graph laws.  One must additionally show that
the likelihood ratio contains no other contribution of constant order.
Proposition~\ref{prop:critical-likelihood-representation} supplies this
triangle-dominance step through an asymptotic expansion of the conditioned
likelihood ratio.  The expansion then proves part~(i) of
Theorem~\ref{thm:growing-critical-main}.

Throughout this section, \(N\to\infty\) and \(\Delta_n\to\Delta\in(0,\infty)\).
Choose fixed \(0<a<b<\infty\) so that \(a\le\Delta_n\le b\) eventually.
Since $\Delta_n^2=\binom n3\beta^6/N^4$ and $\binom n3\asymp n^3$,
\begin{equation}
  n\beta^2\asymp N^{4/3}.
  \label{eq:critical-scaling-relation}
\end{equation}
In particular, $N\lesssim n^{3/4}$ because $|\beta|\le1$.

Write \(L_n^{\mathcal E}=d\Pp_n^{\mathcal E_n}/d\Qq_n\).
The proof separates the finite graph calculation from the uniform bound
that permits summing it over all edge sets.

For a prior event \(\mathcal E_n\subseteq V^n\), put
\(\pi_n=\mathbb P(q\in\mathcal E_n)\).  We call the following real
polynomial the truncated replica generating function:
\begin{equation}
  \mathcal Z_n^{\mathcal E}(s)
  :=\mathbb E\left[
    \mathbf1_{\mathcal E_n}(q^\ast)
    \mathbf1_{\mathcal E_n}(q)
    \prod_{i<j}\left\{1+s\beta^2
      \langle q_i^\ast,q_j^\ast\rangle_{\HS}
      \langle q_i,q_j\rangle_{\HS}\right\}
  \right],
  \label{eq:quadratic-generating-function}
\end{equation}
where \(q^\ast\) and \(q\) are independent samples from \(\mu^{\otimes n}\).
At \(s=1\), equation~\eqref{eq:truncated-second-moment} becomes
\begin{equation}
  \mathbb E_{\Qq_n}(L_n^{\mathcal E})^2
  =\pi_n^{-2}\mathcal Z_n^{\mathcal E}(1).
  \label{eq:quadratic-replica-L2}
\end{equation}
Let \(K_n\) be the complete graph on \([n]\).  For an edge set
\(F\subseteq E(K_n)\), write \(|F|\) for its cardinality and set
\[
  G_F(q):=\mathbf1_{\mathcal E_n}(q)
  \prod_{\{i,j\}\in F}\langle q_i,q_j\rangle_{\HS}.
\]
For a polynomial \(f\), write \([s^k]f(s)\) for the coefficient of \(s^k\).
With this notation, Lemma~\ref{lem:nonnegative-replica-coefficients}
identifies each coefficient \(\mathfrak c_{k,n}\) as a sum of squares.

\begin{lemma}[Nonnegative replica coefficients]
\label{lem:nonnegative-replica-coefficients}
For every \(0\le k\le\binom n2\),
\begin{equation}
  \mathfrak c_{k,n}:=[s^k]\mathcal Z_n^{\mathcal E}(s)
  =\beta^{2k}\sum_{\substack{F\subseteq E(K_n)\\ |F|=k}}
  \bigl\{\mathbb E G_F(q)\bigr\}^{2}\ge0.
  \label{eq:quadratic-edge-set-expansion}
\end{equation}
\end{lemma}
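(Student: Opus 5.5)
The plan is to expand the product in \eqref{eq:quadratic-generating-function} over edge sets and then factor the expectation using independence of the two replicas. The product is over the finitely many pairs \(i<j\), so it expands as
\[
  \prod_{i<j}\bigl\{1+s\beta^2\langle q_i^\ast,q_j^\ast\rangle_{\HS}\langle q_i,q_j\rangle_{\HS}\bigr\}
  =\sum_{F\subseteq E(K_n)}s^{|F|}\beta^{2|F|}
  \prod_{\{i,j\}\in F}\langle q_i^\ast,q_j^\ast\rangle_{\HS}
  \prod_{\{i,j\}\in F}\langle q_i,q_j\rangle_{\HS}.
\]
Multiplying by the indicators \(\mathbf1_{\mathcal E_n}(q^\ast)\mathbf1_{\mathcal E_n}(q)\) turns each summand into \(s^{|F|}\beta^{2|F|}G_F(q^\ast)G_F(q)\).

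Next comes integrability and the exchange of sum and expectation. The sum is finite, and every factor is bounded: \(|\langle q_i,q_j\rangle_{\HS}|\le1\) because the features are unit Hilbert--Schmidt vectors (Lemma~\ref{lem:feature-lift}), and the indicators lie in \(\{0,1\}\). Hence \(|G_F|\le1\), and taking expectations termwise is justified by linearity. This also confirms that \(\mathcal Z_n^{\mathcal E}\) is a real polynomial of degree at most \(\binom n2\).

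Since \(q^\ast\) and \(q\) are independent with the same law \(\mu^{\otimes n}\), and \(G_F\) is the same measurable function applied to each, we get
\[
  \mathbb E[G_F(q^\ast)G_F(q)]=\mathbb E G_F(q^\ast)\,\mathbb E G_F(q)=\{\mathbb E G_F(q)\}^2 .
\]
Collecting all \(F\) with \(|F|=k\) identifies \([s^k]\mathcal Z_n^{\mathcal E}(s)\) with the right-hand side of \eqref{eq:quadratic-edge-set-expansion}. Nonnegativity then follows because each term is \(\beta^{2k}\) times a square of a real number.

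There is no real obstacle here. The only points to state explicitly are that both replicas are truncated by the same event \(\mathcal E_n\) and are identically distributed, which is exactly what makes the mixed moment a perfect square. At \(k=0\) the formula gives \(\mathfrak c_{0,n}=\pi_n^2\), consistent with \eqref{eq:quadratic-replica-L2}.
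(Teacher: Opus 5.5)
Your proposal is correct and follows the paper's proof: expand the product over edge sets $F$, use independence and identical distribution of the two replicas (both truncated by the same $\mathcal E_n$) to get $\mathbb E[G_F(q^\ast)G_F(q)]=\{\mathbb E G_F(q)\}^2$, and extract the coefficient of $s^k$. Your extra remarks on the boundedness of $G_F$ and the check $\mathfrak c_{0,n}=\pi_n^2$ are correct and consistent with \eqref{eq:quadratic-replica-L2}.
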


\begin{proof}
Expand the product over edge sets and use independence of the two replicas:
\[
  \mathcal Z_n^{\mathcal E}(s)
  =\sum_{F\subseteq E(K_n)}(s\beta^2)^{|F|}
  \mathbb E[G_F(q^\ast)G_F(q)]
  =\sum_{F\subseteq E(K_n)}(s\beta^2)^{|F|}\{\mathbb E G_F(q)\}^2.
\]
Extracting $[s^k]$ gives the claimed nonnegative sum of squares.
\end{proof}

Lemma~\ref{lem:critical-generating-truncation}, proved in
Appendix~\ref{app:critical-likelihood-analysis}, supplies events
\(\mathcal E_n\) with \(\mathbb P(\mathcal E_n^c)\le n^{-M_n}\),
where \(M_n\to\infty\), and constants \(s_\star>1\), \(C_{a,b}<\infty\)
such that \(\sup_{0\le s\le s_\star}\mathcal Z_n^{\mathcal E}(s)\le C_{a,b}\).
Its proof reuses the finite-parameter deletion and interpolation bounds
with a critical covariance cutoff. The concentration and moment-transfer
checks are independent of the graph calculation below.

\subsection{Connected graph activities and the exact second moment}
\label{sec:critical-graph-expansion}

We identify the fixed-order coefficients and control their tail using the
common truncation. Since $\pi_n\to1$, \eqref{eq:quadratic-replica-L2}
reduces Proposition~\ref{prop:exact-critical-second-moment} to
\begin{equation}
\begin{gathered}
  \mathfrak c_{k,n}\longrightarrow[s^k]e^{\Delta^2s^3}
  \quad\text{for every fixed }k,\\
  \lim_{K\to\infty}\limsup_{n\to\infty}\sum_{k>K}\mathfrak c_{k,n}=0.
\end{gathered}
  \label{eq:critical-coefficient-roadmap}
\end{equation}
The second assertion follows at once from nonnegativity
(Lemma~\ref{lem:nonnegative-replica-coefficients}) and
Lemma~\ref{lem:critical-generating-truncation}:
\[
  0\le\sum_{k>K}\mathfrak c_{k,n}
  \le s_\star^{-K}\mathcal Z_n^{\mathcal E}(s_\star)
  \le C_{a,b}s_\star^{-K}.
\]
For the first assertion, remove the truncation indicators and put
\begin{equation}
  \mathfrak c_{k,n}^{\circ}
  :=\beta^{2k}\sum_{\substack{F\subseteq E(K_n)\\|F|=k}}
  \left\{\mathbb E\prod_{\{i,j\}\in F}
                 \langle q_i,q_j\rangle_{\HS}\right\}^2.
  \label{eq:quadratic-unconditioned-coefficient}
\end{equation}
For a finite simple graph $H$ without isolated vertices, define
\begin{equation}
  t_d(H):=\mathbb E\prod_{\{u,v\}\in E(H)}\langle q_u,q_v\rangle_{\HS},
  \qquad q_u\stackrel{\mathrm{iid}}\sim\mu.
  \label{eq:quadratic-graph-moment}
\end{equation}
Let $H_F$ be the graph with edge set $F$ after deleting isolated vertices.
Relabeling the iid features gives
\begin{equation}
  \mathbb E\prod_{\{i,j\}\in F}\langle q_i,q_j\rangle_{\HS}=t_d(H)
  \quad\text{if }H_F\cong H.
  \label{eq:quadratic-copy-moment}
\end{equation}
Write $(n)_0=1$ and $(n)_v=n(n-1)\cdots(n-v+1)$ for $v\ge1$, with
$(n)_v=0$ for $v>n$. Dividing injective vertex maps by the order of the
automorphism group gives
\begin{equation}
  \#\{F\subseteq E(K_n):H_F\cong H\}
  =\frac{(n)_{v(H)}}{|\operatorname{Aut}(H)|}.
  \label{eq:quadratic-copy-count}
\end{equation}
Thus, with the \emph{labelled activity}
$\mathsf{Act}_n(H):=(n)_{v(H)}\beta^{2e(H)}t_d(H)^2/|\operatorname{Aut}(H)|$,
grouping \eqref{eq:quadratic-unconditioned-coefficient} by isomorphism type yields
\begin{equation}
  \mathfrak c_{k,n}^{\circ}
  =\sum_{\substack{H:\ e(H)=k,\ v(H)\le n}}\mathsf{Act}_n(H),\qquad k\ge1.
  \label{eq:quadratic-activity-decomposition}
\end{equation}
The sum runs over simple graphs without isolated vertices. In particular,
the cycle $C_3$ has $(n)_3/6=\binom n3$ copies.

To reduce $t_d(H)$, extend \eqref{eq:quadratic-graph-moment} to multigraphs,
counting parallel edges with multiplicity. Loops contribute $\|q_u\|_{\HS}^2=1$
and are deleted. Isotropy gives
\begin{equation}
  \int\langle x,q\rangle_{\HS}\langle q,y\rangle_{\HS}\,d\mu(q)
  =N^{-1}\langle x,y\rangle_{\HS}.
  \label{eq:quadratic-degree-two-contraction}
\end{equation}
Suppressing a degree-two vertex $w$ with incident edges $\{u,w\},\{w,v\}$
means replacing these edges by $\{u,v\}$ and deleting $w$, any loop,
and any resulting isolated vertex. For the resulting graph $G'$, integration
in $q_w$ gives
\begin{equation}
  t_d(G)=N^{-1}t_d(G').
  \label{eq:quadratic-one-suppression}
\end{equation}
Starting from $G_0=H$, fix any suppression order. Stop after $h$ steps
when $G_h$ has a degree-one vertex, has no edges, or has minimum degree
at least three; in the last case write $H_{\mathrm c}=G_h$.
Iteration and centering at a degree-one vertex yield
\begin{equation}
  t_d(H)=
  \begin{cases}
    0,&\text{if $G_h$ has a degree-one vertex},\\
    N^{-h},&\text{if $G_h$ has no edges},\\
    N^{-h}t_d(H_{\mathrm c}),&\text{if $G_h=H_{\mathrm c}\ne\varnothing$}.
  \end{cases}
  \label{eq:quadratic-reduction-outcomes}
\end{equation}
For example, $K_4$ is unchanged, while the union of three distinct internally
vertex-disjoint paths with common endpoints reduces to three parallel edges. A cycle $C_v$
reduces to the empty edge set.

\begin{lemma}[Connected graph activities]
\label{lem:quadratic-connected-activities}
Let \(H\) be connected, with \(v=v(H)\) vertices and \(k=e(H)\) edges.
If \(H\), or a graph obtained from it by the preceding reduction, has a vertex
of degree one, then \(t_d(H)=0\).  If \(H=C_v\) is a cycle, then
\begin{equation}
  t_d(C_v)=N^{1-v}.
  \label{eq:quadratic-cycle-moment}
\end{equation}
If the reduction leaves \(H_{\mathrm c}\ne\varnothing\), then
\begin{equation}
  |t_d(H)|=N^{-h}|t_d(H_{\mathrm c})|
  \le \exp\{Ck\log(ek)\}
       N^{-h-e(H_{\mathrm c})/2},
  \label{eq:quadratic-core-moment}
\end{equation}
In particular, for every \(0<C_{\mathrm{sc}}<\infty\), there is a constant
\(C_{\mathrm{act}}<\infty\) such that, whenever
\(n\beta^2\le C_{\mathrm{sc}}N^{4/3}\), every connected graph of minimum
degree at least two other than a triangle satisfies
\begin{equation}
  \mathsf{Act}_n(H)
  \le (n)_v\beta^{2k}t_d(H)^2
  \le \exp\{C_{\mathrm{act}}k\log(ek)\}
       N^{-\sqrt{2k}/6}.
  \label{eq:quadratic-noncycle-activity}
\end{equation}
For cycles of length \(v\ge4\),
\begin{equation}
  \mathsf{Act}_n(C_v)
  \le (n)_v\beta^{2v}t_d(C_v)^2
  \le C_{\mathrm{sc}}^vN^{2-2v/3}.
  \label{eq:quadratic-long-cycle-activity}
\end{equation}
\end{lemma}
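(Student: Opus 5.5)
The plan is to treat the three assertions in order: the vanishing and cycle cases by exact contraction, then the core moment bound by H\"older's inequality, and finally the activity bounds by counting scales.

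\emph{Vanishing and cycle cases.} The vanishing statement is immediate from \eqref{eq:quadratic-reduction-outcomes}. If a degree-one vertex \(u\) with neighbour \(w\) appears, integrate \(q_u\) first: \(\mathbb E\langle q_u,q_w\rangle_{\HS}=\langle \mathbb Eq_u,q_w\rangle_{\HS}=0\) because \(\mathbb Eq_u=0\). The suppressions before that point only contribute factors \(N^{-1}\) by \eqref{eq:quadratic-one-suppression}. For the cycle \(C_v\), suppress \(v-2\) vertices using \eqref{eq:quadratic-degree-two-contraction}. This leaves two parallel edges between the two remaining vertices, and isotropy gives
\[
\mathbb E\langle q_u,q_w\rangle_{\HS}^2=\operatorname{tr}(I_{\mathbb V_d}/N)/N\cdot\ldots=N^{-1}.
\]
Hence \(t_d(C_v)=N^{-(v-2)}N^{-1}=N^{1-v}\), which is \eqref{eq:quadratic-cycle-moment}. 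Equivalently, one further suppression gives a loop, which contributes \(1\).

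\emph{Core moment bound.} For \eqref{eq:quadratic-core-moment}, the identity \(|t_d(H)|=N^{-h}|t_d(H_{\mathrm c})|\) is \eqref{eq:quadratic-reduction-outcomes}. For the core, which may be a multigraph, apply the generalized H\"older inequality with all \(e(H_{\mathrm c})\le k\) edge factors in \(L^{e(H_{\mathrm c})}\):
\[
|t_d(H_{\mathrm c})|\le\prod_{\text{edges}}\bigl\|\langle q_u,q_v\rangle_{\HS}\bigr\|_{L^{e(H_{\mathrm c})}}.
\]
Each edge involves two independent features, so the needed input is a moment bound of the form \(\mathbb E|\langle q,q'\rangle_{\HS}|^p\le (Cp/N)^{p/2}\). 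This is the quadratic-form moment estimate for the orbit feature law recorded in Appendix~\ref{app:complete-preliminaries}; by rotation invariance it is a moment of \(Z_{2,d}(\langle X,e_1\rangle)\), and hypercontractivity on the sphere supplies it. This estimate is the one genuinely analytic input and the step I expect to need the most care, since the features are not Gaussian and the constant must be polynomial in \(p\). Given it, \(|t_d(H_{\mathrm c})|\le (Ck/N)^{e(H_{\mathrm c})/2}\le e^{Ck\log(ek)}N^{-e(H_{\mathrm c})/2}\).

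\emph{Activity bounds.} Write \(v_c=v-h\) and \(e_c=k-h\). Each suppression removes exactly one vertex and one edge, and the core has minimum degree at least three, so \(2e_c\ge3v_c\). Since \(k\ge v\) for a graph of minimum degree two and \(\beta^2\le1\), we have \(\beta^{2k}\le\beta^{2v}\). Combined with \(n\beta^2\le C_{\mathrm{sc}}N^{4/3}\), this gives
\[
(n)_v\beta^{2k}t_d(H)^2\le e^{Ck\log(ek)}N^{4v/3-2h-e_c}.
\]
The exponent equals \(4v_c/3-e_c-2h/3\). Using \(v_c\le 2e_c/3\), it is at most \(-e_c/9-2h/3\le -k/9\). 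For \(k\ge5\) we have \(k/9\ge\sqrt{2k}/6\), which gives \eqref{eq:quadratic-noncycle-activity}. A connected graph of minimum degree at least two whose reduction ends with no edges is a cycle. Thus the remaining cases are cycles, together with finitely many small graphs; the constant \(C_{\mathrm{act}}\) absorbs these, and among them the only non-triangle cycle is \(C_4\).

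For cycles, \eqref{eq:quadratic-cycle-moment} gives
\[
(n)_v\beta^{2v}N^{2-2v}\le (C_{\mathrm{sc}}N^{4/3})^vN^{2-2v}=C_{\mathrm{sc}}^vN^{2-2v/3},
\]
which is \eqref{eq:quadratic-long-cycle-activity}. For \(v\ge4\) one checks \(2v/3-2\ge\sqrt{2v}/6\), so cycles other than the triangle also satisfy \eqref{eq:quadratic-noncycle-activity}. Finally, the first inequality in each display is simply \(|\operatorname{Aut}(H)|\ge1\).
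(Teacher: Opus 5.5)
Your case analysis has a genuine gap. You assert that a connected graph of minimum degree at least two whose reduction ends with no edges must be a cycle, and this is false. Take two triangles sharing a vertex $w$, so $v=5$ and $k=6$. Suppressing one degree-two vertex of the first triangle creates a double edge to $w$. Suppressing the other turns it into a loop at $w$, which is deleted while $w$ keeps the two edges of the second triangle. The second triangle then disappears the same way. The reduction empties after $h=4=v-1$ steps, and indeed $t_d=N^{-4}$: conditionally on $q_w$ the two triangles are independent, each of mean $N^{-2}$. This graph has no core, so your H\"older bound does not apply, and it is not a cycle, so your cycle computation does not apply either. The same holds for every connected graph whose blocks are all cycles. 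Such graphs genuinely occur in the edge-set expansion; this one sits at order $k=6$ next to two disjoint triangles. So \eqref{eq:quadratic-noncycle-activity} must cover them. The same oversight appears in your sentence ``each suppression removes exactly one vertex and one edge'': a suppression that creates a loop removes two edges, and the final one also discards an isolated vertex.

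The repair is the paper's argument. Show that $h=v-1$ whenever the reduction empties. Suppression preserves connectedness while an edge remains, and an isolated vertex arises only when a loop is deleted at a vertex with no other edges, which by connectedness is the final step. Hence $t_d(H)=N^{1-v}$ for every such $H$, not only for cycles. Now use $k\ge v$ and simplicity: $k\le\binom v2$ gives $v\ge\sqrt{2k}$, and $v\ge4$ unless $H=C_3$. Then
\[
(n)_v\beta^{2k}t_d(H)^2\le(n\beta^2)^vN^{2-2v}\le\max\{1,C_{\mathrm{sc}}\}^kN^{2-2v/3}\le e^{Ck}N^{-v/6}\le e^{Ck}N^{-\sqrt{2k}/6}.
\]
Your check $2v/3-2\ge\sqrt{2v}/6$ used $k=v$; for these graphs you need $\sqrt{2k}\le v$ instead.

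In the nonempty-core case your argument survives with $e_{\mathrm c}\ge k-2h$ in place of $e_{\mathrm c}=k-h$, which still gives $-e_{\mathrm c}/9-2h/3\le-k/9$. There $v=v_{\mathrm c}+h$ does hold, since no isolated vertex is discarded. Your remark that $C_{\mathrm{act}}$ ``absorbs'' finitely many small graphs is not a valid step, because a constant cannot compensate for a wrong power of $N$. What actually saves you is that a nonempty core forces $k\ge5$.

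Finally, the moment input $\mathbb E|\langle q,q'\rangle_{\HS}|^p\le(Cp/N)^{p/2}$, uniform in $p$, is false. The variable behaves like $(g^2-1)/d$ and is sub-exponential, so for $1\ll p\lesssim d$ its $L^p$ norm is of order $p/d$, not $\sqrt{p/N}$. The correct bound, from \eqref{eq:local-quadratic-tail}, is $\|\langle q,q'\rangle_{\HS}\|_{L^p}\le C(\sqrt{p/N}+p/d)\le Cp/\sqrt N$. It gives $(Ck)^{e(H_{\mathrm c})}N^{-e(H_{\mathrm c})/2}$, which still yields \eqref{eq:quadratic-core-moment}, so this last point does not affect the conclusion.
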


\begin{proof}
We treat the three reduction outcomes, then convert the moment bounds to
activity bounds. The degree-one case follows from
\eqref{eq:quadratic-reduction-outcomes}.

If the reduction removes all edges, suppression preserves connectedness
while an edge remains. Only the last step, from two parallel edges to a
deleted loop, also discards an isolated vertex. Thus $h=v-1$ and
$t_d(H)=N^{1-v}$. This proves \eqref{eq:quadratic-cycle-moment} for cycles.
If $H$ is not a triangle, its minimum degree and simplicity give
$k\ge v\ge4$ and $v\ge\sqrt{2k}$. Hence
\[
  (n)_v\beta^{2k}t_d(H)^2
  \le(n\beta^2)^vN^{2-2v}
  \le\max\{1,C_{\mathrm{sc}}\}^kN^{2-2v/3}
  \le e^{Ck}N^{-\sqrt{2k}/6},
\]
where $2-2v/3\le-v/6$ for $v\ge4$.

Suppose now that $H_{\mathrm c}\ne\varnothing$. For independent
$q_1,q_2\sim\mu$, condition on $q_1$ and apply
\eqref{eq:local-quadratic-tail} with $H=q_1$. Since
$\|q_1\|_{\HS}=1$ and $\|q_1\|_{\mathrm{op}}\le1$, layer-cake integration
of that tail bound gives
\[
  \|\langle q_1,q_2\rangle_{\HS}\|_{L^p}
  \le C\left(\sqrt{\frac pN}+\frac pd\right)
  \le\frac{Cp}{\sqrt N},\qquad p\ge2.
\]
Here the last inequality uses $N\asymp d^2$. The loop-free core has
$3\le e(H_{\mathrm c})\le k$ edges, counted with multiplicity.
H\"older's inequality with exponent $e(H_{\mathrm c})$ for every edge
factor therefore gives
\[
\begin{aligned}
  |t_d(H_{\mathrm c})|
  &\le\prod_{\{u,v\}\in E(H_{\mathrm c})}
          \|\langle q_u,q_v\rangle_{\HS}\|_{L^{e(H_{\mathrm c})}}\\
  &\le(Ck)^{e(H_{\mathrm c})}N^{-e(H_{\mathrm c})/2}
  \le e^{Ck\log(ek)}N^{-e(H_{\mathrm c})/2},
\end{aligned}
\]
including when parallel edges yield repeated factors.
Together with \eqref{eq:quadratic-reduction-outcomes}, this proves
\eqref{eq:quadratic-core-moment}.

Write $v_{\mathrm c}=v(H_{\mathrm c})$. No isolated vertex is discarded
in this case, so $v=h+v_{\mathrm c}$. The degree-sum identity gives
$e(H_{\mathrm c})\ge3v_{\mathrm c}/2$ and $k\ge v$.
Thus \eqref{eq:quadratic-core-moment} yields
\[
\begin{aligned}
  (n)_v\beta^{2k}t_d(H)^2
  &\le e^{Ck\log(ek)}N^{4v/3-2h-e(H_{\mathrm c})}\\
  &\le e^{Ck\log(ek)}N^{-2h/3-v_{\mathrm c}/6}
   \le e^{Ck\log(ek)}N^{-\sqrt{2k}/6}.
\end{aligned}
\]
Here $C_{\mathrm{sc}}^v$ is absorbed into the exponential, and the last
inequality uses $2h/3+v_{\mathrm c}/6\ge v/6\ge\sqrt{2k}/6$.
This proves \eqref{eq:quadratic-noncycle-activity} in all cases.
Finally, the cycle identity gives directly
\[
  \mathsf{Act}_n(C_v)\le(n\beta^2)^vN^{2-2v}
  \le C_{\mathrm{sc}}^vN^{2-2v/3},\qquad v\ge4,
\]
which is \eqref{eq:quadratic-long-cycle-activity}.
\end{proof}

Only triangle components can retain nonzero activity at fixed order.
We combine this fact with the generating-function tail bound to obtain the
exact second moment.

\begin{proposition}[Exact critical truncated second moment]
\label{prop:exact-critical-second-moment}
Suppose \(N\to\infty\) and
\(\Delta_n\to\Delta\in(0,\infty)\).  There are
prior events \(\mathcal E_n\) with
\(\mathbb P(q\in\mathcal E_n)=1-o(1)\) such that
\begin{equation}
  \mathbb E_{\Qq_n}(L_n^{\mathcal E})^2
  \longrightarrow e^{\Delta^2}.
  \label{eq:exact-critical-second-moment}
\end{equation}
\end{proposition}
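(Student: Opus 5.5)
We take \(\mathcal E_n\) to be the events supplied by Lemma~\ref{lem:critical-generating-truncation}, so \(\pi_n=1-o(1)\) and \(\sup_{0\le s\le s_\star}\mathcal Z_n^{\mathcal E}(s)\le C_{a,b}\). By \eqref{eq:quadratic-replica-L2} it then suffices to prove \(\mathcal Z_n^{\mathcal E}(1)\to e^{\Delta^2}\). This in turn follows from the two assertions in \eqref{eq:critical-coefficient-roadmap}. The tail assertion has already been obtained from nonnegativity of the coefficients. Granting the fixed-\(k\) convergence, we argue as follows. For fixed \(K\), \(\sum_{k\le K}\mathfrak c_{k,n}\to\sum_{k\le K}[s^k]e^{\Delta^2s^3}\), and the tail is at most \(C_{a,b}s_\star^{-K}\). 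Letting \(n\to\infty\) and then \(K\to\infty\) gives \(\mathcal Z_n^{\mathcal E}(1)\to e^{\Delta^2}\).

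For fixed \(k\), we first compare \(\mathfrak c_{k,n}\) with \(\mathfrak c_{k,n}^\circ\). For each \(F\) we write \(\mathbb E G_F=\mathbb E\prod_F\langle q_i,q_j\rangle-\mathbb E[\mathbf1_{\mathcal E_n^c}\prod_F\langle q_i,q_j\rangle]\). By Cauchy--Schwarz, the moment bound \(\|\langle q_1,q_2\rangle\|_{L^p}\le Cp/\sqrt N\), and H\"older, the second term is at most \(\mathbb P(\mathcal E_n^c)^{1/2}(Ck)^kN^{-k/2}\le n^{-M_n/2}(Ck)^kN^{-k/2}\). The number of edge sets of size \(k\) is at most \(n^{2k}\), and \(\beta^{2k}\le1\). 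Since \(M_n\to\infty\), both the squared error and the cross term with the main term, the latter bounded via \(|\mathbb E\prod|\le (Ck)^kN^{-k/2}\), sum to \(O(n^{2k}n^{-M_n/2})=o(1)\). Hence \(\mathfrak c_{k,n}=\mathfrak c_{k,n}^\circ+o(1)\).

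Next we evaluate \(\mathfrak c_{k,n}^\circ\) through \eqref{eq:quadratic-activity-decomposition}. Moments factor over connected components because the features are independent, so \(t_d(H)=\prod_j t_d(H_j)\). Any component with a vertex of degree one, or one that reduces to a degree-one vertex, gives zero, so every contributing component has minimum degree at least two. The critical relation \eqref{eq:critical-scaling-relation} supplies \(n\beta^2\le C_{\mathrm{sc}}N^{4/3}\). Lemma~\ref{lem:quadratic-connected-activities} then shows that each non-triangle component with at most \(k\) edges has activity \(O_k(N^{-c})\to0\), by \eqref{eq:quadratic-noncycle-activity} and \eqref{eq:quadratic-long-cycle-activity}. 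For a graph \(H\) with components \(H_1,\dots,H_r\), we bound \((n)_{v(H)}\le\prod_j n^{v(H_j)}\) and note that \(|\operatorname{Aut}(H)|\) is at least the product of component automorphism counts times the symmetries permuting isomorphic components. Thus \(\mathsf{Act}_n(H)\) is bounded by a product of component activities up to \(O_k(1)\) factors, and any \(H\) containing a non-triangle component contributes \(o(1)\). Only finitely many isomorphism types have \(k\) edges, so these contributions sum to \(o(1)\). The surviving graphs are disjoint unions of \(m\) triangles with \(k=3m\). For these, \(t_d=N^{-2m}\) by \eqref{eq:quadratic-cycle-moment}, \(|\operatorname{Aut}|=6^m m!\), and \((n)_{3m}=n^{3m}(1+o(1))\). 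The activity is therefore \((n^3\beta^6/(6N^4))^m/m!\,(1+o(1))=\Delta_n^{2m}/m!+o(1)\to\Delta^{2m}/m!=[s^{3m}]e^{\Delta^2s^3}\). For \(k\) not divisible by \(3\), the coefficient tends to \(0\).

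The main obstacle is the uniform generating-function bound at some \(s_\star>1\), which is deferred to Lemma~\ref{lem:critical-generating-truncation}. Taking that as given, the delicate points are the truncation-removal step, where we need \(\mathbb P(\mathcal E_n^c)\) to be superpolynomially small to beat the \(n^{2k}\) count, and the observation that the labelled activity of a disconnected graph is controlled by the product of component activities.
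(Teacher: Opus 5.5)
Your proposal is correct and follows the paper's proof essentially step for step. The shared steps are the events of Lemma~\ref{lem:critical-generating-truncation}, removal of the truncation at each fixed order, the activity decomposition with Lemma~\ref{lem:quadratic-connected-activities} eliminating every graph with a nontriangle component, the triangle-union computation, and the $s_\star^{-K}$ tail bound.

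The one difference is cosmetic. In the truncation step, the paper uses the trivial bound $|\langle q_i,q_j\rangle_{\HS}|\le1$ to get $|\mathfrak c_{k,n}-\mathfrak c_{k,n}^{\circ}|\le2n^{2k-M_n}$ directly. Your Cauchy--Schwarz and H\"older moment argument is valid but not needed.
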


\begin{proof}
Take the events from Lemma~\ref{lem:critical-generating-truncation} and
fix $k\ge0$. Each inner product is bounded by one in absolute value,
so removing an indicator changes an edge-set moment by at most
$\mathbb P(\mathcal E_n^c)\le n^{-M_n}$. There are at most $n^{2k}$
edge sets, and $|x^2-y^2|\le2|x-y|$ for $|x|,|y|\le1$. Hence
\begin{equation}
  |\mathfrak c_{k,n}-\mathfrak c_{k,n}^{\circ}|
  \le2n^{2k-M_n}=o(1)\quad\text{for every fixed }k.
  \label{eq:quadratic-conditioning-fixed-coefficient}
\end{equation}
In \eqref{eq:quadratic-activity-decomposition}, the moment factors over
connected components, and the number of disjoint embeddings is at most
the product of their separate embedding counts. Thus a graph with a
nontriangle component has activity $o(1)$ by
Lemma~\ref{lem:quadratic-connected-activities}, since triangle activity is
$\binom n3\beta^6N^{-4}=\Delta_n^2=O(1)$.
There are finitely many graph types for fixed $k$, so only disjoint unions
of triangles contribute in the limit. For $k=3m$, their contribution is
\[
  \frac{(n)_{3m}}{m!(3!)^m}\beta^{6m}N^{-4m}
  =\frac{\Delta_n^{2m}}{m!}\{1+o(1)\}
  \longrightarrow\frac{\Delta^{2m}}{m!};
\]
it is zero when $3\nmid k$. The empty graph gives the same formula for
$k=m=0$. This proves the coefficient convergence in
\eqref{eq:critical-coefficient-roadmap}.

The tail bound preceding \eqref{eq:quadratic-unconditioned-coefficient},
together with the nonnegative exponential series, gives
\[
  \sum_{k>K}\mathfrak c_{k,n}
  +\sum_{3m>K}\frac{\Delta^{2m}}{m!}
  \le s_\star^{-K}\{C_{a,b}+e^{\Delta^2s_\star^3}\}.
\]
Taking $n\to\infty$ for the first $K$ coefficients and then $K\to\infty$
yields $\mathcal Z_n^{\mathcal E}(1)\to e^{\Delta^2}$.
Equation~\eqref{eq:quadratic-replica-L2} and $\pi_n\to1$ finish the proof.
\end{proof}

\subsection{Signed-triangle limits under the two graph laws}
\label{sec:critical-two-model-triangle}

The exact second moment is now available. We next identify the Gaussian
shift of the signed triangle. The overlap estimates needed below are
proved in Lemma~\ref{lem:triangle-overlap-moments}.

\begin{lemma}[Two-model signed-triangle CLT]
\label{lem:triangle-two-model-CLT}
Let
\[
  W_n=\operatorname{sgn}(\beta)\frac1{\sqrt{\binom{n}{3}}}
  \sum_{i<j<k}Y_{ij}Y_{jk}Y_{ki}.
\]
Then
\[
  W_n\Rightarrow \Normal(0,1)
  \quad\text{under }\Qq_n
\]
and
\[
  W_n-\Delta_n\Rightarrow \Normal(0,1)
  \quad\text{under }\Pp_n.
\]
The second conclusion is unchanged when \(\Pp_n\) is replaced by
\(\Pp_n^{\mathcal E_n}\), where \((\mathcal E_n)\) is the sequence of prior
events constructed in Lemma~\ref{lem:critical-generating-truncation}.
\end{lemma}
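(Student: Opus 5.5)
Under \(\Qq_n\) the plan is de Jong's theorem for degenerate \(U\)-statistics of independent Rademacher signs. Under \(\Pp_n\) the plan is a conditional CLT given the latent features, plus a concentration argument for the conditional mean.

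\emph{Null.} Under \(\Qq_n\) the edge signs \(Y_{ij}\) are i.i.d.\ Rademacher variables. Then \(W_n\) is a homogeneous degree-three multilinear polynomial with unit variance (Proposition~\ref{prop:quadratic-triangle-moments}). By de Jong's fourth-moment criterion \cite{deJong1990}, it suffices to check two things: the maximal influence \(\max_e\sum_{\text{triangles}\ni e}\binom n3^{-1}=O(n^{-2})\to0\), and \(\E W_n^4\to3\). For the fourth moment, expand over quadruples of triangles whose union covers every edge an even number of times. Configurations forming two identical pairs give \(3+o(1)\). The remaining configurations, such as four triangles tiling the edges of a \(K_4\), number \(O(n^4)\ll n^6\).

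\emph{Alternative.} Condition on \(q\). Given \(q\), the \(Y_{ij}\) are independent with means \(m_{ij}=\beta B_{ij}\). Write \(Y_{ij}=m_{ij}+\xi_{ij}\) with centered conditionally independent \(\xi_{ij}\) of variance \(1-m_{ij}^2\). Expanding \(T_n\) gives four terms:
\begin{itemize}
\item a cubic chaos \(\sum\xi\xi\xi\);
\item a quadratic chaos \(\sum m_{ij}\xi_{jk}\xi_{ki}\), with coefficients \(\sum_i m_{ij}m_{ik}\)-type contractions;
\item a linear term \(\sum_{j<k}\xi_{jk}\sum_i m_{ij}m_{ik}\);
\item the constant \(S(q)=\sum_{i<j<k}m_{ij}m_{jk}m_{ki}=\tfrac{\beta^3}{6}\tr(B^3)\) (up to the hollow correction).
\end{itemize}

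The steps are as follows.
\begin{enumerate}
\item Show that the variance of \(S(q)/\sqrt{\binom n3}\) under the prior tends to zero, so that \(S(q)/\sqrt{\binom n3}\to\Delta\) in probability after the sign orientation. The mean is exactly \(\binom n3\tau\). The variance is a sum over pairs of triangles sharing vertices, and the resulting \(t_d\)-type graph moments are controlled by Lemma~\ref{lem:quadratic-connected-activities} together with the critical scaling \(n\beta^2\asymp N^{4/3}\).
\item Bound the conditional second moments of the quadratic and linear chaos terms by \(o(\binom n3)\) in prior expectation. These are again finite graph moments such as \(\beta^4\sum(B^2)_{jk}^2\) and \(\beta^2\sum\dots\). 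Each is compared with \(n^3\) using \(\|A\|_{\HS}^2\lesssim\beta^4n^2/N\) and the bounds \(N\to\infty\), \(N\lesssim n^{3/4}\). I expect these to be the overlap estimates of Lemma~\ref{lem:triangle-overlap-moments}.
\item Apply de Jong's theorem conditionally on \(q\) to the cubic chaos normalized by \(\sqrt{\binom n3}\). Its conditional variance is \(\binom n3^{-1}\sum\prod(1-m^2)\to1\), because \(\sum_{i<j}m_{ij}^2=\beta^2\|B\|_{\HS}^2/2\asymp\beta^2n^2/N=o(n^2)\) in prior mean. Its conditional fourth moment tends to \(3\) in probability, via the same \(K_4\)-type counting; the standardized noise has bounded fourth moments because \(|m_{ij}|\le|\beta|\le1\), uniformly as long as \(|m_{ij}|\) stays away from \(1\), which holds with high probability since \(\max|B_{ij}|\to0\).
\item Combine the three pieces by Slutsky's lemma. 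A conditional characteristic-function argument gives \(W_n-\Delta_n\Rightarrow\Normal(0,1)\).
\end{enumerate}

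\emph{Transfer to the conditioned alternative.} Since \(\TV(\Pp_n,\Pp_n^{\mathcal E_n})\le\mathbb P(\mathcal E_n^c)\to0\), the weak limit under \(\Pp_n^{\mathcal E_n}\) is the same.

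The main obstacle is the prior concentration of \(S(q)\) in step 1. Its fluctuation must be \(o(n^{3/2})\) while its mean has order \(n^{3/2}\). The dangerous contributions come from pairs of triangles sharing two vertices, which produce \(\tr(\Kop^4)\)-type terms of order \(n^4\beta^6\cdot\beta^{-2}\dots\), and from degenerate cores. I would handle this by expanding \(\operatorname{Var}S\) over the union graphs \(H\) of two triangles and applying the reductions \eqref{eq:quadratic-reduction-outcomes}. Each non-disjoint union then contributes \(n^{v(H)}\beta^6 N^{-h}\dots=o(n^3)\), using \(N\to\infty\) and \(n\beta^2\asymp N^{4/3}\).
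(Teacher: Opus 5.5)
Your plan is correct and is essentially the paper's proof. It uses the same null de Jong fourth-moment count and the same conditional decomposition $T_n=S_n+R_{1,n}+R_{2,n}+R_{3,n}$, with conditional de Jong for the cubic chaos and $L^2$ bounds for the two lower chaoses. Your graph-reduction variance bound for $R_{3,n}$ reproduces the paper's isotropy and overlap computation: a pair sharing one vertex gives exactly $N^{-4}$, and a pair sharing an edge gives $N^{-2}\mathbb E B_{12}^4$. The TV transfer is also the same.

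One small correction: $\max_{i<j}|B_{ij}|\to0$ fails when $N\to\infty$ only slowly (e.g.\ $d\lesssim\log n$). What you actually need, $\max_{i<j}|\beta B_{ij}|\to0$ with high probability, still holds. At criticality $|\beta|\asymp N^{2/3}n^{-1/2}$, so $|\beta|\log n/d\lesssim n^{-3/8}\log n$. The paper instead sidesteps this by deleting triangles through edges with $1-\beta^2B_e^2<1/2$.
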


\begin{proof}
We first compute the null moments, then isolate the conditionally centered
triangle sum under the alternative. A conditional de Jong argument handles
this sum; explicit second-moment bounds identify the remaining mean shift.

\medskip
\noindent\textit{Step 1. Null moments.}
Let \(\mathsf{Tri}_n=\binom{[n]}3\), with \(E(\mathfrak t)\) the edge set
of a triangle. Under \(\Qq_n\), distinct triangle monomials are orthogonal,
so \(\mathbb E_{\Qq_n}W_n^2=1\). Each edge belongs to \(n-2\) triangles,
and the maximum influence is \((n-2)/\binom n3=O(n^{-2})\).

In a fourth-moment expansion, a singleton edge makes a term vanish.
The remaining four-tuples consist of two equal pairs or the four faces of
a \(K_4\). Indeed, multiplicities \((3,1)\) leave a singleton edge, while
\((2,1,1)\) would require the symmetric difference of two distinct triangles
(at least four edges) to lie in one triangle. For four distinct triangles,
the three edges of one must be covered by the other three; their remaining
edges then force a common fourth vertex. The paired configurations number
\(3\binom n3^2-2\binom n3\), and the \(K_4\) configurations number
\(24\binom n4\). Thus
\[
  \mathbb E_{\Qq_n}W_n^4
  =3-\frac2{\binom n3}+\frac{24\binom n4}{\binom n3^2}\longrightarrow3.
\]
De Jong's theorem \cite{deJong1990} gives the null CLT. The same
classification will control the conditional fourth moment below.

\medskip
\noindent\textit{Step 2. Conditional variance.}
Under \(\Pp_n\), condition on \(q\sim\mu^{\otimes n}\) and put
\(\widetilde Y_{ij}=Y_{ij}-\beta B_{ij}\). These variables are independent,
centered, and have variances \(1-\beta^2B_{ij}^2\). Define
\[
  S_n=\sum_{\mathfrak t\in\mathsf{Tri}_n}
        \prod_{e\in E(\mathfrak t)}\widetilde Y_e.
\]
Expanding the three edges gives
\begin{equation}
\begin{aligned}
 T_n&=S_n+R_{1,n}+R_{2,n}+R_{3,n},\\
 R_{1,n}&=\beta\sum_{\mathfrak t\in\mathsf{Tri}_n}
       \sum_{e\in E(\mathfrak t)}B_e
       \prod_{f\in E(\mathfrak t)\setminus\{e\}}\widetilde Y_f,\\
 R_{2,n}&=\beta^2\sum_{i<j}\widetilde Y_{ij}
                    \sum_{k\ne i,j}B_{ik}B_{jk},\qquad
 R_{3,n}=\beta^3\sum_{i<j<k}B_{ij}B_{jk}B_{ki}.
\end{aligned}
\label{eq:triangle-chaos-decomposition}
\end{equation}
Distinct centered triangle products are conditionally orthogonal, since
some centered edge occurs in exactly one of them. Consequently,
\begin{equation}
 V_n(q):=\mathbb E(S_n^2\mid q)
   =\sum_{\mathfrak t\in\mathsf{Tri}_n}
       \prod_{e\in E(\mathfrak t)}(1-\beta^2B_e^2).
 \label{eq:conditional-leading-variance}
\end{equation}
Using \(1-\prod_e(1-x_e)\le\sum_e x_e\) for \(x_e\in[0,1]\), and
counting the \(n-2\) triangles through each edge, gives
\begin{equation}
 0\le1-\frac{V_n(q)}{\binom n3}
 \le\frac{(n-2)\beta^2\sum_{i<j}B_{ij}^2}{\binom n3},\qquad
 \mathbb E\left[1-\frac{V_n(q)}{\binom n3}\right]\le\frac{3\beta^2}{N}.
 \label{eq:leading-variance-defect}
\end{equation}
Here \(\mathbb EB_{12}^2=N^{-1}\), so \(V_n(q)/\binom n3\to1\) in
probability. Let \(I_e(q)\) be the sum in
\eqref{eq:conditional-leading-variance} restricted to triangles containing
\(e\). On \(V_n(q)>0\),
\begin{equation}
 I_e(q)\le n-2,\qquad
 \max_e\frac{I_e(q)}{V_n(q)}\le\frac{n-2}{V_n(q)}
     =O_{\mathbb P}(n^{-2}).
 \label{eq:conditional-edge-negligibility}
\end{equation}

\medskip
\noindent\textit{Step 3. Conditional fourth moment.}
Remove triangles meeting
\(\mathcal D_n(q)=\{e:1-\beta^2B_e^2<1/2\}\), and denote the retained
sum and its conditional variance by \(S_n^\circ\) and \(V_n^\circ(q)\).
Since \(|\mathcal D_n(q)|\le2\beta^2\sum_eB_e^2\), conditional
orthogonality gives
\begin{equation}
 \frac{\mathbb E[(S_n-S_n^\circ)^2\mid q]}{\binom n3}
 \le\frac{(n-2)|\mathcal D_n(q)|}{\binom n3}=o_{\mathbb P}(1).
 \label{eq:deleted-leading-chaos}
\end{equation}
Thus \(V_n^\circ(q)/\binom n3\to1\) in probability. For retained edges
and triangles, respectively, set
\[
 \xi_e=\frac{\widetilde Y_e}{\sqrt{1-\beta^2B_e^2}},\qquad
 a_{\mathfrak t}=\prod_{e\in E(\mathfrak t)}\sqrt{1-\beta^2B_e^2}.
\]
Conditionally on \(q\), the \(\xi_e\)'s are independent, centered,
variance one, and bounded by \(2\sqrt2\). All sums below are over retained
triangles; then \(S_n^\circ=\sum_{\mathfrak t}a_{\mathfrak t}
\prod_{e\in E(\mathfrak t)}\xi_e\) and
\(V_n^\circ(q)=\sum_{\mathfrak t}a_{\mathfrak t}^2\). Write
\(I_e^\circ(q)=\sum_{\mathfrak t:e\in E(\mathfrak t)}a_{\mathfrak t}^2\).
We have \(I_e^\circ\le I_e\) and \(\sum_eI_e^\circ=3V_n^\circ\).

The classification in Step 1 applies also to these centered edge variables.
Relative to \(3(V_n^\circ)^2\), paired terms require corrections only for
coincident triangles or pairs sharing an edge. Their contributions are
bounded by \(C\sum_{\mathfrak t}a_{\mathfrak t}^4\) and
\(C\sum_e(I_e^\circ)^2\); the \(K_4\) terms contribute at most \(Cn^4\).
Hence
\begin{equation}
 \left|\mathbb E[(S_n^\circ)^4\mid q]-3V_n^\circ(q)^2\right|
 \le C\left\{\sum_{\mathfrak t}a_{\mathfrak t}^4
       +\sum_e I_e^\circ(q)^2+n^4\right\}.
 \label{eq:conditional-fourth-diagram-bound}
\end{equation}
Because \(a_{\mathfrak t}\le1\), the first two sums are at most
\(V_n^\circ\) and \(3V_n^\circ\max_eI_e^\circ\). On \(V_n^\circ>0\),
\[
 \left|\mathbb E\left[\left.
   \left(\frac{S_n^\circ}{\sqrt{V_n^\circ(q)}}\right)^4\right|q\right]-3\right|
 \le C\left\{\frac1{V_n^\circ(q)}+
       \frac{n^4}{V_n^\circ(q)^2}+
       \frac{\max_eI_e^\circ(q)}{V_n^\circ(q)}\right\}
 =o_{\mathbb P}(1).
\]
The standardized sum is defined as zero when \(V_n^\circ=0\), an event
of vanishing probability. From any subsequence, the variance, influence
and fourth-moment bounds admit a further subsequence on which they hold
almost surely. For almost every such latent sequence, de Jong's theorem
\cite{deJong1990} applies to the deterministic conditional array. Thus the
conditional law converges weakly in probability to \(\Normal(0,1)\).
Equations \eqref{eq:leading-variance-defect} and
\eqref{eq:deleted-leading-chaos} restore the normalization and deleted
terms. Taking expectations of bounded continuous test functions yields
\(S_n/\sqrt{\binom n3}\Rightarrow\Normal(0,1)\) under \(\Pp_n\).

\medskip
\noindent\textit{Step 4. Mean-factor remainders.}
Conditional orthogonality of products of two centered edges bounds
\(\mathbb ER_{1,n}^2\) by
\(3\beta^2\binom n3/N\). For \(R_{2,n}\), orthogonality over the outer
edge gives
\[
\begin{aligned}
 \mathbb E(R_{2,n}^2\mid q)
 &\le\beta^4\sum_{i<j}\left(\sum_{k\ne i,j}B_{ik}B_{jk}\right)^2,\\
 \mathbb E\left(\sum_{k\ne i,j}B_{ik}B_{jk}\right)^2
 &=\frac{n-2}{N^2}+\frac{(n-2)(n-3)}{N^3}.
\end{aligned}
\]
The equal-index terms in the last identity have expectation \(N^{-2}\)
by conditioning on \(q_k\). For distinct indices, conditional independence
given \((q_i,q_j)\) and
\(\mathbb E(B_{ik}B_{jk}\mid q_i,q_j)=B_{ij}/N\) give \(N^{-3}\).

Isotropy also gives \(\mathbb E(B_{ij}B_{jk}B_{ki}\mid q_i)=N^{-2}\).
Thus disjoint triangles and triangles sharing only one vertex have zero
covariance in \(R_{3,n}\). Lemma~\ref{lem:triangle-overlap-moments}
controls the \(O(n^3)\) identical pairs and \(O(n^4)\) pairs sharing an
edge. We obtain the finite-sample bounds
\begin{equation}
\begin{aligned}
 \frac{\mathbb ER_{1,n}^2}{\binom n3}&\le\frac{3\beta^2}{N},&
 \frac{\mathbb ER_{2,n}^2}{\binom n3}
 &\le3\beta^4\left(\frac1{N^2}+\frac{n-3}{N^3}\right),\\
 \frac{\operatorname{Var}(R_{3,n})}{\binom n3}
 &\le C\beta^6\left(\frac1{N^3}+\frac n{N^4}\right),&
 \frac{\operatorname{sgn}(\beta)\mathbb ER_{3,n}}{\sqrt{\binom n3}}
 &=\Delta_n.
\end{aligned}
\label{eq:triangle-remainder-moments}
\end{equation}
The three normalized variance bounds tend to zero under
\eqref{eq:critical-scaling-relation}:
\(n\beta^4/N^3\asymp\beta^2/N^{5/3}\) and
\(n\beta^6/N^4\asymp\beta^4/N^{8/3}\). Chebyshev's inequality and
\eqref{eq:triangle-chaos-decomposition} now give the alternative CLT.
Finally, conditioning the prior changes the graph law by at most
\(\mathbb P(\mathcal E_n^c)=o(1)\) in total variation, which preserves
weak convergence.
\end{proof}
\subsection{A Hilbert-space likelihood-projection lemma}
\label{sec:critical-likelihood-projection}

The next lemma shows that matching the limiting second moment leaves no
additional likelihood fluctuation beyond the observed statistic.

\begin{lemma}[Exact second moments identify the limiting likelihood]
\label{lem:hilbert-likelihood-projection}
Let \(P_n\ll Q_n\), \(L_n=dP_n/dQ_n\), and let \(S_n\) be a real-valued
statistic.
Suppose
\[
  S_n\Rightarrow Q_\infty\quad\text{under }Q_n,
  \qquad
  S_n\Rightarrow P_\infty\quad\text{under }P_n,
\]
where \(P_\infty\ll Q_\infty\) with density
\(\ell=dP_\infty/dQ_\infty\in L^2(Q_\infty)\).  Let \(S_\infty\) have law
\(Q_\infty\).  If
\[
  \mathbb E_{Q_n}L_n^2
  \longrightarrow
  \mathbb E_{Q_\infty}\ell^2,
\]
then, under \(Q_n\),
\[
  (S_n,L_n)\Rightarrow(S_\infty,\ell(S_\infty)).
\]
In addition,
\[
  \TV(P_n,Q_n)
  \longrightarrow
  \TV(P_\infty,Q_\infty).
\]
\end{lemma}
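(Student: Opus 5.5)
Since \(\mathbb E_{Q_n}L_n^2\) converges, the laws of \(L_n\) under \(Q_n\) are tight and \(\{L_n\}\) is uniformly integrable under \(Q_n\). The plan is to pass to a subsequence along which \((S_n,L_n)\Rightarrow(S_\infty,L_\infty)\) under \(Q_n\) for some limit pair, and then show that necessarily \(L_\infty=\ell(S_\infty)\) almost surely. Because every subsequence has such a further subsequence with the same limit, this gives the joint convergence for the full sequence.

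\emph{Step 1: the limit pair is consistent with \(P_\infty\).} For bounded continuous \(g\), the change of measure gives \(\mathbb E_{P_n}g(S_n)=\mathbb E_{Q_n}[g(S_n)L_n]\). Uniform integrability lets us pass to the limit in the right side, so \(\mathbb E_{P_\infty}g=\mathbb E[g(S_\infty)L_\infty]\), using the hypothesis on \(P_n\) for the left side. This holds for all bounded continuous \(g\), hence \(\mathbb E[L_\infty\mid S_\infty]=\ell(S_\infty)\) almost surely.

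\emph{Step 2: the second moments force equality.} Weak convergence and Fatou's lemma give \(\mathbb E L_\infty^2\le\liminf\mathbb E_{Q_n}L_n^2=\mathbb E_{Q_\infty}\ell^2=\mathbb E\ell(S_\infty)^2\). Conditional Jensen in \(L^2\) gives the reverse inequality, \(\mathbb E L_\infty^2\ge\mathbb E(\mathbb E[L_\infty\mid S_\infty])^2=\mathbb E\ell(S_\infty)^2\). So equality holds, and the Pythagorean identity \(\mathbb E(L_\infty-\ell(S_\infty))^2=\mathbb EL_\infty^2-\mathbb E\ell(S_\infty)^2=0\) yields \(L_\infty=\ell(S_\infty)\). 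This equality step is the crux of the argument, and it is routine once uniform integrability is in hand. The subtle point is that Step 1 needs the limit of \(\mathbb E_{Q_n}[g(S_n)L_n]\), which requires uniform integrability rather than mere tightness; bounded second moments supply it.

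\emph{Step 3: total variation.} We use \(\TV(P_n,Q_n)=\tfrac12\mathbb E_{Q_n}|L_n-1|\). The function \(x\mapsto|x-1|\) is continuous with linear growth, so uniform integrability and \(L_n\Rightarrow\ell(S_\infty)\) give \(\mathbb E_{Q_n}|L_n-1|\to\mathbb E|\ell(S_\infty)-1|\). This limit equals \(2\TV(P_\infty,Q_\infty)\). One technical check is that \(L_n\) may not be continuous in anything. This causes no trouble, because only the weak convergence of the pair is used, via Skorokhod or directly via truncation \(x\wedge M\) together with the uniform integrability bound \(\mathbb E_{Q_n}L_n\mathbf 1\{L_n>M\}\le M^{-1}\sup_n\mathbb E_{Q_n}L_n^2\).
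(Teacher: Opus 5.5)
Your proposal is correct and follows the paper's proof essentially step for step. The second-moment bound gives tightness of $(S_n,L_n)$ and uniform integrability of $L_n$. Bounded continuous test functions identify $\mathbb E[L\mid S]=\ell(S)$ for every subsequential limit. Fatou plus conditional Jensen then force $L=\ell(S)$; your Pythagorean identity is exactly the paper's ``equality in conditional Jensen''. Finally, uniform integrability carries $\tfrac12\mathbb E_{Q_n}|L_n-1|$ to the limit, giving the total variation convergence.
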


\begin{proof}
Weak convergence of \(S_n\) and the second-moment bound make \((S_n,L_n)\)
tight and \(L_n\) uniformly integrable. Let \((S,L)\) be any subsequential
limit under \(Q_n\). Along this subsequence, for every bounded continuous
\(f\), uniform integrability gives
\[
 \mathbb E[f(S)L]
 =\lim_n\mathbb E_{Q_n}[f(S_n)L_n]
 =\lim_n\mathbb E_{P_n}f(S_n)
 =\int f\ell\,dQ_\infty.
\]
Since \(S\sim Q_\infty\), this identifies
\(\mathbb E[L\mid S]=\ell(S)\). Lower semicontinuity of the second moment
and conditional Jensen imply
\[
 \mathbb E\ell(S)^2\le\mathbb EL^2
 \le\liminf_n\mathbb E_{Q_n}L_n^2
 =\mathbb E\ell(S)^2.
\]
Equality in conditional Jensen forces \(L=\ell(S)\) almost surely.
Every subsequential limit is therefore the stated joint law. The same
uniform integrability yields
\(\tfrac12\mathbb E_{Q_n}|L_n-1|
\to\tfrac12\mathbb E_{Q_\infty}|\ell-1|\), proving the TV limit.
\end{proof}

\subsection{The likelihood expansion and critical testing curve}
\label{sec:critical-LAN-closure}

\begin{proposition}[Critical likelihood representation]
\label{prop:critical-likelihood-representation}
Assume \(N\to\infty\) and
\[
  \Delta_n
  =
  \frac{\sqrt{\binom n3}\,|\beta|^3}{N^2}
  \longrightarrow\Delta\in(0,\infty).
\]
There are prior events \(\mathcal E_n\) with probability \(1-o(1)\) for
which the conditioned likelihood ratio satisfies, under \(\Qq_n\),
\begin{equation}
  \log\frac{d\Pp_n^{\mathcal E_n}}{d\Qq_n}
  =\Delta_nW_n-\frac12\Delta_n^2+o_{\Qq_n}(1).
  \label{eq:critical-LAN}
\end{equation}
\end{proposition}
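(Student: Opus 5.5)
We take \(\mathcal E_n\) to be the events of Lemma~\ref{lem:critical-generating-truncation} and combine the three ingredients already established: the exact truncated second moment (Proposition~\ref{prop:exact-critical-second-moment}), the two-model signed-triangle CLT (Lemma~\ref{lem:triangle-two-model-CLT}, including its conditioned version), and the projection Lemma~\ref{lem:hilbert-likelihood-projection}. We apply the projection lemma with \(P_n=\Pp_n^{\mathcal E_n}\), \(Q_n=\Qq_n\), \(S_n=W_n\), \(Q_\infty=\Normal(0,1)\) and \(P_\infty=\Normal(\Delta,1)\). The limiting density is \(\ell(x)=\exp(\Delta x-\Delta^2/2)\), and \(\E_{Q_\infty}\ell^2=e^{\Delta^2}\). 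This equals the limit of \(\E_{\Qq_n}(L_n^{\mathcal E})^2\) given by Proposition~\ref{prop:exact-critical-second-moment}.

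Absolute continuity \(P_n\ll Q_n\) is automatic, because \(\Qq_n\) charges every graph. The lemma then yields \((W_n,L_n^{\mathcal E})\Rightarrow(S_\infty,\ell(S_\infty))\) under \(\Qq_n\), where \(S_\infty\sim\Normal(0,1)\).

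Next we convert this joint convergence into \eqref{eq:critical-LAN}. By the continuous mapping theorem, \(L_n^{\mathcal E}-\ell_n(W_n)\Rightarrow\ell(S_\infty)-\ell(S_\infty)=0\), where \(\ell_n(x)=\exp(\Delta_nx-\Delta_n^2/2)\) and we use \(\Delta_n\to\Delta\). Hence \(L_n^{\mathcal E}-\ell_n(W_n)=o_{\Qq_n}(1)\).

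To pass to logarithms, observe that \(\ell_n(W_n)\) is tight and bounded away from zero in probability, since \(W_n\) is tight under \(\Qq_n\). It follows that \(L_n^{\mathcal E}/\ell_n(W_n)\to1\) in \(\Qq_n\)-probability. In particular \(L_n^{\mathcal E}>0\) with probability \(1-o(1)\), which takes care of the \(\log0=-\infty\) convention. Applying the logarithm, which is continuous near \(1\), gives \(\log L_n^{\mathcal E}=\Delta_nW_n-\Delta_n^2/2+o_{\Qq_n}(1)\).

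The substantive difficulty has already been handled in the earlier results: matching the second moment exactly, and the uniform truncation bound that makes the edge-set expansion summable. What remains here is to check carefully the hypotheses of the projection lemma along the full sequence rather than along subsequences. The lemma is stated with fixed limits, whereas \(\Delta_n\) varies with \(n\). This is harmless, because \(\Delta_n\to\Delta\) and the CLT gives \(W_n-\Delta_n\Rightarrow\Normal(0,1)\), so \(W_n\Rightarrow\Normal(\Delta,1)\) under \(\Pp_n^{\mathcal E_n}\).

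We also need to confirm that the events \(\mathcal E_n\) used in the second-moment computation are the same events for which the conditioned CLT was proved. Lemma~\ref{lem:triangle-two-model-CLT} states this explicitly.
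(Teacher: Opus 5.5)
Your proposal is correct and follows the paper's proof essentially step for step. You use the events of Lemma~\ref{lem:critical-generating-truncation}, feed Proposition~\ref{prop:exact-critical-second-moment} and Lemma~\ref{lem:triangle-two-model-CLT} into Lemma~\ref{lem:hilbert-likelihood-projection}, and then pass to logarithms using tightness of \(W_n\) and \(\Delta_n\to\Delta\); your explicit handling of the varying \(\Delta_n\) and the \(\log 0\) convention only spells out what the paper leaves implicit.
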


\begin{proof}
We combine the second moment, two-model CLT and projection lemma
established above.
Set
\[
  L_n^{\mathcal E}
  =
  \frac{d\Pp_n^{\mathcal E_n}}{d\Qq_n}.
\]
Proposition~\ref{prop:exact-critical-second-moment} and
Lemma~\ref{lem:triangle-two-model-CLT} give, respectively,
\[
  \mathbb E_{\Qq_n}(L_n^{\mathcal E})^2\longrightarrow e^{\Delta^2}
\]
and
\[
  W_n\Rightarrow\Normal(0,1)\quad\text{under }\Qq_n,
  \qquad
  W_n\Rightarrow\Normal(\Delta,1)\quad\text{under }\Pp_n^{\mathcal E_n}.
\]
The likelihood ratio of \(\Normal(\Delta,1)\) with respect to
\(\Normal(0,1)\) is
\(
  \ell(z)=\exp\{\Delta z-\Delta^2/2\}
\), and its second moment under \(\Normal(0,1)\) is \(e^{\Delta^2}\).
Lemma~\ref{lem:hilbert-likelihood-projection} therefore yields, under
\(\Qq_n\),
\[
  (W_n,L_n^{\mathcal E})
  \Rightarrow
  \bigl(Z,\ell(Z)\bigr),
  \qquad Z\sim\Normal(0,1).
\]
Applying the continuous mapping theorem to
\((w,l)\mapsto l-\ell(w)\) gives
\[
  L_n^{\mathcal E}
  -\exp\{\Delta W_n-\Delta^2/2\}
  \longrightarrow0
  \quad\text{in }\Qq_n\text{-probability}.
\]
The exponential is bounded away from zero whenever \(W_n\) is restricted to
a fixed compact interval.  We may therefore take logarithms and use
\(W_n=O_{\Qq_n}(1)\) together with \(\Delta_n-\Delta=o(1)\) to obtain the
stated likelihood expansion.
\end{proof}

\begin{proof}[Proof of Theorem~\ref{thm:growing-critical-main}, part~(i)]
The likelihood-projection argument in the proof of
Proposition~\ref{prop:critical-likelihood-representation} also gives
\begin{equation}
  \TV(\Pp_n^{\mathcal E_n},\Qq_n)
  \longrightarrow
  \TV\bigl(\Normal(\Delta,1),\Normal(0,1)\bigr)
  =2\Phi(\Delta/2)-1.
  \label{eq:critical-TV-curve}
\end{equation}
Finally,
\(
  \TV(\Pp_n,\Pp_n^{\mathcal E_n})
  \le \mathbb P(\mathcal E_n^c)=o(1)
\), which transfers the limit to \(\Pp_n\).
\end{proof}

No small-amplitude assumption is needed: when \(|\beta|\) stays bounded
away from zero, the critical scale is \(N\asymp n^{3/4}\). At this scale
the triangle determines the limiting likelihood. The curve
\(2\Phi(\Delta/2)-1\) tends to zero as \(\Delta\downarrow0\) and to one
as \(\Delta\to\infty\), consistently with the two strict regimes of
Theorem~\ref{thm:pure-dichotomy}.

\section{Quantitative likelihood and testing bounds}
\label{sec:quantitative-main}

We now make the critical argument quantitative. A common error scale
collects the graph-expansion and triangle-approximation errors; a
quantitative projection converts them into likelihood and testing bounds.
The propositions in Appendix~\ref{app:quantitative-LAN} verify these inputs,
and the proof below carries out the projection and removes conditioning.

Recall
\[
  \Delta_n=\frac{\sqrt{\binom n3}\,|\beta|^3}{N^2}.
\]
Fix \(0<a<b<\infty\) and a sufficiently
small constant \(\gamma_0=\gamma_0(a,b)>0\), as specified in the proof of
Proposition~\ref{prop:quant-second-moment} in Section~S4.3
of the Supplementary Material. We assume
\begin{equation}
  a\le\Delta_n\le b,\qquad N\ge2,\qquad
  0<|\beta|\le1.
  \label{eq:quant-critical-range}
\end{equation}
All constants in this section and Appendix~\ref{app:quantitative-LAN} may depend on \(a,b\), but not on
\(n,N,\beta\).

Put
\begin{equation}
\begin{aligned}
  \varepsilon_{n,N,\beta}
  &:={}
  \log^{12}(en)
  \left\{
    \frac1{\sqrt n\,|\beta|}
    +\frac{|\beta|^{7/8}}{n^{1/16}}
  \right\}
  +N^{-\gamma_0}+\frac{\log^2(eN)}n.
\end{aligned}
\label{eq:quant-master-error}
\end{equation}
The two terms outside braces control the graph expansion. The logarithmically
weighted terms provide common bounds for the truncation and local likelihood
errors, as well as the signed-triangle approximation and its lower-order
edge chaoses.

Section~\ref{subsec:quant-truncation} constructs a prior event
\(\mathcal E_n\) whose complement has superpolynomially small probability.
For \(L_n^{\mathcal E}=d\Pp_n^{\mathcal E_n}/d\Qq_n\),
Sections~\ref{subsec:quant-partition} and \ref{subsec:quant-triangle} bound
the second-moment error and the two signed-triangle Gaussian approximation
errors by \(C_{a,b}\varepsilon_{n,N,\beta}\), provided
\(\varepsilon_{n,N,\beta}\le c_{a,b}\) for a sufficiently small constant. More precisely,
Propositions~\ref{prop:quant-second-moment} and~\ref{prop:quant-triangle} give
\[
\begin{gathered}
 \left|\mathbb E_{\Qq_n}(L_n^{\mathcal E})^2-e^{\Delta_n^2}\right|
 \le C_{a,b}\varepsilon_{n,N,\beta},\\
 d_W(\mathcal L_{\Qq_n}(W_n),\Normal(0,1))
 +d_W(\mathcal L_{\Pp_n^{\mathcal E_n}}(W_n),\Normal(\Delta_n,1))
 \le C_{a,b}\varepsilon_{n,N,\beta}.
\end{gathered}
\]
Here \(d_W\) is the \(1\)-Wasserstein distance, defined in
Section~\ref{subsec:quant-triangle}.
The projection lemma in Section~\ref{subsec:quant-projection} converts these
inputs into TV and log-likelihood bounds of order
\(\varepsilon_{n,N,\beta}^{1/4}\).
Section~\ref{subsec:quant-final} removes the prior conditioning and
concludes the theorem.

\subsection{The finite-sample theorem}
\label{subsec:quant-final}

Propositions~\ref{prop:quant-second-moment} and \ref{prop:quant-triangle}
provide the three inputs to Lemma~\ref{lem:quant-sufficiency}.
We apply it and then remove the prior conditioning using
\eqref{eq:quant-prior-failure}.

\begin{theorem}[Quantitative critical TV and LAN bounds]
\label{thm:quantitative-triangle-LAN}
Assume \eqref{eq:quant-critical-range}.  There are constants
\(c_{a,b},C_{a,b}>0\) such that, whenever
\(\varepsilon_{n,N,\beta}\le c_{a,b}\),
\begin{equation}
  \left|
  \TV(\Pp_n,\Qq_n)
  -
  \{2\Phi(\Delta_n/2)-1\}
  \right|
  \le
  C_{a,b}\varepsilon_{n,N,\beta}^{1/4}.
  \label{eq:quant-TV-final}
\end{equation}
For \(L_n=d\Pp_n/d\Qq_n\) and every \(0<t\le1\),
\begin{equation}
\begin{split}
  \Qq_n\left\{
  \left|
  \log L_n
  -
  \left(
  \Delta_nW_n-\frac12\Delta_n^2
  \right)
  \right|>t
  \right\}
  \le
  C_{a,b}t^{-2}
  \varepsilon_{n,N,\beta}^{1/4}.
  \label{eq:quant-LAN-final}
\end{split}
\end{equation}
The bound \eqref{eq:quant-LAN-final} also holds with \(L_n\) replaced by
\(L_n^{\mathcal E}\), under the same law \(\Qq_n\).
\end{theorem}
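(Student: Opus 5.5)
The plan follows the qualitative route of Section~\ref{sec:critical-LAN-closure}, replacing each limit by an explicit error. We take as given the three inputs quoted before the theorem. First, Propositions~\ref{prop:quant-second-moment} and~\ref{prop:quant-triangle} bound the second-moment error $|\mathbb E_{\Qq_n}(L_n^{\mathcal E})^2-e^{\Delta_n^2}|$ and the two Wasserstein errors for $W_n$ under $\Qq_n$ and $\Pp_n^{\mathcal E_n}$ by $C_{a,b}\varepsilon_{n,N,\beta}$. Second, the superpolynomial prior-failure bound \eqref{eq:quant-prior-failure} holds. Write $\varepsilon=\varepsilon_{n,N,\beta}$ and $\ell_n(w)=\exp\{\Delta_nw-\Delta_n^2/2\}$. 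The core step is a quantitative version of Lemma~\ref{lem:hilbert-likelihood-projection} (the Lemma~\ref{lem:quant-sufficiency} of Section~\ref{subsec:quant-projection}). Our aim is to show $\mathbb E_{\Qq_n}(L_n^{\mathcal E}-\ell_n(W_n))^2\le C\varepsilon^{1/2}$, or some fixed power of $\varepsilon$.

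To get this, expand the square:
\[
\mathbb E_{\Qq_n}(L_n^{\mathcal E}-\ell_n(W_n))^2=\mathbb E_{\Qq_n}(L_n^{\mathcal E})^2-2\mathbb E_{\Pp_n^{\mathcal E_n}}\ell_n(W_n)+\mathbb E_{\Qq_n}\ell_n(W_n)^2 .
\]
The first term is $e^{\Delta_n^2}+O(\varepsilon)$. The other two terms are expectations of an exponential test function of $W_n$ under the two laws, and their Gaussian values are $e^{\Delta_n^2}$ each, so the total cancels to zero. The difficulty is that $\ell_n$ and $\ell_n^2$ are not Lipschitz, so the Wasserstein bounds do not apply directly. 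We therefore truncate at level $M$: replace $\ell_n$ by $\ell_n\wedge e^{bM}$, which is $be^{bM}$-Lipschitz. The truncation errors are then controlled separately:
\begin{itemize}
\item under $\Qq_n$, by hypercontractivity of the degree-three Rademacher chaos $W_n$, which gives $\mathbb E e^{\lambda W_n}$ bounded uniformly in $n$ for $\lambda$ in a compact range and hence Gaussian-type tails up to $e^{-cM^{2/3}}$;
\item under $\Pp_n^{\mathcal E_n}$, by writing $\mathbb E_{\Pp^{\mathcal E}}f=\mathbb E_{\Qq}[L^{\mathcal E}f]$ and applying Cauchy--Schwarz with the second-moment bound.
\end{itemize}
Choosing $M\asymp\log(1/\varepsilon)$ times a small constant balances $e^{CM}\varepsilon$ against the tails. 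The tail loss is where the exponent $1/4$ (rather than $1$) is spent. I expect this truncation bookkeeping to be the main obstacle: the tails must decay faster than $e^{-CM}$ grows, and I would first try to use hypercontractivity for the $\Qq_n$ part and the second moment for the $\Pp_n$ part.

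Given the $L^2$ bound, both conclusions follow.
\begin{itemize}
\item \textbf{TV bound.} We have $\TV(\Pp_n^{\mathcal E_n},\Qq_n)=\tfrac12\mathbb E_{\Qq_n}|L_n^{\mathcal E}-1|$, which differs from $\tfrac12\mathbb E_{\Qq_n}|\ell_n(W_n)-1|$ by at most $C\varepsilon^{1/4}$. The latter quantity is within $C\varepsilon^{1/4}$ of $\tfrac12\mathbb E|\ell_n(Z)-1|=2\Phi(\Delta_n/2)-1$, again by the truncated Lipschitz argument under $\Qq_n$. Then $\TV(\Pp_n,\Pp_n^{\mathcal E_n})\le\mathbb P(\mathcal E_n^c)$, which is superpolynomially small and hence below $\varepsilon^{1/4}$ (using the $\log^2(eN)/n$ term in $\varepsilon$ as a floor). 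This gives \eqref{eq:quant-TV-final}.
\item \textbf{Log-likelihood bound for $L_n^{\mathcal E}$.} On $\{|W_n|\le M\}$ we have $\ell_n(W_n)\ge e^{-bM-b^2/2}$, and $\log$ is Lipschitz on $[\tfrac12\ell_n,\infty)$. Hence $|\log L_n^{\mathcal E}-\log\ell_n(W_n)|>t$ forces $|L_n^{\mathcal E}-\ell_n(W_n)|\ge ct\,e^{-bM}$. Chebyshev then gives probability at most $Ct^{-2}e^{2bM}\varepsilon^{1/2}$, plus $\Qq_n(|W_n|>M)$. Absorbing $e^{2bM}$ into the spare power of $\varepsilon$ yields $Ct^{-2}\varepsilon^{1/4}$.
\item \textbf{Log-likelihood bound for the unconditioned $L_n$.} Note that $L_n=\pi_nL_n^{\mathcal E}+(1-\pi_n)L_n^{\mathcal E^c}$ with $L_n^{\mathcal E^c}\ge0$, so $\log L_n\ge\log L_n^{\mathcal E}+\log\pi_n$. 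For the upper side, Markov's inequality gives $\Qq_n\{(1-\pi_n)L_n^{\mathcal E^c}>\tfrac t2 L_n^{\mathcal E}\}$ small whenever $L_n^{\mathcal E}$ is bounded below. The latter holds off an event already counted via $|W_n|\le M$.
\end{itemize}

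Finally, Theorem~\ref{thm:growing-critical-main}(ii) follows by checking that $\beta^2\asymp n^{-r}$ with $r<1$ and $a\le\Delta_n\le b$ give $N\asymp n^{3/4}|\beta|^{3/2}=n^{3/4-3r/4}$, a positive power of $n$. Substituting into the definition of $\varepsilon_{n,N,\beta}$:
\begin{itemize}
\item $1/(\sqrt n|\beta|)=n^{-(1-r)/2}$;
\item $|\beta|^{7/8}n^{-1/16}\le n^{-1/16}$;
\item $N^{-\gamma_0}$ is a negative power of $n$.
\end{itemize}
Hence $\varepsilon\le Cn^{-4\gamma}$, and in particular $\varepsilon\le c_{a,b}$ for $n\ge n_0$. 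The quantitative theorem then yields both displayed bounds with $\gamma$ depending only on $a,b,r,c_\pm$.
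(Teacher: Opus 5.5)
Your overall structure matches the paper: a quantitative projection lemma for the conditioned model, followed by removal of the conditioning. Your handling of the unconditioned log-likelihood, via $L_n\ge\pi_nL_n^{\mathcal E}$ and Markov's inequality for $(1-\pi_n)L_n^{\mathcal E^c}$, is a valid variant of the paper's bound $\mathbb E_{\Qq_n}|L_n-L_n^{\mathcal E}|=2\TV(\Pp_n,\Pp_n^{\mathcal E_n})$.

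The gap is in the core $L^2$ step. You target $\mathbb E_{\Qq_n}(L_n^{\mathcal E}-\ell_n(W_n))^2\le C\varepsilon^{1/2}$ with the \emph{untruncated} Gaussian likelihood. After expanding the square, this needs the upper bound $\mathbb E_{\Qq_n}\ell_n(W_n)^2=e^{-\Delta_n^2}\mathbb E_{\Qq_n}e^{2\Delta_nW_n}\le e^{\Delta_n^2}+O(\varepsilon^{1/2})$, that is, quantitative exponential moments of the cubic chaos $W_n$ under the null.

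Hypercontractivity does not give this. It yields $\|W_n\|_{L^p}\le(p-1)^{3/2}$, hence only tails of order $\exp(-cM^{2/3})$ and integrability of $\exp(c|W_n|^{2/3})$. The series $\sum_p\lambda^pp^{3p/2}/p!$ diverges, so no uniform bound on $\mathbb E_{\Qq_n}e^{\lambda W_n}$ follows. Moreover, $e^{-cM^{2/3}}$ does not beat the growth $e^{2bM}$, which is exactly the condition you identify as necessary.

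Your Cauchy--Schwarz step on the $\Pp_n^{\mathcal E_n}$ side reduces to the same null quantity $\mathbb E_{\Qq_n}[(\ell_n-\ell_n\wedge e^{bM})^2(W_n)]$, so it cannot close the gap. In fact that side needs no control at all: $\ell_n\ge\ell_n\wedge e^{bM}$ makes its truncation error favorable.

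The missing idea is that no tail information about $W_n$ under either law is needed if you compare $L_n^{\mathcal E}$ with the bounded Lipschitz truncation $g$ itself, rather than with $\ell_n$. The paper takes $g=\ell_n\chi$, where $\chi$ is a ramp equal to one on $[-T,T]$ and zero off $[-T-1,T+1]$, with $T=4b+1+\sqrt{\log(1/\eta)}$ and $\eta=C_{a,b}\varepsilon$; your cap $\ell_n\wedge e^{bM}$ would also work. The expansion becomes $\mathbb E_{\Qq_n}(L_n^{\mathcal E}-g(W_n))^2=\mathbb E_{\Qq_n}(L_n^{\mathcal E})^2-2\mathbb E_{\Pp_n^{\mathcal E_n}}g(W_n)+\mathbb E_{\Qq_n}g(W_n)^2$.

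Since $0\le g\le\ell_n$, Gaussian tilting gives, for $Z\sim\Normal(0,1)$, both $\mathbb E g(Z)^2\le e^{\Delta_n^2}$ and $\mathbb E g(Z+\Delta_n)\ge e^{\Delta_n^2}\{1-\mathbb P(|Z+2\Delta_n|>T)\}\ge e^{\Delta_n^2}-C_b\eta^{1/2}$. The Wasserstein hypotheses transfer both to $W_n$ at cost $C_be^{2bT}\eta\le C_b\eta^{3/4}$. This yields $\mathbb E_{\Qq_n}(L_n^{\mathcal E}-g(W_n))^2\le C_b\eta^{1/2}$ using only Gaussian tails.

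Your TV and log-likelihood arguments then go through with $g(W_n)$ in place of $\ell_n(W_n)$, for three reasons:
\begin{itemize}
\item the two functions agree on $\{|W_n|\le T\}$;
\item $\Qq_n\{|W_n|>T\}$ is controlled by a Lipschitz ramp and the null Wasserstein bound;
\item $\tfrac12\mathbb E|g(Z)-\ell_n(Z)|\le\tfrac12\mathbb P(|Z+\Delta_n|>T)$.
\end{itemize}
This is Lemma~\ref{lem:quant-sufficiency}, which the paper applies to $(\Pp_n^{\mathcal E_n},\Qq_n,W_n,\Delta_n)$ before removing the conditioning.
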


\begin{proof}
Set $\eta=C_{a,b}\varepsilon_{n,N,\beta}$, with the constant large enough
for Propositions~\ref{prop:quant-second-moment} and
\ref{prop:quant-triangle} to verify Lemma~\ref{lem:quant-sufficiency}
for
\[
  P=\Pp_n^{\mathcal E_n},\qquad Q=\Qq_n,\qquad
  W=W_n,\qquad\delta=\Delta_n.
\]
Choose $c_{a,b}$ so that $\eta\le e^{-1}$. The lemma gives the asserted
TV and LAN bounds for the conditioned model. By
\eqref{eq:quant-prior-failure},
\begin{equation}
  \mathbb E_{\Qq_n}|L_n-L_n^{\mathcal E}|
  =2\TV(\Pp_n,\Pp_n^{\mathcal E_n})
  \le C_{a,b}e^{-c_{a,b}\log^2(en)}.
  \label{eq:quant-likelihood-conditioning-L1}
\end{equation}
The triangle inequality transfers the TV bound to $\Pp_n$.

For LAN, set \(T_\eta=4b+1+\sqrt{\log(1/\eta)}\) and
\(g_\eta(w)=e^{\Delta_nw-\Delta_n^2/2}\chi_\eta(w)\), where
\(\chi_\eta\) equals one on \([-T_\eta,T_\eta]\), zero outside
\([-T_\eta-1,T_\eta+1]\), and is linear in between, as in the proof of
Lemma~\ref{lem:quant-sufficiency}.
On $\{|W_n|\le T_\eta\}$, $g_\eta(W_n)$ equals the Gaussian
likelihood and is at least $c_be^{-bT_\eta}$. Hence a logarithmic
deviation larger than $t$ implies that either
$|L_n^{\mathcal E}-g_\eta(W_n)|$ or
$|L_n-L_n^{\mathcal E}|$ exceeds $c_bt e^{-bT_\eta}/2$.
Using the central-tail and $L^2$ estimates from that proof for the first
two terms below, and Markov's inequality for the last, gives
\[
\begin{aligned}
  &\Qq_n\left\{\left|\log L_n
    -\left(\Delta_nW_n-\frac12\Delta_n^2\right)\right|>t\right\}\\
  &\quad\le\Qq_n\{|W_n|>T_\eta\}
    +C_bt^{-2}e^{2bT_\eta}\mathbb E_{\Qq_n}
      |L_n^{\mathcal E}-g_\eta(W_n)|^2\\
  &\qquad+C_bt^{-1}e^{bT_\eta}\mathbb E_{\Qq_n}
      |L_n-L_n^{\mathcal E}|\\
  &\quad\le C_{a,b}t^{-2}\eta^{1/4}
    +C_{a,b}t^{-1}e^{-c_{a,b}\log^2(en)+bT_\eta}.
\end{aligned}
\]
Since $\varepsilon_{n,N,\beta}\ge n^{-1/2}$,
$T_\eta=O_{a,b}(1+\sqrt{\log n})$; the last term is absorbed by
$C_{a,b}t^{-2}\varepsilon_{n,N,\beta}^{1/4}$.
\end{proof}

\begin{corollary}[Finite-sample rate under polynomial signal scaling]
\label{cor:quant-polynomial-TV}
Assume \eqref{eq:quant-critical-range} and
\(c_-n^{-r}\le\beta^2\le c_+n^{-r}\) for fixed \(0\le r<1\) and
\(0<c_-\le c_+<\infty\). Then there is a constant \(\gamma_r>0\) such that
\[
  \left|
  \TV(\Pp_n,\Qq_n)
  -
  \{2\Phi(\Delta_n/2)-1\}
  \right|
  \le
  C_{a,b,r,c_-,c_+}
  n^{-\gamma_r},
\]
for all sufficiently large \(n\).
The right-hand side in \eqref{eq:quant-LAN-final} is bounded by the same
quantity times \(C_{a,b,r,c_-,c_+}t^{-2}\).
\end{corollary}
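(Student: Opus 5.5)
The corollary should follow directly from Theorem~\ref{thm:quantitative-triangle-LAN}. The only task is to show that, under the polynomial scaling $c_-n^{-r}\le\beta^2\le c_+n^{-r}$ and the critical range \eqref{eq:quant-critical-range}, the master error $\varepsilon_{n,N,\beta}$ in \eqref{eq:quant-master-error} is bounded by $Cn^{-\gamma'}$ for some $\gamma'=\gamma'(r)>0$. Once this holds, $\varepsilon_{n,N,\beta}\le c_{a,b}$ for all large $n$. Then \eqref{eq:quant-TV-final} and \eqref{eq:quant-LAN-final} give the claimed bounds with $\gamma_r=\gamma'/4$.

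First I will relate $N$ to $n$. The assumption $a\le\Delta_n\le b$ means $\binom n3\beta^6/N^4\in[a^2,b^2]$, as in \eqref{eq:critical-scaling-relation}. This gives $N\asymp_{a,b} n^{3/4}|\beta|^{3/2}$, and substituting $\beta^2\asymp n^{-r}$ yields
\[
N\asymp n^{3/4-3r/4}=n^{3(1-r)/4}.
\]
Since $r<1$, this is a positive power of $n$. Hence $N^{-\gamma_0}\lesssim n^{-3(1-r)\gamma_0/4}$, and $\log^2(eN)/n\lesssim \log^2(en)/n$. Because $N\to\infty$, the condition $N\ge2$ holds eventually.

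Next I bound the bracketed terms, each of which carries a factor $\log^{12}(en)$. The first term is
\[
\frac{1}{\sqrt n\,|\beta|}\lesssim n^{-1/2+r/2}=n^{-(1-r)/2},
\]
which is polynomially small precisely because $r<1$; this is the only place the restriction $r<1$ is essential. The second term satisfies $|\beta|^{7/8}n^{-1/16}\le n^{-1/16}$, since $|\beta|\le1$. The logarithmic factor is absorbed by slightly decreasing the exponent.

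Collecting the pieces, $\varepsilon_{n,N,\beta}\le Cn^{-\gamma'}$ with
\[
\gamma'=\tfrac12\min\Bigl\{\tfrac{1-r}{2},\ \tfrac1{16},\ \tfrac{3(1-r)\gamma_0}{4},\ \tfrac12\Bigr\},
\]
for $n\ge n_0(a,b,r,c_\pm)$. All constants depend only on $a,b,r,c_-,c_+$. There is no real obstacle here; the step needing the most care is tracking that every implicit constant in $N\asymp n^{3(1-r)/4}$ depends only on $a,b,c_\pm$, which is immediate from the explicit formula for $\Delta_n$.
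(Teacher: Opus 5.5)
Your proposal is correct and follows the paper's route. The paper also bounds $\varepsilon_{n,N,\beta}\le Cn^{-\eta_r}$ in Corollary~\ref{cor:quant-polynomial-rate}, using $N\asymp n^{3(1-r)/4}$, term-by-term polynomial bounds and absorption of the logarithms, and then applies Theorem~\ref{thm:quantitative-triangle-LAN} with $\gamma_r=\eta_r/4$. The only difference is cosmetic: the paper uses the sharper order $n^{-(1+7r)/16}$ for the second bracketed term, where your cruder bound $n^{-1/16}$ (from $|\beta|\le1$) also suffices.
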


\begin{proof}
Corollary~\ref{cor:quant-polynomial-rate} gives
\(\varepsilon_{n,N,\beta}\le C_{a,b,r,c_-,c_+}n^{-\eta_r}\).
Apply Theorem~\ref{thm:quantitative-triangle-LAN} with
\(\gamma_r=\eta_r/4\).
\end{proof}
The polynomial-scaling conclusion now follows directly.

\begin{proof}[Proof of Theorem~\ref{thm:growing-critical-main}, part~(ii)]
Apply the quantitative estimate proved above.
Since \(\binom n3\asymp n^3\), the assumptions on \(\Delta_n\) and \(\beta\)
imply \(N\asymp n^{3(1-r)/4}\), with comparison constants depending only on
\(a,b,c_-,c_+\).  The hypotheses of
Corollary~\ref{cor:quant-polynomial-TV} therefore hold uniformly over the
stated parameter range.  Its total-variation and likelihood bounds are exactly
the two asserted inequalities.
\end{proof}

The parameter range \(0\le r<1\) includes \(r=0\), where the signal
amplitude may remain bounded away from zero.  Under the critical scaling,
\(\beta_n^2\asymp n^{-r}\) gives \(N\asymp n^{3(1-r)/4}\to\infty\).
Part~(ii) therefore quantifies the likelihood approximation and the
total-variation limit uniformly for \(\Delta_n\in[a,b]\) throughout this
parameter range.
\section{Discussion}

For the pure quadratic kernel, the signed triangle determines both the
detection scale and the limiting testing risk at criticality.
Theorem~\ref{thm:pure-dichotomy} establishes the sharp boundary
\(N\asymp n^{3/4}|\beta|^{3/2}\), while
Theorem~\ref{thm:growing-critical-main} identifies the Gaussian shift
carried by the normalized triangle under its critical-window assumptions.
Thus a statistic built from vertex triples retains all asymptotic testing
information in the full graph in this window.  The finite-sample estimates
quantify the likelihood and total-variation approximations under polynomial
signal scaling.

The proof clarifies the roles of harmonic structure and latent concentration.
A single harmonic block prevents cancellation in the cubic trace, but
isotropy alone does not determine the higher moments of the quadratic
feature law.  Controlling those moments and atypical latent configurations is
essential to the likelihood argument.  The spectral deletion and cavity
estimates give complementary bounds for the same conditioned second moment;
the division between them introduces no additional statistical threshold.
At criticality, uniform control of the tail of the edge-set expansion is what
allows triangle moment calculations to determine the full likelihood limit.

For testing, the statistic \(T_n\) uses only the observed edge signs
\(Y_{ij}=2a_{ij}-1\).  Its computation and null distribution depend on
\(n\), but require neither \(d\) nor \(|\beta|\).
Critical optimality is stated for a specified alternative: its parameters
determine the orientation and the likelihood-optimal rejection threshold.
If only the sign of \(\beta\) is unspecified, the two-sided test based on
\(|T_n|\) retains strong consistency in the detectable regime, as noted
after Proposition~\ref{prop:single-positive}.  This separates the choice of
statistic from the parameter knowledge needed to attain the optimal critical
testing risk.

The quadratic result leaves concrete questions for broader kernel families.
An extension to higher even harmonic layers would require corresponding
feature-moment and concentration estimates.  For mixtures of layers, one must
also account for cancellation in the cubic trace and possible information in
longer cycles.  A separate extension concerns unknown edge density.  The
identity \(p=1/2\) is structural for the present kernel, so treating \(p\)
as a nuisance parameter first requires enlarging the model.  One would then
need to control how estimating \(p\) changes the centered cycle statistics
and the critical likelihood.  These questions concern the conditions under
which a low-order graph statistic continues to capture the available testing
information beyond the pure quadratic model.

\begin{appendix}
\section{Feature, matrix, and spectral preliminaries}
\label{app:complete-preliminaries}

This appendix collects the algebraic and analytic inputs to the likelihood
proofs and the spectral identities for the quadratic model. The feature and matrix estimates are shared by the
strict-regime and critical-window arguments. Throughout,
\[
\begin{aligned}
  V&=\mathbb V_d=\operatorname{Sym}_0(d)
  :=\{H\in\mathbb R^{d\times d}:H^\top=H,\ \operatorname{tr}H=0\},\\
  N&=\dim V=\frac{(d-1)(d+2)}2,
\end{aligned}
\]
with the Hilbert--Schmidt inner product \(\langle H_1,H_2\rangle_{\HS}
:=\operatorname{tr}(H_1H_2)\), and
\[
  q=\varphi_d(X)
  =
  \sqrt{\frac d{d-1}}
  \left(XX^\top-\frac1dI_d\right),
  \qquad X\sim\operatorname{Unif}(\Sph^{d-1}).
\]
Let \(\mu\) denote the law of \(q\).
A square matrix is called \emph{hollow} if all its diagonal entries are zero.
For a configuration \(u=(u_1,\ldots,u_n)\in V^n\), let \(B(u)\) denote its
hollow Gram matrix,
\[
  B_{ii}(u)=0,\qquad
  B_{ij}(u)=\langle u_i,u_j\rangle_{\HS}\quad(i\ne j).
\]
When the configuration is clear, we write \(B\) for \(B(u)\).
For a square matrix \(M\), let \(\operatorname{off}(M)\) denote the hollow
matrix obtained by setting the diagonal of \(M\) equal to zero.

Lemmas~\ref{lem:feature-lift} and \ref{lem:complete-feature-identities}
verify the feature representation and its first three moments.  Lemma~\ref{lem:local-subgaussian}
controls a one-site quadratic-form MGF.  The next two sections turn scalar
inner-product tails into simultaneous row bounds and then into estimates for
the random interaction matrix.  Lemma~\ref{lem:uniform-covariance-tail}
provides uniform covariance concentration, and
Lemma~\ref{lem:hollow-schur-update-bounds} records the deterministic matrix
update used by both spectral recursions.
Section~\ref{subsec:spherical-spectral-background} supplies the spectral
identities used in the triangle calculation in Section~\ref{sec:cycles}.

\subsection{Exact algebra and moment tensors}

\begin{lemma}[Quadratic feature representation]
\label{lem:feature-lift}
For every $x,y\in\Sph^{d-1}$,
\[
  \|\varphi_d(x)\|_{\HS}=1
\]
and
\[
  \langle\varphi_d(x),\varphi_d(y)\rangle_{\HS}
  =
  \frac{d\langle x,y\rangle^2-1}{d-1}
  =
  Z_{2,d}(\langle x,y\rangle).
\]
Moreover, if $X\sim\sigma_d$, then
\[
  \E\varphi_d(X)=0,\qquad
  \E[\varphi_d(X)\otimes\varphi_d(X)]
  =\frac1N I_{\mathbb V_d}.
\]
In the covariance identity,
\(\varphi_d(X)\otimes\varphi_d(X)\) denotes the rank-one operator
\[
  H\longmapsto
  \langle\varphi_d(X),H\rangle_{\HS}\varphi_d(X)
  \qquad (H\in\mathbb V_d).
\]
\end{lemma}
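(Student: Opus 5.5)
The plan is to compute directly with $P:=xx^\top-\frac1dI_d$ and then handle the two expectations separately: the mean by one elementary integral, the covariance by a symmetry (Schur's lemma) argument fixed by a trace computation.

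First observe that $\varphi_d(x)$ is symmetric and traceless, since $\operatorname{tr}(xx^\top)=\|x\|^2=1$. For $x,y\in\Sph^{d-1}$ we have $\operatorname{tr}(xx^\top yy^\top)=\langle x,y\rangle^2$ and $\operatorname{tr}(xx^\top)=1$. Hence
\[
  \Bigl\langle xx^\top-\tfrac1dI_d,\,yy^\top-\tfrac1dI_d\Bigr\rangle_{\HS}
  =\langle x,y\rangle^2-\tfrac2d+\tfrac1d
  =\langle x,y\rangle^2-\tfrac1d.
\]
Multiplying by $d/(d-1)$ gives $(d\langle x,y\rangle^2-1)/(d-1)=Z_{2,d}(\langle x,y\rangle)$. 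Setting $y=x$ gives $Z_{2,d}(1)=1$, which is the unit norm.

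For the mean, rotational invariance gives $\E XX^\top=cI_d$, and taking traces forces $c=1/d$. Hence $\E\varphi_d(X)=0$.

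For the covariance, let $\Sigma=\E[\varphi_d(X)\otimes\varphi_d(X)]$, a positive operator on $\mathbb V_d$. Every orthogonal $O\in O(d)$ acts on $\mathbb V_d$ by $H\mapsto OHO^\top$. This action is HS-orthogonal and satisfies $\varphi_d(OX)=O\varphi_d(X)O^\top$. Since $OX\stackrel{d}{=}X$, the operator $\Sigma$ commutes with the whole action. The key input is that $\mathbb V_d$ is an irreducible real representation of $O(d)$ with commutant equal to the scalars. This is standard for $d\ge2$: the real traceless symmetric matrices carry the degree-two harmonics, and the representation is absolutely irreducible. By Schur's lemma, $\Sigma=\lambda I_{\mathbb V_d}$.

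I would avoid even this citation by a direct argument. $\Sigma$ commutes with conjugation by diagonal sign matrices and by permutation matrices. It then splits into one block on traceless diagonal matrices and one block on off-diagonal symmetric matrices. Invariance under rotations in a coordinate plane then links the two blocks and equalizes their scalars. This is the only step with any real content, and I expect it to be the main (mild) obstacle. I would present it via the representation-theoretic route, with the elementary reduction as backup.

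Finally, take traces to fix the constant: $\operatorname{tr}\Sigma=\E\|\varphi_d(X)\|_{\HS}^2=1$, while $\operatorname{tr}(\lambda I_{\mathbb V_d})=\lambda N$. Therefore $\lambda=1/N$, which completes the lemma.
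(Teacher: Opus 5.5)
Your proof is correct. The norm, inner-product and mean identities are computed exactly as in the paper, but you obtain the covariance by a different route.

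\textbf{The paper's route.} In the Supplementary Material, Section S1.1, the covariance is evaluated directly from the spherical fourth-moment identity $\E(X^\top HX)^2=2\tr(H^2)/\{d(d+2)\}$ for traceless symmetric $H$, cited from Muirhead. This gives $\E\langle\varphi_d(X),H\rangle_{\HS}^2=\frac d{d-1}\cdot\frac{2\|H\|_{\HS}^2}{d(d+2)}=\|H\|_{\HS}^2/N$, and the paper then reads off the operator identity by polarization.

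\textbf{Your route.} You use $O(d)$-equivariance to force $\Sigma$ to be a scalar, and let the unit norm fix the constant via $\tr\Sigma=\E\|\varphi_d(X)\|_{\HS}^2=1$.

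\textbf{Comparison.} Your argument needs no external moment formula and explains why the constant is $1/\dim\mathbb V_d$. The paper's argument is a one-line computation. It belongs to the same family of spherical moment identities the paper uses for the third-moment tensor in Lemma~\ref{lem:complete-feature-identities}.

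\textbf{A simplification.} You need neither absolute irreducibility nor the real form of Schur's lemma. Since $\Sigma$ is self-adjoint, each of its eigenspaces is invariant under the conjugation action. Plain irreducibility therefore already forces a single eigenvalue. The same remark settles the traceless-diagonal block in your elementary backup.

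This is exactly how the paper treats $M_3^\ast M_3$ in the proof of Lemma~\ref{lem:cubic-kernel-partition-norms}. There it also proves irreducibility of the conjugation action on $\mathbb V_d$ by essentially your backup argument: diagonalize, permute, and rotate by $\pi/4$ in a coordinate plane. So your route can be made fully self-contained using material already in the paper.
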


The proof is given in Section~S1.1 of the Supplementary Material.

For matrices in $V$, let
\(S_3\) denote the permutation group of \(\{1,2,3\}\), and define
\[
  \operatorname{tr}_{\mathrm{sym}}(H_1H_2H_3)
  =
  \frac1{6}\sum_{\pi\in S_3}
  \operatorname{tr}(H_{\pi(1)}H_{\pi(2)}H_{\pi(3)}).
\]

\begin{lemma}[Quadratic feature identities and moments]
\label{lem:complete-feature-identities}
Let $X\sim\operatorname{Unif}(\Sph^{d-1})$ and $q=\varphi_d(X)$.  Then, almost
surely,
\begin{equation}
\begin{aligned}
  \|q\|_{\HS}&=1,
  &q^2&=\frac1dI_d+b_dq,\\
  b_d&=\frac{d-2}{\sqrt{d(d-1)}},
  &|b_d|&\le1.
\end{aligned}
  \label{eq:q-square}
\end{equation}
The first two moments of $q$ are
\[
  \mathbb E q=0,\qquad
  \mathbb E(q\otimes q)=\frac1N I_V.
\]
Moreover, for every $H_1,H_2,H_3\in V$,
\begin{equation}
  \mathbb E\prod_{a=1}^3\langle q,H_a\rangle_{\HS}
  =
  \gamma_d\,
  \operatorname{tr}_{\mathrm{sym}}(H_1H_2H_3),
  \label{eq:third-moment-tensor}
\end{equation}
where
\[
  \gamma_d
  =
  \frac{8(d/(d-1))^{3/2}}{d(d+2)(d+4)}
  \asymp N^{-3/2}.
\]
\end{lemma}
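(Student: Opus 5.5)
The plan is to verify the pointwise algebraic identities directly from the definition \(q=c(XX^\top-\tfrac1dI_d)\) with \(c=\sqrt{d/(d-1)}\). The first two moments are taken from Lemma~\ref{lem:feature-lift}. The third moment is computed by passing from the uniform sphere to a standard Gaussian vector and applying Wick's formula.

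\textbf{Step 1: the algebraic identities.} The norm \(\|q\|_{\HS}=1\) is already contained in Lemma~\ref{lem:feature-lift}. For the square, I will use \((XX^\top)^2=XX^\top\) to get
\[
q^2=c^2\bigl((1-\tfrac2d)XX^\top+\tfrac1{d^2}I_d\bigr).
\]
I then substitute \(XX^\top=q/c+I_d/d\). The coefficient of \(q\) becomes \(c(d-2)/d=(d-2)/\sqrt{d(d-1)}=b_d\). The coefficient of \(I_d\) becomes
\[
c^2\Bigl(\frac{d-2}{d^2}+\frac1{d^2}\Bigr)=c^2\,\frac{d-1}{d^2}=\frac1d.
\]
The bound \(|b_d|\le1\) is equivalent to \((d-2)^2\le d(d-1)\), that is, \(4\le 3d\), which holds for \(d\ge2\). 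The identities \(\mathbb Eq=0\) and \(\mathbb E(q\otimes q)=I_V/N\) are quoted from Lemma~\ref{lem:feature-lift}.

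\textbf{Step 2: reduction to a Gaussian quadratic-form moment.} For \(H\in V\) we have \(\tr H=0\), so \(\langle q,H\rangle_{\HS}=c\,X^\top HX\). Write \(X=g/\|g\|\) with \(g\sim\Normal(0,I_d)\); the radius \(\|g\|\) is independent of the direction \(X\), and \(\mathbb E\|g\|^6=d(d+2)(d+4)\). Homogeneity then gives
\[
\mathbb E\prod_{a=1}^3\langle q,H_a\rangle_{\HS}
=\frac{c^3}{d(d+2)(d+4)}\,\mathbb E\prod_{a=1}^3 g^\top H_ag.
\]

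\textbf{Step 3: Wick's formula.} I expand the Gaussian moment over pairings of the six factors \(g_{i_1}g_{j_1}g_{i_2}g_{j_2}g_{i_3}g_{j_3}\).
\begin{itemize}
\item Any pairing that joins the two indices of one quadratic form produces a factor \(\tr H_a=0\), so it vanishes.
\item The surviving pairings connect the three forms in a single cycle; there are \(8\) of them.
\item By symmetry of the \(H_a\), each surviving pairing contributes either \(\tr(H_1H_2H_3)\) or \(\tr(H_1H_3H_2)\).
\item These two traces are equal, since \(\tr(H_1H_2H_3)=\tr((H_1H_2H_3)^\top)=\tr(H_3H_2H_1)\), which is the other cyclic class.
\end{itemize}
Hence every permutation in \(S_3\) gives the same trace, so \(\tr_{\mathrm{sym}}(H_1H_2H_3)=\tr(H_1H_2H_3)\). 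The Gaussian expectation therefore equals \(8\,\tr_{\mathrm{sym}}(H_1H_2H_3)\). This yields \(\gamma_d=8c^3/\{d(d+2)(d+4)\}\), and \(\gamma_d\asymp d^{-3}\asymp N^{-3/2}\) because \(N\asymp d^2\).

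The only step needing care is the pairing count in Step~3. I would check it independently through the third cumulant of \(g^\top Hg\), which equals \(8\tr H^3\), and then polarize to recover the trilinear form.
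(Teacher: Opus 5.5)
Your proof is correct and follows the paper's route: the square identity comes from $(XX^\top)^2=XX^\top$, the first two moments are quoted from Lemma~\ref{lem:feature-lift}, and the third moment reduces to the sixth-order spherical moment of traceless quadratic forms. The only difference is that you derive that moment identity yourself, via the Gaussian polar decomposition with $\mathbb E\|g\|^6=d(d+2)(d+4)$ and the eight cyclic Wick pairings, where the paper cites Muirhead; you also supply the one-line check of $|b_d|\le1$, which the paper leaves implicit.
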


The proof is given in Section~S1.2 of the Supplementary Material.

\subsection{A one-site quadratic-form MGF bound}

The spectral recursions integrate one feature at a time.  The following
estimate controls each such integral whenever the associated quadratic form
has sufficiently small operator norm.

\begin{lemma}[Local sub-Gaussian moment bound]
\label{lem:local-subgaussian}
There are absolute constants $c_0,C_0>0$ such that, for every
$H\in V=\operatorname{Sym}_0(d)$ satisfying
\[
  \frac{\|H\|_{\mathrm{op}}}{d}\le c_0,
\]
one has
\begin{equation}
  \log\mathbb E_\mu
  \exp\{\langle H,q\rangle_{\HS}\}
  \le
  \frac{C_0}{N}\|H\|_{\HS}^2.
  \label{eq:local-subgaussian}
\end{equation}
The same estimate holds with $q$ replaced by $-q$ and, consequently, under
any convex combination of the laws of $q$ and $-q$.
\end{lemma}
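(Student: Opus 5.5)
The plan is a direct Taylor expansion of the moment generating function, using the quadratic identity \(q^2=\frac1dI_d+b_dq\) from Lemma~\ref{lem:complete-feature-identities} to reduce the cumulants of \(\langle H,q\rangle_{\HS}\) to a spectral problem in \(X\). Write \(\langle H,q\rangle_{\HS}=\sqrt{d/(d-1)}\,X^\top HX\), since \(\operatorname{tr}H=0\). Thus the quantity to bound is \(\log\mathbb E\exp\{sX^\top HX\}\) with \(s=\sqrt{d/(d-1)}\in[1,\sqrt2]\), where \(X\) is uniform on \(\Sph^{d-1}\) and \(H\) is traceless. Because the mean is zero, it suffices to show
\[
  \mathbb E e^{sX^\top HX}\le 1+\frac{C}{N}\|H\|_{\HS}^2 .
\]
The bound \(\log(1+x)\le x\) then finishes.

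The first step is a Gaussian representation. Write \(X=G/\|G\|\) with \(G\sim\Normal(0,I_d)\) and \(\|G\|\) independent of \(X\). Then \(X^\top HX=G^\top HG/\|G\|^2\). To decouple the denominator, I will use the standard comparison for polynomial chaos. By independence of the radius, \(\mathbb E(G^\top HG)^k=\mathbb E\|G\|^{2k}\,\mathbb E(X^\top HX)^k\), and \(\mathbb E\|G\|^{2k}=d(d+2)\cdots(d+2k-2)\ge d^k\). Hence \(|\mathbb E(X^\top HX)^k|\le d^{-k}\mathbb E|G^\top HG|^k\) for every \(k\), including odd moments, after bounding \(|\mathbb E(X^\top HX)^k|\le\mathbb E|X^\top HX|^k\).

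The second step is the moment bound. Hanson--Wright for the centered Gaussian chaos \(G^\top HG\) gives
\[
  \|G^\top HG\|_{L^k}\le C\bigl(\sqrt k\,\|H\|_{\HS}+k\|H\|_{\mathrm{op}}\bigr),
\]
with no centering term since \(\operatorname{tr}H=0\). Summing the exponential series from \(k\ge2\) (the \(k=1\) term vanishes) gives
\[
  \mathbb E e^{sX^\top HX}-1\le\sum_{k\ge2}\frac{(Cs)^k}{k!\,d^k}
  \bigl(\sqrt k\|H\|_{\HS}+k\|H\|_{\mathrm{op}}\bigr)^k .
\]
The constraint \(\|H\|_{\mathrm{op}}/d\le c_0\) makes the \(k\|H\|_{\mathrm{op}}\) part geometric, since \(k^k/k!\le e^k\).

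The \(\sqrt k\|H\|_{\HS}\) part is the obstacle. This series behaves like \(\exp(C\|H\|_{\HS}^2/d^2)\), and it is only \(1+O(\|H\|_{\HS}^2/N)\) when \(\|H\|_{\HS}^2/d^2\) is bounded. But \(\|H\|_{\HS}\) can be as large as \(\sqrt d\,\|H\|_{\mathrm{op}}\le c_0d^{3/2}\), so that condition can fail.

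I would therefore avoid the moment series and compute the Gaussian MGF exactly. Diagonalize \(H=\sum\lambda_i e_ie_i^\top\). Then \(\mathbb E e^{tG^\top HG}=\prod_i(1-2t\lambda_i)^{-1/2}\le\exp\{Ct^2\|H\|_{\HS}^2\}\) whenever \(t\|H\|_{\mathrm{op}}\le1/4\), using \(\sum\lambda_i=0\). Transferring this to the sphere requires handling the random denominator. I would localize on \(\|G\|^2\ge d/2\), where \(sX^\top HX\le 2sG^\top HG/d\) if \(X^\top HX\ge0\). When \(X^\top HX<0\) the exponential is at most one and contributes no excess. Concentration of \(\|G\|^2\) together with the exact MGF then bounds the main part by \(\exp\{C\|H\|_{\HS}^2/d^2\}\). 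On the complementary event \(\|G\|^2<d/2\), the contribution is at most \(e^{-cd}e^{c_0 s d}\), which is small once \(c_0\) is chosen small.

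This gives \(\log\mathbb E\le C\|H\|_{\HS}^2/d^2\asymp C\|H\|_{\HS}^2/N\), which is the claimed bound \eqref{eq:local-subgaussian}. The real difficulty is making this denominator localization rigorous without losing the centering, since one cannot simply drop negative values when the target is \(1+O(\cdot)\) rather than \(e^{O(\cdot)}\). The step I would try first is to replace the radius by its exact Beta law. For fixed \(G\)-direction the identity \(\mathbb E e^{\lambda G^\top HG/2}\) becomes a mixture over radii, and this should give a clean comparison. Finally, the statements for \(-q\) and for convex combinations follow because the bound holds for \(-H\), which satisfies the same hypotheses, and because MGFs are linear in the law.
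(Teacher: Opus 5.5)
Your final route does not close, and the obstruction you flag at the end is genuine rather than technical. On \(\{\|G\|^2\ge d/2\}\), your comparison \(sX^\top HX\le 2sG^\top HG/d\) holds only where \(X^\top HX\ge0\). So what you actually bound is \(\mathbb E e^{Z_+}\) with \(Z=2sG^\top HG/d\). But \(\mathbb E e^{Z_+}\ge 1+\mathbb E Z_+\), and \(\mathbb E Z_+=\tfrac12\mathbb E|Z|\asymp\|H\|_{\HS}/d\). This is a first-order term, and it is not \(O(\|H\|_{\HS}^2/N)\) when \(\|H\|_{\HS}/d\) is small. No control of the radius can restore the cancellation once the negative values are discarded. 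Your additive bound \(e^{-(c-sc_0)d}\) on the complementary event is also not of the required order as \(H\to0\). The ``Beta-law mixture'' fix is left unspecified.

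Separately, your reason for abandoning the first route misreads the target. The lemma asks only for \(\mathbb E e^{W}\le e^{C_0\|H\|_{\HS}^2/N}\), not \(1+C\|H\|_{\HS}^2/N\). The latter fails in general when \(\|H\|_{\HS}^2/N\) is large: for \(H=c_0d\,\operatorname{diag}(1,-1,\ldots,1,-1)\) the moment generating function grows exponentially in \(d\). A series of size \(e^{C\|H\|_{\HS}^2/d^2}\) is therefore exactly what is permitted.

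The missing idea is the paper's one-line decoupling of the radius by conditional Jensen. This is precisely what makes your ``mixture over radii'' point in the right direction. Write \(G=RX\) with \(R\) independent of \(X\) and \(\mathbb E R^2=d\). For each fixed \(X\), convexity of the exponential gives
\[
  e^{sX^\top HX}=e^{s\,\mathbb E[R^2]\,X^\top HX/d}\le\mathbb E_R\,e^{sR^2X^\top HX/d}.
\]
Hence \(\mathbb E e^{sX^\top HX}\le\mathbb E e^{sG^\top HG/d}=\det(I_d-2sH/d)^{-1/2}\) whenever \(2s\|H\|_{\mathrm{op}}/d<1\). Your own log-determinant computation with \(\operatorname{tr}H=0\) then yields \(\log\mathbb E e^{W}\le Cs^2\|H\|_{\HS}^2/d^2\lesssim\|H\|_{\HS}^2/N\). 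The inequality is pointwise in \(X\) before averaging, so nothing is lost to the sign of \(X^\top HX\), and no localization is needed.

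Alternatively, your moment comparison already sufficed. Set \(Z=sG^\top HG/d\) and use \(\mathbb E X^\top HX=0\). Then
\[
  \mathbb E e^{sX^\top HX}-1\le\mathbb E[e^{|Z|}-1-|Z|]\le\mathbb E e^{Z}+\mathbb E e^{-Z}-2\le 2(e^{a}-1),
\]
with \(a=Cs^2\|H\|_{\HS}^2/d^2\), and \(2e^{a}-1\le e^{2a}\) finishes. This route does not even need Hanson--Wright. Your handling of \(-q\) and of convex combinations is correct.
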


The same calculation yields, for every nonzero \(H\in V\)
and \(u>0\), without a smallness assumption on \(H\),
\begin{equation}
  \mathbb P_\mu\{|\langle H,q\rangle_{\HS}|\ge u\}
  \le2\exp\left[-c'\min\left\{
  \frac{Nu^2}{\|H\|_{\HS}^2},
  \frac{du}{\|H\|_{\mathrm{op}}}\right\}\right].
  \label{eq:local-quadratic-tail}
\end{equation}
The tail bound also holds under any convex combination
of the laws of \(q\) and \(-q\).
The proof is given in Section~S1.3 of the Supplementary Material.

\subsection{Quadratic inner-product tails and simultaneous row occupancy}

For independent $X_1,\ldots,X_n$ uniform on $\Sph^{d-1}$, set
\(q_i=\varphi_d(X_i)\) and \(B=B(q)\).  The feature identity gives, for
\(i\ne j\),
\[
  B_{ij}
  =\langle q_i,q_j\rangle_{\HS}
  =
  Z_{2,d}(\langle X_i,X_j\rangle)
  =
  \frac{d\langle X_i,X_j\rangle^2-1}{d-1}.
\]

\begin{lemma}[Quadratic inner-product tail]
\label{lem:quadratic-inner-product-tail}
There are absolute constants $c,C>0$ such that, for fixed
$x\in\Sph^{d-1}$ and $0\le u\le1$,
\[
  \mathbb P\{Z_{2,d}(\langle x,X\rangle)\ge u\}
  \le Ce^{-cdu}.
\]
For every $D>0$ there is $C_D$ such that
\[
  \mathbb P\left\{
  \max_{i<j}|B_{ij}|>
  C_D\min\left(1,\frac{\log n}{d}\right)
  \right\}\le n^{-D}.
\]
\end{lemma}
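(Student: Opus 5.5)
For fixed \(x\), the variable \(\langle x,X\rangle^2\) with \(X\) uniform on \(\Sph^{d-1}\) has the \(\mathrm{Beta}(1/2,(d-1)/2)\) law. The first claim is therefore a one-dimensional tail estimate. The event \(Z_{2,d}(\langle x,X\rangle)\ge u\) is the same as \(\langle x,X\rangle^2\ge (1+(d-1)u)/d\), and in particular it implies \(\langle x,X\rangle^2\ge u(d-1)/d\ge u/2\).

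The standard spherical cap bound \(\mathbb P\{|\langle x,X\rangle|\ge s\}\le 2e^{-ds^2/2}\) then gives
\[
  \mathbb P\{Z_{2,d}\ge u\}\le 2e^{-du/4}.
\]
This bound can be obtained either from the explicit density \(c_d(1-t^2)^{(d-3)/2}\) or from Gaussian representation plus concentration. For \(d=2\) the cap bound still holds after adjusting constants, because \(e^{-cdu}\) is bounded below by a constant when \(d\) is bounded. This settles the first assertion with absolute \(c,C\).

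For the maximum, the lower tail needs no argument, since \(Z_{2,d}\ge -1/(d-1)\). So \(|B_{ij}|>v\) with \(v\ge 1/(d-1)\) forces \(B_{ij}>v\). Condition on \(X_i\) and use rotational invariance: each pair then satisfies \(\mathbb P\{B_{ij}>v\}\le Ce^{-cdv}\).

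Now take \(v=C_D\log n/d\) when \(\log n\le d\). A union bound over at most \(n^2\) pairs gives probability at most \(Cn^{2-cC_D}\le n^{-D}\) once \(C_D\) is large, and this \(v\) also exceeds \(1/(d-1)\) when \(C_D\ge 2\) and \(n\ge 3\).

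When \(\log n>d\), the target threshold is \(C_D\cdot 1\ge 1\). Since \(|B_{ij}|\le1\) always holds, the event is empty once \(C_D\ge1\).

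The only delicate point is making the constants uniform in \(d\ge2\), including small \(d\), and handling the negative side. Both are dispatched by the trivial bounds \(|Z_{2,d}|\le1\) and \(Z_{2,d}\ge-1/(d-1)\). Finitely many small \(n\) are absorbed into \(C_D\).
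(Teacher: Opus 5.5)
Your proposal is correct and follows essentially the same route as the paper. You reduce the first claim to a sub-Gaussian tail for one coordinate of a uniform spherical vector via the exact event $\langle x,X\rangle^2\ge\{1+(d-1)u\}/d$, then use $B_{ij}\ge-1/(d-1)$ with a union bound when $\log n\le d$, and the trivial bound $|B_{ij}|\le1$ otherwise; your explicit cap constant $2e^{-ds^2/2}$ and your check that the threshold exceeds $1/(d-1)$ simply make concrete the paper's ``after changing constants'' steps.
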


The proof is given in Section~S1.4 of the Supplementary Material.

For $u\ge0$, define the row occupancy
\[
  \mathcal N_i(u)=\#\{j\ne i:B_{ij}\ge u\}.
\]
Let
\[
  \mathcal U_d
  =
  \left\{\frac{2^m}{d}:0\le m\le\lceil\log_2d\rceil\right\}\cap[0,1].
\]

\begin{lemma}[Multiscale row occupancy]
\label{lem:multiscale-occupancy}
For every fixed $D>0$, there are $c,C_D>0$ such that, with probability
at least $1-n^{-D}$,
\begin{equation}
  \mathcal N_i(u)
  \le
  C_D\bigl(ne^{-cdu}+\log n\bigr)
  \label{eq:occupancy-dyadic}
\end{equation}
simultaneously for every $i\in[n]$ and $u\in\mathcal U_d$.
On the same event, after changing constants,
\begin{equation}
  \mathcal N_i(u)
  \le
  C_D\bigl(ne^{-cdu/2}+\log n\bigr)
  \label{eq:occupancy-continuous}
\end{equation}
for every $u\in[1/d,C_D\log n/d]$.
\end{lemma}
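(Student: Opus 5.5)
We must prove Lemma~\ref{lem:multiscale-occupancy}: with probability at least \(1-n^{-D}\), every row occupancy \(\mathcal N_i(u)\) is bounded simultaneously over \(i\in[n]\) and dyadic \(u\in\mathcal U_d\), and a continuous version then holds. The plan is to fix \(i\) and \(u\), condition on \(X_i\), apply a binomial tail bound, and take a union bound over the polynomially many pairs \((i,u)\).

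\emph{Conditioning.} Conditional on \(X_i=x\), the indicators \(\mathbf 1\{B_{ij}\ge u\}\), \(j\ne i\), are independent Bernoulli variables. Each has success probability \(p_u(x)=\mathbb P\{Z_{2,d}(\langle x,X\rangle)\ge u\}\). By rotational invariance this probability does not depend on \(x\). Lemma~\ref{lem:quadratic-inner-product-tail} gives \(p_u\le Ce^{-cdu}\) for \(0\le u\le1\). Hence \(\mathcal N_i(u)\) is stochastically dominated by \(\operatorname{Bin}(n,Ce^{-cdu})\), with mean \(m_u\le Cne^{-cdu}\).

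\emph{Chernoff bound.} For a binomial with mean \(m\), we have \(\mathbb P\{\operatorname{Bin}\ge t\}\le (em/t)^t\) for \(t\ge m\). Take \(t=e^2 m_u+K\log n\), with \(K=K(D)\) large. Then \(em_u/t\le e^{-1}\), so the probability is at most \(e^{-t}\le n^{-K}\).

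\emph{Union bound.} There are \(n\) choices of \(i\). There are at most \(\lceil\log_2 d\rceil+1\) choices of \(u\in\mathcal U_d\). If \(d\) is so large relative to \(n\) that \(\log_2 d\) is not polynomially bounded, observe that for \(u\ge C'\log n/d\) the bound \(ne^{-cdu}\le n^{-D'}\) makes \(\mathbb P\{\mathcal N_i(u)\ge1\}\le n\cdot n^{-D'-1}\) small. Monotonicity in \(u\) then lets one union bound only over \(u\le C'\log n/d\), where the dyadic grid has \(O(\log\log n)\) points, plus the single point \(u^\ast\approx C'\log n/d\). Choosing \(K\ge D+2\) gives total failure probability at most \(n^{-D}\). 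Absorbing \(e^2C\) into \(C_D\) proves \eqref{eq:occupancy-dyadic}.

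\emph{Continuous extension.} For \(u\in[1/d,C_D\log n/d]\), let \(u'\in\mathcal U_d\) be the largest grid point with \(u'\le u\). Since \(u\ge1/d\), such a point exists and satisfies \(u'\ge u/2\). Because \(\mathcal N_i\) is nonincreasing in \(u\), we get \(\mathcal N_i(u)\le\mathcal N_i(u')\le C_D(ne^{-cdu'}+\log n)\le C_D(ne^{-cdu/2}+\log n)\). This is \eqref{eq:occupancy-continuous}. The argument needs \(u'\le1\); if \(u>1\) the occupancy vanishes because \(|B_{ij}|\le1\), apart from ties at \(1\), which have probability zero.

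\emph{Main obstacle.} The only delicate point is the union bound over the grid when \(d\) is huge compared with \(n\). The monotone truncation at scale \(\log n/d\) described above handles it.
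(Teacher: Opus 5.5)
Your proposal is correct and follows essentially the same route as the paper. Both arguments use conditional binomial domination via rotational invariance and Lemma~\ref{lem:quadratic-inner-product-tail}, a Chernoff bound, a union bound over the rows and the $O(\log\log n)$ dyadic levels below scale $\log n/d$, and dyadic monotonicity for the continuous version. The only cosmetic difference is that the paper obtains your truncation at $u^\ast$ from the maximum-entry bound of Lemma~\ref{lem:quadratic-inner-product-tail}, which is the same first-moment estimate you derive directly.
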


The proof is given in Section~S1.5 of the Supplementary Material.

The row-moment estimates used in Proposition~\ref{prop:row-gram-environment}
are given in Corollary~S1.1
of the Supplementary Material.

\subsection{First-replica matrix bounds}

Fix an independent first replica $q_1^\ast,\ldots,q_n^\ast$ and set
\begin{equation}
  B^\ast=B(q^\ast),\qquad
  A=\beta^2 B^\ast,\qquad 0<|\beta|\le1.
  \label{eq:first-replica-environment}
\end{equation}

\begin{proposition}[First-replica matrix-norm bounds]
\label{prop:row-gram-environment}
With probability $1-o(1)$, the Hilbert--Schmidt and row norms satisfy
\begin{equation}
\begin{aligned}
  \|A\|_{\HS}^2&\lesssim\frac{\beta^4n^2}{N},
  &\qquad
  \max_i\|A_{i\cdot}\|_2^2&\lesssim\frac{\beta^4n}{N}.
\end{aligned}
  \label{eq:environment-FR}
\end{equation}
On the same event, the sixth row moment obeys
\begin{equation}
  \max_i\sum_{j\ne i}|A_{ij}|^6
  \lesssim\frac{\beta^{12}n}{N^3}.
  \label{eq:environment-sixth-row}
\end{equation}
The maximum entry and operator norm satisfy
\begin{align}
  \|A\|_{\max}&\lesssim
  \frac{\beta^2\log n}{\sqrt N},
  \label{eq:environment-M}\\
  \|A\|_{\mathrm{op}}&\lesssim
  \beta^2\left(
  \frac nN+
  \sqrt{\frac{n\log N}{N}}+
  \log N
  \right).
  \label{eq:environment-L}
\end{align}
\end{proposition}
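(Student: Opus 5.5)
We bound each of the five quantities separately on a high-probability event and intersect the events; each bound fails with probability $o(1)$, which suffices. Since $A=\beta^2B^\ast$, it is enough to prove the corresponding bounds for $B=B(q^\ast)$ and multiply by the appropriate power of $\beta^2$. For every row $i$, conditionally on $q_i^\ast$, the entries $B_{ij}$ with $j\ne i$ are i.i.d. Each satisfies $\mathbb E B_{ij}^2=1/N$ by isotropy, and each has the Bernstein-type tail \eqref{eq:local-quadratic-tail} with $H=q_i^\ast$, for which $\|H\|_{\HS}=1$ and $\|H\|_{\mathrm{op}}\le1$. Consequently $\mathbb E|B_{ij}|^p\le (Cp)^p N^{-p/2}$ for $p\ge2$, using $N\asymp d^2$.

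\emph{Row and sixth moments.} For fixed $i$, the variable $\sum_{j\ne i}B_{ij}^2$ is a sum of $n-1$ i.i.d. nonnegative terms with mean $1/N$. Each term is bounded by $\max_{j}|B_{ij}|^2\lesssim \min(1,\log^2 n/d^2)$ on the event of Lemma~\ref{lem:quadratic-inner-product-tail}. Bernstein's inequality with this truncation, followed by a union bound over $i$, gives $\max_i\sum_jB_{ij}^2\lesssim n/N+\log^3 n/N$. When $n\gtrsim \log^3 n$, this is $\lesssim n/N$. If $n$ is too small for that comparison, I would instead use the multiscale occupancy Lemma~\ref{lem:multiscale-occupancy}: summing $u^2\mathcal N_i(u)$ over dyadic $u\in\mathcal U_d$ gives $\lesssim n/d^2+\log n\cdot(\log n/d)^2$, which is acceptable. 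The same dyadic summation of $u^6\mathcal N_i(u)$ yields $\max_i\sum_j|B_{ij}|^6\lesssim n/d^6+\log^7 n/d^6$, and hence \eqref{eq:environment-sixth-row} in the regime $n\gtrsim\log^7 n$. Summing the row bound over $i$ gives the Hilbert--Schmidt bound. Alternatively, Markov's inequality applied to $\mathbb E\|B\|_{\HS}^2=n(n-1)/N$, with a constant that diverges slowly, gives $\|B\|_{\HS}^2\lesssim n^2/N$ with probability $1-o(1)$. The maximum-entry bound \eqref{eq:environment-M} follows directly from Lemma~\ref{lem:quadratic-inner-product-tail}, since $\log n/d\asymp\log n/\sqrt N$.

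\emph{Operator norm.} This is the main step. Write $\Phi:\mathbb R^n\to V$ for the map $x\mapsto\sum_ix_iq_i^\ast$. Then $B=\Phi^\top\Phi-I$, because the diagonal entries of $\Phi^\top\Phi$ are $\|q_i^\ast\|^2=1$. It follows that $\|B\|_{\mathrm{op}}\le\max\{1,\|\Phi\Phi^\top\|_{\mathrm{op}}\}$, and $\Phi\Phi^\top=\sum_iq_i^\ast\otimes q_i^\ast$ is the empirical covariance, whose mean is $(n/N)I_V$. I would apply the matrix Bernstein inequality in dimension $N$ to these summands. Each summand has norm at most $1$ and variance proxy $\|\sum_i\mathbb E(q_i\otimes q_i)^2\|=n/N$. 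This yields $\|\sum_i q_i^\ast\otimes q_i^\ast\|_{\mathrm{op}}\le n/N+C\sqrt{(n/N)\log N}+C\log N$ with probability $1-N^{-c}$. This is precisely \eqref{eq:environment-L}; it also agrees with the uniform covariance tail of Lemma~\ref{lem:uniform-covariance-tail}, which could be cited instead. The obstacle is that the failure probability $N^{-c}$ is $o(1)$ only when $N\to\infty$. For bounded $N$, I would use the trivial bound $\|B\|_{\mathrm{op}}\le\|B\|_{\HS}$ together with the row bound, which gives $\|B\|_{\mathrm{op}}\lesssim n/N$ deterministically up to constants, since $\|B\|_{\mathrm{op}}\le\max_i\sum_j|B_{ij}|\le n$ and $n\asymp n/N$ when $N$ is bounded.

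Finally, I would choose all exceptional probabilities to be $o(1)$, either as $n^{-D}$ or as $N^{-c}$ when $N$ is large. Intersecting the resulting events gives a single event of probability $1-o(1)$ on which all five bounds hold simultaneously.
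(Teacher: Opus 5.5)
Your proposal is correct and follows essentially the paper's route. Both arguments obtain the row and sixth moments from multiscale occupancy with layer-cake summation (Lemma~\ref{lem:multiscale-occupancy}), the Hilbert--Schmidt bound by summing rows, the entry bound from Lemma~\ref{lem:quadratic-inner-product-tail}, and the operator norm from matrix Bernstein for \(\sum_i q_i^\ast\otimes q_i^\ast\) together with the shared nonzero spectrum of \(B^\ast+I_n\) and the empirical covariance.

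The one difference is the small-\(N\) case. Your fallback \(\|B\|_{\mathrm{op}}\le n\) gives a constant depending on \(\sup N\), so it is not uniform over sequences. The paper instead puts \(n/N\) into the Bernstein deviation level, so the failure probability is at most \(2N\exp[-cC\max\{n/N,\log N\}]\). This is \(o(1)\) along every sequence once \(C\) is a large absolute constant, and no split into bounded and divergent \(N\) is needed. Citing Lemma~\ref{lem:uniform-covariance-tail} with \(T=\max\{n/N,\log N\}\) would achieve the same.
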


The proof is given in Section~S1.7 of the Supplementary Material.

\subsection{Covariance truncation and its uniform tail}
\label{sec:covariance-truncation}

Recall that $\mu$ is the law of $q$.  Let $\mu^-$ be the law of $-q$, and
let $\operatorname{conv}\{\mu,\mu^-\}$ denote their convex mixtures.  Because
$q\otimes q$ is invariant under $q\mapsto-q$, its distribution is the same
under every law in this class.  For a finite index set \(I\), define
\[
  \mathcal C_I(u)
  =
  \left\{
  \left\|\sum_{i\in I}q_i\otimes q_i\right\|_{\mathrm{op}}\le u
  \right\}.
\]
We abbreviate
\(\mathcal C_m(u)=\mathcal C_{[m]}(u)\) and
\(\mathcal C_{-i}(u)=\mathcal C_{[n]\setminus\{i\}}(u)\).

\begin{lemma}[Uniform covariance tail]
\label{lem:uniform-covariance-tail}
There are absolute constants $c,C>0$ such that, for every finite index set
\(I\) and all $T\ge0$,
\[
  \sup_{\nu\in\operatorname{conv}\{\mu,\mu^-\}}
  \mathbb P_{\nu^{\otimes |I|}}
  \left\{
  \mathcal C_I\left(
  C\left\{\frac{|I|}{N}+T+\log N\right\}
  \right)^c
  \right\}
  \le e^{-cT}.
\]
For the quadratic spherical features there is also the stronger bound
\begin{equation}
  \sup_{\nu\in\operatorname{conv}\{\mu,\mu^-\}}
  \mathbb P_{\nu^{\otimes |I|}}
  \left\{
    \mathcal C_I\left(
      C\left\{\frac{|I|}{N}+\log(eN)+\frac Td\right\}
    \right)^c
  \right\}
  \le e^{-cT}.
  \label{eq:covariance-tail-spherical}
\end{equation}
The bounds also hold when the independent coordinates have different laws
in this convex class.
\end{lemma}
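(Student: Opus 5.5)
The plan is to bound the top eigenvalue of \(S_I=\sum_{i\in I}q_i\otimes q_i\), an operator on the \(N\)-dimensional space \(V\). Each term is positive semidefinite with \(\|q_i\otimes q_i\|_{\mathrm{op}}=\|q_i\|_{\HS}^2=1\), and \(\mathbb E\, q_i\otimes q_i=I_V/N\) by Lemma~\ref{lem:feature-lift}. Since \(q\otimes q\) is invariant under \(q\mapsto-q\), the law of \(S_I\) is the same for every \(\nu\in\operatorname{conv}\{\mu,\mu^-\}\), and also when the coordinates use different laws from this class. It therefore suffices to work under \(\mu^{\otimes|I|}\).

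\textbf{First bound.} I would apply the matrix Chernoff inequality for sums of independent positive semidefinite matrices with operator norm at most \(R=1\). Here \(\mu_{\max}:=\lambda_{\max}(\mathbb E S_I)=|I|/N\). The Chernoff bound gives, for \(t\ge e\,\mu_{\max}\),
\[
\mathbb P\{\lambda_{\max}(S_I)\ge t\}\le N\Bigl(\frac{e\mu_{\max}}{t}\Bigr)^{t}\le N e^{-t}
\]
once \(t\ge e^2\mu_{\max}\). Now take \(t=C(|I|/N+T+\log N)\) with \(C\ge e^2\vee 2\). Then \(Ne^{-t}\le e^{\log N-C\log N-CT}\le e^{-T}\). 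This proves the first display with \(c=1\). This step is routine.

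\textbf{Second bound.} For \eqref{eq:covariance-tail-spherical} the deviation term should be \(T/d\) rather than \(T\), so I would exploit the sub-exponential behavior of one-dimensional marginals. For any unit \(H\in V\), the tail \eqref{eq:local-quadratic-tail} applies with \(\|H\|_{\mathrm{op}}\le\|H\|_{\HS}=1\). It shows that \(\langle H,q\rangle_{\HS}^2\) has mean \(1/N\) and is dominated by \(\min\{N^{-1}\xi^2,\,d^{-1}\xi\}\) with \(\xi\) of order one sub-exponential. So \(\sum_i\langle H,q_i\rangle^2\) is a sum of independent nonnegative variables of size at most \(1\), with mean \(|I|/N\) and \(\psi_1\)-scale \(1/d\) (recall \(N\asymp d^2\)). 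Bernstein's inequality then gives
\[
\mathbb P\Bigl\{\langle S_IH,H\rangle\ge C\bigl(|I|/N+s/d\bigr)\Bigr\}\le e^{-cs}
\]
for a fixed \(H\).

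I would pass from fixed \(H\) to the operator norm with an \(\varepsilon\)-net of the unit sphere of \(V\), which has cardinality \(e^{CN}\). The union bound costs \(e^{CN}\), so I need \(s\gtrsim N+T\), which produces \(N/d\asymp d\) in place of \(\log(eN)\). \textbf{This mismatch is the main obstacle}: a naive net loses a factor \(d/\log N\).

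To remove it, I would not use a net on all of \(V\). Instead I would combine matrix Bernstein with a sharper variance proxy that uses the orbit structure of \(q\). Truncate each summand on \(\{\|q_i\|_{\mathrm{op}}\text{-directional overlap}\}\); more concretely, apply the matrix Bernstein/Freedman inequality in the form
\[
\mathbb P\{\lambda_{\max}(S_I-\mathbb ES_I)\ge t\}\le N\exp\Bigl(-\frac{t^2/2}{\sigma^2+Lt/3}\Bigr).
\]
In this form the effective uniform bound \(L\) on \(\|q\otimes q\|\) restricted to the relevant directions is replaced by the moment-generating control \(\mathbb E e^{\theta\,q\otimes q}\preceq\exp\{(e^{\theta}-1)I/N\}\). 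The point is that the exponential moment along any direction of \(q\otimes q\) is governed by \(\langle H,q\rangle^2\), whose sub-exponential parameter is \(1/d\). I would establish a matrix MGF bound
\[
\mathbb E\exp(\theta\, q\otimes q)\preceq\exp\{C\theta I/N\}\quad\text{for }\theta\le cd
\]
using \(q^2=I/d+b_dq\) from \eqref{eq:q-square} and the isotropy of \(\mu\). Lieb's theorem then gives
\[
\mathbb P\{\lambda_{\max}(S_I)\ge t\}\le N\exp\bigl(-\theta t+C\theta|I|/N\bigr).
\]
Choosing \(\theta=cd\) and \(t=C(|I|/N+\log(eN)+T/d)\) absorbs the factor \(N\) and leaves \(e^{-cT}\). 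If the matrix MGF bound proves delicate, I would fall back on a chaining argument over a multiscale net adapted to low-rank directions. I expect the MGF estimate at \(\theta\asymp d\) to be the crux.
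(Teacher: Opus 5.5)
Your first bound is correct and is the paper's argument: matrix Chernoff with \(R=1\) and \(\mu_{\max}=|I|/N\), a level \(C(|I|/N+T+\log N)\) that absorbs the factor \(N\), and sign invariance of \(q\otimes q\) for the mixtures. The second bound, however, has a genuine gap. The matrix MGF estimate you identify as the crux is false. Since \(\|q\|_{\HS}=1\), the operator \(q\otimes q\) is a rank-one orthogonal projection. Hence \(\exp(\theta\,q\otimes q)=I_V+(e^\theta-1)\,q\otimes q\) exactly, and isotropy gives
\[
  \mathbb E\exp(\theta\,q\otimes q)=\Bigl(1+\frac{e^\theta-1}{N}\Bigr)I_V .
\]
At \(\theta=cd\) this is of order \(e^{cd}/N\), not \(e^{C\theta/N}\). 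More fundamentally, this identity shows that the summand MGF depends on the feature law only through its covariance. Any Lieb/Tropp master-bound argument therefore gives the same conclusion for every unit-norm isotropic law. For \(q\) uniform on an orthonormal basis of \(V\), \(\lambda_{\max}(S_I)\) is a maximal bin occupancy, and \eqref{eq:covariance-tail-spherical} fails for large \(d\). Plugging the exact MGF into the master bound simply reproduces your first (Chernoff) bound. Your heuristic breaks because
\[
\langle H,\exp(\theta\,q\otimes q)H\rangle=1+(e^\theta-1)\langle H,q\rangle_{\HS}^2\ge e^{\theta\langle H,q\rangle_{\HS}^2}
\]
for unit \(H\), by convexity. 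The \(1/d\) sub-exponential scale of the marginals controls only the right-hand side. The chaining fallback is not specified enough to close the gap.

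The missing idea is concentration of measure on \((\Sph^{d-1})^{|I|}\), which is how the paper proves the spherical bound. With \(q_i=\varphi_d(X_i)\), let \(\mathsf S_xa=\sum_ia_i\varphi_d(x_i)\) and \(F(x)=\|\mathsf S_x\|_{\mathrm{op}}\), so that \(F^2=\lambda_{\max}(S_I)\). Since \(\|\varphi_d(x)-\varphi_d(y)\|_{\HS}\le2\sqrt2\|x-y\|_2\), you get
\[
|F(x)-F(y)|\le\|\mathsf S_x-\mathsf S_y\|_{\HS}\le2\sqrt2\Bigl(\sum_i\|x_i-y_i\|_2^2\Bigr)^{1/2}.
\]
The spherical log-Sobolev inequality, with constant of order \(1/d\), tensorizes over coordinates. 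Herbst's argument then gives \(\mathbb P\{F>\mathbb EF+r\}\le e^{-cdr^2}\). Integrating your first bound gives \((\mathbb EF)^2\le\mathbb EF^2\le C\{|I|/N+\log(eN)\}\). Taking \(r=\sqrt{T/d}\) and using \(F^2\le2(\mathbb EF)^2+2r^2\) yields \eqref{eq:covariance-tail-spherical}. It is the square root \(F\), not \(\lambda_{\max}(S_I)\), that is Lipschitz with an absolute constant, and this is where the \(T/d\) scale comes from. Uniformity over sign mixtures, including coordinate-dependent ones, holds because flipping the signs of the columns of \(\mathsf S_x\) leaves its operator norm unchanged.
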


The proof is given in Section~S1.8 of the Supplementary Material.

\subsection{Deterministic hollow--Schur update bounds}

Both spectral recursions use the same matrix update after a random
bipartition.  The next lemma records all of its norm bounds independently of
the probability measure used in either recursion.

\begin{lemma}[Hollow--Schur update bounds]
\label{lem:hollow-schur-update-bounds}
Let $A$ be a real symmetric hollow $m\times m$ matrix, and let $I,J$ partition
$[m]$.  Recall the submatrix notation
$A_{I,J}=(A_{ij})_{i\in I,\,j\in J}$, and set
\[
  D=A_{I,J}^{\mathsf T}A_{I,J},
  \qquad
  A'=\frac cN\operatorname{off}(D)
\]
for some $c>0$.  Then
\begin{align}
  \|A'\|_{\HS}
  &\le\frac cN\|A\|_{\mathrm{op}}\|A\|_{\HS},
  &
  \|A'\|_{\mathrm{op}}
  &\le\frac{2c}{N}\|A\|_{\mathrm{op}}^2,
  \label{eq:hollow-schur-common-norms}\\
  \max_j\sum_k|A'_{jk}|
  &\le\frac cN\left(\max_i\sum_j|A_{ij}|\right)^2,
  &
  \max_j\|A'_{j\cdot}\|_2
  &\le\frac cN\|A\|_{\mathrm{op}}
  \max_i\|A_{i\cdot}\|_2.
  \label{eq:hollow-schur-common-rows}
\end{align}
\end{lemma}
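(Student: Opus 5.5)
The statement is purely deterministic linear algebra, so the plan is to write everything through the submatrix $C:=A_{I,J}$ (an $|I|\times|J|$ matrix) and compare $\operatorname{off}(D)$ with $D=C^{\mathsf T}C$. The first observation is that $C$ is a block of $A$: it equals $P_IAP_J$ for the coordinate projections $P_I,P_J$. Hence
\[
  \|C\|_{\mathrm{op}}\le\|A\|_{\mathrm{op}},\qquad
  \|C\|_{\HS}\le\|A\|_{\HS},
\]
and every row or column of $C$ is a subvector of the corresponding row of $A$. We may index $D$ by $J$, or embed it in an $m\times m$ matrix with zeros outside $J\times J$; this changes none of the norms involved.

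\emph{Hilbert--Schmidt norm.} Removing the diagonal only decreases the Hilbert--Schmidt norm, so $\|\operatorname{off}(D)\|_{\HS}\le\|D\|_{\HS}$. Then
\[
  \|C^{\mathsf T}C\|_{\HS}\le\|C^{\mathsf T}\|_{\mathrm{op}}\|C\|_{\HS}
  \le\|A\|_{\mathrm{op}}\|A\|_{\HS},
\]
and multiplying by $c/N$ gives the first bound in \eqref{eq:hollow-schur-common-norms}.

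\emph{Operator norm.} The only point needing care is that taking the off-diagonal part is not an operator-norm contraction. However, $\operatorname{off}(D)=D-\operatorname{diag}(D)$, and since $D\succeq0$ we have $0\le D_{jj}\le\|D\|_{\mathrm{op}}$. Therefore
\[
  \|\operatorname{off}(D)\|_{\mathrm{op}}
  \le\|D\|_{\mathrm{op}}+\max_jD_{jj}
  \le2\|D\|_{\mathrm{op}}
  =2\|C\|_{\mathrm{op}}^2
  \le2\|A\|_{\mathrm{op}}^2,
\]
which accounts for the factor $2$ in the second bound of \eqref{eq:hollow-schur-common-norms}.

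\emph{$\ell_1$ row sums.} For $j\in J$, the triangle inequality gives
\[
  \sum_k|D_{jk}|\le\sum_{i\in I}|A_{ij}|\sum_{k\in J}|A_{ik}|
  \le\Bigl(\max_i\sum_k|A_{ik}|\Bigr)\sum_{i}|A_{ij}|.
\]
By symmetry of $A$, $\sum_i|A_{ij}|$ is the $\ell_1$ norm of row $j$, so it is also at most $\max_i\sum_k|A_{ik}|$. Discarding the diagonal term only helps, and this yields the first bound in \eqref{eq:hollow-schur-common-rows}.

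\emph{$\ell_2$ rows.} Row $j$ of $D$ is $C^{\mathsf T}(Ce_j)$, hence
\[
  \|D_{j\cdot}\|_2\le\|C\|_{\mathrm{op}}\|Ce_j\|_2,
\]
and $\|Ce_j\|_2$ is the norm of a subvector of column $j$ of $A$, which by symmetry is at most $\max_i\|A_{i\cdot}\|_2$. Removing the diagonal entry again only decreases the norm, giving the second bound in \eqref{eq:hollow-schur-common-rows}.

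There is no real obstacle here. The one step needing attention is the operator-norm factor $2$, which comes from positive semidefiniteness of $D$ bounding its diagonal.
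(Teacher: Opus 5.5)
Your proof is correct and follows the paper's argument essentially step for step: the same submatrix comparisons $\|A_{I,J}\|_{\mathrm{op}}\le\|A\|_{\mathrm{op}}$ and $\|A_{I,J}\|_{\HS}\le\|A\|_{\HS}$, the positive-semidefinite diagonal bound giving the factor $2$, the symmetric $\ell_1$ row-sum estimate, and the identity $D_{j\cdot}^{\mathsf T}=A_{I,J}^{\mathsf T}A_{I,j}$ for the row norms. As a side remark, the factor $2$ is not needed: since $D\succeq0$ and $\operatorname{diag}(D)\succeq0$, you have $-\operatorname{diag}(D)\preceq\operatorname{off}(D)\preceq D$, which already gives $\|\operatorname{off}(D)\|_{\mathrm{op}}\le\|D\|_{\mathrm{op}}$.
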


The proof is given in Section~S1.9 of the Supplementary Material.

The lemmas above provide the feature and matrix estimates used by the
branch-specific second-moment arguments and the later critical analyses.

\subsection{Cycle traces and the quadratic spectrum}
\label{subsec:spherical-spectral-background}

Retain the normalized kernel \(\kappa\), edge variables \(Y_{ij}\) and
integral operator \(\Kop_\kappa\) from Section~\ref{subsec:quadratic-spectral-scale}.
For a simple cycle on distinct vertices \(i_1,\ldots,i_\ell\), conditional
independence gives the product of the kernel values as its conditional
mean. Integrating the latent points and composing the bounded kernel yields
\begin{equation}
  \mathbb E_{\Pp_{n,d,K}}\prod_{r=1}^{\ell}Y_{i_ri_{r+1}}
  =\tr(\Kop_\kappa^\ell),
  \qquad i_{\ell+1}=i_1,\quad \ell\ge3.
  \label{eq:cycle-trace-identity}
\end{equation}
The corresponding expectation under \(\Qq_{n,p}\) is zero because the
distinct edges are independent and centered.

For \(\kappa=\beta Z_{2,d}\), set
\(f_H(x)=\langle\varphi_d(x),H\rangle_{\HS}\), \(H\in V\).
Lemma~\ref{lem:feature-lift} gives
\(\Kop_\kappa f_H=(\beta/N)f_H\) and
\(\langle f_H,f_G\rangle_{L^2(\sigma_d)}=\langle H,G\rangle_{\HS}/N\).
The range of \(\Kop_\kappa\) is contained in the span of these functions;
thus its only nonzero eigenvalue is \(\beta/N\), with multiplicity \(N\).
Consequently,
\begin{equation}
  \tr(\Kop_{\beta Z_{2,d}}^\ell)
  =\frac{\beta^\ell}{N^{\ell-1}},
  \qquad \ell\ge2.
  \label{eq:single-layer-traces}
\end{equation}
The full cycle calculation and the general harmonic expansion are given in
Section~S5 of the Supplementary Material.
\section{Likelihood second moments and the sparse-regime recursion}
\label{app:complete-spectral-recursions}

This appendix first converts the graph likelihood second moment into an
expectation over two latent configurations.  It then completes the sparse
branch of Proposition~\ref{prop:pure-negative}.  The main step is an
untruncated spectral recursion: a random bipartition permits one side to be
integrated using Lemma~\ref{lem:local-subgaussian}, and the resulting
lower-dimensional interaction contracts under row-\(\ell_1\) and operator-norm
control.  Proposition~\ref{prop:sparse-signal-complete} applies this recursion
when \(n\beta^2/N\to0\).

Throughout this appendix, \(V=\operatorname{Sym}_0(d)\), \(N=\dim V\),
\(q=\varphi_d(X)\), and \(\mu=\mathcal L(q)\) are as defined in
Appendix~\ref{app:complete-preliminaries}.

\subsection{Likelihood second moment and prior truncation}

For a measurable subset $\mathcal E$ of the latent configuration space with
$\pi=\mathbb P(q\in\mathcal E)>0$, let $\Pp^{\mathcal E}$ be the geometric
graph model obtained by conditioning the latent configuration on
$q\in\mathcal E$.
For a configuration \(q=(q_1,\ldots,q_n)\), use the hollow Gram matrix
\(B(q)\) defined in Appendix~\ref{app:complete-preliminaries}.  Thus
\(B_{ij}(q)=\langle q_i,q_j\rangle_{\HS}\) for \(i\ne j\).

\begin{lemma}[Truncated likelihood identity]
\label{lem:truncated-likelihood}
For the pure quadratic model, let
\(L_{\mathcal E}=d\Pp^{\mathcal E}/d\Qq_n\).  Then
\begin{equation}
  \mathbb E_{\Qq_n}L_{\mathcal E}^2
  =
  \frac1{\pi^2}\mathbb E_{q^\ast,q}\left[
  \mathbf1_{\{q^\ast\in\mathcal E\}}\mathbf1_{\{q\in\mathcal E\}}
  \prod_{i<j}\left\{1+\beta^2 B_{ij}(q^\ast)B_{ij}(q)\right\}
  \right].
  \label{eq:truncated-second-moment}
\end{equation}
In particular, if, uniformly over $q^\ast\in\mathcal E$,
\[
  \mathbb E_q
  \left[
  \mathbf1_{\{q\in\mathcal E\}}
  \prod_{i<j}
  \left\{1+\beta^2 B_{ij}(q^\ast)B_{ij}(q)\right\}
  \right]
  \le1+o(1),
\]
then $\chi^2(\Pp^{\mathcal E}\|\Qq_n)\to0$ whenever $\pi\to1$.
\end{lemma}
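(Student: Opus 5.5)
The identity is a direct computation with the conditioned mixture. We write out the conditioned likelihood ratio, square it, and integrate against the null law using independence and centering of the edge signs. Conditionally on a configuration \(q\), the edges are independent with \(\mathbb P(a_{ij}=1\mid q)=(1+\beta B_{ij}(q))/2\) by Lemma~\ref{lem:feature-lift}. Each null edge probability is \(1/2\), so the conditional likelihood of \(\Adj\) relative to \(\Qq_n\) is
\[
  \prod_{i<j}\bigl(1+\beta Y_{ij}B_{ij}(q)\bigr),
\]
where \(Y_{ij}=2a_{ij}-1\in\{-1,1\}\). Under \(\Pp^{\mathcal E}\) the prior is \(\mu^{\otimes n}\) restricted to \(\mathcal E\) and normalized by \(\pi\). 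Hence
\[
  L_{\mathcal E}(\Adj)=\frac1\pi\,\mathbb E_q\Bigl[\mathbf1_{\{q\in\mathcal E\}}\prod_{i<j}\bigl(1+\beta Y_{ij}B_{ij}(q)\bigr)\Bigr],
\]
which gives \eqref{eq:pure-mixture-likelihood} when \(\mathcal E\) is the whole space. Every factor lies in \([0,2]\) because \(|B_{ij}|\le1\) and \(|\beta|\le1\), so all integrands are bounded.

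To square, we introduce an independent replica \(q^\ast\) and write \(L_{\mathcal E}^2\) as \(\pi^{-2}\) times an expectation over \((q^\ast,q)\) of the product of the two indicators and the two edge products. By Fubini, which applies since everything is bounded on a finite graph space, we exchange \(\mathbb E_{\Qq_n}\) with the latent expectation. For fixed \((q^\ast,q)\), the \(Y_{ij}\) are independent Rademacher variables under \(\Qq_n\), so the expectation factorizes over edges:
\[
  \mathbb E\bigl[(1+\beta Y B^\ast_{ij})(1+\beta Y B_{ij})\bigr]=1+\beta^2B^\ast_{ij}B_{ij}.
\]
This uses \(\mathbb EY=0\) and \(Y^2=1\). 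Multiplying over \(i<j\) gives \eqref{eq:truncated-second-moment}.

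For the second claim, we condition on \(q^\ast\). The hypothesis bounds the inner \(q\)-expectation by \(1+o(1)\) for every \(q^\ast\in\mathcal E\). Integrating against \(\mathbf1_{\{q^\ast\in\mathcal E\}}\) then gives \(\mathbb E_{\Qq_n}L_{\mathcal E}^2\le\pi^{-2}\pi(1+o(1))=(1+o(1))/\pi\). This tends to one because \(\pi\to1\). Since \(\chi^2(\Pp^{\mathcal E}\|\Qq_n)=\mathbb E_{\Qq_n}L_{\mathcal E}^2-1\ge0\) by \eqref{eq:tv-chi-square-bound}, the divergence tends to zero.

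None of these steps is a real obstacle. The only points needing care are that the factors are nonnegative, so that \(\Pp^{\mathcal E}\ll\Qq_n\) and the ratio is a genuine density, and that the bound in the hypothesis is uniform in \(q^\ast\), so that it survives integration over \(q^\ast\in\mathcal E\).
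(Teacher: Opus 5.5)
Your proposal is correct and follows the paper's proof essentially step for step. You write $L_{\mathcal E}$ as the $\pi^{-1}$-normalized latent average of the edge factors $1+\beta Y_{ij}B_{ij}(q)$, square it with an independent replica, and use that under $\Qq_n$ the $Y_{ij}$ are independent and centered with $Y_{ij}^2=1$; the uniform hypothesis then gives $\mathbb E_{\Qq_n}L_{\mathcal E}^2\le(1+o(1))/\pi$, and hence $\chi^2(\Pp^{\mathcal E}\|\Qq_n)\to0$.
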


The proof is given in Section~S2.1 of the Supplementary Material.

\subsection{Untruncated spectral recursion and the sparse regime}

For a real symmetric hollow $m\times m$ matrix $A$, define
\[
  \Psi_m(A)=\log\mathbb E\exp\left\{
  \sum_{i<j}A_{ij}\langle q_i,q_j\rangle_{\HS}\right\},
  \qquad r_1(A)=\max_i\sum_j|A_{ij}|.
\]

\begin{lemma}[Untruncated spectral MGF bound]
\label{lem:l1-spectral-recursion}
There are absolute constants $c,C>0$ such that, for every \(m\) and every
real symmetric hollow \(m\times m\) matrix \(A\), the conditions
\[
  \frac{r_1(A)}d\le c,\qquad
  \frac{\|A\|_{\mathrm{op}}}{N}\le c
\]
imply
\begin{equation}
  \Psi_m(A)\le\frac C N\|A\|_{\HS}^2.
  \label{eq:l1-spectral-recursion}
\end{equation}
\end{lemma}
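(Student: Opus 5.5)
We prove the bound by a random-bipartition recursion on \(m\). Integrate out one side of a bipartition using Lemma~\ref{lem:local-subgaussian}, and absorb the induced interaction on the other side as a smaller hollow matrix whose norms are controlled by Lemma~\ref{lem:hollow-schur-update-bounds}. Concretely, draw a uniformly random partition \([m]=I\sqcup J\), with each index placed independently with probability \(1/2\). Write
\[
\sum_{i<j}A_{ij}B_{ij}=S_I+S_J+\sum_{i\in I,j\in J}A_{ij}B_{ij}.
\]
Averaging over the partition, each edge crosses with probability \(1/2\). Jensen's inequality (convexity of \(\exp\), applied as \(\exp\{\mathbb E_{\mathrm{part}}\cdot\}\le\mathbb E_{\mathrm{part}}\exp\)) then lets us replace the full exponent by \(\mathbb E_{\mathrm{part}}\) of the crossing interaction with doubled weight, \(2\sum_{i\in I,j\in J}A_{ij}B_{ij}\), since \(\mathbb E_{\mathrm{part}}\) of this quantity equals the original sum. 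So it suffices to bound, uniformly in the partition,
\[
\log\mathbb E\exp\Bigl\{2\sum_{i\in I,\,j\in J}A_{ij}\langle q_i,q_j\rangle_{\HS}\Bigr\}.
\]

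Conditional on \((q_j)_{j\in J}\), the \(q_i\), \(i\in I\), are independent. The exponent splits as \(\sum_{i\in I}\langle H_i,q_i\rangle\) with \(H_i=2\sum_{j\in J}A_{ij}q_j\). Since \(\|q_j\|_{\mathrm{op}}\le1\), we have \(\|H_i\|_{\mathrm{op}}\le 2r_1(A)\le 2cd\), so Lemma~\ref{lem:local-subgaussian} applies once \(c\le c_0/2\). It gives the bound
\[
\frac{C_0}{N}\sum_{i\in I}\|H_i\|_{\HS}^2
=\frac{4C_0}{N}\Bigl(\sum_{i\in I}r_i^2+\sum_{j\ne j'\in J}D_{jj'}\langle q_j,q_{j'}\rangle\Bigr),
\]
where \(D=A_{I,J}^\top A_{I,J}\). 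The diagonal part is deterministic and at most \(\tfrac{4C_0}{N}\|A\|_{\HS}^2\). The off-diagonal part is \(\sum_{j<j'}A'_{jj'}\langle q_j,q_{j'}\rangle\) with \(A'=\tfrac{8C_0}{N}\operatorname{off}(D)\), a hollow matrix on \(J\). We are left with \(\Psi_{|J|}(A')\), to which we apply the same procedure recursively.

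By Lemma~\ref{lem:hollow-schur-update-bounds} with this \(c'=8C_0\), we get \(r_1(A')\le\tfrac{c'}{N}r_1(A)^2\) and \(\|A'\|_{\mathrm{op}}\le\tfrac{2c'}{N}\|A\|_{\mathrm{op}}^2\). Using \(r_1(A)\le cd\le cN\) and \(\|A\|_{\mathrm{op}}\le cN\), these give \(r_1(A')\le c'c\,r_1(A)\) and \(\|A'\|_{\mathrm{op}}\le 2c'c\,\|A\|_{\mathrm{op}}\). For small \(c\) both norms contract by a factor at most \(1/2\), so the hypotheses are preserved. Also \(\|A'\|_{\HS}\le\tfrac{c'}{N}\|A\|_{\mathrm{op}}\|A\|_{\HS}\le c'c\|A\|_{\HS}\), so \(\|A'\|_{\HS}^2\le\tfrac14\|A\|_{\HS}^2\).

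The recursion terminates because the index set strictly shrinks, or alternatively because \(A'\) becomes \(0\) after at most \(m\) steps; the base case \(\Psi(0)=0\) holds trivially. Summing the geometric series of contributions \(\tfrac{4C_0}{N}\|A^{(k)}\|_{\HS}^2\), with \(\|A^{(k)}\|_{\HS}^2\le4^{-k}\|A\|_{\HS}^2\), gives \(\Psi_m(A)\le\tfrac{C}{N}\|A\|_{\HS}^2\).

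The main subtlety, and the step I expect to be the main obstacle, is the Jensen step. It must be set up as an inequality on \(\log\mathbb E\exp\) that holds uniformly over partitions, so the bound needs to be formulated as an induction hypothesis of the form "for all hollow \(A\) satisfying the two norm conditions, \(\Psi(A)\le\tfrac{C}{N}\|A\|_{\HS}^2\)". We also need to verify that the doubled crossing matrix still satisfies the local condition, which is where the absolute constants are fixed.
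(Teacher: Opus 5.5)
Your proposal is correct and follows the paper's proof essentially step for step. Both use a random bipartition with Jensen, integrate the $I$-side conditionally via Lemma~\ref{lem:local-subgaussian}, control the hollow--Schur matrix $A'\propto\operatorname{off}(A_{I,J}^{\mathsf T}A_{I,J})/N$ by Lemma~\ref{lem:hollow-schur-update-bounds}, and close by induction on $m$; your geometric-series unrolling is equivalent to the paper's strong induction with $C=2C_1$. One point should be stated precisely: when $I=\varnothing$ the index set does not shrink but $D=A'=0$, and otherwise $|J|<m$, which is exactly how the paper handles termination.
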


\begin{proof}
A random bipartition lets us integrate one group of features.  The resulting
hollow--Schur update preserves the hypotheses and contracts the
Hilbert--Schmidt norm, closing a strong induction on $m$.

Choose independent $\operatorname{Bernoulli}(1/2)$ variables $\delta_i$,
independent of the features, and set $I=\{i:\delta_i=1\}$, $J=[m]\setminus I$.
Write $\mathbb E_{\mathrm{cut}}$ for expectation over this partition and put
\[
  \mathsf H_{I,J}^{\mathrm{cut}}
  =\sum_{i\in I,j\in J}A_{ij}\langle q_i,q_j\rangle.
\]
Each unordered pair crosses the cut with probability $1/2$, so
$\mathbb E_{\mathrm{cut}}[2\mathsf H_{I,J}^{\mathrm{cut}}]
=\sum_{i<j}A_{ij}\langle q_i,q_j\rangle$.  Jensen's inequality gives
$e^{\Psi_m(A)}\le\mathbb E_{\mathrm{cut}}\mathbb E_q
e^{2\mathsf H_{I,J}^{\mathrm{cut}}}$; this is the standard random-partition
decoupling step \cite[proof of Theorem~6.1.1]{Vershynin2018}.
For a fixed cut, condition on $q_J$.  The fields
$H_i=2\sum_{j\in J}A_{ij}q_j$ satisfy $\|H_i\|_{\mathrm{op}}\le2r_1(A)$.
For small enough $c$, Lemma~\ref{lem:local-subgaussian} and independence give
\[
  \mathbb E_{q_I}[e^{2\mathsf H_{I,J}^{\mathrm{cut}}}\mid q_J]
  \le\exp\left\{\frac{C_1}{N}
  \sum_{i\in I}\left\|\sum_{j\in J}A_{ij}q_j\right\|_{\HS}^2\right\}.
\]
Set $D=A_{I,J}^{\mathsf T}A_{I,J}$, $C_2=2C_1$, and
$A'=(C_2/N)\operatorname{off}(D)$.  Since $\|q_j\|_{\HS}=1$,
\[
  \sum_{i\in I}\left\|\sum_{j\in J}A_{ij}q_j\right\|_{\HS}^2
  =\operatorname{tr}D+2\sum_{j<k}D_{jk}\langle q_j,q_k\rangle.
\]
Integrating over $q_J$ and averaging over the cut therefore yield
\[
  e^{\Psi_m(A)}\le\mathbb E_{\mathrm{cut}}
  \exp\left\{\frac{C_1}{N}\operatorname{tr}D+\Psi_{|J|}(A')\right\}.
\]
Lemma~\ref{lem:hollow-schur-update-bounds} and $N\asymp d^2$ give
\[
\begin{gathered}
  \frac{r_1(A')}{d}\le\frac{C_2d}{N}\left(\frac{r_1(A)}d\right)^2,
  \qquad
  \frac{\|A'\|_{\mathrm{op}}}{N}
  \le2C_2\left(\frac{\|A\|_{\mathrm{op}}}{N}\right)^2,\\
  \|A'\|_{\HS}\le\frac{C_2}{N}\|A\|_{\mathrm{op}}\|A\|_{\HS}
  \le C_2c\|A\|_{\HS}.
\end{gathered}
\]
Thus both hypotheses pass to $A'$ for sufficiently small $c$.

We prove the conclusion with $C=2C_1$ by strong induction, the case
$m\le1$ being immediate.  If $I=\varnothing$, then $D=A'=0$ and
$\Psi_m(0)=0$.  Otherwise $|J|<m$, so the induction hypothesis applies to
$A'$.  In either case, $\operatorname{tr}D\le\|A\|_{\HS}^2$ gives
\[
  \frac{C_1}{N}\operatorname{tr}D+\Psi_{|J|}(A')
  \le\frac{C_1+CC_2^2c^2}{N}\|A\|_{\HS}^2
  \le\frac C N\|A\|_{\HS}^2,
\]
after reducing $c$ so that $C_2^2c^2\le1/2$.  Substituting into the
random-cut bound and taking logarithms proves \eqref{eq:l1-spectral-recursion}.
\end{proof}

\subsection{Completion of the sparse branch}

We now verify the row-\(\ell_1\), operator-norm, and Hilbert--Schmidt conditions
of Lemma~\ref{lem:l1-spectral-recursion} for the random interaction generated
by the first latent replica.

\begin{proposition}[Sparse-signal indistinguishability]
\label{prop:sparse-signal-complete}
If
\[
  \frac{n\beta^2}{N}\to0,
\]
then
\[
  \TV(\Pp_n,\Qq_n)\to0.
\]
\end{proposition}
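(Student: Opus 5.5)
We use the truncated second-moment method with a first-replica event, and then apply Lemma~\ref{lem:l1-spectral-recursion} to the interaction $A=\beta^2B(q^\ast)$. By Lemma~\ref{lem:truncated-likelihood} and \eqref{eq:tv-chi-square-bound}, it suffices to find prior events $\mathcal E_n$ with $\mathbb P(\mathcal E_n)\to1$ such that
\[
 \mathbb E_q\Bigl[\mathbf 1_{\{q\in\mathcal E_n\}}\prod_{i<j}\{1+A_{ij}B_{ij}(q)\}\Bigr]\le1+o(1)
\]
uniformly over $q^\ast\in\mathcal E_n$. We bound the product by $1+x\le e^x$, giving $\prod_{i<j}(1+A_{ij}B_{ij})\le\exp\{\sum_{i<j}A_{ij}B_{ij}\}$. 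We then drop the indicator on the $q$ side, so that the left-hand side is at most $e^{\Psi_n(A)}$. No compensation term is needed, because in the sparse regime we will show $\|A\|_{\HS}^2/N\to0$, which is stronger than needed. Note that we keep the truncation only on the first replica.

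We take $\mathcal E_n$ to be the event of Proposition~\ref{prop:row-gram-environment}, intersected with the maximum-entry event of Lemma~\ref{lem:quadratic-inner-product-tail}. On this event we check the three inputs of Lemma~\ref{lem:l1-spectral-recursion}.
\begin{itemize}
\item[(a)] Hilbert--Schmidt norm: by \eqref{eq:environment-FR}, $\|A\|_{\HS}^2/N\lesssim\beta^4n^2/N^2=(n\beta^2/N)^2\to0$.
\item[(b)] Operator norm: by \eqref{eq:environment-L}, $\|A\|_{\mathrm{op}}/N\lesssim\beta^2(n/N^2+\sqrt{n\log N}/N^{3/2}+\log N/N)$. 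Since $n\beta^2/N\to0$ and $|\beta|\le1$, each term is $o(1)$. For example, $\beta^2\sqrt{n\log N}/N^{3/2}\le\sqrt{n\beta^2/N}\,\sqrt{\log N/N}$.
\item[(c)] Row $\ell_1$ norm: this is the delicate condition, $r_1(A)/d\le c$. Here $r_1(A)=\beta^2\max_i\sum_j|B_{ij}^\ast|$. We split $|B_{ij}^\ast|$ dyadically in scale using the multiscale occupancy bound of Lemma~\ref{lem:multiscale-occupancy}. Entries below $1/d$ contribute at most $n/d$ in total. The dyadic levels $u=2^m/d$ contribute $\sum_m u\,C(ne^{-cdu}+\log n)\lesssim n/d+\log n$, using the fact that $|B_{ij}|\le C\log n/d$ bounds the number of levels at $O(\log\log n)$; the log factors are absorbed. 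Negative entries satisfy $B_{ij}\ge-1/(d-1)$, so they add at most $n/d$. Hence $r_1(A)/d\lesssim\beta^2(n/d^2+\log n/d)\asymp n\beta^2/N+\beta^2\log n/d$.
\end{itemize}

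The first term in (c) is $o(1)$ by hypothesis. The second term, $\beta^2\log n/d$, is the main obstacle: it is small only if $d\gg\beta^2\log n$. When $n\beta^2/N\to0$ but $d$ is small, we argue as follows.
\begin{itemize}
\item If $N\gtrsim1$ is bounded, then $n\beta^2\to0$. In this case we bound the likelihood directly: $\chi^2\le\mathbb E e^{\beta^2\sum_{i<j}|B_{ij}^\ast B_{ij}|}-1\le e^{n^2\beta^4/2}-1\to0$, which handles this case since $n\beta^2\to0$ forces $n^2\beta^4\to0$.
\item Otherwise we use $\beta^2\le N/n$, so that $\beta^2\log n/d\lesssim d\log n/n$. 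This is $o(1)$ unless $d\gtrsim n/\log n$, and in that case $\log n/d\to0$ anyway.
\end{itemize}
So condition (c) holds in all cases for large $n$, possibly after passing to subsequences to separate the bounded-$N$ and growing-$N$ regimes.

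Lemma~\ref{lem:l1-spectral-recursion} then gives $\Psi_n(A)\le C\|A\|_{\HS}^2/N=o(1)$ uniformly on $\mathcal E_n$. Lemma~\ref{lem:truncated-likelihood} with $\pi_n\to1$ then yields $\chi^2(\Pp^{\mathcal E_n}\|\Qq_n)\to0$, and the triangle inequality, using $\TV(\Pp_n,\Pp^{\mathcal E_n})\le\mathbb P(\mathcal E_n^c)$, concludes $\TV(\Pp_n,\Qq_n)\to0$.
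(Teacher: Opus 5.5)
Your argument is the paper's: truncate only the first replica, bound the product by \(e^{\sum_{i<j}A_{ij}B_{ij}}\), check the row-\(\ell_1\) and operator-norm hypotheses of Lemma~\ref{lem:l1-spectral-recursion}, and conclude with Lemma~\ref{lem:truncated-likelihood}. Your variations are harmless:
\begin{itemize}
\item you get \(r_1(B^\ast)\lesssim n/d+\log n\) from the occupancy bound of Lemma~\ref{lem:multiscale-occupancy} rather than the paper's conditional Bernstein, so add that occupancy event to \(\mathcal E_n\);
\item you take the Hilbert--Schmidt bound from Proposition~\ref{prop:row-gram-environment}, giving \((n\beta^2/N)^2\) rather than the paper's Markov bound \(n\beta^2/N\);
\item you take the operator norm from \eqref{eq:environment-L} rather than from \(\|A\|_{\mathrm{op}}\le r_1(A)\).
\end{itemize}

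One step is wrong, though you do not need it. In the bounded-\(N\) bullet, \(|B^\ast_{ij}B_{ij}|\le1\) only gives
\(\chi^2\le e^{\beta^2\binom n2}-1\le e^{n^2\beta^2/2}-1\), not \(e^{n^2\beta^4/2}-1\). Moreover, \(n\beta^2\to0\) does not force \(n^2\beta^2\to0\); take for example \(\beta^2=n^{-3/2}\). The loss is genuine, not just a weak estimate: for bounded \(d\), the exponent \(\beta^2\sum_{i<j}|B^\ast_{ij}B_{ij}|\) has mean \(\beta^2\binom n2(\mathbb E|B_{12}|)^2\asymp n^2\beta^2\), and Jensen's inequality then forces the bound to diverge.

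Simply delete this bullet. Your second bullet never uses \(N\to\infty\). For bounded \(d\) you also have directly \(\beta^2\log n/d\le n\beta^2\cdot\log n/n\to0\). So the spectral lemma covers that case as well.

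Also state explicitly that every factor satisfies \(1+A_{ij}B_{ij}\ge1-\beta^2\ge0\). This is what justifies multiplying the inequalities \(1+x\le e^x\) and dropping the indicator on the second replica.
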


\begin{proof}
For $\beta=0$ the laws coincide, so assume $\beta\ne0$ and set $B=B(q^\ast)$.
Conditional on $q_i^\ast$, the variables $(|B_{ij}|)_{j\ne i}$ are independent,
bounded by one, and, by isotropy, satisfy
\[
  \mathbb E[|B_{ij}|\mid q_i^\ast]\le N^{-1/2},
  \qquad \mathbb E[B_{ij}^2\mid q_i^\ast]=N^{-1}.
\]
Bernstein's inequality gives, uniformly in $q_i^\ast$,
\[
  \mathbb P\left\{\sum_{j\ne i}|B_{ij}|>
  \frac n{\sqrt N}+C\left(\sqrt{\frac{nx}{N}}+x\right)
  \,\middle|\,q_i^\ast\right\}\le e^{-x}.
\]
Taking $x=3\log n$ and a union bound shows that
$r_1(B)\lesssim n/\sqrt N+\log n$ with probability $1-o(1)$.
Also $\mathbb E\|B\|_{\HS}^2=n(n-1)/N$, so Markov's inequality gives
\begin{equation}
  \|B\|_{\HS}^2\le\frac n{\beta^2}
  \label{eq:sparse-hs-bound}
\end{equation}
with probability at least $1-n\beta^2/N$.  Let $\mathcal E_n$ be the
intersection of these events and $\pi_n=\mathbb P(q^\ast\in\mathcal E_n)\to1$.
On $\mathcal E_n$, $N\asymp d^2$ implies
\[
  \frac{r_1(\beta^2B)}d\lesssim
  \frac{n\beta^2}{N}+\frac{\beta^2\log n}{d}=o(1),
  \qquad
  \frac{\|\beta^2B\|_{\mathrm{op}}}{N}
  \le\frac{r_1(\beta^2B)}N=o(1).
\]
Indeed, the remaining term $\beta^2\log n/d$ is at most $\log n/\sqrt n$
when $d\ge\sqrt n$, since $|\beta|\le1$; when $d<\sqrt n$, it is at most
$C(n\beta^2/N)\log n/\sqrt n=o(1)$.
Lemma~\ref{lem:l1-spectral-recursion} and \eqref{eq:sparse-hs-bound} now give,
uniformly on $\mathcal E_n$,
\[
  \log\mathbb E_q\prod_{i<j}
  (1+\beta^2B_{ij}\langle q_i,q_j\rangle)
  \le\Psi_n(\beta^2B)
  \le\frac{C\beta^4}{N}\|B\|_{\HS}^2
  \le\frac{Cn\beta^2}{N}=o(1).
\]
Here $1+u\le e^u$ applies to each factor, and all factors are nonnegative
because $\beta^2\le1$ and the inner products lie in $[-1,1]$.
Inserting $\mathbf1_{\{q\in\mathcal E_n\}}$ can therefore only decrease the
expectation.  Lemma~\ref{lem:truncated-likelihood} yields
$\chi^2(\Pp_n^{\mathcal E_n}\|\Qq_n)=o(1)$, and hence
\[
  \TV(\Pp_n,\Qq_n)\le1-\pi_n+
  \tfrac12\sqrt{\chi^2(\Pp_n^{\mathcal E_n}\|\Qq_n)}=o(1).
\]
\end{proof}

Proposition~\ref{prop:sparse-signal-complete} completes the sparse branch of
Proposition~\ref{prop:pure-negative}.
\section{Bounds for covariance-truncated deletion}
\label{app:complete-adaptive-even}

This appendix proves the estimates used in Section~\ref{sec:dense-likelihood}:
the covariance-truncated spectral bound, weighted feature tails, stability
of the hollow--Schur updates, and summability of the local remainders.
We retain the notation and uniformity convention of that section.
Unless a finite-parameter statement specifies otherwise, the dense regime
\eqref{eq:dense-master-regime} and cutoff \eqref{eq:adaptive-cutoff-parameters}
are in force. Expectations integrate the second replica.

\subsection{Covariance-truncated row-\texorpdfstring{$\ell_2$}{l2}
recursion}
\label{sec:covariance-truncated-recursion}

Throughout this subsection, $\Lambda>0$ is an arbitrary covariance cutoff.
The estimates below are uniform in $\Lambda$ subject to their displayed
assumptions.  Their application to the dense branch uses the value in
\eqref{eq:adaptive-cutoff-parameters}.  For \(I\subseteq[m]\), recall from
Appendix~\ref{app:complete-preliminaries} that
\[
  \mathcal C_I(\Lambda)
  =
  \left\{
  \left\|\sum_{i\in I}q_i\otimes q_i\right\|_{\mathrm{op}}\le\Lambda
  \right\},
  \qquad
  \mathcal C_m(\Lambda)=\mathcal C_{[m]}(\Lambda).
\]
For a real symmetric hollow \(m\times m\) matrix \(A\), let
\(q_1,\ldots,q_m\) be independent with respective laws
\(\mu_{t_1},\ldots,\mu_{t_m}\), where \(t_i\in[0,1]\), and define
\[
  \mathcal M_m(A;\Lambda)
  =
  \mathbb E\left[
  \mathbf1_{\mathcal C_m(\Lambda)}
  \exp\left\{
  \sum_{i<j}A_{ij}\langle q_i,q_j\rangle
  \right\}
  \right],
\]
and define
\[
  R_2(A)=\max_i\left(\sum_jA_{ij}^2\right)^{1/2}.
\]

\begin{lemma}[Deterministic covariance-cutoff weighted-sum bound]
\label{lem:cutoff-field-bound}
On $\mathcal C_m(\Lambda)$, for every $a\in\mathbb R^m$,
\[
  \left\|\sum_ja_jq_j\right\|_{\HS}
  \le\sqrt\Lambda\,\|a\|_2,
\]
and therefore the same upper bound holds for the matrix operator norm.
\end{lemma}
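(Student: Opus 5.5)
The bound follows from a variational description of the operator norm together with the fact that the covariance operator is a Gram-type operator. I would write $S=\sum_{j\le m}q_j\otimes q_j$, a positive semidefinite operator on $V$. On $\mathcal C_m(\Lambda)$ we have $\|S\|_{\mathrm{op}}\le\Lambda$. The key identity is the duality
\[
  \Bigl\|\sum_j a_jq_j\Bigr\|_{\HS}
  =\sup_{\|H\|_{\HS}\le1}\sum_j a_j\langle q_j,H\rangle_{\HS}.
\]
Cauchy--Schwarz in the index $j$ then bounds the right-hand side by
\[
  \|a\|_2\Bigl(\sum_j\langle q_j,H\rangle_{\HS}^2\Bigr)^{1/2}
  =\|a\|_2\langle SH,H\rangle_{\HS}^{1/2}
  \le\|a\|_2\sqrt{\|S\|_{\mathrm{op}}}.
\]
This gives $\sqrt\Lambda\,\|a\|_2$ on the event. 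Equivalently, I could argue through the synthesis map $\Phi:\mathbb R^m\to V$, $a\mapsto\sum_ja_jq_j$: its adjoint is $H\mapsto(\langle q_j,H\rangle)_j$, and $\Phi\Phi^\ast=S$. Hence $\|\Phi\|_{\mathrm{op}}^2=\|\Phi\Phi^\ast\|_{\mathrm{op}}=\|S\|_{\mathrm{op}}$, which is the same bound.

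For the second assertion, $\sum_ja_jq_j$ is a symmetric $d\times d$ matrix. Its matrix operator norm, the largest absolute eigenvalue, is at most its Frobenius norm, and the Frobenius norm equals the Hilbert--Schmidt norm used here. So the same upper bound transfers immediately.

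There is no real obstacle. The only point needing care is to keep the two meanings of \emph{operator norm} distinct: the operator norm of $S$ as an operator on $(V,\langle\cdot,\cdot\rangle_{\HS})$ defines the event, while the matrix operator norm of the weighted sum is the conclusion. The comparison of the latter with the Hilbert--Schmidt norm is the elementary inequality $\|M\|_{\mathrm{op}}\le\|M\|_{\HS}$. The argument is deterministic, so it holds pointwise on $\mathcal C_m(\Lambda)$, for every law of the $q_j$ and every $\Lambda>0$.
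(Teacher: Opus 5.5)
Your proof is correct and is essentially the paper's: the paper uses exactly your synthesis map $\mathsf S_q a=\sum_j a_jq_j$, the identity $\mathsf S_q\mathsf S_q^\ast=\sum_j q_j\otimes q_j$, the bound $\|\mathsf S_q\|_{\mathrm{op}}^2=\|\mathsf S_q\mathsf S_q^\ast\|_{\mathrm{op}}\le\Lambda$, and then $\|\cdot\|_{\mathrm{op}}\le\|\cdot\|_{\HS}$ for the matrix norm. Your primary duality-plus-Cauchy--Schwarz computation is just the unpacked form of that same identity, so nothing further is needed.
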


\begin{proof}
Let \(\mathsf S_q:\mathbb R^m\to V\) be
\(\mathsf S_qa=\sum_ja_jq_j\), and let \(\mathsf S_q^\ast\) denote its
Hilbert-space adjoint.  Then
\[
  \mathsf S_q\mathsf S_q^\ast=\sum_jq_j\otimes q_j.
\]
On $\mathcal C_m(\Lambda)$,
\(\|\mathsf S_q\|_{\mathrm{op}}^2
  =\|\mathsf S_q\mathsf S_q^\ast\|_{\mathrm{op}}\le\Lambda\).
Thus \(\|\mathsf S_qa\|_{\HS}\le\sqrt\Lambda\|a\|_2\).  Finally,
\(\|\mathsf S_qa\|_{\mathrm{op}}\le\|\mathsf S_qa\|_{\HS}\).
\end{proof}

\begin{theorem}[Covariance-truncated spectral recursion]
\label{thm:l2-spectral-recursion}
There are absolute constants $c,C>0$ such that
\[
  \frac{\sqrt\Lambda\,R_2(A)}d\le c,
  \qquad
  \frac{\|A\|_{\mathrm{op}}}{N}\le c
\]
imply
\begin{equation}
  \log\mathcal M_m(A;\Lambda)
  \le\frac C N\|A\|_{\HS}^2.
  \label{eq:l2-spectral-recursion}
\end{equation}
The conclusion is uniform when the coordinates independently have laws
$\mu_t$ with possibly different $t\in[0,1]$.
\end{theorem}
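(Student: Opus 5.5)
I would follow the template of Lemma~\ref{lem:l1-spectral-recursion}: a random-bipartition decoupling, one-site integration of one side via Lemma~\ref{lem:local-subgaussian}, and a strong induction on \(m\) through the hollow--Schur update of Lemma~\ref{lem:hollow-schur-update-bounds}. The difference is that the operator-norm control of the local fields must now come from the covariance cutoff, not from the row-\(\ell_1\) norm. Let \(\delta_i\) be independent fair coins and set \(I=\{\delta_i=1\}\), \(J=[m]\setminus I\). Since \(\mathcal C_m(\Lambda)\subseteq\mathcal C_J(\Lambda)\) and the indicator is nonnegative, Jensen over the cut gives
\[
  \mathcal M_m(A;\Lambda)\le \mathbb E_{\mathrm{cut}}\mathbb E\bigl[\mathbf1_{\mathcal C_J(\Lambda)}\,e^{2\mathsf H^{\mathrm{cut}}_{I,J}}\bigr].
\]
After this step the \(q_I\) coordinates appear without any truncation.

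Next I would condition on \(q_J\) and work on \(\mathcal C_J(\Lambda)\). The field \(H_i=2\sum_{j\in J}A_{ij}q_j\) then satisfies \(\|H_i\|_{\mathrm{op}}\le\|H_i\|_{\HS}\le 2\sqrt\Lambda R_2(A)\) by Lemma~\ref{lem:cutoff-field-bound}. This is at most \(2cd\), so the hypothesis of Lemma~\ref{lem:local-subgaussian} holds for small \(c\), and that lemma is valid under every \(\mu_t\) because \(\mu_t\in\operatorname{conv}\{\mu,\mu^-\}\). Integrating the \(q_i\), \(i\in I\), independently gives the bound \(\exp\{(C_1/N)\operatorname{tr}D+\sum_{j<k}A'_{jk}\langle q_j,q_k\rangle\}\), with \(D=A_{I,J}^{\mathsf T}A_{I,J}\) and \(A'=(C_2/N)\operatorname{off}(D)\), exactly as in the \(\ell_1\) proof. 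What remains is \(\mathbb E[\mathbf1_{\mathcal C_J(\Lambda)}e^{\sum A'\langle q,q\rangle}]=\mathcal M_{|J|}(A';\Lambda)\), with the coordinates in \(J\) keeping their own laws \(\mu_{t_j}\). This is the point at which uniformity over heterogeneous \(t_i\) enters.

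To close the induction I need \(A'\) to satisfy the hypotheses again. Lemma~\ref{lem:hollow-schur-update-bounds} gives \(R_2(A')\le (C_2/N)\|A\|_{\mathrm{op}}R_2(A)\le C_2cR_2(A)\), so \(\sqrt\Lambda R_2(A')/d\le c\) once \(C_2c\le1\). It also gives \(\|A'\|_{\mathrm{op}}/N\le 2C_2c^2\le c\) and \(\|A'\|_{\HS}\le C_2c\|A\|_{\HS}\). Then \((C_1/N)\operatorname{tr}D+(C/N)\|A'\|_{\HS}^2\le (C_1+CC_2^2c^2)\|A\|_{\HS}^2/N\le (C/N)\|A\|_{\HS}^2\) with \(C=2C_1\) and \(C_2^2c^2\le1/2\). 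The base case \(m\le1\) and the case \(I=\varnothing\) are trivial: for the latter \(\mathcal M\le1\).

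The step that needs the most care is the second one. The truncation event must be dropped only on the \(I\) side before integrating, while the correct cutoff is kept on the \(J\) side, so that the one-site MGF is applied to fields whose operator norm is controlled by \(\sqrt\Lambda R_2(A)\). Here I must check two things: that monotonicity \(\mathcal C_m\subseteq\mathcal C_J\) is used in the right direction, and that the reduced quantity is exactly \(\mathcal M_{|J|}(A';\Lambda)\) with the same \(\Lambda\). Only then does the recursion close without the cutoff degrading.
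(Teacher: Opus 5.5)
Your proposal is correct and follows the paper's proof essentially step for step: random-cut decoupling with Jensen, dropping the cutoff on the $I$ side via $\mathcal C_m(\Lambda)\subseteq\mathcal C_J(\Lambda)$, bounding the local fields by $2\sqrt\Lambda\,R_2(A)$ through Lemma~\ref{lem:cutoff-field-bound}, the one-site MGF, and strong induction through the hollow--Schur update $A'=C_2\operatorname{off}(D)/N$ at the same cutoff $\Lambda$. One constant needs tightening: the step $2C_2c^2\le c$, which passes the operator-norm hypothesis to $A'$, requires $2C_2c\le1$ and not merely $C_2c\le1$, and this is exactly the choice the paper makes.
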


\begin{proof}
We decouple across a random cut, apply the one-site MGF with the covariance
cutoff on the unintegrated side, and close the resulting matrix recursion
by induction. Let \(I\subseteq[m]\) include each index independently with
probability \(1/2\), put \(J=[m]\setminus I\), and write
\(S_{I,J}=\sum_{i\in I,j\in J}A_{ij}\langle q_i,q_j\rangle\).
Each edge crosses the cut with probability \(1/2\), so Jensen gives
\[
 \mathcal M_m(A;\Lambda)
 \le\mathbb E_{\mathrm{cut}}\mathbb E[
       \mathbf1_{\mathcal C_m(\Lambda)}e^{2S_{I,J}}].
\]
Fix the cut. Positivity of the covariance summands implies
\(\mathcal C_m(\Lambda)\subseteq\mathcal C_J(\Lambda)\). We may therefore
drop the restriction on \(q_I\) before conditioning on \(q_J\). On
\(\mathcal C_J(\Lambda)\), the matrices
\(H_i=2\sum_{j\in J}A_{ij}q_j\) satisfy
\[
 \max_{i\in I}\frac{\|H_i\|_{\mathrm{op}}}{d}
 \le\frac{2\sqrt\Lambda R_2(A)}d\le2c
\]
by Lemma~\ref{lem:cutoff-field-bound}. For sufficiently small \(c\),
conditional independence and Lemma~\ref{lem:local-subgaussian} give
\[
 \mathbb E_{q_I}[e^{2S_{I,J}}\mid q_J]
 \le\exp\left\{\frac{C_1}{N}\sum_{i\in I}
            \left\|\sum_{j\in J}A_{ij}q_j\right\|_{\HS}^2\right\}.
\]
With \(D=A_{I,J}^{\mathsf T}A_{I,J}\), the sum in this exponent is
\(\tr D+2\sum_{j<k}D_{jk}\langle q_j,q_k\rangle\), since
\(\|q_j\|_{\HS}=1\). Thus, for \(C_2=2C_1\) and
\(A'=C_2\operatorname{off}(D)/N\),
\[
 \mathcal M_m(A;\Lambda)
 \le\mathbb E_{\mathrm{cut}}
       [e^{C_1\tr D/N}\mathcal M_{|J|}(A';\Lambda)].
\]
Lemma~\ref{lem:hollow-schur-update-bounds} yields
\[
\begin{gathered}
 \|A'\|_{\HS}\le C_2c\|A\|_{\HS},\qquad
 R_2(A')\le C_2cR_2(A),\qquad
 \|A'\|_{\mathrm{op}}\le2C_2c\|A\|_{\mathrm{op}}.
\end{gathered}
\]
Choose \(c\) so that \(2C_2c\le1\) and \(C_2^2c^2\le1/2\).
Then \(A'\) satisfies the same two smallness hypotheses.
We prove \(\mathcal M_m(A;\Lambda)\le
e^{C_{\mathcal M}\|A\|_{\HS}^2/N}\), with
\(C_{\mathcal M}=2C_1\), by strong induction on \(m\).
For \(m\le1\) the integral is at most one. If \(I\ne\varnothing\),
then \(|J|<m\), so the induction hypothesis applies to \(A'\);
if \(I=\varnothing\), then \(A'=0\) and its integral is again at most one.
In both cases, using \(\tr D\le\|A\|_{\HS}^2\),
\[
 \mathcal M_m(A;\Lambda)
 \le\exp\left\{\frac{C_1+C_{\mathcal M}C_2^2c^2}{N}
                         \|A\|_{\HS}^2\right\}
 \le e^{C_{\mathcal M}\|A\|_{\HS}^2/N}.
\]
This closes the induction. The one-site MGF is uniform over \(\mu_t\),
and \(q_i\otimes q_i\) has the same law for every \(t\); hence the proof
also allows a different interpolation parameter at each coordinate.
\end{proof}

\subsection{Weighted feature-sum tails}
\label{sec:weighted-feature-sum-tails}

For deterministic coefficients $a=(a_1,\ldots,a_m)\in\mathbb R^m$, put
\[
  H(a)=\sum_{j=1}^ma_jq_j.
\]
Thus $H(a)$ is the weighted feature sum with coefficient vector $a$.  Since
$\|q_j\|_{\HS}=1$ for every $j$,
\[
  \|H(a)\|_{\HS}^2-\|a\|_2^2
  =
  2\sum_{j<k}a_ja_k\langle q_j,q_k\rangle.
\]

\begin{corollary}[Weighted-sum MGF]
\label{cor:local-field-mgf}
There are absolute constants \(c,C>0\) such that the following holds.  Let
\(\theta\ge0\), and suppose
\[
  \frac{\theta\sqrt\Lambda\,\|a\|_\infty\|a\|_2}{d}\le c,
  \qquad
  \frac{\theta\|a\|_2^2}{N}\le c.
\]
Then
\begin{equation}
  \log\mathbb E\left[
  \mathbf1_{\mathcal C_m(\Lambda)}
  e^{\theta(\|H(a)\|_{\HS}^2-\|a\|_2^2)}
  \right]
  \le C\frac{\theta^2\|a\|_2^4}{N}.
  \label{eq:local-field-mgf}
\end{equation}
Consequently, if \(\|a\|_2>0\), \(1\le T\le cN\), and
\[
  \frac{T\sqrt\Lambda\,\|a\|_\infty}{d\|a\|_2}\le c,
\]
then for every \(u>0\),
\begin{equation}
  \mathbb P\left\{
  \mathcal C_m(\Lambda),\
  \|H(a)\|_{\HS}^2\ge(1+u)\|a\|_2^2
  \right\}
  \le
  \exp\left\{-uT+C\frac{T^2}{N}\right\}.
  \label{eq:local-field-tail}
\end{equation}
\end{corollary}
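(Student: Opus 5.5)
The plan is to view the exponent as a quadratic form in the features and apply Theorem~\ref{thm:l2-spectral-recursion} to a suitably scaled rank-one coefficient matrix. Since the \(q_j\) have unit norm,
\[
  \theta\bigl(\|H(a)\|_{\HS}^2-\|a\|_2^2\bigr)
  =\sum_{j<k}A_{jk}\langle q_j,q_k\rangle_{\HS},
  \qquad A:=2\theta\operatorname{off}(aa^\top).
\]
The left side of \eqref{eq:local-field-mgf} is therefore exactly \(\log\mathcal M_m(A;\Lambda)\).

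\emph{Checking the hypotheses of Theorem~\ref{thm:l2-spectral-recursion}.} For the row norm,
\[
  R_2(A)\le2\theta\max_j|a_j|\,\|a\|_2=2\theta\|a\|_\infty\|a\|_2,
\]
so the first assumption gives \(\sqrt\Lambda R_2(A)/d\le2c\). For the operator norm, \(\|\operatorname{off}(aa^\top)\|_{\mathrm{op}}\le\|aa^\top\|_{\mathrm{op}}+\max_ja_j^2\le2\|a\|_2^2\). Hence \(\|A\|_{\mathrm{op}}/N\le4\theta\|a\|_2^2/N\le4c\). Taking \(c\) here to be a small multiple of the absolute constant in the theorem, both hypotheses hold.

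\emph{Applying the theorem.} It gives \(\log\mathcal M_m\le C\|A\|_{\HS}^2/N\). Since \(\|A\|_{\HS}^2\le4\theta^2\|a\|_2^4\), this is \eqref{eq:local-field-mgf}. Uniformity over the interpolating laws \(\mu_t\) is inherited from the theorem.

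\emph{The tail bound.} Use the Chernoff bound with \(\theta=T/\|a\|_2^2\):
\[
  \mathbb P\{\mathcal C_m(\Lambda),\ \|H\|_{\HS}^2-\|a\|_2^2\ge u\|a\|_2^2\}
  \le e^{-\theta u\|a\|_2^2}\,\mathbb E[\mathbf 1_{\mathcal C_m(\Lambda)}e^{\theta(\|H\|^2-\|a\|^2)}].
\]
This choice makes \(\theta u\|a\|_2^2=uT\) and \(\theta^2\|a\|_2^4/N=T^2/N\). The MGF conditions become \(T\sqrt\Lambda\|a\|_\infty/(d\|a\|_2)\le c\), which is assumed, and \(T/N\le c\), which is the assumption \(T\le cN\). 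Substituting yields \eqref{eq:local-field-tail}.

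No step is a genuine obstacle. The only point needing care is that the same constant \(c\) serves in both the corollary's hypotheses and the theorem's smallness conditions, so the constants must be shrunk consistently.
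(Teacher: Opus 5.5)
Your proposal is correct and matches the paper's proof: the same coefficient matrix $2\theta\operatorname{off}(aa^\top)$, the same check of the two hypotheses of Theorem~\ref{thm:l2-spectral-recursion}, and the same Chernoff choice $\theta=T/\|a\|_2^2$. The only difference is cosmetic. You bound $\|\operatorname{off}(aa^\top)\|_{\mathrm{op}}$ by $2\|a\|_2^2$, whereas the paper uses $-\operatorname{diag}(a_i^2)\preceq\operatorname{off}(aa^\top)\preceq aa^\top$ to get $\|a\|_2^2$; this changes only the absolute constant.
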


\begin{proof}
Set \(A_\theta=2\theta\operatorname{off}(aa^\top)\). The identity above
identifies the expectation in \eqref{eq:local-field-mgf} with
\(\mathcal M_m(A_\theta;\Lambda)\). Directly,
\[
\begin{aligned}
 R_2(A_\theta)&\le2\theta\|a\|_\infty\|a\|_2,
 &\|A_\theta\|_{\HS}^2
 &=4\theta^2\left(\|a\|_2^4-\sum_i a_i^4\right),\\
 \|A_\theta\|_{\mathrm{op}}&\le2\theta\|a\|_2^2.
\end{aligned}
\]
The operator bound follows from
\(-\operatorname{diag}(a_i^2)\preceq\operatorname{off}(aa^\top)
\preceq aa^\top\). After decreasing \(c\), the assumptions therefore
verify both hypotheses of Theorem~\ref{thm:l2-spectral-recursion}, which
gives \eqref{eq:local-field-mgf}.
For \eqref{eq:local-field-tail}, take \(\theta=T/\|a\|_2^2\).
The MGF hypotheses reduce exactly to the two stated restrictions on \(T\),
and exponential Markov inequality gives
\[
 \mathbb P\{\mathcal C_m(\Lambda),
       \|H(a)\|_{\HS}^2\ge(1+u)\|a\|_2^2\}
 \le e^{-\theta u\|a\|_2^2+C\theta^2\|a\|_2^4/N}
 =e^{-uT+CT^2/N}.
\]
\end{proof}

\subsection{Hollow--Schur recursion and squared-row-norm stability}
\label{sec:adaptive-hollow-schur}

Eliminating one feature creates a rank-one Schur correction in the remaining
interaction matrix.  We first show deterministically that repeated corrections
do not enlarge its squared row norms or squared Hilbert--Schmidt norm beyond
the scales needed in Section~\ref{sec:dense-likelihood}.
We use the matrices, column vectors and row norms defined in
Section~\ref{sec:two-matrix-compensation} and \eqref{eq:hollow-schur}.

\begin{lemma}[One-step row and Hilbert--Schmidt norm bounds]
\label{lem:one-step-row-hs-bounds}
If $r_m^2/N^2\le1/4$, then, for every $j<m$,
\[
  r_j^{(m-1)\,2}
  \le
  \left(1+5\frac{r_m^2}{N^2}\right)r_j^{(m)\,2}
  -\frac12(A_{jm}^{(m)})^2.
\]
If $\|A^{(m)}\|_{\HS}^2/N^2\le1/20$, then
\[
  \|A^{(m-1)}\|_{\HS}^2
  \le \|A^{(m)}\|_{\HS}^2-\frac54r_m^2.
\]
\end{lemma}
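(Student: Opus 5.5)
The plan is to prove both bounds by exact entrywise expansion of the hollow--Schur update \eqref{eq:hollow-schur}. No probabilistic input is needed. For \(j\ne k<m\), write \(a=a^{(m)}\) and \(A=A^{(m)}\), so that
\[
  A^{(m-1)}_{jk}=A_{jk}+\frac{a_ja_k}{N}.
\]
Recall \(r_j^{(m)\,2}=\rho_j^2+a_j^2\), where
\[
  \rho_j^2:=\sum_{k<m,\,k\ne j}A_{jk}^2
\]
is the part of row \(j\) retained after deleting column \(m\).

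\emph{Row bound.} Expanding the square gives
\[
  r_j^{(m-1)\,2}
  =\rho_j^2+\frac{2a_j}{N}\sum_{k<m,\,k\ne j}A_{jk}a_k
   +\frac{a_j^2}{N^2}\sum_{k<m,\,k\ne j}a_k^2 .
\]
For the cross term, apply Cauchy--Schwarz and then \(2xy\le x^2/4+4y^2\). This gives
\[
  \left|\frac{2a_j}{N}\sum_kA_{jk}a_k\right|
  \le\frac{2|a_j|\rho_jr_m}{N}
  \le\frac14a_j^2+4\rho_j^2\frac{r_m^2}{N^2}.
\]
The last term is at most \(a_j^2r_m^2/N^2\le a_j^2/4\), using the hypothesis \(r_m^2/N^2\le1/4\). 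Collecting terms yields
\[
  r_j^{(m-1)\,2}\le\Bigl(1+4\frac{r_m^2}{N^2}\Bigr)\rho_j^2+\frac12a_j^2 .
\]
Substitute \(\rho_j^2=r_j^{(m)\,2}-a_j^2\) and note \(a_j=A^{(m)}_{jm}\). This gives
\[
  r_j^{(m-1)\,2}\le\Bigl(1+4\frac{r_m^2}{N^2}\Bigr)r_j^{(m)\,2}-\frac12a_j^2 ,
\]
because the coefficient of \(a_j^2\) becomes \(-(1+4r_m^2/N^2)+\tfrac12\le-\tfrac12\). Finally, \(1+4x\le1+5x\).

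\emph{Hilbert--Schmidt bound.} Sum the same expansion over ordered pairs \(j\ne k<m\). The removed row and column \(m\) contribute \(-2r_m^2\), which gives the identity already recorded before \eqref{eq:delta-quadratic-reduction}:
\[
  \|A^{(m-1)}\|_{\HS}^2-\|A^{(m)}\|_{\HS}^2
  =-2r_m^2+\frac2Na^\top\widehat A^{(m)}a+\frac{r_m^4-\sum_ja_j^4}{N^2}.
\]
The middle term is at most
\[
  \frac2N\|\widehat A^{(m)}\|_{\mathrm{op}}r_m^2
  \le\frac2N\|A^{(m)}\|_{\HS}r_m^2
  \le\frac{2}{\sqrt{20}}r_m^2 ,
\]
by the hypothesis \(\|A^{(m)}\|_{\HS}^2/N^2\le1/20\). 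Symmetry of \(A^{(m)}\) gives \(2r_m^2\le\|A^{(m)}\|_{\HS}^2\), so \(r_m^2/N^2\le1/40\). Hence the last term is at most \(r_m^2/40\). The total coefficient of \(r_m^2\) is then
\[
  -2+\frac{2}{\sqrt{20}}+\frac1{40}<-1.52\le-\frac54 ,
\]
which proves the second bound.

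The argument is elementary bookkeeping. The only point needing care is the constant-tracking in the row bound: the AM--GM split must be chosen so that the \(a_j^2\) coefficient stays at or below \(-\tfrac12\) while the multiplicative factor is at most \(1+5r_m^2/N^2\). The split \(\tfrac14/4\) above achieves both.
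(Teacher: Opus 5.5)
Your proof is correct. For the row bound you follow the paper's argument. The only difference is that the paper applies Cauchy--Schwarz with the full row norm $r_j^{(m)}$ instead of your $\rho_j$, which is cosmetic, and it also notes that the constant $4$ suffices.

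For the Hilbert--Schmidt bound you take a different route.

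\emph{The paper's route.} It simply sums the row inequality over $j<m$, using $\sum_{j<m}a_j^2=r_m^2$ and $\sum_{j<m}r_j^{(m)\,2}=\|A^{(m)}\|_{\HS}^2-r_m^2$. This gives a decrease of $(\tfrac32-5\|A^{(m)}\|_{\HS}^2/N^2)\,r_m^2\ge\tfrac54r_m^2$. The decrease comes entirely from the retained negative term $-\tfrac12a_j^2$. The paper reuses that same term in Proposition~\ref{prop:global-row-quartic} to control individual entries of the recursive matrices.

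\emph{Your route.} You start from the exact identity for $\|A^{(m-1)}\|_{\HS}^2-\|A^{(m)}\|_{\HS}^2$, which the paper records in Section~\ref{sec:compensation-trace-potential}. You bound the quadratic form $a^\top\widehat A^{(m)}a$ by $\|\widehat A^{(m)}\|_{\mathrm{op}}\,r_m^2$, and the quartic term via $2r_m^2\le\|A^{(m)}\|_{\HS}^2$.

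\emph{What each buys.} Your argument is independent of the first assertion and gives a better constant, roughly $1.52$ in place of $5/4$. It also shows that the decrease needs only $\|\widehat A^{(m)}\|_{\mathrm{op}}/N$ and $r_m^2/N^2$ to be small, not Hilbert--Schmidt smallness. The paper's route has the economy of obtaining both assertions from a single computation.
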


\begin{proof}
Write \(a=a^{(m)}\). For \(j<m\), expanding \eqref{eq:hollow-schur}
and applying Cauchy--Schwarz to the cross term gives
\[
\begin{aligned}
 r_j^{(m-1)\,2}
 &=r_j^{(m)\,2}-a_j^2
   +\frac{2a_j}{N}\sum_{k<m,\,k\ne j}A_{jk}^{(m)}a_k
   +\frac{a_j^2}{N^2}\sum_{k<m,\,k\ne j}a_k^2\\
 &\le r_j^{(m)\,2}-a_j^2
       +\frac{2|a_j|r_j^{(m)}r_m}{N}+\frac{a_j^2r_m^2}{N^2}.
\end{aligned}
\]
Young's inequality bounds the cross term by
\(a_j^2/4+4r_j^{(m)\,2}r_m^2/N^2\); the last term is at most
\(a_j^2/4\) when \(r_m^2/N^2\le1/4\). This proves the first assertion,
even with \(4\) in place of \(5\).

Under the second hypothesis the first assertion applies. Summing it over
\(j<m\), using \(\sum_{j<m}a_j^2=r_m^2\) and
\(\sum_{j<m}r_j^{(m)\,2}=\|A^{(m)}\|_{\HS}^2-r_m^2\), gives
\[
\begin{aligned}
 \|A^{(m-1)}\|_{\HS}^2
 &\le\left(1+\frac{5r_m^2}{N^2}\right)
        (\|A^{(m)}\|_{\HS}^2-r_m^2)-\frac12r_m^2\\
 &\le\|A^{(m)}\|_{\HS}^2
       -\left(\frac32-\frac{5\|A^{(m)}\|_{\HS}^2}{N^2}\right)r_m^2
 \le\|A^{(m)}\|_{\HS}^2-\frac54r_m^2.
\end{aligned}
\]
\end{proof}

\begin{proposition}[Global hollow--Schur bounds]
\label{prop:global-row-quartic}
Assume $\|A\|_{\HS}^2/N^2\le1/20$, and put
\[
  R_0^2=\max_j\sum_{k\ne j}A_{jk}^2,
  \qquad
  M_\ast=\max_{m,j<k\le m}|A_{jk}^{(m)}|.
\]
Then
\begin{equation}
  \|A^{(m)}\|_{\HS}^2\le\|A\|_{\HS}^2,\qquad
  \sum_{m=2}^nr_m^2\le\frac45\|A\|_{\HS}^2,
  \label{eq:recursive-hs-bounds}
\end{equation}
\[
  \max_{m,j\le m}r_j^{(m)\,2}
  \le
  R_0^2
  \exp\left\{5\frac{\|A\|_{\HS}^2}{N^2}\right\},
\]
and
\begin{equation}
  \sum_{m=2}^n r_m^4
  \le
  \frac45
  e^{5\|A\|_{\HS}^2/N^2}
  R_0^2\|A\|_{\HS}^2.
  \label{eq:quartic-row-sum}
\end{equation}
Moreover,
\begin{equation}
  M_\ast\le \|A\|_{\max}+\frac{CR_0^2}{N}.
  \label{eq:recursive-max-entry}
\end{equation}
\end{proposition}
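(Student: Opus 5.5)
The plan is to iterate Lemma~\ref{lem:one-step-row-hs-bounds} along the deletion sequence \(m=n,n-1,\ldots,2\), with an induction that keeps its hypotheses valid at every stage. Assume \(\|A^{(m)}\|_{\HS}^2\le\|A\|_{\HS}^2\le N^2/20\). Then \(r_m^2\le\|A^{(m)}\|_{\HS}^2\le N^2/20\le N^2/4\), so both parts of the lemma apply. The second part gives \(\|A^{(m-1)}\|_{\HS}^2\le\|A^{(m)}\|_{\HS}^2-\tfrac54r_m^2\), which preserves the inductive hypothesis and proves the first claim in \eqref{eq:recursive-hs-bounds}. Telescoping from \(m=n\) down to \(A^{(1)}=0\) gives \(\tfrac54\sum_m r_m^2\le\|A\|_{\HS}^2\), which is the second claim.

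For the row bound, fix \(j\). Apply the first part of the lemma repeatedly from stage \(n\) down to stage \(m\ge j\), dropping the negative term \(-\tfrac12(A^{(\ell)}_{j\ell})^2\). This gives
\[
r_j^{(m)\,2}\le R_0^2\prod_{\ell>m}\Bigl(1+5\frac{r_\ell^2}{N^2}\Bigr)\le R_0^2\exp\Bigl\{\frac5{N^2}\sum_\ell r_\ell^2\Bigr\}.
\]
The exponent is at most \(4\|A\|_{\HS}^2/N^2\le5\|A\|_{\HS}^2/N^2\) by the sum bound just proved. Since \(r_m=r_m^{(m)}\), the quartic bound \eqref{eq:quartic-row-sum} follows by bounding \(r_m^4\le(\max_{m,j}r_j^{(m)\,2})\,r_m^2\) and then summing with \eqref{eq:recursive-hs-bounds}.

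For the entry bound \eqref{eq:recursive-max-entry}, write out the update \eqref{eq:hollow-schur}. For \(j<k<m\) it reads \(A^{(m-1)}_{jk}=A^{(m)}_{jk}+A^{(m)}_{jm}A^{(m)}_{km}/N\). Iterating from stage \(n\) gives
\[
A^{(m)}_{jk}=A_{jk}+\frac1N\sum_{\ell>m}A^{(\ell)}_{j\ell}A^{(\ell)}_{k\ell}.
\]
Cauchy--Schwarz bounds the sum by \(\bigl(\sum_\ell (A^{(\ell)}_{j\ell})^2\bigr)^{1/2}\bigl(\sum_\ell (A^{(\ell)}_{k\ell})^2\bigr)^{1/2}\). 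The key step is to show that \(\sum_{\ell}(A^{(\ell)}_{j\ell})^2\lesssim R_0^2\). To get it, keep the negative term in the first part of Lemma~\ref{lem:one-step-row-hs-bounds} and telescope:
\[
\tfrac12\sum_{\ell>j}(A^{(\ell)}_{j\ell})^2\le r_j^{(n)\,2}+\sum_\ell \frac{5r_\ell^2}{N^2}\,r_j^{(\ell)\,2}\le R_0^2\bigl(1+Ce^{C}\bigr).
\]
The last inequality uses the row bound and \(\sum_\ell r_\ell^2/N^2\le C\). Hence \(|A^{(m)}_{jk}|\le\|A\|_{\max}+CR_0^2/N\).

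I expect the main obstacle to be this last telescoping, where the \(-\tfrac12 a_j^2\) gain must be retained rather than discarded. A secondary point is bookkeeping: the inductive hypothesis \(\|A^{(m)}\|_{\HS}^2\le N^2/20\) must be verified at every stage before the lemma is invoked, and the downward induction above handles this.
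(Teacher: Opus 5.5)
Your proposal is correct and follows essentially the same route as the paper's proof. You propagate the Hilbert--Schmidt hypothesis through the second part of Lemma~\ref{lem:one-step-row-hs-bounds}, then iterate the first part with the negative term dropped to get the row and quartic bounds, and finally retain that term and telescope to obtain the $\ell_2$ control of the eliminated entries that feeds Cauchy--Schwarz in the iterated entry recursion.
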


\begin{proof}
The second bound in Lemma~\ref{lem:one-step-row-hs-bounds} propagates
\(\|A^{(m)}\|_{\HS}^2\le N^2/20\). Summing its decreases gives
\eqref{eq:recursive-hs-bounds}. Its first bound therefore applies at every
stage; discarding the negative term and iterating for a surviving row gives
\[
 r_j^{(m)\,2}
 \le R_0^2\prod_{\ell>m}\left(1+\frac{5r_\ell^2}{N^2}\right)
 \le R_0^2e^{5\|A\|_{\HS}^2/N^2}.
\]
Together with \(\sum_mr_m^4\le(\max_mr_m^2)\sum_mr_m^2\), this proves
\eqref{eq:quartic-row-sum}.

To control individual entries, retain the negative term in the same row
inequality and sum it for a surviving row \(j\):
\[
 \frac12\sum_{\ell>m}(A_{j\ell}^{(\ell)})^2
 \le r_j^{(n)\,2}-r_j^{(m)\,2}
       +\frac5{N^2}\sum_{\ell>m}r_\ell^2r_j^{(\ell)\,2}
 \le CR_0^2.
\]
The exact entry recursion and Cauchy--Schwarz now give
\[
 |A_{jk}^{(m)}-A_{jk}|
 =\frac1N\left|\sum_{\ell>m}
        A_{j\ell}^{(\ell)}A_{k\ell}^{(\ell)}\right|
 \le\frac1N\left\{\sum_{\ell>m}(A_{j\ell}^{(\ell)})^2
                    \sum_{\ell>m}(A_{k\ell}^{(\ell)})^2\right\}^{1/2}
 \le\frac{CR_0^2}{N}.
\]
This proves \eqref{eq:recursive-max-entry}.
\end{proof}

\subsection{Probability of the recursive weighted-sum event}
\label{sec:adaptive-local-fields}

The probability estimate for $\mathcal G^{\mathrm{rec}}$ follows by first
controlling one deterministic row and then taking a union bound over the
deletion sequence.  The row threshold must be large enough to give the desired
tail probability, but small enough that $k_{m,\mathrm{ad}}r_m^2/N$, $k_{m,\mathrm{ad}}r_m^2/d$, and the
sum of the $U_m$ terms remain negligible.  We begin with a deterministic
vector $a\in\mathbb R^m$ and the feature sum $H(a)$ defined in
Section~\ref{sec:weighted-feature-sum-tails}.

Corollary~\ref{cor:local-field-mgf} bounds the MGF of
$\|H(a)\|_{\HS}^2-\|a\|_2^2$ on the covariance event when both
$\theta\sqrt\Lambda\|a\|_\infty\|a\|_2/d$ and
$\theta\|a\|_2^2/N$ are sufficiently small.
Let $C_0$ be the absolute constant in \eqref{eq:local-field-mgf}.  For a
prescribed tail level $t\ge1$ and $\|a\|_2>0$, choose
\[
  \theta(a)
  =c_0\min\left\{
  \frac N{\|a\|_2^2},
  \frac d{\sqrt\Lambda\|a\|_\infty\|a\|_2}
  \right\},
  \qquad
  k_{\mathrm{ad}}(a,t)
  =1+2C_0\frac{\theta(a)\|a\|_2^2}{N}
  +\frac{2t}{\theta(a)\|a\|_2^2}.
\]
The choice of $\theta(a)$ stays within the MGF range, while the two
correction terms in $k_{\mathrm{ad}}(a,t)$ control the MGF contribution and the target tail
probability $e^{-t}$, respectively.  Lemma~\ref{lem:adaptive-threshold} gives
the resulting tail bound and records the upper bound on $k_{\mathrm{ad}}(a,t)$ used in the
remainder estimates.

\begin{lemma}[Adaptive weighted-sum threshold]
\label{lem:adaptive-threshold}
Let $a$ be a deterministic coefficient vector with $\|a\|_2>0$, and let
$t\ge1$.  For the threshold $k_{\mathrm{ad}}(a,t)$ defined above, sufficiently small $c_0$
gives
\[
  \mathbb P\left\{
  \mathcal C_m(\Lambda),\
  \|H(a)\|_{\HS}^2\ge k_{\mathrm{ad}}(a,t)\|a\|_2^2
  \right\}\le e^{-t},
\]
and
\begin{equation}
  k_{\mathrm{ad}}(a,t)
  \le
  C\left[
  1+\frac tN+
  \frac{t\sqrt\Lambda\|a\|_\infty}{d\|a\|_2}
  \right].
  \label{eq:adaptive-K}
\end{equation}
\end{lemma}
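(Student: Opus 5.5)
The plan is to apply Corollary~\ref{cor:local-field-mgf} through its MGF form \eqref{eq:local-field-mgf} with $\theta=\theta(a)$, followed by the exponential Markov inequality. For the first claim, check the two hypotheses of the corollary. The definition of $\theta(a)$ gives $\theta\|a\|_2^2/N\le c_0$ and $\theta\sqrt\Lambda\|a\|_\infty\|a\|_2/d\le c_0$. Taking $c_0$ no larger than the constant $c$ of the corollary therefore places $\theta(a)$ inside the admissible range. We then have
\[
  \mathbb E\bigl[\mathbf1_{\mathcal C_m(\Lambda)}e^{\theta(\|H(a)\|_{\HS}^2-\|a\|_2^2)}\bigr]
  \le\exp\{C_0\theta^2\|a\|_2^4/N\}.
\]
On the event $\{\|H(a)\|_{\HS}^2\ge k_{\mathrm{ad}}\|a\|_2^2\}$, the exponent inside the expectation is at least
\[
  \theta(k_{\mathrm{ad}}-1)\|a\|_2^2=2C_0\theta^2\|a\|_2^4/N+2t.
\]
Markov's inequality then bounds the probability by $\exp\{-C_0\theta^2\|a\|_2^4/N-2t\}\le e^{-2t}\le e^{-t}$. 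This is slightly stronger than required; the factor $2$ in the definition simply leaves some slack.

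For the upper bound \eqref{eq:adaptive-K}, the middle term of $k_{\mathrm{ad}}$ is controlled by $2C_0\theta\|a\|_2^2/N\le2C_0c_0$, which is an absolute constant. For the last term, use the inequality $1/\min\{x,y\}\le1/x+1/y$. Applied to
\[
  \theta(a)\|a\|_2^2=c_0\min\Bigl\{N,\ \frac{d\|a\|_2}{\sqrt\Lambda\|a\|_\infty}\Bigr\},
\]
it gives
\[
  \frac{2t}{\theta(a)\|a\|_2^2}\le\frac{2}{c_0}\Bigl(\frac tN+\frac{t\sqrt\Lambda\|a\|_\infty}{d\|a\|_2}\Bigr).
\]
Adding the constant $1$ yields \eqref{eq:adaptive-K} with $C=\max\{1+2C_0c_0,\,2/c_0\}$.

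No step here is a real obstacle. The only point needing care is that the admissibility of $\theta(a)$ must rely on the MGF form \eqref{eq:local-field-mgf}, not on the tail form \eqref{eq:local-field-tail}. The tail form requires $T\le cN$, and we avoid that restriction entirely: the MGF hypotheses hold for every $t\ge1$ because $\theta(a)$ does not depend on $t$. We must also keep $c_0$ fixed as an absolute constant, smaller than the constant $c$ of Corollary~\ref{cor:local-field-mgf}, so that both conclusions hold with constants independent of $a$, $t$, $m$, and $\Lambda$.
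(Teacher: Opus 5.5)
Your proposal is correct and follows the paper's proof essentially line by line. You verify both MGF restrictions of Corollary~\ref{cor:local-field-mgf} directly from the definition of $\theta(a)$. You then apply exponential Markov to obtain the exponent $-C_0\theta^2\|a\|_2^4/N-2t\le -t$. Finally, you bound $k_{\mathrm{ad}}(a,t)$ using $\theta\|a\|_2^2/N\le c_0$ together with $1/(\theta\|a\|_2^2)=c_0^{-1}\max\{1/N,\sqrt\Lambda\|a\|_\infty/(d\|a\|_2)\}$. Your remark that the argument relies on the MGF form \eqref{eq:local-field-mgf}, with $\theta(a)$ independent of $t$, and not on the tail form with its $T\le cN$ restriction, also matches how the paper uses the corollary.
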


\begin{proof}
The definition of \(\theta=\theta(a)\) verifies both MGF restrictions in
Corollary~\ref{cor:local-field-mgf} when \(c_0\) is sufficiently small.
For \(k=k_{\mathrm{ad}}(a,t)\), exponential Markov inequality gives
\[
\begin{aligned}
 \log\mathbb P\{\mathcal C_m(\Lambda),\|H(a)\|_{\HS}^2\ge k\|a\|_2^2\}
 &\le-\theta(k-1)\|a\|_2^2+C_0\theta^2\|a\|_2^4/N\\
 &=-C_0\theta^2\|a\|_2^4/N-2t\le-t.
\end{aligned}
\]
Finally, \(\theta\|a\|_2^2/N\le c_0\) and
\[
 \frac1{\theta\|a\|_2^2}
 =\frac1{c_0}\max\left\{\frac1N,
       \frac{\sqrt\Lambda\|a\|_\infty}{d\|a\|_2}\right\}.
\]
Substituting these bounds into \(k\) proves \eqref{eq:adaptive-K}.
\end{proof}

We now return to the recursion.  Recall the deterministic tail level $T$ from
\eqref{eq:adaptive-cutoff-parameters}.  For every nonzero recursive row
$a^{(m)}$, set $k_{m,\mathrm{ad}}=k_{\mathrm{ad}}(a^{(m)},T)$, and retain $k_{m,\mathrm{ad}}=1$ when $r_m=0$.
The required size of $K_0$ is verified in
Section~\ref{sec:symmetric-endpoint-closure}.  Apply
Lemma~\ref{lem:adaptive-threshold} with $t=T$ and $m-1$ coordinates to each
nonzero row: $H(a^{(m)})=H_m$ and $\|a^{(m)}\|_2^2=r_m^2$.
Because the covariance summands are positive semidefinite,
$\mathcal C_n(\Lambda)$ implies $\mathcal C_{m-1}(\Lambda)$ for every
$2\le m\le n$.  Hence Lemma~\ref{lem:adaptive-threshold} and a union bound give
\begin{equation}
  \mathbb P\{\mathcal C_n(\Lambda)\cap(\mathcal G^{\mathrm{rec}})^c\}
  \le\sum_{m=2}^n\mathbb P\{\mathcal C_{m-1}(\Lambda)\cap\mathcal G_m^c\}
  \le ne^{-T}.
  \label{eq:all-bad-rows-probability}
\end{equation}
Thus the thresholds $k_{m,\mathrm{ad}}$ make the complement of
$\mathcal G^{\mathrm{rec}}$ cost at most $ne^{-T}$ on
$\mathcal C_n(\Lambda)$.  We next verify that these thresholds are also small
enough to make the deterministic remainder majorant summable.

\subsection{Order of the deterministic remainder majorant}
\label{sec:adaptive-quartic-sum}

We sum the two components of $\mathcal E_m=C_{\mathrm{rem}}(U_m+V_m)$
and verify the pointwise smallness required for the conditional moments.
Squaring \eqref{eq:adaptive-K}, using $(x+y+z)^2\le3(x^2+y^2+z^2)$,
and summing with $\sum_mr_m^2\le4\|A\|_{\HS}^2/5$ gives
\begin{equation}
  \sum_mk_{m,\mathrm{ad}}^2\left(\frac{r_m^2}{N}\right)^2
  \le C\left\{\left(1+\frac{T^2}{N^2}\right)
  \sum_m\left(\frac{r_m^2}{N}\right)^2
  +\frac{T^2\Lambda M_\ast^2\|A\|_{\HS}^2}{d^2N^2}\right\}.
  \label{eq:adaptive-quartic-sum}
\end{equation}

\begin{lemma}[Deterministic remainder scales at the cubic scale]
\label{lem:cubic-adaptive-remainder}
Under the standing assumptions \eqref{eq:dense-master-regime}, on the event
in Proposition~\ref{prop:row-gram-environment},
\begin{equation}
  \sum_m U_m=o(1),
  \qquad
  \max_m\frac{k_{m,\mathrm{ad}}r_m^2}{d}=o(1),
  \qquad
  \sum_m V_m=o(1).
  \label{eq:adaptive-remainder-small}
\end{equation}
Consequently,
\begin{equation}
  \sum_{m=2}^n\mathcal E_m=o(1).
  \label{eq:even-remainders}
\end{equation}
In particular, \(\max_mk_{m,\mathrm{ad}}r_m^2/N=o(1)\).
\end{lemma}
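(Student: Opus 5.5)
The plan is to substitute the adaptive bound \eqref{eq:adaptive-K} with $t=T$ and control each of the three sums separately. The inputs are the scale bounds of Proposition~\ref{prop:row-gram-environment} together with the global hollow--Schur bounds of Proposition~\ref{prop:global-row-quartic}. The first step is to check the hypothesis $\|A\|_{\HS}^2/N^2\le1/20$. By \eqref{eq:environment-FR} and $n\beta^2\gtrsim h_n^4$, we have $\|A\|_{\HS}^2/N^2\lesssim\beta^4n^2/N^3$, and with $N\lesssim n\beta^2$ and $N\gtrsim h_n n^{3/4}|\beta|^{3/2}$ this is $\lesssim h_n^{-4}\cdot(N/(n\beta^2))^{-1}\ldots$. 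I will simply write it as $(\beta^6n^3/N^4)\cdot(N/(n\beta^2))=h_n^{-4}R_n=O(h_n^{-4})$. The proposition then gives $\sum_m r_m^2\le\tfrac45\|A\|_{\HS}^2$, $\max_m r_m^2\lesssim R_0^2\lesssim\beta^4n/N$, $\sum_m r_m^4\lesssim R_0^2\|A\|_{\HS}^2$, and $M_\ast\lesssim\|A\|_{\max}+R_0^2/N$.

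For $V_m=(p_m/N)^2$, I observe that $P^{(m)}$ is a restriction of $A^{\circ2}$. Hence $p_m=\sum_{j<m}A_{jm}^2\le R_0^2$ and $\sum_m p_m\le\|A\|_{\HS}^2$, which gives $\sum_m V_m\le R_0^2\|A\|_{\HS}^2/N^2\lesssim\beta^8n^3/N^4=\beta^2h_n^{-4}=o(1)$. For $U_m$ I use \eqref{eq:adaptive-quartic-sum}. The first piece is $(1+T^2/N^2)R_0^2\|A\|_{\HS}^2/N^2$. Since $T\lesssim\beta^4n^2/N^2+\log(nN)$, we have $T/N\lesssim h_n^{-4}\beta^{-2}n^{-1}\cdot\ldots$; more simply, $\beta^4n^2/N^3=O(h_n^{-4})$ and $\log n/N\to0$ because $N\gtrsim h_n^4\to\infty$ polynomially. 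So $T/N=o(1)$, and this piece is $o(1)$ as for $V_m$.

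The second piece, $T^2\Lambda M_\ast^2\|A\|_{\HS}^2/(d^2N^2)$, is the main obstacle, and I would do the scale bookkeeping in terms of $h_n$ and $n\beta^2$. Using $d^2\asymp N$, $\Lambda\lesssim n/N+T$, $M_\ast^2\lesssim\beta^4\log^2n/N+R_0^4/N^2$ and $\|A\|_{\HS}^2\lesssim\beta^4n^2/N$, the leading term is
\[
 \frac{T^2(n/N+T)\,\beta^8 n^2\log^2 n}{N^5}.
\]
With $T\lesssim(n\beta^2)^{1/2}h_n^{-2}+\log(nN)$ (as in \eqref{eq:dense-recursion-smallness}) and $n/N\gtrsim\beta^{-2}$, each resulting monomial should reduce to a negative power of $h_n$ times polylog factors, times $n^{-\epsilon}$ from $N\gtrsim n^{3/4}|\beta|^{3/2}$ and $N\lesssim n$. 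I expect the worst case to be $T\approx\log n$ with $|\beta|$ small, where the logarithms must be beaten by a power of $n$. If this bookkeeping is too tight, I would first try the sharper bound $M_\ast\lesssim\beta^2\log n/\sqrt N$. The $R_0^4/N^2$ part is smaller.

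Finally, $\max_m k_{m,\mathrm{ad}}r_m^2/d\lesssim(1+T/N)R_0^2/d+T\sqrt\Lambda M_\ast R_0/d^2$, using $r_m\|a\|_\infty/\|a\|_2\cdot r_m\le M_\ast r_m$. Here $R_0^2/d\lesssim\beta^4n/N^{3/2}$, which is $o(1)$ since $N^{3/2}\gtrsim h_n^{3/2}n^{9/8}|\beta|^{9/4}$. The second term is handled by the same scale substitutions. The ratio $k_{m,\mathrm{ad}}r_m^2/N$ is then smaller still. Summing with $C_{\mathrm{rem}}$ gives \eqref{eq:even-remainders}.
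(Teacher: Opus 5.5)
Your route is the paper's: check the hypothesis of Proposition~\ref{prop:global-row-quartic}, bound $\sum_mV_m\le R_0^2\|A\|_{\HS}^2/N^2\lesssim\beta^2h_n^{-4}$, split $\sum_mU_m$ by \eqref{eq:adaptive-quartic-sum}, and bound the pointwise ratio through \eqref{eq:adaptive-K}. The $V_m$ part is exactly right. Two steps in the rest need repair.

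\textbf{The claim $T/N=o(1)$ is false.} The dense regime gives only $N\gtrsim h_n^4$, and $h_n$ may grow arbitrarily slowly. For example, with $n\beta^2=N$ one has $h_n=N^{1/4}$, and choosing $N\to\infty$ with $N=o(\log n)$ gives $T\ge K_0\log(nN)\gg N$. Both places where you factor out $T/N$ still close, but only because the cofactor carries a factor $1/n$, so you must bound the products directly. Write $x=n\beta^2\in[1,n]$ and $N=h_nx^{3/4}$. Then $T\lesssim x^{1/2}h_n^{-2}+\log(nN)$, $\sum_m(r_m^2/N)^2\lesssim x/(h_n^4n)$ and $R_0^2/d\lesssim x^{7/8}/(h_n^{3/2}n)$. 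Hence $(T/N)^2\sum_m(r_m^2/N)^2\lesssim n^{-1/2}+\log^2(nN)/n$ and $(T/N)R_0^2/d\lesssim n^{-3/8}\log(nN)$.

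\textbf{The mixed term is only asserted.} This term is the actual content of the lemma, and you say only that it ``should reduce''. In the same variables, $M_\ast^2\lesssim x^{5/4}\log^2(nN)/(h_nn^2)$, $\|A\|_{\HS}^2\lesssim x^{5/4}/h_n$ and $d^2N^2\asymp h_n^3x^{9/4}$. So the term is at most $CT^2\Lambda x^{1/4}\log^2(nN)/(h_n^5n^2)$, with $\Lambda\lesssim n/(h_nx^{3/4})+x^{1/2}h_n^{-2}+\log(nN)$. Expanding $T^2\Lambda$ gives six monomials, and $1\le x\le n$ bounds each by $n^{-1/4}$ times a power of $\log n$. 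The slowest is $x^{7/4}\log^2(nN)/(h_n^{11}n^2)$. It comes from taking the $x^{1/2}h_n^{-2}$ part of both $T$ and $\Lambda$ at $x\asymp n$, i.e.\ $|\beta|\asymp1$. So the worst case is not $T\approx\log n$ with small $|\beta|$, as you guessed. The ``sharper'' fallback bound on $M_\ast$ you mention is the one you are already using, and no fallback is needed.

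For the second pointwise term, $(T\sqrt\Lambda M_\ast R_0/d^2)^2$ differs from the mixed term only by replacing $\|A\|_{\HS}^2/N^2\lesssim\beta^4n^2/N^3$ with $R_0^2/d^2\lesssim\beta^4n/N^2$. Its bound is therefore $N/n$ times the mixed-term bound, and it also vanishes.
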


The proof is given in Section~S3.1 of the Supplementary Material.

\subsection{Comparison with the conditional remainder}
\label{sec:adaptive-exponential-remainders}

The order of the deterministic majorant was established in
\eqref{eq:even-remainders}.  To compare it with the random conditional
remainder in \eqref{eq:even-one-step}, we first establish in
Lemma~\ref{lem:square-exponential-moments} a square-exponential moment bound
for a generic linear form $D=\langle H,q\rangle_{\HS}$.  We then apply that
bound to $D_m=\langle H_m,q_m\rangle_{\HS}$ on $\mathcal G_m$.

\begin{lemma}[Square-exponential weighted moments]
\label{lem:square-exponential-moments}
Let $q\sim\bar\mu$, let $H\in V$ be deterministic, set
$D=\langle H,q\rangle_{\HS}$, and put
\[
  \sigma^2=\frac{\|H\|_{\HS}^2}{N}.
\]
For every fixed $K>0$ and integer $p\ge0$, there are constants
$c>0$ and $C_{K,p}<\infty$ such that, if
\[
  K\sigma^2\le c,\qquad
  \frac{K\|H\|_{\mathrm{op}}^2}{d}\le c,
\]
then
\[
  \mathbb E\left[|D|^pe^{KD^2}\right]
  \le C_{K,p}\sigma^p.
\]
\end{lemma}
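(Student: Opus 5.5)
We will derive the bound from the tail estimate \eqref{eq:local-quadratic-tail} by layer-cake integration, handling the sub-Gaussian and sub-exponential regimes separately. Under \(\bar\mu\), a convex combination of \(\mu\) and \(\mu^-\), the tail bound still holds:
\[
  \mathbb P\{|D|\ge u\}\le2\exp\bigl[-c'\min\{u^2/\sigma^2,\ du/\|H\|_{\mathrm{op}}\}\bigr].
\]
Write \(F(u)=u^pe^{Ku^2}\). Then
\[
  \mathbb E F(|D|)\le F(0)+\int_0^\infty F'(u)\,\mathbb P\{|D|\ge u\}\,du,
  \qquad
  F'(u)=(pu^{p-1}+2Ku^{p+1})e^{Ku^2}.
\]
For \(p=0\) the boundary term \(F(0)=1\) is the required bound, since \(C_{K,0}\sigma^0=C_{K,0}\).

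We split the integral at the crossover \(u_0=\sigma^2 d/\|H\|_{\mathrm{op}}\). For \(u\le u_0\) the minimum equals \(u^2/\sigma^2\), and for \(u>u_0\) it equals \(du/\|H\|_{\mathrm{op}}\).

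On \([0,u_0]\), the integrand is at most \(C(u^{p-1}+Ku^{p+1})\exp\{Ku^2-c'u^2/\sigma^2\}\). Since \(K\sigma^2\le c\le c'/2\), the exponent is at most \(-c'u^2/(2\sigma^2)\). Substituting \(u=\sigma v\) gives a contribution of at most
\[
  C_{K,p}\bigl(\sigma^p+K\sigma^{p+2}\bigr)\le C_{K,p}\sigma^p .
\]

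On \((u_0,\infty)\) the sub-Gaussian comparison is no longer available, and this is the step we expect to be the main obstacle, because \(e^{Ku^2}\) eventually beats \(e^{-c'du/\|H\|_{\mathrm{op}}}\). The saving point is that \(D\) is almost surely bounded. Since \(\|q\|_{\HS}=1\) and \(q\) has rank-one-plus-identity structure, \(q=\sqrt{d/(d-1)}(XX^\top-I/d)\) with \(\tr H=0\) gives
\[
  |D|=\sqrt{d/(d-1)}\,|\langle X,HX\rangle|\le\sqrt2\,\|H\|_{\mathrm{op}} .
\]
So the integral only runs over \(u\le u_1:=\sqrt2\|H\|_{\mathrm{op}}\). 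On \([u_0,u_1]\) we have
\[
  Ku^2\le Ku\,u_1=\sqrt2K\|H\|_{\mathrm{op}}\,u\le \frac{c'}{2}\,\frac{du}{\|H\|_{\mathrm{op}}},
\]
because \(K\|H\|_{\mathrm{op}}^2/d\le c\) with \(c\) small. The integrand is therefore bounded by
\[
  C(u^{p-1}+Ku^{p+1})\,e^{-c'du/(2\|H\|_{\mathrm{op}})} .
\]
Set \(\lambda=\|H\|_{\mathrm{op}}/d\). Integrating from \(u_0\) gives a contribution bounded by
\[
  C_{K,p}\bigl(\lambda^p+\lambda^{p+2}\bigr)e^{-c'u_0/(2\lambda)}+\text{(polynomial in }u_0\text{)}\cdot e^{-c'u_0/(2\lambda)}.
\]
Since \(u_0/\lambda=\sigma^2/\lambda^2\), every term has the form \(u_0^{j}e^{-cu_0/\lambda}\), which is at most \(C\lambda^{j}\) uniformly. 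It then suffices to show \(\lambda\lesssim\sigma\).

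The inequality \(\lambda\lesssim\sigma\) follows from \(\|H\|_{\mathrm{op}}\le\|H\|_{\HS}\) and \(N\asymp d^2\):
\[
  \lambda=\frac{\|H\|_{\mathrm{op}}}{d}\le\frac{\|H\|_{\HS}}{d}\asymp\frac{\|H\|_{\HS}}{\sqrt N}=\sigma .
\]
All tail contributions are therefore \(O(\sigma^p)\). The \(K\lambda^{p+2}\)-type factors are absorbed using \(K\lambda^2\le K\sigma^2\le c\). Adding the two ranges gives \(\mathbb E[|D|^pe^{KD^2}]\le C_{K,p}\sigma^p\), with constants depending only on \(K\) and \(p\).
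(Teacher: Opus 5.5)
Your argument is correct and is essentially the paper's proof: layer-cake integration against the tail bound \eqref{eq:local-quadratic-tail}, the almost-sure bound $|D|\le\sqrt2\,\|H\|_{\mathrm{op}}$ to keep $e^{Ku^2}$ dominated by the sub-exponential tail on the support, and $\|H\|_{\mathrm{op}}/d\le\sigma$ to turn every contribution into $O(\sigma^p)$; your split at the crossover $u_0$ is equivalent to the paper's bounding $e^{-c'\min\{\cdot,\cdot\}}$ by the sum of the two exponentials. The only omission is the case $H=0$, where $u_0$ is undefined but the claim is immediate.
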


The proof is given in Section~S2.2 of the Supplementary Material.

Set
\[
  Q_m=\sum_{j<m}(a_j^{(m)})^2B_{jm}^2,\qquad W_m=(D_m^2-Q_m)/2.
\]
On $\mathcal G_m$, both $\|H_m\|_{\HS}^2$ and
$\|H_m\|_{\mathrm{op}}^2$ are at most $k_{m,\mathrm{ad}}r_m^2$.
Lemma~\ref{lem:cubic-adaptive-remainder} therefore verifies the hypotheses of
Lemma~\ref{lem:square-exponential-moments}, uniformly in $m$.  Thus, for
each fixed integer $s\ge0$ and $\kappa>0$,
\[
  \mathbb E_m[|D_m|^se^{\kappa D_m^2}]
  \le C_{\kappa,s}(k_{m,\mathrm{ad}}r_m^2/N)^{s/2}.
\]
Applying \eqref{eq:local-quadratic-tail} with $H=q_j$ and integrating gives
$\mathbb E_m|B_{jm}|^{2p}\le C_p(N^{-p}+d^{-2p})\le C_pN^{-p}$.
Minkowski's inequality therefore yields, for every fixed integer $p\ge1$,
\begin{equation}
  \|Q_m\|_{L^p(q_m)}\le C_pr_m^2/N,\qquad
  \|Q_m^P\|_{L^p(q_m)}\le C_pp_m/N.
  \label{eq:compensation-Q-moments}
\end{equation}
Since $e^{(W_m)_+}\le e^{D_m^2/2}$, Cauchy--Schwarz combines these bounds
into the following estimate for fixed nonnegative integers $r,s$:
\[
\begin{aligned}
  \mathbb E_m[|D_m|^rQ_m^se^{(W_m)_+}]
  &\le\bigl(\mathbb E_m[|D_m|^{2r}e^{D_m^2}]\bigr)^{1/2}
  \bigl(\mathbb E_mQ_m^{2s}\bigr)^{1/2}\\
  &\le C_{r,s}(k_{m,\mathrm{ad}}r_m^2/N)^{r/2}(r_m^2/N)^s.
\end{aligned}
\]
Taking $(r,s)=(4,0),(0,2),(5,0),(3,1),(1,2)$ and using
$W_m^2\le C(D_m^4+Q_m^2)$, $k_{m,\mathrm{ad}}\ge1$, and
$\max_m k_{m,\mathrm{ad}}r_m^2/N=o(1)$ proves
\begin{equation}
\begin{split}
  \mathbb E_m\big[(&W_m^2+D_m^4+Q_m^2+|D_m|^5+|D_m|^3Q_m+|D_m|Q_m^2)
  e^{(W_m)_+}\big]  \le Ck_{m,\mathrm{ad}}^2(r_m^2/N)^2.
\end{split}
  \label{eq:GER-local}
\end{equation}
Its sum is $o(1)$ by Lemma~\ref{lem:cubic-adaptive-remainder}.

For the actual remainder in \eqref{eq:even-one-step},
$(W_m^P)^2e^{(W_m^P)_+}\le C(D_m^4+(Q_m^P)^2)e^{D_m^2/2}$ gives
\[
  \widetilde{\mathcal E}_m\le CU_m
  +C\|Q_m^P\|_{L^4(q_m)}^2(\mathbb E_me^{D_m^2})^{1/2}
  \le C(U_m+V_m)\le\mathcal E_m
\]
on $\mathcal G_m$, after fixing $C_{\mathrm{rem}}$ sufficiently large.
Together with \eqref{eq:even-remainders}, this supplies the required
summable deterministic bound.
\section{Estimates for interpolation from the symmetric prior}
\label{app:complete-buffered-cavity}

This appendix supplies the cutoff, local-remainder and cubic moment bounds
used in Section~\ref{sec:likelihood-interpolation}. We use the fixed first
replica, dense-regime assumptions and uniformity convention of
Section~\ref{sec:dense-likelihood}. The final differential inequality and
likelihood comparison are proved in the main text.

\subsection{The covariance-boundary remainder}
\label{sec:covariance-boundary-proof}
\begin{proof}[Proof of Lemma~\ref{lem:buffered-cavity-complete}]
We derive the identity, bound its two boundary errors by the same
conditional integral, and establish a second-moment estimate valid for
every cutoff in the finite-parameter assertion.  Write $\mathbb E_{-i}$
for expectation over $q_{-i}\sim\mu_t^{\otimes(n-1)}$.  Differentiation gives
\[
  Z'(t)=\sum_i\mathbb E_{-i}\int
  \mathbf1_{\mathcal C_n(\Lambda+1)}e^{K_A}\,d\eta(q_i).
\]
On $\mathcal C_{-i}(\Lambda)$, conditional integration and
$O_i=Z_i^{\mathrm{loc}}(t)\rho_i(t)$ give
\[
  \mathbb E_{-i}[\mathbf1_{\mathcal C_{-i}(\Lambda)}e^{K_{-i}}O_i]
  =\mathbb E_{\mu_t^{\otimes n}}
  [\mathbf1_{\mathcal C_{-i}(\Lambda)}e^{K_A}\rho_i(t)].
\]
Set $\mathcal E_i^{\mathrm{cut}}=
\mathcal C_n(\Lambda+1)\cap\mathcal C_{-i}(\Lambda)^c$.
Subtracting the last identity from the two full-cutoff integrals yields
\begin{equation}
\begin{aligned}
  \operatorname{Rem}_{\mathrm{cov}}(t)
  ={}&\sum_i\mathbb E_{-i}\int
  \mathbf1_{\mathcal E_i^{\mathrm{cut}}}e^{K_A}\,d\eta(q_i)\\
  &-\sum_i\mathbb E_{\mu_t^{\otimes n}}
  [\mathbf1_{\mathcal E_i^{\mathrm{cut}}}e^{K_A}\rho_i(t)].
\end{aligned}
  \label{eq:covariance-remainder-decomposition}
\end{equation}
Since $\mathcal E_i^{\mathrm{cut}}\subseteq
\mathcal C_{-i}(\Lambda+1)\cap\mathcal C_{-i}(\Lambda)^c$,
$|\eta|\le\bar\mu$, and
$|\rho_i(t)|Z_i^{\mathrm{loc}}(t)=|O_i|
\le\mathbb E_{\bar\mu,i}e^{D_i-Q_i/2}$, the absolute value of the
$i$-th summand in either sum is at most
\[
  \mathbb E_{-i}\left[
  \mathbf1_{\mathcal C_{-i}(\Lambda+1)\cap\mathcal C_{-i}(\Lambda)^c}
  e^{K_{-i}}\mathbb E_{\bar\mu,i}e^{D_i-Q_i/2}\right].
\]

For its second moment, Jensen's inequality and $Q_i\ge0$ give
\[
  \left(e^{K_{-i}}\mathbb E_{\bar\mu,i}e^{D_i-Q_i/2}\right)^2
  \le e^{2K_{-i}}\mathbb E_{\bar\mu,i}e^{2D_i}.
\]
Let $H=\sum_{j\ne i}A_{ij}q_j$.  On $\mathcal C_{-i}(\Lambda+1)$,
Lemma~\ref{lem:cutoff-field-bound} gives
$\|H\|_{\mathrm{HS}}\le\sqrt{\Lambda+1}\|A_{i\cdot}\|_2$.
The row hypothesis on $2A$ therefore verifies
Lemma~\ref{lem:local-subgaussian} for $2H$, and
\[
  \mathbb E_{\bar\mu,i}e^{2D_i}
  \le\exp\{C(\Lambda+1)\|A_{i\cdot}\|_2^2/N\}.
\]
Principal submatrices do not increase the row, Hilbert--Schmidt, or
operator norms.  Thus Theorem~\ref{thm:l2-spectral-recursion}, applied to
$2A_{-i,-i}$ after dropping the nonpositive compensation, gives
$\mathbb E_{-i}[\mathbf1_{\mathcal C_{-i}(\Lambda+1)}e^{2K_{-i}}]
\le\exp\{C\|A\|_{\HS}^2/N\}$.  Combining the estimates yields
\begin{equation}
\begin{aligned}
  &\mathbb E_{-i}\left[\mathbf1_{\mathcal C_{-i}(\Lambda+1)}
  \left(e^{K_{-i}}\mathbb E_{\bar\mu,i}e^{D_i-Q_i/2}\right)^2\right]\\
  &\qquad\le\exp\left\{\frac C N\|A\|_{\HS}^2
  +\frac{C(\Lambda+1)}N R_2(A)^2\right\}.
\end{aligned}
  \label{eq:covariance-remainder-second-moment}
\end{equation}
Cauchy--Schwarz, with boundary probability at most $p_\Lambda$, bounds
each summand of \eqref{eq:covariance-remainder-decomposition} by the
square root of this expression times $p_\Lambda^{1/2}$.  Summing the
$2n$ terms proves \eqref{eq:buffered-cavity-general-remainder}.

For the dense-regime parameters, \eqref{eq:dense-recursion-smallness}
verifies the spectral hypotheses uniformly in $i$:
\begin{equation}
  \frac{\|2H\|_{\mathrm{op}}}{d}
  \le\frac{2\sqrt{\Lambda+1}R_2(A)}d=o(1),
  \label{eq:local-field-spectral-small}
\end{equation}
\begin{equation}
  \frac{\|2A_{-i,-i}\|_{\mathrm{op}}}{N}
  \le\frac{2\|A\|_{\HS}}N\lesssim h_n^{-2}=o(1).
  \label{eq:environment-op-small}
\end{equation}
It also gives $(\Lambda+1)R_2(A)^2/N=o(1)$.  With
\eqref{eq:covariance-tail-dense}, the finite-parameter bound becomes
\[
  |\operatorname{Rem}_{\mathrm{cov}}(t)|
  \le2n\exp\{C\|A\|_{\HS}^2/N-cT/2+o(1)\}=o(1),
\]
since $T\ge K_0\{\|A\|_{\HS}^2/N+\log(nN)\}$ and $K_0$ is sufficiently large.
Finally, for a $q_{-i}$-measurable event $\mathcal U_i$, domination by
$\bar\mu$ bounds the corresponding signed integral by
\[
  \mathbb E_{-i}[\mathbf1_{\mathcal C_{-i}(\Lambda+1)\cap\mathcal U_i}
  e^{K_{-i}}\mathbb E_{\bar\mu,i}e^{D_i-Q_i/2}].
\]
Applying Cauchy--Schwarz and \eqref{eq:covariance-remainder-second-moment}
with probability $e^{-u}$ gives
$\exp\{C\|A\|_{\HS}^2/(2N)-u/2+O(1)\}$ per coordinate.
Here the row smallness hypothesis and $d^2\asymp N$ bound the cutoff
contribution by an absolute constant. Summation proves the weighted-event assertion.
\end{proof}

\subsection{Two-sided local expansion}
\label{sec:two-sided-local-expansion}

Lemma~\ref{lem:buffered-cavity-complete} reduces the interpolation derivative
to the one-coordinate scores \(\rho_i(t)\).  This subsection identifies the
cubic part of each score and proves that the remaining terms are summable.
Lemma~\ref{lem:global-local-remainders} supplies deterministic Taylor bounds
for the odd numerator and the even denominator.
Lemma~\ref{lem:cavity-score-expansion} applies those bounds after localizing the
weighted feature sum associated with the deleted coordinate.  It extracts the
cubic term \(T_i\) and controls the accumulated score error.  The same
localization also provides the tail estimate needed to remove the exceptional
rows before the cubic terms are aggregated.

We next record the deterministic odd and even Taylor estimates used on
\(\mathcal G_i\).  For generic $D\in\mathbb R$ and $Q\ge0$, put
$W=(D^2-Q)/2$ and $W_+=\max(W,0)$.

\begin{lemma}[Global odd and even remainders]
\label{lem:global-local-remainders}
For every $D\in\mathbb R$ and $Q\ge0$,
\[
\begin{aligned}
  \left|
  e^{-Q/2}\sinh D-D-\frac16D^3+\frac12DQ
  \right|
  \le
  C(&|D|^5+|D|^3Q+|D|Q^2)e^{W_+},
\end{aligned}
\]
and
\[
  \left|
  e^{-Q/2}\cosh D-1-W
  \right|
  \le C(D^4+Q^2)e^{W_+}.
\]
\end{lemma}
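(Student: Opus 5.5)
The plan is to prove both inequalities by splitting the real line into a region where the exponents are small, where Taylor's theorem applies directly, and a region where one of $|D|$ or $Q$ is large, where the right-hand side already dominates each term on the left separately. We use the exact factorizations
\[
  e^{-Q/2}\sinh D=\tfrac12\bigl(e^{D-Q/2}-e^{-D-Q/2}\bigr),\qquad
  e^{-Q/2}\cosh D=\tfrac12\bigl(e^{D-Q/2}+e^{-D-Q/2}\bigr),
\]
and the elementary bound $e^{-Q/2}\cosh D\le e^{W}\le e^{W_+}$, which follows from $\cosh D\le e^{D^2/2}$. Since both sides of each inequality are invariant under $D\mapsto -D$ (odd times odd, respectively even), we may assume $D\ge0$.

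\emph{Small region: $|D|\le1$ and $Q\le1$.} Expand $f(D,Q)=e^{-Q/2}\sinh D$ around the origin. The odd-in-$D$ terms through total weighted degree three (with $Q$ counted as degree two) are $D+D^3/6-DQ/2$. The remaining Taylor terms are of the form $D^aQ^b$ with $a$ odd and $a+2b\ge5$, namely $D^5$, $D^3Q$, $DQ^2$ and higher products. On the unit box each higher product is bounded by one of these three, and the analytic Taylor remainder is bounded by $C(|D|^5+|D|^3Q+|D|Q^2)$. The factor $e^{W_+}\ge1$ is harmless here.

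The even estimate works the same way. The expansion of $e^{-Q/2}\cosh D$ through weighted degree two is $1+D^2/2-Q/2=1+W$. The leftover terms are $D^aQ^b$ with $a$ even and $a+2b\ge4$, namely $D^4$, $D^2Q$, $Q^2$ and higher; these are controlled by $C(D^4+Q^2)$, using $D^2Q\le D^4+Q^2$.

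\emph{Large region: $|D|>1$ or $Q>1$.} Here we bound each term on the left separately.
\begin{itemize}
\item For the even inequality, $e^{-Q/2}\cosh D\le e^{W_+}$. Also $1+|W|\le C(D^4+Q^2)$, because $D^4+Q^2\ge1$ in this region and $|W|\le D^2+Q\le D^4+Q^2+2$.
\item For the odd inequality, $|e^{-Q/2}\sinh D|\le e^{W_+}$. When $|D|\ge1$, it suffices to show $|D|+|D|^3+|D|Q\le C(|D|^5+|D|^3Q)$, which holds term by term.
\end{itemize}

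The main obstacle is the odd case with $|D|<1<Q$, because the right-hand side carries a factor $|D|$ and so can be small. In this regime every left-hand term is itself $O(|D|)$ times a polynomial in $Q$:
\begin{itemize}
\item $|e^{-Q/2}\sinh D|\le|D|\cosh(1)$, since $\sinh D\le D\cosh D$ and $e^{-Q/2}\le1$;
\item $|D|+|D|^3/6+|D|Q/2\le C|D|(1+Q)\le C|D|Q^2$, using $Q>1$.
\end{itemize}
Hence the left-hand side is at most $C|D|Q^2\le C|D|Q^2e^{W_+}$, which closes this case. With the constant $C$ taken as the maximum over all cases, both inequalities hold.
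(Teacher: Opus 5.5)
Your proof is correct, but it takes a different route from the paper's.

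\textbf{The paper's route.} The paper works globally, with no case split. It writes $\sinh D=D+\tfrac16D^3+R_s$ with $|R_s|\le C|D|^5e^{|D|}$ and uses the exact identity $e^{-Q/2}\sinh D-D-\tfrac16D^3+\tfrac12DQ=e^{-Q/2}R_s+\tfrac16D^3(e^{-Q/2}-1)+D(e^{-Q/2}-1+\tfrac12Q)$. All exponential growth is then absorbed through the single scalar inequality $e^{|D|-Q/2}\le e^{1/2}e^{W_+}$. For the even bound it combines $|e^W-1-W|\le CW^2e^{W_+}$ with $0\le e^{D^2/2}-\cosh D\le CD^4e^{D^2/2}$.

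\textbf{Your route.} You use only local analyticity on the unit box: the double series has summable coefficients, and each non-leading monomial is dominated by one of the target weights there. Outside the box you rely on the trivial bound $e^{-Q/2}|\sinh D|\le e^{-Q/2}\cosh D\le e^{W_+}$ and on the fact that the polynomial weights are bounded below. In the case $|D|\ge1$, the term $e^{W_+}$ is absorbed by $|D|^5e^{W_+}$; you leave this implicit, but it is immediate.

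\textbf{Comparison.} The paper's identity makes the factor $D$ in every odd remainder term explicit. As a result, the regime of small $|D|$ and large $Q$ needs no separate argument. Your softer argument pays for this with the extra strip $|D|<1<Q$, where the odd weight degenerates linearly in $D$. You handle that strip correctly via $|\sinh D|\le |D|\cosh D$. In exchange, your method needs no explicit exponential Taylor remainders at all: only the local power series near the origin and crude size bounds on the main term elsewhere.
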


The proof is given in Section~S2.3 of the Supplementary Material.

We now apply Lemma~\ref{lem:global-local-remainders} with
\((D,Q)=(D_i,Q_i)\).  The event
\(\mathcal G_i\) supplies the localized bounds used below to sum its
remainders.  The first nonzero odd term in the local expansion is cubic.
Integrating out
the distinguished feature converts this term into its third moment.  Recall
from Section~\ref{sec:main-local-score} that, for \(q_1,q_2,q_3\in V\),
\[
  \mathcal K_3(q_1,q_2,q_3)
  =
  \mathbb E_{q\sim\mu}
  \prod_{a=1}^3\langle q,q_a\rangle_{\HS}.
\]
Equation~\eqref{eq:third-moment-tensor} gives
\[
  \mathcal K_3(q_1,q_2,q_3)
  =
  \gamma_d\operatorname{tr}_{\mathrm{sym}}(q_1q_2q_3).
\]
Here
\(\gamma_d\) is the third-moment coefficient from
Lemma~\ref{lem:complete-feature-identities}, and
\(\gamma_d\asymp N^{-3/2}\).
Lemma~\ref{lem:cavity-score-expansion} uses this kernel to express the cubic
contribution to the one-coordinate score and bounds the remaining Taylor and
denominator terms.

\begin{proof}[Proof of Lemma~\ref{lem:cavity-score-expansion}]
We first verify the local moment hypotheses at level $T_{\mathrm{loc}}$,
then identify the cubic term and control division by the local partition
function.  Since $T_{\mathrm{loc}}\lesssim T\log n$, the calculation in
Lemma~\ref{lem:cubic-adaptive-remainder} applies to the original rows,
with $\Lambda+1$ in place of $\Lambda$.  Indeed, squaring
\eqref{eq:adaptive-K} and summing gives
\[
\begin{aligned}
  \sum_i\left(\frac{k_{i,\mathrm{ad}}\|A_{i\cdot}\|_2^2}{N}\right)^2
  \le C\left\{
  \left(1+\frac{T_{\mathrm{loc}}^2}{N^2}\right)
  \frac{R_0^2\|A\|_{\HS}^2}{N^2}
  +\frac{T_{\mathrm{loc}}^2(\Lambda+1)\|A\|_{\max}^2\|A\|_{\HS}^2}{d^2N^2}
  \right\}.
\end{aligned}
\]
The part independent of $T_{\mathrm{loc}}$ is $O(\beta^2/h_n^4)$.
Every term involving $T$ in the proof of
\textup{(S3.5)}, including the preceding weighted
quartic estimate, has a negative power of $n$ after
$1\le n\beta^2\le n$ is used.  Multiplication by $\log^2n$ preserves its
convergence to zero.  The pointwise calculation following
\textup{(S3.6)} likewise survives multiplication of
the $T$ terms by $\log n$.  Consequently,
\[
  \max_i k_{i,\mathrm{ad}}\|A_{i\cdot}\|_2^2/d=o(1),
\]
and
\begin{equation}
  \max_i\frac{k_{i,\mathrm{ad}}\|A_{i\cdot}\|_2^2}{N}=o(1),\qquad
  \sum_i\left(\frac{k_{i,\mathrm{ad}}\|A_{i\cdot}\|_2^2}{N}\right)^2=o(1).
  \label{eq:local-row-scales}
\end{equation}
For the finite-parameter assertion, assume instead
\eqref{eq:local-score-finite-conditions}. On $\mathcal G_i$,
$\|H_i\|_{\mathrm{op}}^2\le\|H_i\|_{\HS}^2
\le k_{i,\mathrm{ad}}\|A_{i\cdot}\|_2^2$, so these conditions verify
Lemma~\ref{lem:square-exponential-moments} for all fixed moment orders
used below. The following calculation applies in both cases.

Integration against $\eta$ extracts the odd part, whereas $\bar\mu$
extracts the even part.  Thus
\[
  O_i=\mathbb E_{\mu,i}[e^{-Q_i/2}\sinh D_i],\qquad
  E_i:=\mathbb E_{\mu,i}[e^{-Q_i/2}\cosh D_i],\qquad
  Z_i^{\mathrm{loc}}(t)=E_i+tO_i.
\]
Since $\mathbb E_{\mu,i}D_i=0$, Lemma~\ref{lem:global-local-remainders}
and the conditional moment calculation \eqref{eq:GER-local} give
\[
\begin{gathered}
  O_i=\mathbb E_{\mu,i}[D_i^3/6-D_iQ_i/2]
  +\operatorname{Rem}_i^{\mathrm{odd}},\\
  E_i=1+\frac{\|H_i\|_{\HS}^2-\|A_{i\cdot}\|_2^2}{2N}
  +\operatorname{Rem}_i^{\mathrm{even}},\\
  |\operatorname{Rem}_i^{\mathrm{odd}}|+|\operatorname{Rem}_i^{\mathrm{even}}|
  \le C(k_{i,\mathrm{ad}}\|A_{i\cdot}\|_2^2/N)^2.
\end{gathered}
\]
The moment majorants are even in $q_i$, so their $\bar\mu$ and $\mu$
integrals agree.  The quadratic mean follows from isotropy in
Lemma~\ref{lem:complete-feature-identities}.

The cubic mean is \(T_i\) by the cancellation derived in
Section~\ref{sec:main-local-score}. To bound it, retain the unexpanded form:
\[
  T_i=\tfrac16\mathcal K_3(H_i,H_i,H_i)
  -\tfrac12\sum_{j\ne i}A_{ij}^2\mathcal K_3(H_i,q_j,q_j).
\]
Write $q_j=\varepsilon_j\varphi_d(X_j)$, $\varepsilon_j\in\{-1,1\}$.
By \eqref{eq:q-square}, $q_j^2=I/d+b_d\varepsilon_jq_j$ with $|b_d|\le1$.
Since $H_i$ is traceless, \eqref{eq:third-moment-tensor} gives
$\mathcal K_3(H_i,q_j,q_j)=\gamma_db_d\varepsilon_j\langle H_i,q_j\rangle_{\HS}$.
Using $|\gamma_d|\lesssim N^{-3/2}$ and $\|q_j\|_{\HS}=1$, we obtain
\[
  |T_i|\le CN^{-3/2}\left(\|H_i\|_{\HS}^3
  +\|A_{i\cdot}\|_2^2\|H_i\|_{\HS}\right)
  \le C(k_{i,\mathrm{ad}}\|A_{i\cdot}\|_2^2/N)^{3/2}
\]
on $\mathcal G_i$, since $k_{i,\mathrm{ad}}\ge1$.

The denominator expansion is
\begin{equation}
  Z_i^{\mathrm{loc}}(t)-1
  =\frac{\|H_i\|_{\HS}^2-\|A_{i\cdot}\|_2^2}{2N}
  +\operatorname{Rem}_i^{\mathrm{even}}+tT_i+t\operatorname{Rem}_i^{\mathrm{odd}}.
  \label{eq:local-denominator-expansion}
\end{equation}
The preceding estimates imply
$|Z_i^{\mathrm{loc}}(t)-1|\le Ck_{i,\mathrm{ad}}\|A_{i\cdot}\|_2^2/N
\le Cc_\ast$ uniformly on $\mathcal G_i$. Taking $c_\ast$ sufficiently
small gives $Z_i^{\mathrm{loc}}(t)\ge1/2$.
Therefore
\[
\begin{aligned}
  |\rho_i(t)-T_i|
  &=\left|\frac{\operatorname{Rem}_i^{\mathrm{odd}}
  -T_i(Z_i^{\mathrm{loc}}(t)-1)}{Z_i^{\mathrm{loc}}(t)}\right|\\
  &\le C(k_{i,\mathrm{ad}}\|A_{i\cdot}\|_2^2/N)^2,
\end{aligned}
\]
which implies \eqref{eq:score-error}; its sum is $o(1)$ by
\eqref{eq:local-row-scales}.  The additional repeated-index scale in the
stated bound is also summable: \eqref{eq:environment-FR},
\eqref{eq:environment-M}, and \textup{(S3.1)} give
\begin{equation}
  \sum_i\frac{\|A\|_{\max}\|A_{i\cdot}\|_2^2}{N^{3/2}}
  =\frac{\|A\|_{\max}\|A\|_{\HS}^2}{N^{3/2}}
  \lesssim\frac{|\beta|^{3/2}\log n}{h_n^3n^{1/4}}=o(1).
  \label{eq:local-repeated-scale}
\end{equation}
\end{proof}

The cubic contribution on exceptional rows admits a finite-parameter
bound. By \eqref{eq:third-moment-tensor} and
$|\operatorname{tr}(XYZ)|\le\|X\|_{\HS}\|Y\|_{\mathrm{op}}\|Z\|_{\HS}$,
the kernel is bounded by $CN^{-3/2}$ on the feature support. Hence
\eqref{eq:local-cubic-source} and
$\|A_{i\cdot}\|_1\le\sqrt{n-1}\|A_{i\cdot}\|_2$ give
\[
 |T_i|\le CN^{-3/2}\|A_{i\cdot}\|_1^3
 \le C\left(\frac nN\right)^{3/2}\|A_{i\cdot}\|_2^3.
\]
Since $\mathcal C_n(\Lambda+1)\subseteq\mathcal C_{-i}(\Lambda+1)$,
Cauchy--Schwarz and \eqref{eq:bad-row-probability} yield
\begin{equation}
 \sum_i\left\|\mathbf1_{\mathcal C_n(\Lambda+1)}
 \mathbf1_{\mathcal G_i^c}T_i\right\|_{p_{\mathrm H}}
 \le Cn\left(\frac nN\right)^{3/2}R_2(A)^3
 e^{-T_{\mathrm{loc}}/(2p_{\mathrm H})}.
 \label{eq:bad-row-cubic-Lp}
\end{equation}
Here $p_{\mathrm H}\ge2$, and all norms are under $\mu_t^{\otimes n}$;
the bound is uniform in $t\in[0,1]$. Together with the weighted score
estimate, it gives \eqref{eq:bad-row-finite-bound}, which controls the
replacement of the localized cubic sum by $\sum_iT_i$.

\subsection{Coefficient norms for the aggregated cubic term}
Recall \(g\) and \(\mathcal T_g\) from \eqref{eq:full-row-tensor}.
We define the coefficient norms used below.  For an array
\(u=(u_{jk\ell})_{j,k,\ell\in[n]}\), let
\[
  \|u\|_{\HS}:=\left(\sum_{j,k,\ell}u_{jk\ell}^2\right)^{1/2},
\]
and define the two partition norms by
\[
\begin{aligned}
  \|u\|_{1|23}
  &:={}
  \sup_{\substack{\|x\|_2\le1\\\|Y\|_{\HS}\le1}}
  \left|\sum_{j,k,\ell}u_{jk\ell}x_jY_{k\ell}\right|,\\
  \|u\|_{1|2|3}
  &:={}
  \sup_{\substack{\|x\|_2\le1,\ \|y\|_2\le1\\\|z\|_2\le1}}
  \left|\sum_{j,k,\ell}u_{jk\ell}x_jy_kz_\ell\right|.
\end{aligned}
\]
Here \(x,y,z\in\mathbb R^n\) and \(Y\in\mathbb R^{n\times n}\).
For later use, define the matrix and vector slices
\[
\begin{aligned}
  u_{j\cdot\cdot}&:=(u_{jk\ell})_{k,\ell\in[n]}
  \in\mathbb R^{n\times n},&
  u_{\cdot\cdot\ell}&:=(u_{jk\ell})_{j,k\in[n]}
  \in\mathbb R^{n\times n},\\
  u_{\cdot k\ell}&:=(u_{jk\ell})_{j\in[n]}\in\mathbb R^n.
\end{aligned}
\]
With this notation,
\[
  \|u\|_{1|23}
  =\sup_{\|x\|_2\le1}
  \left\|\sum_jx_ju_{j\cdot\cdot}\right\|_{\HS}.
\]
Thus \(\|u\|_{1|23}\) is the operator norm of the flattening associated with
the partition \(\{1\}|\{2,3\}\), whereas \(\|u\|_{1|2|3}\) is the injective
norm associated with the three singleton blocks.  These definitions apply to
the full array \(g\) in \eqref{eq:full-row-tensor}.  For the coefficient array
of \(\mathcal T_g\), set
\[
  g^\circ_{jk\ell}
  :=g_{jk\ell}\mathbf1\{j,k,\ell\ \text{are pairwise distinct}\}.
\]

\begin{lemma}[Full-row coefficient norms]
\label{lem:full-row-coefficient-norms}
The full coefficient array \(g\) and its restriction \(g^\circ\) satisfy
\[
\begin{aligned}
  \|g\|_{\HS}
  &\lesssim \left(\max_i\|A_{i\cdot}\|_2\right)\|A\|_{\HS}^2,&
  \|g\|_{1|23}
  &\lesssim \left(\max_i\|A_{i\cdot}\|_2\right)\|A\|_{\mathrm{op}}^2,\\
  \|g\|_{1|2|3}
  &\lesssim \left(\max_i\|A_{i\cdot}\|_2\right)\|A\|_{\mathrm{op}}^2,&
  \max_\ell\|g_{\cdot\cdot\ell}\|_{\HS}
  &\lesssim \|A\|_{\mathrm{op}}
  \left(\max_i\|A_{i\cdot}\|_2\right)^2,\\
  \max_\ell\|g_{\cdot\cdot\ell}\|_{\mathrm{op}}
  &\lesssim \|A\|_{\max}\|A\|_{\mathrm{op}}^2,&
  \max_{k,\ell}\|g_{\cdot k\ell}\|_2
  &\lesssim \|A\|_{\mathrm{op}}\|A\|_{\max}
  \max_i\|A_{i\cdot}\|_2.
\end{aligned}
\]
The same six inequalities hold with \(g^\circ\) in place of \(g\).
\end{lemma}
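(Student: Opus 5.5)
The plan is to express every contraction of \(g\) as a composition of row-weighted matrix operations, then bound it with Cauchy--Schwarz and the operator norm of \(A\). Write \(g=\sum_i a^{(i)}\otimes a^{(i)}\otimes a^{(i)}\) with \(a^{(i)}=A_{i\cdot}\), and put \(R=\max_i\|A_{i\cdot}\|_2\).

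For the Hilbert--Schmidt norm, expand
\[
\|g\|_{\HS}^2=\sum_{i,i'}\langle a^{(i)},a^{(i')}\rangle^3
=\sum_{i,i'}(A^2)_{ii'}^3 .
\]
Since \(|(A^2)_{ii'}|\le R^2\), this is at most \(R^2\sum_{i,i'}(A^2)_{ii'}^2=R^2\|A^2\|_{\HS}^2\). Then \(\|A^2\|_{\HS}\le\|A\|_{\mathrm{op}}\|A\|_{\HS}\). This gives \(R\|A\|_{\mathrm{op}}\|A\|_{\HS}\), which is stronger than the stated bound because \(\|A\|_{\mathrm{op}}\le\|A\|_{\HS}\).

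For the flattening norm \(\|g\|_{1|23}\), fix \(x\) with \(\|x\|_2\le1\). The matrix \(\sum_j x_jg_{j\cdot\cdot}\) equals \(A^{\mathsf T}\operatorname{diag}(Ax)A\). Its Hilbert--Schmidt norm is at most
\[
\|A\|_{\mathrm{op}}\,\|\operatorname{diag}(Ax)A\|_{\HS}
=\|A\|_{\mathrm{op}}\Bigl(\sum_i(Ax)_i^2\|A_{i\cdot}\|_2^2\Bigr)^{1/2}
\le\|A\|_{\mathrm{op}}\,R\,\|Ax\|_2 .
\]
Since \(\|Ax\|_2\le\|A\|_{\mathrm{op}}\), this is at most \(R\|A\|_{\mathrm{op}}^2\). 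The injective norm is dominated by the flattening norm, so \(\|g\|_{1|2|3}\le R\|A\|_{\mathrm{op}}^2\) follows at once.

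The slice bounds go the same way. The slice \(g_{\cdot\cdot\ell}\) equals \(A^{\mathsf T}\operatorname{diag}(A_{\cdot\ell})A\).
\begin{itemize}
\item Its Hilbert--Schmidt norm is at most \(\|A\|_{\mathrm{op}}(\sum_iA_{i\ell}^2\|A_{i\cdot}\|_2^2)^{1/2}\le\|A\|_{\mathrm{op}}R^2\).
\item Its operator norm is at most \(\|A\|_{\max}\|A\|_{\mathrm{op}}^2\).
\item The vector slice \(g_{\cdot k\ell}\) equals \(A^{\mathsf T}(A_{\cdot k}\circ A_{\cdot\ell})\). Its norm is at most \(\|A\|_{\mathrm{op}}\|A\|_{\max}\|A_{\cdot\ell}\|_2\le\|A\|_{\mathrm{op}}\|A\|_{\max}R\).
\end{itemize}

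The main obstacle is passing to \(g^\circ\). Subtracting the diagonal restriction is not a contraction for the partition and operator norms, so those bounds do not transfer automatically. The plan is to use inclusion--exclusion: \(g^\circ=g-\sum_{\text{pairs}}g^{(jk)}+2g^{(jk\ell)}\), where \(g^{(jk)}\) is the restriction to \(j=k\) and \(g^{(jk\ell)}\) to \(j=k=\ell\). Then each diagonal array is bounded separately.

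For example, \(g^{(jk)}_{jj\ell}=\sum_iA_{ij}^2A_{i\ell}\) is a matrix \(M=(A^{\circ2})^{\mathsf T}A\) placed on a two-dimensional face.
\begin{itemize}
\item Its Hilbert--Schmidt norm is \(\|M\|_{\HS}\le\|A^{\circ2}\|_{\mathrm{op}}\|A\|_{\HS}\).
\item The operator norm of \(A^{\circ2}\) is at most its maximal row \(\ell_1\) norm, which is at most \(R^2\).
\item Also \(R^2\le R\,\|A\|_{\mathrm{op}}\), since each row norm is bounded by the operator norm; this yields \(R\|A\|_{\mathrm{op}}\|A\|_{\HS}\), within the claimed bound.
\end{itemize}

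For the flattenings and slices of these diagonal arrays, the plan is to use \(\|A\|_{\max}\le R\le\|A\|_{\mathrm{op}}\), together with the same diag-sandwich bounds. For instance, \(\|\operatorname{diag}(v)A\|_{\mathrm{op}}\le\|v\|_\infty\|A\|_{\mathrm{op}}\). The fully diagonal array \(\sum_iA_{ij}^3\) has every norm at most \(R\|A\|_{\max}\|A\|_{\mathrm{op}}\). Each face case is then checked against the six right-hand sides, up to absolute constants.
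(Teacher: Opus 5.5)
Your proof is correct. For $g$ itself it matches the paper on the flattening, injective and slice bounds: you use the same identities $\sum_jx_jg_{j\cdot\cdot}=A^{\mathsf T}\operatorname{diag}(Ax)A$, $g_{\cdot\cdot\ell}=A^{\mathsf T}\operatorname{diag}(A_{\cdot\ell})A$ and $g_{\cdot k\ell}=A^{\mathsf T}(A_{\cdot k}\circ A_{\cdot\ell})$.

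It differs from the paper in two places.

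\textbf{Hilbert--Schmidt norm.} The paper uses the triangle inequality, $\|g\|_{\HS}\le\sum_i\|A_{i\cdot}\|_2^3\le R\|A\|_{\HS}^2$. Your Gram expansion $\|g\|_{\HS}^2=\sum_{i,i'}(A^2)_{ii'}^3$ gives the sharper bound $R\|A\|_{\mathrm{op}}\|A\|_{\HS}$, which is more than the lemma needs.

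\textbf{Passing to $g^\circ$.} The paper uses inclusion--exclusion only for the $1|23$ flattening. The Euclidean-type quantities $\|g\|_{\HS}$, $\|g_{\cdot\cdot\ell}\|_{\HS}$ and $\|g_{\cdot k\ell}\|_2$ pass to $g^\circ$ by coordinate deletion. The slice operator norm is handled by $g^\circ_{\cdot\cdot\ell}=P_\ell[g_{\cdot\cdot\ell}-\operatorname{Diag}(g_{\cdot\cdot\ell})]P_\ell$. The diagonal pieces of the flattening are bounded by the already proved vector-slice estimate, and by the fact that taking the diagonal is an HS contraction, so $\|d^{23}\|_{1|23}\le\|g\|_{1|23}$.

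You instead apply inclusion--exclusion to all six norms. You bound every face array through the single matrix $M=A^{\circ2}A$, using the Schur bound $\|A^{\circ2}\|_{\mathrm{op}}\le R^2$ together with $\|A\|_{\max}\le R\le\|A\|_{\mathrm{op}}$. The cases you left as ``checked'' do all close. For example:
\begin{itemize}
\item the $k=\ell$ face in the $1|23$ norm needs $\|M\|_{\mathrm{op}}\le R^2\|A\|_{\mathrm{op}}\le R\|A\|_{\mathrm{op}}^2$;
\item the $j=k$ and $j=\ell$ faces need $\|M_{k\cdot}\|_2\le\|A\|_{\mathrm{op}}\|A\|_{\max}R$.
\end{itemize}
Your route is uniform and self-contained. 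The paper's reuses the bounds already proved for $g$ and avoids most of the case checking.

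One sentence in your last paragraph is wrong as stated. The fully diagonal array $w_j=\sum_iA_{ij}^3$ does not have \emph{every} norm bounded by $R\|A\|_{\max}\|A\|_{\mathrm{op}}$. If $n$ is even and $A$ is the adjacency matrix of a perfect matching, then $R=\|A\|_{\max}=\|A\|_{\mathrm{op}}=1$ but $\|w\|_2=\sqrt n$. This does not affect your proof. Since $w_j=M_{jj}$, you have $\|w\|_2\le\|M\|_{\HS}\le R^2\|A\|_{\HS}\le R\|A\|_{\HS}^2$. In any case $\|g^\circ\|_{\HS}\le\|g\|_{\HS}$ holds directly, so inclusion--exclusion is not needed for that norm.
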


\begin{proof}
We bound the full array using its row decomposition and matrix slices, then
transfer the estimates to distinct indices.  Coordinate deletion handles the
Euclidean norms; the operator and partition norms require explicit diagonal
subtractions.
Set \(a_i=A_{i\cdot}\).  Since \(g=\sum_i a_i^{\otimes3}\), its
Hilbert--Schmidt norm satisfies
\[
\begin{aligned}
  \|g\|_{\HS}
  &\le\sum_i\|a_i^{\otimes3}\|_{\HS}
  =\sum_i\|a_i\|_2^3\\
  &\le\left(\max_i\|A_{i\cdot}\|_2\right)
  \sum_i\|A_{i\cdot}\|_2^2
  =\left(\max_i\|A_{i\cdot}\|_2\right)\|A\|_{\HS}^2.
\end{aligned}
\]

For the flattening norm, use
\(\sum_jx_jg_{j\cdot\cdot}=A^\top\operatorname{diag}(Ax)A\).  Therefore,
\[
\begin{aligned}
  \|g\|_{1|23}
  &=\sup_{\|x\|_2\le1}
  \|A^\top\operatorname{diag}(Ax)A\|_{\HS}\\
  &\le\|A\|_{\mathrm{op}}
  \sup_{\|x\|_2\le1}
  \left(\sum_i(Ax)_i^2\|A_{i\cdot}\|_2^2\right)^{1/2}\\
  &\le\left(\max_i\|A_{i\cdot}\|_2\right)\|A\|_{\mathrm{op}}
  \sup_{\|x\|_2\le1}\|Ax\|_2\\
  &\le\left(\max_i\|A_{i\cdot}\|_2\right)\|A\|_{\mathrm{op}}^2.
\end{aligned}
\]
For the injective norm, \(\|yz^\top\|_{\HS}=\|y\|_2\|z\|_2\), and hence
\[
\begin{aligned}
  \|g\|_{1|2|3}
  &=\sup_{\substack{\|x\|_2\le1,\ \|y\|_2\le1\\\|z\|_2\le1}}
  \left|\left\langle
  \sum_jx_jg_{j\cdot\cdot},yz^\top
  \right\rangle_{\HS}\right|\\
  &\le\sup_{\|x\|_2\le1}
  \left\|\sum_jx_jg_{j\cdot\cdot}\right\|_{\HS}
  =\|g\|_{1|23}\\
  &\le\left(\max_i\|A_{i\cdot}\|_2\right)\|A\|_{\mathrm{op}}^2.
\end{aligned}
\]

For a fixed \(\ell\), the matrix slice satisfies
\(g_{\cdot\cdot\ell}=A^\top\operatorname{diag}(A_{\cdot\ell})A\).  Its
Hilbert--Schmidt norm is bounded by
\[
\begin{aligned}
  \|g_{\cdot\cdot\ell}\|_{\HS}
  &\le\|A\|_{\mathrm{op}}
  \|\operatorname{diag}(A_{\cdot\ell})A\|_{\HS}\\
  &=\|A\|_{\mathrm{op}}
  \left(\sum_iA_{i\ell}^2\|A_{i\cdot}\|_2^2\right)^{1/2}\\
  &\le\|A\|_{\mathrm{op}}
  \left(\max_i\|A_{i\cdot}\|_2\right)\|A_{\cdot\ell}\|_2\\
  &\le\|A\|_{\mathrm{op}}
  \left(\max_i\|A_{i\cdot}\|_2\right)^2,
\end{aligned}
\]
where the last inequality uses the symmetry of \(A\).  The operator norm of
the same slice satisfies
\[
\begin{aligned}
  \|g_{\cdot\cdot\ell}\|_{\mathrm{op}}
  &\le\|A\|_{\mathrm{op}}^2
  \|\operatorname{diag}(A_{\cdot\ell})\|_{\mathrm{op}}\\
  &=\|A\|_{\mathrm{op}}^2\max_i|A_{i\ell}|
  \le\|A\|_{\max}\|A\|_{\mathrm{op}}^2.
\end{aligned}
\]
Finally, the vector slice satisfies
\[
\begin{aligned}
  \|g_{\cdot k\ell}\|_2
  &=\|A^\top(A_{\cdot k}\circ A_{\cdot\ell})\|_2\\
  &\le\|A\|_{\mathrm{op}}
  \|A_{\cdot k}\circ A_{\cdot\ell}\|_2\\
  &\le\|A\|_{\mathrm{op}}\|A\|_{\max}
  \|A_{\cdot k}\|_2
  \le\|A\|_{\mathrm{op}}\|A\|_{\max}
  \max_i\|A_{i\cdot}\|_2.
\end{aligned}
\]

It remains to transfer these estimates to \(g^\circ\).  Coordinate deletion
immediately gives
\[
  \|g^\circ\|_{\HS}\le\|g\|_{\HS},\qquad
  \|g^\circ_{\cdot\cdot\ell}\|_{\HS}
  \le\|g_{\cdot\cdot\ell}\|_{\HS},\qquad
  \|g^\circ_{\cdot k\ell}\|_2\le\|g_{\cdot k\ell}\|_2.
\]
For the operator norm of a matrix slice, let
  \(P_\ell=I-e_\ell e_\ell^\top\), where \(e_\ell\) is the \(\ell\)-th standard
  basis vector of \(\mathbb R^n\).  Since \(g_{\cdot\cdot\ell}\) is symmetric,
\[
\begin{aligned}
  g^\circ_{\cdot\cdot\ell}
  &=P_\ell\Bigl[
  g_{\cdot\cdot\ell}
  -\operatorname{diag}(g_{11\ell},\ldots,g_{nn\ell})
  \Bigr]P_\ell,\\
  \|g^\circ_{\cdot\cdot\ell}\|_{\mathrm{op}}
  &\le\|g_{\cdot\cdot\ell}\|_{\mathrm{op}}
  +\max_j|g_{jj\ell}|
  \le2\|g_{\cdot\cdot\ell}\|_{\mathrm{op}}.
\end{aligned}
\]

To control \(\|g^\circ\|_{1|23}\), define
\[
\begin{aligned}
  d^{12}_{jk\ell}&:=g_{jk\ell}\mathbf1\{j=k\},&
  d^{13}_{jk\ell}&:=g_{jk\ell}\mathbf1\{j=\ell\},\\
  d^{23}_{jk\ell}&:=g_{jk\ell}\mathbf1\{k=\ell\},&
  d^{123}_{jk\ell}&:=g_{jk\ell}\mathbf1\{j=k=\ell\}.
\end{aligned}
\]
Inclusion--exclusion gives
\[
  g^\circ=g-d^{12}-d^{13}-d^{23}+2d^{123}.
\]
For \(\|x\|_2\le1\), symmetry of \(g\) yields
\[
\begin{aligned}
  \left\|\sum_jx_jd^{12}_{j\cdot\cdot}\right\|_{\HS}^2
  &=\sum_{j,\ell}x_j^2g_{jj\ell}^2
  \le\max_j\|g_{\cdot jj}\|_2^2,\\
  \left\|\sum_jx_jd^{13}_{j\cdot\cdot}\right\|_{\HS}^2
  &\le\max_j\|g_{\cdot jj}\|_2^2,\\
  \left\|\sum_jx_jd^{123}_{j\cdot\cdot}\right\|_{\HS}^2
  &=\sum_jx_j^2g_{jjj}^2
  \le\max_j|g_{jjj}|^2.
\end{aligned}
\]
Let \(\operatorname{Diag}(M)\) denote the diagonal matrix with the same
diagonal entries as \(M\).  Then
\[
  \sum_jx_jd^{23}_{j\cdot\cdot}
  =\operatorname{Diag}\left(
  \sum_jx_jg_{j\cdot\cdot}
  \right),
  \qquad
  \|d^{23}\|_{1|23}\le\|g\|_{1|23}.
\]
Consequently,
\[
\begin{aligned}
  \|g^\circ\|_{1|23}
  &\le2\|g\|_{1|23}
  +4\max_{k,\ell}\|g_{\cdot k\ell}\|_2\\
  &\lesssim
  \left(\max_i\|A_{i\cdot}\|_2\right)\|A\|_{\mathrm{op}}^2,
\end{aligned}
\]
where the last step also uses \(\|A\|_{\max}\le\|A\|_{\mathrm{op}}\).
Finally,
\[
  \|g^\circ\|_{1|2|3}
  \le\|g^\circ\|_{1|23}
  \lesssim
  \left(\max_i\|A_{i\cdot}\|_2\right)\|A\|_{\mathrm{op}}^2.
\]
\end{proof}

\subsection{The maximum coefficient}
\label{sec:maximum-coefficient}

The statistic \(\mathcal T_g\) in \eqref{eq:full-row-aggregation} contains only
\(g_{jk\ell}\) with pairwise distinct indices.
Lemma~\ref{lem:full-row-coefficient-norms} bounds the global coefficient norms
used below.  The order-three moment inequality also needs the largest
coefficient.  We therefore set
\[
  G_\infty:=\max_{j,k,\ell\ \mathrm{pairwise\ distinct}}|g_{jk\ell}|.
\]
Conditioning on the three indexed first-replica features makes the summands
over the remaining indices independent and permits a Bernstein estimate.

The following tail-parameter form will also supply the stronger truncations
in Appendices~\ref{app:critical-likelihood-analysis} and \ref{app:quantitative-LAN}.
Its constants do not depend on the chosen tail level.

\begin{lemma}[Maximum cubic coefficient at a prescribed tail level]
\label{lem:gmax-tail-parameter}
Let \(q_1,\ldots,q_n\) be independent with law \(\mu\), and form
\(G_\infty(q)\) from \(A(q)=\beta^2B(q)\) as above.
There are absolute constants \(K,c,C>0\) such that, for
\(x\ge K\log(enN)\),
\[
 \mathbb P\left\{G_\infty(q)>C\beta^6
   \left[\frac{nx^{3/2}}{N^{9/4}}
        +\frac{\sqrt{nx}+x^4}{N^{3/2}}\right]\right\}
 \le Cn^3e^{-cx}.
\]
\end{lemma}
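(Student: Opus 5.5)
Write $g_{jk\ell}=\beta^6\sum_{i\ne j,k,\ell}B_{ij}B_{ik}B_{i\ell}$ for pairwise distinct $j,k,\ell$; the terms $i\in\{j,k,\ell\}$ vanish because $B$ is hollow. The plan is to fix a triple $(j,k,\ell)$, condition on $(q_j,q_k,q_\ell)$, and bound $\sum_i\xi_i$ with $\xi_i=B_{ij}B_{ik}B_{i\ell}$. Conditionally on the three features, the $\xi_i$ are independent and identically distributed, and $|\xi_i|\le1$. A union bound over the at most $n^3$ triples then gives the factor $Cn^3$ in the statement.

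\emph{Conditional mean.} By the third-moment identity \eqref{eq:third-moment-tensor},
\[
\mathbb E[\xi_i\mid q_j,q_k,q_\ell]=\mathcal K_3(q_j,q_k,q_\ell)=\gamma_d\operatorname{tr}_{\mathrm{sym}}(q_jq_kq_\ell).
\]
The trace term is a cubic form in three independent centered features. To control it, we condition on $q_j,q_k$ and apply the tail bound \eqref{eq:local-quadratic-tail} to the linear form $q_\ell\mapsto\langle\mathrm{Sym}(q_jq_k),q_\ell\rangle_{\HS}$ (after projecting to traceless matrices). The matrix $\mathrm{Sym}(q_jq_k)$ has HS norm at most $1$ and operator norm at most $1$. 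It has small HS norm only with high probability, and establishing that is an inner-product concentration step of the same type. This gives
\[
|\operatorname{tr}_{\mathrm{sym}}(q_jq_kq_\ell)|\lesssim\sqrt{x/N}+x/d
\]
with probability $1-e^{-cx}$, so the total mean $n|\mathcal K_3|$ is at most $Cn N^{-3/2}(\sqrt{x/N}+x/d)$. I expect the coarser bound, with $\sqrt{x}$ improved to $x^{3/2}$ to absorb the sequential conditioning, to yield the term $n x^{3/2}N^{-9/4}$. This uses $d\asymp N^{1/2}$ and $x/d\lesssim x^{3/2}/N^{3/4}$, hence the $N^{-9/4}$ scaling after multiplying by $N^{-3/2}$.

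\emph{Conditional variance and Bernstein.} We bound
\[
\mathbb E[\xi_i^2\mid\cdot]\le\prod_{a}\bigl(\mathbb E[B_{ia}^6\mid q_a]\bigr)^{1/3}\lesssim N^{-3},
\]
using Hölder and the moment bound $\mathbb E|B_{ij}|^{2p}\le C_pN^{-p}$ recorded before \eqref{eq:compensation-Q-moments}. Because $\xi_i$ is only bounded by $1$, the range term must be handled differently. We truncate each $|B_{ia}|$ at level $C x/\sqrt N$, which \eqref{eq:local-quadratic-tail} allows with failure probability $ne^{-cx}$ via a union bound over $i$ (requiring $x\ge K\log(enN)$). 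On this event $|\xi_i|\le Cx^3N^{-3/2}$, and the truncation shifts the mean only by an exponentially small amount. Bernstein's inequality then gives a fluctuation
\[
\lesssim\sqrt{nx}\,N^{-3/2}+x\cdot x^3N^{-3/2},
\]
which matches $(\sqrt{nx}+x^4)N^{-3/2}$. Multiplying by $\beta^6$ and combining with the mean bound proves the lemma.

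The main obstacle is the conditional mean. The third-moment kernel does not vanish for this non-symmetric feature law, and a naive bound $|\mathcal K_3|\lesssim N^{-3/2}$ would give $nN^{-3/2}$, which is too large. The key step is therefore the high-probability decay of $\operatorname{tr}(q_jq_kq_\ell)$ for independent features. I would first try iterated conditioning with \eqref{eq:local-quadratic-tail}. If the traceless projection of $q_jq_k$ and its operator-norm control prove awkward, the fallback is to use $q^2=I/d+b_dq$ from \eqref{eq:q-square}, which reduces the product to sums of inner products whose tails are already controlled.
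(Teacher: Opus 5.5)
Your overall architecture is the paper's. You condition on $(q_j,q_k,q_\ell)$ and bound the conditional mean $n\,\mathcal K_3(q_j,q_k,q_\ell)$. You truncate the entries at level $\asymp x/d$, so the summands have range $Cx^3N^{-3/2}$ and conditional variance sum $CnN^{-3}$. You then apply Bernstein and take a union bound over triples. That part is sound. Your sixth-moment bound via \eqref{eq:local-quadratic-tail} and H\"older is an acceptable substitute for the paper's spherical moment formula. You should state explicitly that the truncation bias, of order $ne^{-cx}$, is absorbed into $\sqrt{nx}\,N^{-3/2}$ because $x\ge K\log(enN)$ with $K$ large.

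The gap is the conditional mean, which you rightly identify as the crux. The bound you actually write down uses only $\|\mathrm{Sym}(q_jq_k)\|_{\HS},\|\mathrm{Sym}(q_jq_k)\|_{\mathrm{op}}\le1$. With those norms, \eqref{eq:local-quadratic-tail} gives only $|\operatorname{tr}_{\mathrm{sym}}(q_jq_kq_\ell)|\lesssim\sqrt{x/N}+x/d\asymp x/\sqrt N$, which is a mean of order $nxN^{-2}$. Your upgrade to $nx^{3/2}N^{-9/4}$ relies on $x/d\lesssim x^{3/2}N^{-3/4}$. That inequality is equivalent to $x\gtrsim\sqrt N$, so it fails whenever $x\ll\sqrt N$, including the main case $x\asymp\log(enN)$ with $N$ large. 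Your bound is too large by a factor $\sqrt{d/x}$.

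What you need is that the cubic trace has the size of a product of three overlaps, namely $(x/d)^{3/2}$. In your framework this comes from quantifying the norm you left vague. By \eqref{eq:q-square}, $\|q_jq_k\|_{\HS}^2=\operatorname{tr}(q_j^2q_k^2)=1/d+b_d^2B_{jk}$. So on $\{|B_{jk}|\le Cx/d\}$ the traceless part $H$ of $\mathrm{Sym}(q_jq_k)$ satisfies $\|H\|_{\mathrm{op}}\le\|H\|_{\HS}\lesssim\sqrt{x/d}$. Applying \eqref{eq:local-quadratic-tail} to $q_\ell$ then gives $|\langle H,q_\ell\rangle_{\HS}|\lesssim(x/d)^{3/2}\asymp x^{3/2}N^{-3/4}$ outside probability $Ce^{-cx}$.

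The paper avoids this second conditioning. It expands
\[
 \operatorname{tr}(q_jq_kq_\ell)=\Bigl(\frac d{d-1}\Bigr)^{3/2}\Bigl\{\langle X_j,X_k\rangle\langle X_k,X_\ell\rangle\langle X_\ell,X_j\rangle-\frac1d\sum_{\{r,s\}\subset\{j,k,\ell\}}\langle X_r,X_s\rangle^2+\frac2{d^2}\Bigr\}
\]
and observes that, on the simultaneous entry event already used for the truncation, each $|\langle X_r,X_s\rangle|\lesssim\sqrt{x/d}$. The mean bound $Cnx^{3/2}N^{-9/4}$ therefore holds deterministically on that event.
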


\begin{proof}
We condition on the three indexed features, bound the conditional mean,
and apply Bernstein's inequality after truncating the remaining summands.
Put \(u_x=\min\{1,C_{\max}x/d\}\). The lower bound
\(B_{ij}\ge-1/(d-1)\) and Lemma~\ref{lem:quadratic-inner-product-tail}
give
\[
 \mathbb P\{\max_{i<j}|B_{ij}|>u_x\}\le Cn^2e^{-cx}.
\]
Here \(C_{\max}\) is large enough that \(u_x\ge1/(d-1)\); when
\(u_x=1\), the event on the left is empty.

Fix distinct \(j,k,\ell\), set
\(\mathcal F_{jk\ell}=\sigma(q_j,q_k,q_\ell)\) and
\(I_{jk\ell}=[n]\setminus\{j,k,\ell\}\), and write
\(W_i=B_{ij}B_{ik}B_{i\ell}\). Conditionally on \(\mathcal F_{jk\ell}\),
these summands are independent and have mean
\(\mathcal K_3(q_j,q_k,q_\ell)\). Their sum is
\(\beta^{-6}g_{jk\ell}(q)\).
For \(r\in\{j,k,\ell\}\), the spherical quadratic-form formula
\cite[Section~1.5]{Muirhead1982} has sixth-moment denominator
\(d(d+2)\cdots(d+10)\). Its numerator is a fixed sum of trace products
of total degree six. Tracelessness eliminates singleton traces, and
Schatten H\"older bounds each remaining product by \(\|q_r\|_{\HS}^6=1\).
Consequently,
\[
 \mathbb E(|B_{ir}|^6\mid\mathcal F_{jk\ell})\le CN^{-3},\qquad
 \mathbb E(W_i^2\mid\mathcal F_{jk\ell})
 \le\prod_{r\in\{j,k,\ell\}}
       \mathbb E(|B_{ir}|^6\mid\mathcal F_{jk\ell})^{1/3}
 \le CN^{-3}.
\]

Write \(q_r=\varphi_d(X_r)\). If the three conditioned pairwise entries
are at most \(u_x\) in absolute value, then
\(\lvert\langle X_r,X_s\rangle\rvert\le C\sqrt{x/d}\), by
\(d\langle X_r,X_s\rangle^2=1+(d-1)B_{rs}\). Direct multiplication gives
\[
 \operatorname{tr}(q_jq_kq_\ell)
 =\left(\frac d{d-1}\right)^{3/2}\left\{
  \langle X_j,X_k\rangle\langle X_k,X_\ell\rangle\langle X_\ell,X_j\rangle
  -\frac1d\sum_{\{r,s\}\subset\{j,k,\ell\}}\langle X_r,X_s\rangle^2
  +\frac2{d^2}\right\}.
\]
The identity \eqref{eq:third-moment-tensor} and \(d\asymp\sqrt N\) imply
\begin{equation}
 \left|\sum_{i\in I_{jk\ell}}\mathbb E(W_i\mid\mathcal F_{jk\ell})\right|
 \le Cnx^{3/2}N^{-9/4}.
 \label{eq:fixed-M-coefficient-conditional-mean}
\end{equation}

Set \(\widetilde W_i=W_i\mathbf1\{
\max_{r\in\{j,k,\ell\}}|B_{ir}|\le u_x\}\). Then
\(\lvert\widetilde W_i\rvert\le Cx^3N^{-3/2}\) and the sum of their
conditional variances is at most \(CnN^{-3}\).
Rotational invariance makes the entry-tail bound uniform in each conditioned
feature. A union bound, with \(x\ge K\log(enN)\), gives
\begin{equation}
\begin{aligned}
 \mathbb P(\exists i:\widetilde W_i\ne W_i\mid\mathcal F_{jk\ell})
 &\le Ce^{-cx},\\
 \sum_{i\in I_{jk\ell}}
  |\mathbb E(\widetilde W_i-W_i\mid\mathcal F_{jk\ell})|
 &\le Ce^{-cx}.
\end{aligned}
\label{eq:fixed-M-coefficient-truncation}
\end{equation}
Both left sides are zero if \(u_x=1\); otherwise use
\(ne^{-cd u_x}\le Ce^{-cx}\) and \(|W_i|\le1\).
Conditional Bernstein gives
\begin{equation}
 \mathbb P\left(\left|\sum_{i\in I_{jk\ell}}
   \{\widetilde W_i-\mathbb E(\widetilde W_i\mid\mathcal F_{jk\ell})\}\right|
   >C\frac{\sqrt{nx}+x^4}{N^{3/2}}\,\middle|\,\mathcal F_{jk\ell}\right)
 \le2e^{-cx}.
 \label{eq:fixed-M-coefficient-bernstein}
\end{equation}
The four contributions satisfy the exact identity
\begin{equation}
\begin{aligned}
 \sum_iW_i={}&\sum_i(W_i-\widetilde W_i)
 +\sum_i\{\widetilde W_i-\mathbb E(\widetilde W_i\mid\mathcal F_{jk\ell})\}\\
 &+\sum_i\mathbb E(\widetilde W_i-W_i\mid\mathcal F_{jk\ell})
 +\sum_i\mathbb E(W_i\mid\mathcal F_{jk\ell}),
\end{aligned}
\label{eq:fixed-M-coefficient-decomposition}
\end{equation}
where the sums run over \(I_{jk\ell}\). The first term vanishes on the
simultaneous entry event. The truncation bias is absorbed because, after
increasing the absolute constant \(K\),
\begin{equation}
 e^{-cx}\le C\frac{\sqrt{nx}}{N^{3/2}}.
 \label{eq:fixed-M-truncation-absorption}
\end{equation}
Apply \eqref{eq:fixed-M-coefficient-conditional-mean} on that entry event,
combine the other three bounds, and take a union bound over fewer than
\(n^3\) triples. Multiplication by \(\beta^6\) proves the lemma.
\end{proof}

\begin{lemma}[Conditional Bernstein bound for \(G_\infty\)]
\label{lem:gmax}
With probability $1-o(1)$ over the first replica,
\begin{equation}
  G_\infty\lesssim\beta^6\left[\frac{n(\log n)^{3/2}}{N^{9/4}}+\frac{\sqrt{n\log n}+(\log n)^4}{N^{3/2}}\right].
  \label{eq:gmax-bound}
\end{equation}
\end{lemma}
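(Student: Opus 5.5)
This is a direct application of Lemma~\ref{lem:gmax-tail-parameter} to the first replica, with the tail level \(x\) chosen as a large multiple of \(\log n\). The first replica \(q^\ast\) consists of independent features with law \(\mu\), and \(A=\beta^2B(q^\ast)\). So \(G_\infty\) is exactly the quantity \(G_\infty(q^\ast)\) controlled there. No further probabilistic input should be needed; the work is choosing \(x\) and checking that the resulting bound simplifies to \eqref{eq:gmax-bound}.

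\emph{Choice of tail level.} Take \(x=K'\log(enN)\), where \(K'\ge K\) is a large absolute constant. Then the hypothesis \(x\ge K\log(enN)\) holds, and the failure probability is at most
\[
 Cn^3e^{-cx}\le Cn^3(enN)^{-cK'}\le Cn^{3-cK'},
\]
which is \(o(1)\) once \(cK'>3\). On the complementary event,
\[
 G_\infty\le C\beta^6\left[\frac{nx^{3/2}}{N^{9/4}}+\frac{\sqrt{nx}+x^4}{N^{3/2}}\right].
\]

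\emph{Replacing \(\log(enN)\) by \(\log n\).} This step requires \(\log(enN)\lesssim\log n\), that is, \(N\) at most polynomial in \(n\). In the dense regime \eqref{eq:dense-master-regime}, where the lemma is invoked in defining \(\mathcal E_n\), we have \(N\lesssim n\beta^2\le n\). Hence \(\log(enN)\le C\log n\) for \(n\ge2\), and \(x\asymp\log n\). Substituting, \(x^{3/2}\lesssim(\log n)^{3/2}\), \(\sqrt{nx}\lesssim\sqrt{n\log n}\), and \(x^4\lesssim(\log n)^4\), which gives \eqref{eq:gmax-bound}.

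The only point needing care, and I would check it first, is the uniformity convention. The statement is made under the standing assumptions of Section~\ref{sec:dense-likelihood}, so \(N\lesssim n\) is available. If \(N\) were allowed to be superpolynomial in \(n\), the clean \(\log n\) form would not follow directly. In that case one could still keep \(\log(enN)\) in place of \(\log n\), or note that the bound only weakens if \(N\) is replaced by its lower bound in the denominators. Beyond this, all the substantive content — conditional mean, truncation, and Bernstein — is already contained in Lemma~\ref{lem:gmax-tail-parameter}.
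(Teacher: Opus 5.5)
Your proposal is correct and follows the paper's own proof: apply Lemma~\ref{lem:gmax-tail-parameter} at a tail level of order \(\log n\). Then use \(N\lesssim n\beta^2\le n\) from \eqref{eq:dense-master-regime} to replace \(\log(enN)\) by \(\log n\), with the absolute constant chosen large enough that \(Cn^3e^{-cx}=o(1)\). The only cosmetic difference is that you take \(x=K'\log(enN)\) where the paper takes \(x=K'\log(en)\); under \(N\lesssim n\) the two choices are equivalent.
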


\begin{proof}
Apply Lemma~\ref{lem:gmax-tail-parameter} with \(x=K'\log(en)\).
Under \eqref{eq:dense-master-regime}, \(N\lesssim n\beta^2\le n\), so
\(\log(enN)\lesssim\log(en)\). A sufficiently large absolute \(K'\)
makes the lemma applicable and its failure bound \(o(1)\), giving
\eqref{eq:gmax-bound}.
\end{proof}

\subsection{Partition norms of the cubic kernel}
\label{sec:cubic-kernel-partition-norms}

As recorded in the roadmap, the cubic estimate needed for the differential
inequality is
\[
  \sup_{0\le t\le1}
  \|\mathcal T_g\|_{L^{p_{\mathrm H}}(\mu_t^{\otimes n})}=o(1).
\]
Adamczak's order-three moment inequality uses seven mixed norm types.
Lemmas~\ref{lem:full-row-coefficient-norms}
and~\ref{lem:gmax} provide the coefficient bounds.  This subsection supplies
the feature-kernel bounds uniformly along the interpolation; the joint
three-singleton estimate is proved in
Lemma~\ref{lem:mixed-cubic-singletons}.  Since \(\mathcal K_3\) is symmetric, one
representative partition of each type suffices.  For functions normalized in
the indicated \(L^2(\mu_t)\) spaces, define the seven norms below.  In the
last four definitions, every displayed fixed variable ranges over the support
of \(\mu_t\).
\begingroup
\allowdisplaybreaks[4]
\begin{align*}
  \nu_{3,1}(t)
  &:={}
  \|\mathcal K_3\|_{L^2(\mu_t^{\otimes3})},\\
  \nu_{3,2}(t)
  &:={}
  \sup_{\substack{\|f\|_{L^2(\mu_t)}\le1\\
  \|F\|_{L^2(\mu_t^{\otimes2})}\le1}}
  \left|\mathbb E_{\mu_t^{\otimes3}}
  [\mathcal K_3(q_1,q_2,q_3)f(q_1)F(q_2,q_3)]\right|,\\
  \nu_{3,3}(t)
  &:={}
  \sup_{\substack{\|f_a\|_{L^2(\mu_t)}\le1\\a=1,2,3}}
  \left|\mathbb E_{\mu_t^{\otimes3}}
  [\mathcal K_3(q_1,q_2,q_3)\textstyle\prod_{a=1}^3f_a(q_a)]\right|,\\
  \nu_{2,1}(t)
  &:={}
  \sup_{q_3}\sup_{\|F\|_{L^2(\mu_t^{\otimes2})}\le1}
  \left|\mathbb E_{\mu_t^{\otimes2}}
  [\mathcal K_3(q_1,q_2,q_3)F(q_1,q_2)]\right|,\\
  \nu_{2,2}(t)
  &:={}
  \sup_{q_3}\sup_{\substack{\|f_a\|_{L^2(\mu_t)}\le1\\a=1,2}}
  \left|\mathbb E_{\mu_t^{\otimes2}}
  [\mathcal K_3(q_1,q_2,q_3)f_1(q_1)f_2(q_2)]\right|,\\
  \nu_{1,1}(t)
  &:={}
  \sup_{q_2,q_3}\sup_{\|f\|_{L^2(\mu_t)}\le1}
  \left|\mathbb E_{\mu_t}
  [\mathcal K_3(q_1,q_2,q_3)f(q_1)]\right|,\\
  \nu_{0,0}(t)
  &:={}
  \sup_{q_1,q_2,q_3}|\mathcal K_3(q_1,q_2,q_3)|.
\end{align*}
\endgroup

\begin{lemma}[Partition norms of the cubic kernel]
\label{lem:cubic-kernel-partition-norms}
Uniformly in \(t\in[0,1]\),
\begin{equation}
\begin{aligned}
  (\nu_{3,1}(t),\nu_{3,2}(t),\nu_{3,3}(t),\nu_{2,1}(t),\nu_{2,2}(t))
  &\lesssim (N^{-9/4},N^{-11/4},N^{-3},N^{-9/4},N^{-5/2}),\\
  (\nu_{1,1}(t),\nu_{0,0}(t))
  &\lesssim (N^{-2},N^{-3/2}).
\end{aligned}
  \label{eq:seven-kernel-norms}
\end{equation}
\end{lemma}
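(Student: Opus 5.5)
The plan is to reduce every norm to explicit matrix computations through the third-moment identity \eqref{eq:third-moment-tensor}, namely \(\mathcal K_3(q_1,q_2,q_3)=\gamma_d\operatorname{tr}_{\mathrm{sym}}(q_1q_2q_3)\) with \(\gamma_d\asymp N^{-3/2}\).

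Uniformity in \(t\) should be cheap. The kernel is odd in each argument. For \(f\) normalized in \(L^2(\mu_t)\), split \(f\) into its even and odd parts under \(q\mapsto-q\). The even part integrates to zero against \(\mathcal K_3\) in that coordinate under \(\bar\mu\). The \(\eta\)-part only interchanges parity. Together with \(|\mu_t|\le2\bar\mu\), this should let each supremum be bounded, up to absolute constants, by the corresponding supremum for \(\mu\) itself. Alternatively, since the features \(\pm q\) give the same \(|\mathcal K_3|\), all bounds below may simply be computed on the support of \(\mu\).

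The pointwise norms come first. For \(\nu_{0,0}\), the inequality \(|\tr(XYZ)|\le\|X\|_{\HS}\|Y\|_{\mathrm{op}}\|Z\|_{\HS}\le1\) gives \(N^{-3/2}\). For \(\nu_{1,1}\), Cauchy--Schwarz in \(q_1\) together with isotropy gives
\[
\mathbb E_{q_1}\langle q_1,\operatorname{sym}(q_2q_3)_0\rangle^2=\|\operatorname{sym}(q_2q_3)_0\|_{\HS}^2/N\le1/N .
\]
Here \((\cdot)_0\) denotes the traceless part. So \(\nu_{1,1}\lesssim N^{-3/2}N^{-1/2}=N^{-2}\).

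For \(\nu_{2,2}\), fix \(q_3\). The kernel becomes the bilinear form \(\langle q_1,M_{q_3}q_2\rangle\), where \(M_{q_3}H=\operatorname{sym}(q_3H)_0\) has \(\|M_{q_3}\|_{\mathrm{op}}\le\|q_3\|_{\mathrm{op}}\lesssim1\). The \(L^2(\mu)\otimes L^2(\mu)\) operator norm of \((q_1,q_2)\mapsto\langle q_1,Mq_2\rangle\) is at most \(\|M\|_{\mathrm{op}}/N\), again by isotropy: project \(f_a\) onto the linear span \(\{\langle\cdot,H\rangle\}\), which has Gram operator \(I/N\). This gives \(\nu_{2,2}\lesssim N^{-5/2}\). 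For \(\nu_{2,1}\), compute the Hilbert--Schmidt norm of the same kernel:
\[
\mathbb E\langle q_1,Mq_2\rangle^2=\|M\|_{\HS}^2/N^2 .
\]
Since \(\|q_3\|_{\mathrm{op}}\le1\) and \(\|q_3\|_{\HS}=1\), I expect \(\|M_{q_3}\|_{\HS}^2\lesssim N\|q_3\|_{\HS}^2/d\lesssim N^{1/2}\). This gives \(\nu_{2,1}\lesssim N^{-3/2}N^{1/4}N^{-1}=N^{-9/4}\).

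For \(\nu_{3,1}\), compute \(\mathbb E\,\tr_{\mathrm{sym}}(q_1q_2q_3)^2\) by integrating \(q_1\) and then \(q_2\) with isotropy, obtaining \(N^{-2}\,\mathbb E_{q_3}\|M_{q_3}\|_{\HS}^2\lesssim N^{-3/2}\), and hence \(\nu_{3,1}\lesssim N^{-9/4}\). For \(\nu_{3,2}\), view the kernel as an operator from \(L^2(\mu)\) (variable \(q_1\)) to \(L^2(\mu^{\otimes2})\). Only the linear part \(f(q)=\langle H,q\rangle\) with \(\|H\|_{\HS}\le\sqrt N\) contributes, and the image is \(\langle M_H q_2,q_3\rangle/N\). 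Its \(L^2\) norm is \(\|M_H\|_{\HS}/N^2\), with \(\|M_H\|_{\HS}\lesssim\|H\|_{\HS}\sqrt d\lesssim N^{3/4}\). This gives \(\nu_{3,2}\lesssim N^{-3/2}N^{-5/4}=N^{-11/4}\).

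For \(\nu_{3,3}\), restrict each \(f_a\) to linear functionals \(\langle H_a,\cdot\rangle\) with \(\|H_a\|_{\HS}\le\sqrt N\). This yields
\[
\gamma_d\,\tr_{\mathrm{sym}}(H_1H_2H_3)/N^3\le N^{-3/2}N^{-3}\|H_1\|_{\HS}\|H_2\|_{\mathrm{op}}\|H_3\|_{\HS}\le N^{-3}.
\]

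The main obstacle is the Hilbert--Schmidt bound \(\|M_{q}\|_{\HS}^2\lesssim N^{1/2}\) for rank-one-plus-identity \(q\). It drives the exponents \(9/4\) and \(11/4\), so I would verify it carefully. Using \(q=\sqrt{d/(d-1)}(xx^\top-I/d)\), the map \(H\mapsto\operatorname{sym}(xx^\top H)\) has range in matrices of the form \(xv^\top+vx^\top\), of dimension about \(d\). Its HS norm squared is therefore about \(d\asymp N^{1/2}\), while the identity part contributes \(\|I/d\|\)-sized terms, of total order \(N/d^2=O(1)\). The remaining care is the parity and uniformity transfer to \(\mu_t\).
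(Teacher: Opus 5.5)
Your proof is correct, but it takes a different route from the paper. The paper never expands \(\operatorname{tr}_{\mathrm{sym}}\). Instead it works with the moment tensor \(M_3=\mathbb E_\mu[q^{\otimes3}]\). It computes \(\|M_3\|_{\HS}^2=\gamma_db_d\asymp N^{-3/2}\). It then uses \(O(d)\)-equivariance of the flattening and irreducibility of the conjugation action on \(V\) (a Schur-lemma argument) to get \(M_3^\ast M_3=(\|M_3\|_{\HS}^2/N)I_V\), hence the exact value \(\|M_3\|_{1|23}=\|M_3\|_{\HS}/\sqrt N\asymp N^{-5/4}\). Combined with \(\|M_3\|_{1|2|3}\le\gamma_d\) and the contraction bounds \(\|\mathbb E_{\mu_t}[f(q)q]\|_{\HS}\le N^{-1/2}\|f\|\) and \(\|\mathbb E[F(q_1,q_2)q_1\otimes q_2]\|_{\HS}\le N^{-1}\|F\|\), all seven norms follow. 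Your operator \(\gamma_dM_H\) is exactly the flattening \(M_3H\), viewed as an operator on \(V\). So your bound \(\gamma_d\|M_H\|_{\HS}\lesssim\gamma_d\sqrt d\,\|H\|_{\HS}\asymp N^{-5/4}\|H\|_{\HS}\) recovers the paper's key estimate by elementary matrix inequalities rather than representation theory. What you give up is that you get only an upper bound where the paper has an identity; your reduction to linear test functions is the same as the paper's contraction bounds.

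The step you flag as the main obstacle is a one-liner. It also covers the general \(H\) needed for \(\nu_{3,2}\), where your rank argument does not apply and you only asserted the bound. The map \(K\mapsto HK\) on \(\mathbb R^{d\times d}\) is \(H\otimes I_d\), with Hilbert--Schmidt norm \(\sqrt d\,\|H\|_{\HS}\). Restricting it to \(V\) and composing with the orthogonal projections onto symmetric and traceless matrices cannot increase that norm.

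Finally, the parity discussion is not needed. Every computation you make uses only \(\mathbb E_{\mu_t}[q\otimes q]=I_V/N\) and \(\|q\|_{\mathrm{op}}\le\|q\|_{\HS}=1\) on the support. Both hold for every \(\mu_t=\tfrac{1+t}{2}\mu+\tfrac{1-t}{2}\mu^-\), because \(q\otimes q\) is sign invariant; this is how the paper obtains uniformity in \(t\).
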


\begin{proof}
Let
\[
  M_3:=\mathbb E_{q\sim\mu}[q^{\otimes3}]\in V^{\otimes3}.
\]
We first bound the Hilbert--Schmidt, flattening, and injective norms of this
moment tensor.  We then contract each integrated feature coordinate using
isotropy; the remaining fixed coordinates have unit norm.  This yields all
seven kernel bounds uniformly along the interpolation.
Then
\(\mathcal K_3(q_1,q_2,q_3)
=\langle M_3,q_1\otimes q_2\otimes q_3\rangle\).
For independent \(q,q'\sim\mu\), Lemma~\ref{lem:complete-feature-identities}
and \(\operatorname{tr}(q^3)=b_d\), which follows from
\eqref{eq:q-square}, give
\begin{equation}
  \|M_3\|_{\HS}^2
  =\mathbb E\langle q,q'\rangle_{\HS}^3
  =\mathbb E\mathcal K_3(q,q,q)
  =\gamma_db_d
  \asymp N^{-3/2}.
  \label{eq:m3-hs-size}
\end{equation}
We use the same symbol \(M_3\) for the associated flattening
\(V\to V\otimes V\), defined by
\[
  \langle M_3H,H_2\otimes H_3\rangle
  :=\langle M_3,H\otimes H_2\otimes H_3\rangle,
  \qquad H,H_2,H_3\in V.
\]
For \(U\in O(d)\), let \(\operatorname{Ad}_U\) denote the orthogonal
conjugation action on
\(V\),
\[
  \operatorname{Ad}_U(H):=UHU^\top.
\]
Since \(\varphi_d(UX)=\operatorname{Ad}_U(\varphi_d(X))\) and \(UX\) has the
same law as
\(X\),
the law \(\mu\) is invariant under \(\operatorname{Ad}_U\).  The definition
of the
flattening then gives
\[
  M_3\operatorname{Ad}_U
  =(\operatorname{Ad}_U\otimes\operatorname{Ad}_U)M_3,
  \qquad U\in O(d).
\]
Taking adjoints and using the orthogonality of \(\operatorname{Ad}_U\) shows
that
\[
  (M_3^\ast M_3)\operatorname{Ad}_U
  =\operatorname{Ad}_U(M_3^\ast M_3),
  \qquad U\in O(d).
\]

The conjugation action is irreducible on \(V\).  To see this, let
\(\mathcal W\subseteq V\) be a nonzero invariant subspace and choose
\(0\ne H\in\mathcal W\).  An orthogonal conjugation diagonalizes \(H\), so
\(\mathcal W\) contains a nonzero traceless diagonal matrix
\(H_{\mathrm{diag}}\).  Let \(E_{ab}\) denote the \(d\times d\)
matrix whose \((a,b)\)-entry is \(1\) and whose other entries are zero.  For
two unequal diagonal entries of \(H_{\mathrm{diag}}\), subtracting the
conjugate obtained by
exchanging those entries produces a nonzero multiple of
\(E_{aa}-E_{bb}\).  Permutation conjugations therefore show that \(\mathcal W\)
contains every traceless diagonal matrix.  For a suitable rotation \(U\)
through \(\pi/4\) in the \((a,b)\)-coordinate plane,
\[
  \operatorname{Ad}_U(E_{aa}-E_{bb})=E_{ab}+E_{ba}.
\]
Hence \(\mathcal W\) contains every symmetric off-diagonal matrix unit.  These
matrices together with the traceless diagonal matrices span \(V\), so
\(\mathcal W=V\).

The operator \(M_3^\ast M_3\) is self-adjoint, and the preceding commutation
relation makes each of its eigenspaces invariant under every
\(\operatorname{Ad}_U\).
Irreducibility therefore forces \(M_3^\ast M_3=cI_V\) for some \(c\ge0\).
Taking traces and using
\(\operatorname{tr}(M_3^\ast M_3)=\|M_3\|_{\HS}^2\) gives
\begin{equation}
  M_3^\ast M_3=\frac{\|M_3\|_{\HS}^2}{N}I_V,
  \qquad
  \|M_3\|_{1|23}=\frac{\|M_3\|_{\HS}}{\sqrt N}\asymp N^{-5/4}.
  \label{eq:m3-flattening-size}
\end{equation}
Equation~\eqref{eq:third-moment-tensor} also gives the injective bound
\begin{equation}
  \|M_3\|_{1|2|3}
  =\sup_{\|H_a\|_{\HS}\le1}
  \gamma_d|\operatorname{tr}_{\mathrm{sym}}(H_1H_2H_3)|
  \le\gamma_d\lesssim N^{-3/2}.
  \label{eq:m3-injective-size}
\end{equation}

The laws \(\mu_t\) all have covariance \(I_V/N\).  For
\(f\in L^2(\mu_t)\) and \(F\in L^2(\mu_t^{\otimes2})\),
Cauchy--Schwarz and this covariance identity imply
\begin{equation}
\begin{aligned}
  \left\|\mathbb E_{\mu_t}[f(q)q]\right\|_{\HS}
  &\le N^{-1/2}\|f\|_{L^2(\mu_t)},\\
  \left\|\mathbb E_{\mu_t^{\otimes2}}
  [F(q_1,q_2)q_1\otimes q_2]\right\|_{\HS}
  &\le N^{-1}\|F\|_{L^2(\mu_t^{\otimes2})}.
\end{aligned}
  \label{eq:first-second-contraction-bounds}
\end{equation}
For independent \(q_1,q_2\sim\mu_t\), duality against
\(Y\in V\otimes V\) gives the second inequality:
\[
  \left|\mathbb E
  [F(q_1,q_2)\langle q_1\otimes q_2,Y\rangle]\right|
  \le\|F\|_2
  \left\{\mathbb E
  \langle q_1\otimes q_2,Y\rangle^2\right\}^{1/2}
  =N^{-1}\|F\|_2\|Y\|_{\HS}.
\]

We now evaluate the seven norms in their displayed order.  Integrating all
three coordinates and inserting the covariance operator in each coordinate,
then using \eqref{eq:m3-hs-size}, gives
\[
  \nu_{3,1}(t)^2=N^{-3}\|M_3\|_{\HS}^2\lesssim N^{-9/2}.
\]
For the two nontrivial partitions of all three coordinates, combine
\eqref{eq:m3-flattening-size}, \eqref{eq:m3-injective-size}, and
\eqref{eq:first-second-contraction-bounds} to obtain
\[
\begin{aligned}
  \nu_{3,2}(t)
  &\le N^{-3/2}\|M_3\|_{1|23}
  \lesssim N^{-11/4},\\
  \nu_{3,3}(t)
  &\le N^{-3/2}\|M_3\|_{1|2|3}
  \lesssim N^{-3}.
\end{aligned}
\]
Every \(q\) in the support of \(\mu_t\) is a quadratic feature or its
negative, and hence has Hilbert--Schmidt norm one by
Lemma~\ref{lem:complete-feature-identities}.  Symmetry of \(M_3\), together
with \eqref{eq:m3-flattening-size} and \eqref{eq:m3-injective-size}, therefore
gives
\begin{equation}
\begin{aligned}
  \|M_3(\cdot,\cdot,q_3)\|_{\HS}
  &\le\|M_3\|_{1|23},
  &\|M_3(\cdot,\cdot,q_3)\|_{\mathrm{op}}
  &\le\|M_3\|_{1|2|3},\\
  \|M_3(\cdot,q_2,q_3)\|_{\HS}
  &\le\|M_3\|_{1|2|3}.
\end{aligned}
  \label{eq:m3-fixed-coordinate-bounds}
\end{equation}
Combining \eqref{eq:m3-fixed-coordinate-bounds} with
\eqref{eq:first-second-contraction-bounds} yields
\[
\begin{aligned}
  \nu_{2,1}(t)
  &\le N^{-1}\sup_{q_3}\|M_3(\cdot,\cdot,q_3)\|_{\HS}
  \lesssim N^{-9/4},\\
  \nu_{2,2}(t)
  &\le N^{-1}\sup_{q_3}\|M_3(\cdot,\cdot,q_3)\|_{\mathrm{op}}
  \lesssim N^{-5/2}.
\end{aligned}
\]
Equations~\eqref{eq:m3-fixed-coordinate-bounds}
and~\eqref{eq:first-second-contraction-bounds} give the bound for
\(\nu_{1,1}(t)\), while \eqref{eq:m3-injective-size} and
\(\|q_a\|_{\HS}=1\) give the bound for \(\nu_{0,0}(t)\):
\[
\begin{aligned}
  \nu_{1,1}(t)
  &\le N^{-1/2}\sup_{q_2,q_3}\|M_3(\cdot,q_2,q_3)\|_{\HS}
  \lesssim N^{-2},\\
  \nu_{0,0}(t)
  &\le\|M_3\|_{1|2|3}\sup_{q_1,q_2,q_3}
  \prod_{a=1}^3\|q_a\|_{\HS}
  \lesssim N^{-3/2}.
\end{aligned}
\]
These are the seven bounds in \eqref{eq:seven-kernel-norms}.
\end{proof}

\subsection{Adamczak moment bound and exponent audit}
\label{sec:adamczak-exponent-audit}

We now combine the coefficient bounds from
Lemma~\ref{lem:full-row-coefficient-norms} and Lemma~\ref{lem:gmax} with
\eqref{eq:seven-kernel-norms}, and prove the mixed three-singleton bound
directly.  Decoupling makes the
canonical statistic compatible with Adamczak's inequality, and the subsequent
power count verifies that every resulting term is $o(1)$.

Write \(\mathbf i=(i_1,i_2,i_3)\in[n]^3\),
\(\mathbf q=(q_1,q_2,q_3)\), and
\(h_{\mathbf i}(\mathbf q)=g^\circ_{i_1i_2i_3}\mathcal K_3(\mathbf q)\).
For \(I\subseteq\{1,2,3\}\), let \(\mathcal P(I)\) be the set of partitions
of \(I\), and let \(\mathbb E_I\) integrate the coordinates \(\mathbf q_I\)
under \(\mu_t^{\otimes I}\).  With \(\mathbf i_{I^c}\) and
\(\mathbf q_{I^c}\) fixed, the mixed norm is
\cite[Definitions~2--3]{Adamczak2006}
\[
\begin{gathered}
  \|(h_{\mathbf i})_{\mathbf i_I}\|_{\mathcal J}
  :=\sup_{(f_J)_{J\in\mathcal J}}
  \left|\sum_{\mathbf i_I}\mathbb E_I
  \left[h_{\mathbf i}(\mathbf q)\prod_{J\in\mathcal J}f_J\right]\right|,
  \qquad \mathcal J\in\mathcal P(I),\\
  f_J=f_J(\mathbf i_J,\mathbf q_J),\qquad
  \sum_{\mathbf i_J}\mathbb E_J|f_J|^2\le1
  \quad(J\in\mathcal J).
\end{gathered}
\]
For \(I=\varnothing\), set \(\mathcal P(\varnothing)=\{\varnothing\}\)
and take the mixed norm to be \(|h_{\mathbf i}(\mathbf q)|\).
Since \(\mathbb E_{\mu_t}q=0\), the kernel is canonical.  Decoupling
\cite[Theorem~1]{deLaPenaMontgomerySmith1994} and
\cite[Theorem~4]{Adamczak2006} therefore give, for \(p\ge2\),
\[
  \|\mathcal T_g\|_{L^p(\mu_t^{\otimes n})}
  \lesssim
  \sum_{I\subseteq\{1,2,3\}}\sum_{\mathcal J\in\mathcal P(I)}
  p^{|I^c|+|\mathcal J|/2}
  \max_{\mathbf i_{I^c}}\sup_{\mathbf q_{I^c}}
  \|(h_{\mathbf i})_{\mathbf i_I}\|_{\mathcal J}.
\]
Here the feature suprema are over \(\operatorname{supp}(\mu_t)\).
The seven types \((|I|,|\mathcal J|)\), up to coordinate permutations,
and their \(p\)-factors are listed in
Table~\ref{tab:adamczak-factor-pairing}.

For \(|\mathcal J|\le2\), coefficient and kernel norms separate by
multiplicativity of \(L^2\) and Hilbert-space operator norms under tensor
products; the empty partition gives a pointwise bound.  For example,
\[
\begin{aligned}
  \left(\sum_{i,j,k}\mathbb E_{\mu_t^{\otimes3}}h_{ijk}^2\right)^{1/2}
  &=\|g^\circ\|_{\HS}\,\nu_{3,1}(t),\\
  \|(h_{ijk})\|_{\{1\}|\{2,3\}}
  &\le\|g^\circ\|_{1|23}\,\nu_{3,2}(t).
\end{aligned}
\]
The same identities with complementary indices and features fixed give the
other four bounds after taking maxima and suprema.  The remaining partition
\(\{\{1\},\{2\},\{3\}\}\) requires the following trilinear estimate.

\begin{lemma}[Mixed three-singleton norm]
\label{lem:mixed-cubic-singletons}
For every fixed coefficient matrix \(A\), the associated kernel array
\((h_{ijk})\) satisfies, uniformly in \(t\in[0,1]\),
\begin{equation}
  \|(h_{ijk})\|_{1|2|3}
  \le C N^{-3}R_2(A)\|A\|_{\mathrm{op}}^2.
  \label{eq:mixed-cubic-singletons}
\end{equation}
\end{lemma}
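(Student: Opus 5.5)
The plan is to first integrate out the features against the test functions, reducing the mixed norm to a deterministic trilinear form in \(V\). For test functions \(f_1,f_2,f_3\) with \(\sum_i\mathbb E_{\mu_t}|f_a(i,q)|^2\le1\), I will write \(\mathcal K_3(q_1,q_2,q_3)=\langle M_3,q_1\otimes q_2\otimes q_3\rangle\), as in the proof of Lemma~\ref{lem:cubic-kernel-partition-norms}. By Fubini the expression inside the supremum becomes
\[
 \sum_{i,j,k}g^\circ_{ijk}\langle M_3,u_i\otimes v_j\otimes w_k\rangle,
 \qquad u_i=\mathbb E_{\mu_t}[f_1(i,q)q],
\]
with \(v_j,w_k\) defined in the same way from \(f_2,f_3\). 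The first bound in \eqref{eq:first-second-contraction-bounds} uses only the covariance \(I_V/N\), which is common to all \(\mu_t\). It gives \(\sum_i\|u_i\|_{\HS}^2\le N^{-1}\), and likewise for \(v\) and \(w\); this is the source of the uniformity in \(t\).

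Next I would treat the full array \(g=\sum_l a_l^{\otimes3}\), where \(a_l=A_{l\cdot}\). Set \(U_l=\sum_iA_{li}u_i\), with \(V_l,W_l\) defined in the same way. The full-array form then equals \(\sum_l\langle M_3,U_l\otimes V_l\otimes W_l\rangle\). Each term is at most \(\gamma_d\|U_l\|_{\HS}\|V_l\|_{\HS}\|W_l\|_{\HS}\), by \eqref{eq:third-moment-tensor} and the bound \(|\operatorname{tr}(XYZ)|\le\|X\|_{\HS}\|Y\|_{\mathrm{op}}\|Z\|_{\HS}\). Applying the operator \(A\otimes I_V\) gives \(\sum_l\|U_l\|_{\HS}^2\le\|A\|_{\mathrm{op}}^2/N\), and similarly for \(W\). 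Cauchy--Schwarz gives \(\max_l\|V_l\|_{\HS}\le R_2(A)N^{-1/2}\). Cauchy--Schwarz over \(l\) then bounds the full-array contribution by \(\gamma_dR_2(A)\|A\|_{\mathrm{op}}^2N^{-3/2}\lesssim N^{-3}R_2(A)\|A\|_{\mathrm{op}}^2\).

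It remains to pass from \(g\) to \(g^\circ\), using the inclusion--exclusion \(g^\circ=g-d^{12}-d^{13}-d^{23}+2d^{123}\) from the proof of Lemma~\ref{lem:full-row-coefficient-norms}. Consider, for example, the \(d^{12}\) term \(\sum_{j,k}g_{jjk}\langle M_3,u_j\otimes v_j\otimes w_k\rangle\). I would bound it by
\[
 \gamma_d\sum_j\|u_j\|_{\HS}\|v_j\|_{\HS}\Bigl\|\sum_kg_{jjk}w_k\Bigr\|_{\HS}
 \le\gamma_dN^{-1}\max_j\|g_{\cdot jj}\|_2\,N^{-1/2}.
\]
By the vector-slice bound of Lemma~\ref{lem:full-row-coefficient-norms} and \(\|A\|_{\max}\le\|A\|_{\mathrm{op}}\), the right side is \(\lesssim N^{-3}R_2(A)\|A\|_{\mathrm{op}}^2\). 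The terms \(d^{13}\) and \(d^{23}\) follow by symmetry. The term \(d^{123}\) is at most \(\gamma_d\max_j|g_{jjj}|N^{-3/2}\), which is smaller still.

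I expect the main obstacle to be that the three-singleton partition does not factor into a coefficient norm times \(\nu_{3,3}\). Using \(\|g^\circ\|_{1|2|3}\) with \(\nu_{3,3}\) alone would lose the \(l\)-sum structure. The key step is therefore to keep the rank-one decomposition of \(g\) before contracting. A second point to check carefully is that the diagonal corrections require only the Hilbert--Schmidt bound on \(\operatorname{tr}_{\mathrm{sym}}\) and no support-wise control.
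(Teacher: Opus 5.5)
Your proof is correct. Its first two steps match the paper's: you contract each feature coordinate using only the common covariance $I_V/N$, write $g=\sum_\ell a_\ell^{\otimes3}$, and apply Cauchy--Schwarz with one factor bounded in $\max_\ell$ and two in $\ell^2(\ell)$.

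The genuine difference is how you pass to pairwise distinct indices. The paper never bounds the full array separately. It introduces independent uniform labels $\omega_i\in\{1,2,3\}$ and uses $27\,\mathbb E_\omega\mathbf 1\{\omega_i=1,\omega_j=2,\omega_k=3\}=\mathbf 1\{i,j,k\text{ pairwise distinct}\}$. This rewrites the $g^\circ$-form as $27$ times an average of full-$g$ forms with masked fields $\mathbf 1\{\omega_i=a\}H_i^{(a)}$, and the rank-one argument is then run on each realization. You instead subtract the diagonal pieces via $g^\circ=g-d^{12}-d^{13}-d^{23}+2d^{123}$. You bound each piece with the vector-slice estimate of Lemma~\ref{lem:full-row-coefficient-norms} and $\|A\|_{\max}\le\|A\|_{\mathrm{op}}$; your computations for these terms, including the symmetric ones, check out.

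The random three-colouring is self-contained and needs no slice bounds. It removes every coincidence pattern at once for a universal factor $27$ and would extend verbatim to higher order. Your inclusion--exclusion is deterministic and mirrors what the paper already does for $\|g^\circ\|_{1|23}$. It also shows the diagonal corrections are of the smaller order $N^{-3}\|A\|_{\mathrm{op}}\|A\|_{\max}R_2(A)$, at the price of relying on the earlier coefficient lemma.

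One small correction to your closing remark. The product $\|g^\circ\|_{1|2|3}\,\nu_{3,3}$ would actually have the right order. The only obstruction is that the test functions $f_a(i,q)$ need not factor in $(i,q)$. Consequently the three-singleton mixed norm, an injective norm of a $3$-tensor, is not bounded by the product of injective norms. This is unlike the operator-norm cases with $|\mathcal J|\le2$.
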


\begin{proof}
We first contract the feature coordinates, then use the row decomposition
of \(g\).  An auxiliary random partition of \([n]\) into three classes
removes repeated indices at the cost of an absolute factor.
For the three singleton blocks, the admissible test-function families are
\[
  \mathcal F_t:=\left\{
  f=(f_i^{(a)})_{\substack{i\in[n]\\a\in\{1,2,3\}}}:
  f_i^{(a)}\in L^2(\mu_t),\quad
  \sum_{i=1}^n\|f_i^{(a)}\|_{L^2(\mu_t)}^2\le1
  \quad(a=1,2,3)\right\}.
\]
We bound the defining expression for every \(f\in\mathcal F_t\);
no maximizing family is required.  For such a family, put
\[
  H_i^{(a)}=\sqrt N\,\mathbb E_{\mu_t}[f_i^{(a)}(q)q].
\]
By \eqref{eq:first-second-contraction-bounds},
\(\sum_i\|H_i^{(a)}\|_{\HS}^2\le1\).
By the mixed-norm definition, \eqref{eq:third-moment-tensor}, and
independence of \(q_1,q_2,q_3\sim\mu_t\),
\[
\begin{aligned}
  \|(h_{ijk})\|_{1|2|3}
  &=\sup_{f\in\mathcal F_t}
  \left|\sum_{i,j,k}g^\circ_{ijk}\,
    \mathbb E_{\mu_t^{\otimes3}}\!\left[
    \mathcal K_3(q_1,q_2,q_3)
    f_i^{(1)}(q_1)f_j^{(2)}(q_2)f_k^{(3)}(q_3)
    \right]\right|\\
  &=|\gamma_d|\sup_{f\in\mathcal F_t}
  \left|\sum_{i,j,k}g^\circ_{ijk}
    \operatorname{tr}_{\mathrm{sym}}\!\left(
    \mathbb E_{\mu_t}[f_i^{(1)}(q)q]\,
    \mathbb E_{\mu_t}[f_j^{(2)}(q)q]\,
    \mathbb E_{\mu_t}[f_k^{(3)}(q)q]
    \right)\right|\\
  &=|\gamma_d|N^{-3/2}\sup_{f\in\mathcal F_t}
  \left|\sum_{i,j,k}g^\circ_{ijk}
    \operatorname{tr}_{\mathrm{sym}}
    (H_i^{(1)}H_j^{(2)}H_k^{(3)})\right|.
\end{aligned}
\]
Fix an arbitrary \(f\in\mathcal F_t\) and introduce independent auxiliary labels
\(\omega_i\sim\operatorname{Unif}\{1,2,3\}\), so that
\[
  27\,\mathbb E_\omega
  \mathbf1\{\omega_i=1,\omega_j=2,\omega_k=3\}
  =\mathbf1\{i,j,k\text{ are pairwise distinct}\}.
\]
Thus the sum with \(g^\circ\) is 27 times the \(\omega\)-expectation
of the sum with \(g\) and with \(H_i^{(a)}\) replaced by
\(\mathbf1\{\omega_i=a\}H_i^{(a)}\).

For every realization of \(\omega\), define
\[
  \widehat H_\ell^{(a)}
  =\sum_i A_{\ell i}\mathbf1\{\omega_i=a\}H_i^{(a)}.
\]
Cauchy--Schwarz in each row gives, for every \(\ell\) and \(a\),
\[
  \|\widehat H_\ell^{(a)}\|_{\HS}
  \le\left(\sum_i A_{\ell i}^2\right)^{1/2}
  \left(\sum_i\mathbf1\{\omega_i=a\}\|H_i^{(a)}\|_{\HS}^2\right)^{1/2}
  \le R_2(A).
\]
Applying the scalar operator norm bound for \(A\) to each coordinate in an
orthonormal basis of \(V\), and then summing over the coordinates, gives
\[
\begin{aligned}
  \sum_\ell\|\widehat H_\ell^{(a)}\|_{\HS}^2
  &=\sum_\ell\left\|\sum_i
    A_{\ell i}\mathbf1\{\omega_i=a\}H_i^{(a)}\right\|_{\HS}^2\\
  &\le\|A\|_{\mathrm{op}}^2
    \sum_i\mathbf1\{\omega_i=a\}\|H_i^{(a)}\|_{\HS}^2
  \le\|A\|_{\mathrm{op}}^2.
\end{aligned}
\]
The identity \(g_{ijk}=\sum_\ell A_{\ell i}A_{\ell j}A_{\ell k}\)
and \(|\operatorname{tr}_{\mathrm{sym}}(UVW)|
\le\|U\|_{\HS}\|V\|_{\HS}\|W\|_{\HS}\) therefore imply
\[
  \left|\sum_\ell\operatorname{tr}_{\mathrm{sym}}
    (\widehat H_\ell^{(1)}\widehat H_\ell^{(2)}
     \widehat H_\ell^{(3)})\right|
  \le\max_\ell\|\widehat H_\ell^{(1)}\|_{\HS}
    \prod_{a=2}^3
    \left(\sum_\ell\|\widehat H_\ell^{(a)}\|_{\HS}^2\right)^{1/2}
  \le R_2(A)\|A\|_{\mathrm{op}}^2.
\]
This bound is uniform in \(f\in\mathcal F_t\) and \(\omega\).
Averaging over \(\omega\), taking the supremum over \(f\), and using
\(|\gamma_d|\le CN^{-3/2}\) proves \eqref{eq:mixed-cubic-singletons}
deterministically for each \(A\).
\end{proof}

The six factorized bounds and
Lemma~\ref{lem:mixed-cubic-singletons} give the seven estimates recorded in
Table~\ref{tab:adamczak-factor-pairing}.

\begin{table}[tbp]
\centering
\caption{Adamczak's seven norm types and their \(L^p\) weights.
The last two columns give a product upper bound for the mixed norm, up to an
absolute constant; the \((3,3)\) row uses
\eqref{eq:mixed-cubic-singletons}.}
\label{tab:adamczak-factor-pairing}
\begin{tabular}{@{}cccc@{}}
\toprule
$(|I|,|\mathcal J|)$ & $p$-factor & Coefficient factor & Kernel factor\\
\midrule
$(3,1)$ & $p^{1/2}$ & $\|g^\circ\|_{\HS}$ & $\nu_{3,1}$\\
$(3,2)$ & $p$ & $\|g^\circ\|_{1|23}$ & $\nu_{3,2}$\\
$(3,3)$ & $p^{3/2}$ & $R_2(A)\|A\|_{\mathrm{op}}^2$ & $N^{-3}$\\
$(2,1)$ & $p^{3/2}$ & $\max_\ell\|g^\circ_{\cdot\cdot\ell}\|_{\HS}$ & $\nu_{2,1}$\\
$(2,2)$ & $p^2$ & $\max_\ell\|g^\circ_{\cdot\cdot\ell}\|_{\mathrm{op}}$ & $\nu_{2,2}$\\
$(1,1)$ & $p^{5/2}$ & $\max_{k,\ell}\|g^\circ_{\cdot k\ell}\|_2$ & $\nu_{1,1}$\\
$(0,0)$ & $p^3$ & $G_\infty$ & $\nu_{0,0}$\\
\bottomrule
\end{tabular}
\end{table}

Substituting Lemmas~\ref{lem:full-row-coefficient-norms}
and~\ref{lem:cubic-kernel-partition-norms}, together with
\eqref{eq:mixed-cubic-singletons} and
\eqref{eq:gmax-bound}, into the seven estimates in
Table~\ref{tab:adamczak-factor-pairing} gives
\begin{equation}
\begin{aligned}
  \|\mathcal T_g\|_{L^{p_{\mathrm H}}(\mu_t^{\otimes n})}
  \lesssim{}&
  p_{\mathrm H}^{1/2}N^{-9/4}
  \left(\max_i\|A_{i\cdot}\|_2\right)\|A\|_{\HS}^2\\
  &+p_{\mathrm H}N^{-11/4}
  \left(\max_i\|A_{i\cdot}\|_2\right)\|A\|_{\mathrm{op}}^2\\
  &+p_{\mathrm H}^{3/2}N^{-3}
  \left(\max_i\|A_{i\cdot}\|_2\right)\|A\|_{\mathrm{op}}^2\\
  &+p_{\mathrm H}^{3/2}N^{-9/4}\|A\|_{\mathrm{op}}
  \left(\max_i\|A_{i\cdot}\|_2\right)^2\\
  &+p_{\mathrm H}^2N^{-5/2}\|A\|_{\max}\|A\|_{\mathrm{op}}^2\\
  &+p_{\mathrm H}^{5/2}N^{-2}\|A\|_{\mathrm{op}}\|A\|_{\max}
  \max_i\|A_{i\cdot}\|_2
  +p_{\mathrm H}^3N^{-3/2}G_\infty.
  \label{eq:full-row-adamczak}
\end{aligned}
\end{equation}
The estimate is uniform in $t$.

Substituting the bounds of Proposition~\ref{prop:row-gram-environment}
and Lemma~\ref{lem:gmax} into \eqref{eq:full-row-adamczak} gives
the nineteen terms listed in Table~S3.1
of the Supplementary Material. Section~S3.2
there verifies that each tends to zero, using
\(n\beta^2\gtrsim h_n^4\) for the two terms with negative powers of
\(|\beta|\). Consequently,
\begin{equation}
  \sup_{0\le t\le1}
  \|\mathcal T_g\|_{L^{p_{\mathrm H}}(\mu_t^{\otimes n})}=o(1).
  \label{eq:Tg-Lp-small}
\end{equation}
The repeated-index source also satisfies
\begin{equation}
  \sum_{i,j}N^{-3/2}|A_{ij}|^3
  \le N^{-3/2}\|A\|_{\max}\|A\|_{\HS}^2
  \lesssim\frac{|\beta|^6n^2\log n}{N^3}
  =\frac{|\beta|^{3/2}\log n}{h_n^3n^{1/4}}
  =o(1).
  \label{eq:repeated-index-small}
\end{equation}
Here the final convergence uses \(|\beta|\le1\), \(h_n\to\infty\), and
\(\log n=o(n^{1/4})\).
\section{Critical truncation and uniform replica bounds}
\label{app:critical-likelihood-analysis}

This appendix records the truncation input to the critical graph expansion.
The finite-parameter deletion and interpolation bounds from Sections
\ref{sec:dense-likelihood}--\ref{sec:likelihood-interpolation} remain
applicable at criticality: the local smallness conditions still hold,
while the cumulative remainder and trace potential need only stay bounded.
The detailed transfer, concentration and scale calculations are given in
Section~S6 of the Supplementary Material.
We retain the uniform moment bound and the choice of a common sequence
of prior events, followed by the overlap estimates used in the
signed-triangle CLT.
Throughout, \(0<a<b<\infty\), \(a\le\Delta_n\le b\), and \(N\to\infty\),
with the notation of Section~\ref{sec:critical-window}.

\subsection{A uniform replica bound and a common truncation}
\label{sec:critical-uniform-closure}

Lemma~S6.4 in the Supplementary
Material constructs events \(\mathcal E_{n,M}\) with
\(\mathbb P(\mathcal E_{n,M}^c)\le n^{-2M}\).
For each fixed \(M\), the moment estimates below have a bound independent
of \(M\), although the required lower bound on \(n\) may depend on it.
This permits a slowly increasing choice of \(M\).

\begin{lemma}[Uniform bound for the truncated replica generating function]
\label{lem:critical-generating-truncation}
Fix \(0<a<b<\infty\).  Under
\(a\le\Delta_n\le b\) and \(N\to\infty\), the events in
Lemma~S6.2 may be chosen so that, for some
sequence \(M_n\to\infty\),
\begin{equation}
  \mathbb P(q\notin\mathcal E_n)\le n^{-M_n}.
  \label{eq:critical-superpolynomial-event}
\end{equation}
For the resulting events \(\mathcal E_n\), there are constants
\(s_\star>1\), \(C_{a,b}<\infty\), and \(n_0<\infty\) such that
\begin{equation}
  \sup_{n\ge n_0}\sup_{0\le s\le s_\star}
  \mathcal Z_n^{\mathcal E}(s)\le C_{a,b}.
  \label{eq:quadratic-uniform-generating-bound}
\end{equation}
\end{lemma}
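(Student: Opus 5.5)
The plan is to bound $\mathcal Z_n^{\mathcal E}(s)$ for a slightly enlarged signal, reusing the dense-branch machinery with $\beta^2$ replaced by $s\beta^2$, $s\le s_\star$.

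\emph{Step 1: reduction to a single replica.} Since $s\beta^2\le s_\star\beta^2$ may exceed one, we restrict to $s_\star\le 2$. On the truncation event, the sixth-moment bound gives $\sum_{i<j}|sA_{ij}B_{ij}|^3\lesssim\beta^6n^2/N^3$. At criticality this is $O(N^{-1/3})\to0$, using $n\beta^2\asymp N^{4/3}$. On the event $\max|B_{ij}|\lesssim\log n/d$, each factor $1+s\beta^2B^\ast_{ij}B_{ij}$ is positive. The elementary inequality $\log(1+x)\le x-x^2/2+C|x|^3$ then yields, for fixed $q^\ast\in\mathcal E_n$,
\[
\mathbf1_{\mathcal E_n}(q)\prod_{i<j}\bigl(1+sA_{ij}B_{ij}\bigr)\le e^{o(1)}\mathbf1_{\mathcal C_n(\Lambda+1)}e^{K_{sA}}.
\]
Here $A=\beta^2B(q^\ast)$, and $\mathcal E_n$ is taken as an intersection of a covariance event, the row/Gram-norm events of Proposition~\ref{prop:row-gram-environment}, and the $G_\infty$ event from Lemma~\ref{lem:gmax-tail-parameter}. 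To get superpolynomial failure probability $n^{-M_n}$, we raise the tail levels to $x\asymp M_n\log n$ in Lemmas~\ref{lem:quadratic-inner-product-tail}, \ref{lem:gmax-tail-parameter}, and~\ref{lem:uniform-covariance-tail}. The last uses the spherical form \eqref{eq:covariance-tail-spherical}, so that the covariance cutoff grows only by $M_n\log n/d$. We then choose $M_n\to\infty$ slowly enough that every scale check below survives.

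\emph{Step 2: symmetric endpoint with bounded, not vanishing, output.} We run the compensated deletion recursion of Section~\ref{sec:dense-likelihood} with $sA$. The local hypotheses are unchanged: $\|sA\|_{\HS}^2/N^2\lesssim\beta^4n^2/N^3\asymp N^{-1/3}$, and $(\Lambda+1)R_0^2/N\to0$ still holds because $N\lesssim n^{3/4}$. So Lemma~\ref{lem:trace-potential-step}, Proposition~\ref{prop:global-row-quartic} and the remainder estimates apply, with $\sum_m\mathcal E_m=o(1)$. The defect sum no longer vanishes, however. Instead
\[
\sum_m\delta_m=\mathcal P_N(sA)+o(1),\qquad \mathcal P_N(sA)\approx\frac{s^3\tr(A^3)}{6N^2}.
\]
I will bound $\tr(A^3)$ on the truncation event: its mean is of order $\beta^6 n^3/N^2$, which gives $\mathcal P_N\lesssim s^3\Delta_n^2+o(1)\le C_{a,b}$. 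The higher $\ell\ge4$ terms are $o(1)$ by the operator-norm bound \eqref{eq:environment-L}. The complementary bad-row term remains $o(1)$ because $T$ dominates $\|A\|_{\HS}^2/N\asymp n\beta^4/N\lesssim N^{1/3}$. Hence $Z(0)\le C_{a,b}$.

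\emph{Step 3: interpolation.} The differential inequality \eqref{eq:Z-differential} uses only the finite-parameter forms of Lemmas~\ref{lem:buffered-cavity-complete} and~\ref{lem:cavity-score-expansion} together with the Adamczak bound \eqref{eq:full-row-adamczak}. So I recheck the nineteen scale terms with $h_n\asymp1$ but $N\to\infty$, and expect each to carry a negative power of $N$. This is the \textbf{main obstacle}. Terms previously killed by $h_n^{-k}$ must now be killed by $N^{-c}$ or $n^{-c}$, and the $p_{\mathrm H}$ powers with $p_{\mathrm H}\asymp\|A\|_{\HS}^2/N\asymp N^{1/3}$ are delicate. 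If some term fails, I would first try to improve the $\nu_{3,1}$ contribution, since $\|g^\circ\|_{\HS}\nu_{3,1}$ is essentially the triangle scale, by subtracting its mean (already captured by $\mathcal P_N$). Gronwall then gives $Z(1)\le(1+C_{a,b})e^{o(1)}$ uniformly in $s\le s_\star$ and $q^\ast\in\mathcal E_n$. Integrating over $q^\ast$ proves \eqref{eq:quadratic-uniform-generating-bound}.
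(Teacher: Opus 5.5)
\textbf{The gap is in the covariance cutoff, not in the nineteen-term audit.} Your overall plan matches the paper's: rerun deletion and interpolation for $A_s=s\beta^2B^\ast$ with bounded rather than vanishing outputs, apply Gronwall, compare products through the cubic remainder, and use superpolynomially small truncations with slowly growing $M_n$. The problem is your claim in Step~2 that the local hypotheses are ``unchanged.''

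With the Section~6 cutoff $\Lambda=C_{\mathrm{cov}}(n/N+T)$, criticality gives $T\ge K_0C_{\mathrm{mat}}\beta^4n^2/N^2\asymp N^{2/3}$ and $R_2(A)^2\asymp\beta^4n/N\asymp N^{5/3}/n$. Hence $T R_2(A)^2/N\asymp N^{4/3}/n\asymp\beta^2$. This is bounded but not small, and $N\lesssim n^{3/4}$ is exactly the borderline; in \eqref{eq:dense-recursion-smallness} this term was killed only by $h_n^{-4}$.

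For $|\beta|\asymp1$ and large $K_0,C_{\mathrm{cov}}$, the hypothesis $\sqrt{\Lambda}R_2(2A)/d\le c$ of Theorem~\ref{thm:l2-spectral-recursion} therefore fails. That hypothesis is needed for the endpoint bad-row bound, for \eqref{eq:weighted-second-moment}, and for the covariance-boundary remainder.

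You cannot simply shrink $\Lambda$ either. The remainder \eqref{eq:buffered-cavity-general-remainder} carries the weight $\exp\{C\|A\|_{\HS}^2/N\}$, and $\|A\|_{\HS}^2/N\asymp N^{2/3}$ (not $N^{1/3}$ as you wrote, which is $R_2(A)^2$). So the boundary probability $p_\Lambda$ must be at most $\exp(-C'N^{2/3})$. The matrix-Chernoff tail supplies $e^{-cT}$ only at cutoffs of size at least $T$.

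\textbf{The missing idea is to decouple $\Lambda$ from $T$ using the factor $d$ in the spherical log-Sobolev bound \eqref{eq:covariance-tail-spherical}.} Keep $T$ only as the deletion tail level, and take $\Lambda\asymp n/N+x_M+\log N$. Then apply \eqref{eq:covariance-tail-spherical} with tail parameter $d\{n/N+x_M+\log N\}$, which gives $p_\Lambda\le\exp[-cd\{n/N+x_M+\log N\}]$. This beats the weight because $N^{2/3}/(dn/N)\lesssim N^{7/6}/n\lesssim N^{-1/6}$. With this cutoff the spectral hypothesis holds again, since $\Lambda R_2(A)^2/N\lesssim N^{-1/3}+x_MN^{2/3}/n\to0$.

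In your proposal, \eqref{eq:covariance-tail-spherical} is used only at tail level $M_n\log n$, to keep the prior event's cutoff small. At that level $p_\Lambda\approx n^{-cM_n}$, which is useless against $e^{CN^{2/3}}$, and Step~3 never treats $\operatorname{Rem}_{\mathrm{cov}}$ at all.

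The obstacle you do flag is not one. Once $h_n\asymp1$, every row of Table~S3.1 still carries a negative power of $n$. The two rows with negative powers of $|\beta|$ are handled by $\sqrt n|\beta|\asymp N^{2/3}\to\infty$. No recentering of the $\nu_{3,1}$ term is needed.

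Smaller corrections:
\begin{itemize}
\item $\sum_m\mathcal E_m$ and the score-error sum are $O(\beta^2)$, not $o(1)$. Gronwall therefore gives $e^{C_{a,b}}$ rather than $e^{o(1)}$, which is still enough.
\item $|\mathcal P_N(sA)|\lesssim\|A\|_{\HS}^2\|A\|_{\mathrm{op}}/N^2=O(1)$ holds directly on the event. The mean of $\tr(A^3)$ alone would not bound it there.
\item The operator-norm and row-moment events of Proposition~\ref{prop:row-gram-environment} must also be upgraded to failure probability $n^{-M}$, via matrix Bernstein and occupancy at level $x_M$.
\end{itemize}
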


\begin{proof}
Fix an integer \(M\ge1\), set \(s_\star=3/2\), and take
\(\mathcal E_{n,M}\) and \(\Lambda_M\) from
Lemma~S6.4, with
\(x_M=K_{\mathrm{tail}}(M+5)\log(en)\) as in
\textup{(S6.24)}.
For \(q^\ast\in\mathcal E_{n,M}\), write \(B^\ast=B(q^\ast)\),
put \(A_s=s\beta^2B^\ast\), and
recall the interpolating moment
\[
 Z_s(t)=\mathbb E_{\mu_t^{\otimes n}}
 [\mathbf1_{\mathcal C_n(\Lambda_M+1)}e^{K_{A_s}}].
\]
All bounds below are uniform in this first replica and in
\(0\le s\le s_\star\), \(0\le t\le1\).
Write \(o_M(1)\) for deterministic errors tending to zero with \(M\) fixed.

\noindent\textit{Step 1. Transfer the critical moment bounds.}
Section~S6.4 verifies the deletion and score
hypotheses at cutoff \(\Lambda_M+1\) and establishes
\[
 \sup_{0\le s\le s_\star}Z_s(0)\le C_{a,b},\qquad
 |Z_s'(t)|\le C_{a,b}\max\{1,Z_s(t)\}.
\]
The constants are independent of \(M\), for all sufficiently large \(n\)
depending on \(M\). The accumulated row remainders are bounded by
\(C_{a,b}\beta^2+o_M(1)\), and the trace potential by
\(C_{a,b}+o_M(1)\); the local smallness and weighted covariance-boundary
terms still vanish. Gronwall's inequality therefore gives
\(\sup_{s,t}Z_s(t)\le C_{a,b}\), with a constant independent of \(M\).

\noindent\textit{Step 2. Compare products and choose the diagonal sequence.}
Every edge factor is positive for all sufficiently large \(n\), since
\(B_{ij}^\ast B_{ij}\ge-1/(d-1)\) and \(s_\star<2\).
Uniformly over \(q^\ast,q\in\mathcal E_{n,M}\), the sixth-row bounds and
Cauchy--Schwarz give
\[
 \sup_{s\le s_\star}\sum_{i<j}|(A_s)_{ij}B_{ij}|^3
 \le C_{a,b}\frac{\beta^6n(n+x_M^7)}{N^3}
 \le C_{a,b}\frac Nn\left(1+\frac{x_M^7}{n}\right)=o_M(1).
\]
The product assertion of Lemma~S6.1 now gives
\begin{equation}
 \sup_{0\le s\le s_\star}\mathcal Z_n^{\mathcal E_{n,M}}(s)\le C_{a,b}.
 \label{eq:fixed-M-stability-bound}
\end{equation}
The constant is independent of \(M\); only the required lower bound on
\(n\) depends on \(M\). Choose increasing integers \(n_M\) so that, for
\(n\ge n_M\), this bound, \textup{(S6.34)}, and all input
bounds above hold, with each \(o_M(1)\) error at most \(M^{-1}\).
Take nondecreasing \(M_n\to\infty\) with \(n\ge n_{M_n}\), and set
\(\mathcal E_n=\mathcal E_{n,M_n}\). Then
\eqref{eq:critical-superpolynomial-event} holds and
\begin{equation}
 \sup_{0\le s\le s_\star}
 \mathbb E[\mathbf1_{\mathcal E_n}(q^\ast)\mathbf1_{\mathcal E_n}(q)
       \prod_{i<j}\{1+s\beta^2B_{ij}^\ast B_{ij}\}]
 \le C_{a,b}.
 \label{eq:real-amplitude-coarse-bound}
\end{equation}
By \eqref{eq:quadratic-generating-function}, this is
\eqref{eq:quadratic-uniform-generating-bound}.
The verified recursive, covariance, and product estimates also retain all
conclusions of Lemma~S6.2 for these events.
\end{proof}
\subsection{Triangle-kernel overlap estimates}
\label{sec:triangle-overlap-estimates}

The following two estimates are used in the remainder calculation in
the proof of Lemma~\ref{lem:triangle-two-model-CLT}.

\begin{lemma}[Triangle-kernel overlap moments]
\label{lem:triangle-overlap-moments}
For independent \(q_1,q_2,q_3,q_4\sim\mu\), write
\(B_{ij}=\langle q_i,q_j\rangle_{\HS}\).  Then
\[
  \mathbb E(B_{12}^2B_{23}^2B_{31}^2)\le CN^{-3},
  \qquad
  \left|\operatorname{Cov}(B_{12}B_{23}B_{31},
  B_{12}B_{24}B_{41})\right|\le CN^{-4}.
\]
\end{lemma}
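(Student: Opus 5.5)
For the first bound I would apply Hölder's inequality with exponent three to the three factors: \(\mathbb E(B_{12}^2B_{23}^2B_{31}^2)\le\prod\|B_{rs}\|_{L^6}^2\). Each \(B_{rs}\) is an inner product of two independent features. Conditioning on one of them and using the sixth-moment estimate \(\mathbb E(|B_{ir}|^6\mid q_r)\le CN^{-3}\), recorded in the proof of Lemma~\ref{lem:gmax-tail-parameter} (it also follows from integrating the tail \eqref{eq:local-quadratic-tail} with \(\|q_r\|_{\mathrm{op}}\le1\) and \(d^2\asymp N\)), gives \(\|B_{rs}\|_{L^6}^2\le CN^{-1}\). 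Multiplying the three factors gives \(CN^{-3}\).

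For the covariance I would condition on the shared pair \((q_1,q_2)\). Given it, \(q_3\) and \(q_4\) are independent, so
\[
\mathbb E(B_{12}B_{23}B_{31}B_{12}B_{24}B_{41})
=\mathbb E\bigl[B_{12}^2\,f(q_1,q_2)^2\bigr],
\qquad f(q_1,q_2):=\mathbb E_{q_3}(B_{23}B_{31}).
\]
Isotropy \eqref{eq:quadratic-degree-two-contraction} gives \(f=B_{12}/N\). The joint moment therefore equals \(N^{-2}\mathbb E B_{12}^4\).

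The square of the mean is \((\mathbb E B_{12}B_{23}B_{31})^2=(\mathbb E B_{12}^2/N)^2=N^{-4}\), using the same contraction together with \(\mathbb E B_{12}^2=N^{-1}\). It remains to bound \(\mathbb E B_{12}^4\le CN^{-2}\). This is the fourth-moment case of the same tail integration: \(\|B_{12}\|_{L^4}\le C(\sqrt{4/N}+4/d)\le CN^{-1/2}\). Hence the joint moment is at most \(CN^{-4}\), and the covariance is at most \(CN^{-4}+N^{-4}\).

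The computation is routine. The only points to check are that the moment bounds are uniform over the conditioned feature, which holds by rotational invariance, and that \(d\asymp\sqrt N\) absorbs the \(p/d\) term of the tail bound. I expect no serious obstacle beyond confirming that the constant in \eqref{eq:local-quadratic-tail} is absolute.
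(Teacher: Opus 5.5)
Your proposal is correct and follows the paper's proof essentially step for step. The first bound comes from H\"older with the sixth-moment estimate $\mathbb E|B_{12}|^6\le CN^{-3}$. For the covariance, conditioning on $(q_1,q_2)$ and using the isotropic contraction $\mathbb E(B_{23}B_{31}\mid q_1,q_2)=B_{12}/N$ reduces the joint moment to $N^{-2}\mathbb E B_{12}^4\le CN^{-4}$, which is compared with the squared mean $N^{-4}$.
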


The proof is given in Section~S6.5
of the Supplementary Material.
\section{Estimates for the finite-sample likelihood bounds}
\label{app:quantitative-LAN}

This appendix collects the quantitative inputs used in
Section~\ref{sec:quantitative-main}: a common truncation event, the
second-moment and triangle approximation bounds, and the quantitative
projection lemma. The proofs of the truncation and approximation bounds
are in Section~S4 of the Supplementary Material;
the projection argument is given here. We retain the critical range
\eqref{eq:quant-critical-range}, error scale \eqref{eq:quant-master-error}
and constants from Section~\ref{sec:quantitative-main}.
Let \(c_\ast\) be the smallness constant
in Theorem~\ref{thm:l2-spectral-recursion}, decreased if necessary to
satisfy the pointwise hypotheses of the deletion estimates.

\subsection{A quantitative truncation event}
\label{subsec:quant-truncation}

We construct a latent event used throughout this appendix.  On this event, the
matrix and coefficient estimates hold
uniformly and the truncated replica generating function remains bounded.  The
complement has superpolynomially small probability, which is sufficient for
the graph expansion with a logarithmically growing number of edges.

Recall that \(q=(q_1,\ldots,q_n)\sim\mu^{\otimes n}\), and that \(B(u)\)
denotes the hollow Gram matrix of a latent configuration \(u\), as defined in
Appendix~\ref{app:complete-preliminaries}.  We also retain the notation
\(A(u)=\beta^2B(u)\) from Section~\ref{sec:dense-likelihood}.

\begin{lemma}[Quantitative prior truncation]
\label{lem:quant-critical-bounds}
There exist constants \(c_{a,b},C_{a,b}>0\) and \(s_\star>1\) such that, under
\eqref{eq:quant-critical-range}, whenever
\(\varepsilon_{n,N,\beta}\le c_{a,b}\), one can choose a measurable event
\(\mathcal E_n\subseteq V^n\) with the following simultaneous properties.
Uniformly over \(u\in\mathcal E_n\), the matrix \(A(u)=\beta^2B(u)\) satisfies
\begin{equation}
\begin{aligned}
  \|A(u)\|_{\HS}^2
  &\le C_{a,b}\frac{\beta^4n^2}N,\\
  \max_i\sum_{j\ne i}A_{ij}(u)^2
  &\le C_{a,b}\frac{\beta^4n}N,\\
  \max_i\sum_{j\ne i}|B_{ij}(u)|^6
  &\le C_{a,b}\frac n{N^3},\\
  \|A(u)\|_{\max}
  &\le C_{a,b}\beta^2
    \min\left\{1,\frac{\log^2(en)}{\sqrt N}\right\},\\
  \|A(u)\|_{\mathrm{op}}
  &\le C_{a,b}\beta^2\left\{
    \frac nN+\sqrt{\frac nN}\log(en)+\log^2(en)
  \right\}.
\end{aligned}
\label{eq:quant-matrix-bounds}
\end{equation}
For the same \(u\), let \(G_\infty(u)\) denote the maximum coefficient from
Section~\ref{sec:maximum-coefficient} evaluated at \(A(u)\).  Then
\begin{equation}
  G_\infty(u)
  \le C_{a,b}\beta^6\left[
    \frac{n\log^3(en)}{N^{9/4}}
    +\frac{\sqrt n\log(en)+\log^8(en)}{N^{3/2}}
  \right].
  \label{eq:quant-gmax-bound}
\end{equation}
Moreover, with \(\mathcal Z_n^{\mathcal E}\) defined by
\eqref{eq:quadratic-generating-function},
\begin{equation}
  \sup_{0\le s\le s_\star}\mathcal Z_n^{\mathcal E}(s)\le C_{a,b}.
  \label{eq:quant-uniform-generating-bound}
\end{equation}
Finally, the complement of \(\mathcal E_n\) satisfies
\begin{equation}
  \mathbb P(q\notin\mathcal E_n)
  \le C_{a,b}\exp\{-c_{a,b}\log^2(en)\}.
  \label{eq:quant-prior-failure}
\end{equation}
\end{lemma}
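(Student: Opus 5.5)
The event \(\mathcal E_n\) will be built as an intersection of high-probability events, each evaluated at a tail level \(x_n=K\log^2(en)\) instead of the \(\log n\) levels used in the strict regime, so that every failure probability is \(e^{-c\log^2(en)}\). The plan has four ingredients, handled in order.

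\textbf{Entry, row and operator bounds.} First take \(\mathcal E^{\max}=\{\max_{i<j}|B_{ij}|\le C\min(1,x_n/d)\}\). Lemma~\ref{lem:quadratic-inner-product-tail} with \(u=Cx_n/d\), a union bound over pairs, and \(d\asymp\sqrt N\) give the max-entry bound in \eqref{eq:quant-matrix-bounds} with failure \(n^2e^{-cx_n}\).

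Conditional on \(q_i\), the row variables \(B_{ij}^2\) and \(|B_{ij}|^6\), \(j\ne i\), are independent with means \(N^{-1}\) and \(\lesssim N^{-3}\) (isotropy and the sixth-moment bound from the proof of Lemma~\ref{lem:gmax-tail-parameter}). On \(\mathcal E^{\max}\) they are bounded, so a truncated Bernstein inequality at level \(x_n\) gives
\[
\sum_jB_{ij}^2\lesssim \frac nN+\frac{x_n^2}{N}+\frac{x_n^3}{N}.
\]
The extra terms are absorbed into \(n/N\) once \(\varepsilon_{n,N,\beta}\le c_{a,b}\). That condition forces \(\log^{24}(en)\lesssim n\beta^2\), and at criticality \(n\beta^2\asymp N^{4/3}\), hence \(N\gtrsim\log^{18}n\) and \(n\gg N\log^{c}n\). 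The sixth-moment row bound follows the same way.

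The Hilbert--Schmidt bound follows by summing the row bounds. For the operator norm, write \(\sum_j q_j\otimes q_j\) via the covariance tail. By \eqref{eq:covariance-tail-spherical} with \(T=x_n\), one gets \(\|\sum_jq_j\otimes q_j\|_{\mathrm{op}}\lesssim n/N+\log(eN)+x_n/d\). Then \(\|B\|_{\mathrm{op}}\le\|\sum q_j\otimes q_j\|_{\mathrm{op}}+1\), since \(B+I\) is the Gram matrix. This yields the \(\beta^2(n/N+\log^2 n)\) form; the middle term of \eqref{eq:quant-matrix-bounds} is only a slack allowance.

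\textbf{The coefficient bound.} The estimate \eqref{eq:quant-gmax-bound} is Lemma~\ref{lem:gmax-tail-parameter} applied with \(x=K\log^2(en)\). This requires \(x\ge K\log(enN)\), which holds because \(N\le n\). The failure probability is \(Cn^3e^{-c\log^2(en)}\), and substituting \(x\) gives exactly the displayed powers \(x^{3/2}=\log^3\), \(\sqrt x=\log\), \(x^4=\log^8\). The failure bound \eqref{eq:quant-prior-failure} then follows by adding up all these exceptional probabilities.

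\textbf{Uniform bound on the generating function (the main obstacle).} For \eqref{eq:quant-uniform-generating-bound} I would repeat the proof of Lemma~\ref{lem:critical-generating-truncation} at the same tail level. Fix \(q^\ast\in\mathcal E_n\) and set \(A_s=s\beta^2B^\ast\) for \(s\le s_\star=3/2\). Choose the covariance cutoff \(\Lambda=C(n/N+x_n)\) and intersect \(\mathcal E_n\) with \(\mathcal C_n(\Lambda+1)\).

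Next verify the hypotheses of Theorem~\ref{thm:l2-spectral-recursion} and \eqref{eq:local-score-finite-conditions}. Here \(\sqrt{\Lambda}R_2(A)/d\lesssim\sqrt{n/N}\,\beta^2\sqrt{n/N}/\sqrt N\asymp N^{-1/6}\) at criticality, up to logarithms that are controlled by \(\varepsilon\le c\). In the same way \(\|A\|_{\mathrm{op}}/N\lesssim N^{-2/3}\).

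The key difference from the strict regime is that the symmetric endpoint is no longer \(1+o(1)\). The trace potential \(\mathcal P_N(A_s)\) is now \(O(s^3\Delta_n^2)+o(1)\le C_{a,b}\), and \(\sum\delta_m,\sum\mathcal E_m=O_{a,b}(1)\). So one gets \(Z_s(0)\le C_{a,b}\) and, from the interpolation argument, \(|Z_s'|\le C_{a,b}\max\{1,Z_s\}\); Gronwall then gives a bounded \(Z_s(1)\).

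The hardest point is that the Adamczak terms for \(\mathcal T_g\) are no longer \(o(1)\) but only bounded. Since \(p_{\mathrm H}\) is now of order \(\|A\|_{\HS}^2/N\asymp s^2\beta^4n^2/N^2\), I must check that every one of the nineteen terms stays \(O_{a,b}(1)\) with the \(\log^2\) inflations. I expect this check to work, because each term carries a negative power of \(N\) or of \(n\beta^2\) that beats polylogarithms once \(\varepsilon\le c_{a,b}\).

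Finally, return to the exact product as in Step 2 of Lemma~\ref{lem:critical-generating-truncation}. The cubic remainder \(\sum|A_{ij}B_{ij}|^3\lesssim\beta^6 n^2/N^3\lesssim N/n\) is small, which gives the product comparison, and integrating over both replicas yields \(C_{a,b}\).
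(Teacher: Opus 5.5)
Your proposal follows the paper's proof essentially step for step. Both arguments use concentration events at tail level \(\asymp\log^2(en)\) (the paper takes \(\mathcal E_{n,M}\) of Lemma~S6.4 with \(M\asymp\log(en)\)) and Lemma~\ref{lem:gmax-tail-parameter} for \(G_\infty\). Both then rerun the critical deletion and interpolation argument with bounded rather than vanishing remainders, closed by Gronwall and the cubic product comparison; the paper packages this step as the transfer Lemma~S6.1. The one delicate input you leave implicit is the covariance-boundary remainder. With your small cutoff \(\Lambda\asymp n/N+\log^2(en)\), it must be controlled, as in the paper, by the spherical tail \eqref{eq:covariance-tail-spherical} at level \(d\{n/N+\log^2(en)\}\), which beats the weight \(e^{C\|A_s\|_{\HS}^2/N}\approx e^{CN^{2/3}}\); the generic tail \(e^{-c\log^2(en)}\) would not suffice.
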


The event is \(\mathcal E_n=\mathcal E_{n,M}\) from
Lemma~S6.4, with
\(M\asymp\log(en)\). Substituting its concentration bounds
into Lemma~S6.1 verifies the
generating-function bound uniformly for \(0\le s\le3/2\).
The parameter checks and the proof are given in
Section~S4.1 of the Supplementary Material.

Corollary~\ref{cor:quant-polynomial-rate} specializes
Lemma~\ref{lem:quant-critical-bounds} to polynomial signal scaling and records
an explicit decay rate.  This corollary is used only after the general
finite-sample theorem has been proved.

\begin{corollary}[Rate under polynomial signal scaling]
\label{cor:quant-polynomial-rate}
Suppose, in addition, that for some fixed \(0\le r<1\) and
\(0<c_-\le c_+<\infty\),
\[
  c_-n^{-r}\le\beta^2\le c_+n^{-r}.
\]
Then \eqref{eq:quant-critical-range} implies
\(N\asymp n^{3(1-r)/4}\), with comparison constants depending only on
\(a,b,c_-,c_+\).  There are constants
\(\eta_r,C_{a,b,r,c_-,c_+}>0\) such that, for all sufficiently large \(n\),
\begin{equation}
  \varepsilon_{n,N,\beta}
  \le C_{a,b,r,c_-,c_+}
  n^{-\eta_r}.
  \label{eq:quant-polynomial-error}
\end{equation}
\end{corollary}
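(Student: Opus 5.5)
The first claim, \(N\asymp n^{3(1-r)/4}\), is direct algebra. Since \(\Delta_n^2=\binom n3\beta^6/N^4\) with \(a\le\Delta_n\le b\), we get \(N^4\asymp_{a,b} n^3|\beta|^6\). Substituting \(\beta^2\asymp n^{-r}\) gives \(N^4\asymp n^{3-3r}\), hence \(N\asymp n^{3(1-r)/4}\), with constants depending only on \(a,b,c_-,c_+\). In particular \(N\to\infty\) polynomially, because \(r<1\).

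For \eqref{eq:quant-polynomial-error} I bound each term of \eqref{eq:quant-master-error} separately.
\begin{itemize}
\item Graph-expansion terms: \(N^{-\gamma_0}\lesssim n^{-3(1-r)\gamma_0/4}\), and \(\log^2(eN)/n\lesssim n^{-1/2}\).
\item First bracketed term: \(1/(\sqrt n|\beta|)\asymp n^{-(1-r)/2}\), which is a negative power because \(r<1\).
\item Second bracketed term: \(|\beta|^{7/8}n^{-1/16}\le c_+^{7/16}n^{-1/16-7r/16}\le Cn^{-1/16}\).
\end{itemize}
The factor \(\log^{12}(en)\) is absorbed by replacing each exponent by half of itself for \(n\) large. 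Taking
\[
  \eta_r=\tfrac12\min\{3(1-r)\gamma_0/4,\ (1-r)/2,\ 1/16\}
\]
then gives \(\varepsilon_{n,N,\beta}\le Cn^{-\eta_r}\) for all sufficiently large \(n\).

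The only point needing care is that \(\gamma_0=\gamma_0(a,b)\) is a fixed positive constant, so \(\eta_r>0\) depends only on \(a,b,r\). The constant \(C\) depends only on \(a,b,r,c_-,c_+\), through the comparison constants for \(N\) and \(\beta\). No step here is a real obstacle; the substantive estimates are already contained in Lemma~\ref{lem:quant-critical-bounds}.
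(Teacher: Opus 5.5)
Your proposal is correct and follows the paper's proof essentially verbatim. You derive \(N\asymp n^{3(1-r)/4}\) from \(\Delta_n^2=\binom n3\beta^6/N^4\), bound the pieces of \(\varepsilon_{n,N,\beta}\) by the same powers \(n^{-(1-r)/2}\), \(n^{-(1+7r)/16}\), \(n^{-3\gamma_0(1-r)/4}\) and \(n^{-1}\) up to logarithms, and absorb \(\log^{12}(en)\) by shrinking the exponent; your remark that \(\eta_r\) also depends on \(a,b\) through \(\gamma_0\) is accurate, and Lemma~\ref{lem:quant-critical-bounds} is not actually needed, since \(\varepsilon_{n,N,\beta}\) is an explicit deterministic expression.
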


The proof is given in Section~S4.2 of the Supplementary Material.

\subsection{Quantitative graph expansion and second moment}
\label{subsec:quant-partition}

We make the graph expansion of Section~\ref{sec:critical-graph-expansion}
quantitative by retaining degrees up to $K\asymp\log N$.
The errors come from conditioning, nontriangle components, intersections
between triangles, and the higher-degree tail.

\begin{proposition}[Truncated second moment]
\label{prop:quant-second-moment}
Recall that \(L_n^{\mathcal E}=d\Pp_n^{\mathcal E_n}/d\Qq_n\), where
\(\mathcal E_n\) is the event in
Lemma~\ref{lem:quant-critical-bounds}.  Under
\eqref{eq:quant-critical-range}, if
\(\varepsilon_{n,N,\beta}\le c_{a,b}\), then
\begin{equation}
  \left|
  \mathbb E_{\Qq_n}(L_n^{\mathcal E})^2-e^{\Delta_n^2}
  \right|
  \le C_{a,b}\varepsilon_{n,N,\beta}.
  \label{eq:quant-second-moment}
\end{equation}
\end{proposition}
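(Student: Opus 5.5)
The plan is to make the qualitative argument of Proposition~\ref{prop:exact-critical-second-moment} quantitative, with the coefficient cutoff $K=\lceil C_0\log(eN)\rceil$. By \eqref{eq:quadratic-replica-L2},
\[
\mathbb E_{\Qq_n}(L_n^{\mathcal E})^2=\pi_n^{-2}\sum_{k\ge0}\mathfrak c_{k,n},
\]
and by \eqref{eq:quant-prior-failure} the factor $\pi_n^{-2}$ equals $1+O(e^{-c\log^2(en)})$. I would write
\[
\sum_k\mathfrak c_{k,n}-e^{\Delta_n^2}=\mathrm{I}+\mathrm{II}+\mathrm{III}+\mathrm{IV},
\]
where the four pieces are as follows:
\begin{itemize}
\item $\mathrm{I}=\sum_{k\le K}(\mathfrak c_{k,n}-\mathfrak c_{k,n}^\circ)$ is the conditioning error;
\item $\mathrm{II}$ is the contribution to $\sum_{k\le K}\mathfrak c^\circ_{k,n}$ from graphs having at least one nontriangle connected component;
\item $\mathrm{III}$ compares the vertex-disjoint triangle unions with at most $K/3$ triangles against the truncated series $\sum_{3m\le K}\Delta_n^{2m}/m!$;
\item $\mathrm{IV}=\sum_{k>K}\mathfrak c_{k,n}-\sum_{3m>K}\Delta_n^{2m}/m!$ is the tail.
\end{itemize}

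\textbf{Tail $\mathrm{IV}$.} By Lemma~\ref{lem:nonnegative-replica-coefficients} and \eqref{eq:quant-uniform-generating-bound}, this is at most $s_\star^{-K}(C_{a,b}+e^{b^2s_\star^3})$. For $C_0$ large this is bounded by $N^{-\gamma_0}$.

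\textbf{Conditioning error $\mathrm{I}$.} As in \eqref{eq:quadratic-conditioning-fixed-coefficient}, $|\mathrm{I}|\le 2\sum_{k\le K}\beta^{2k}n^{2k}\,\mathbb P(\mathcal E_n^c)$. A crude count gives $n^{2K}=e^{O(\log n\log N)}$, while $\mathbb P(\mathcal E_n^c)\le e^{-c\log^2(en)}$. Since $\log N\lesssim\log n$, I need $C_0$ small relative to $c_{a,b}$. Here $C_0$ is pulled in opposite directions by $\mathrm{I}$ and $\mathrm{IV}$. If that conflict cannot be resolved, I would instead refine the count, using that only edge sets with $t_d\neq0$ matter. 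For those the moment factor is already small, so a bound like $\beta^{2k}\sum_F|\mathbb E G_F-\mathbb E G^\circ_F|\cdot 2$ can be controlled through the number of minimum-degree-two graphs with activity bounds. Alternatively I would accept a slightly smaller $\gamma_0$ and take $K=c'\log(eN)$ with $s_\star$ fixed, which still makes $\mathrm{IV}$ polynomially small.

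\textbf{Nontriangle components $\mathrm{II}$.} Factor the activity over components, bounding the embedding count by a product. Each nontriangle component with $k_j$ edges contributes at most $e^{Ck_j\log(ek_j)}N^{-\sqrt{2k_j}/6}$ by \eqref{eq:quadratic-noncycle-activity}, or $C^{v}N^{2-2v/3}\le CN^{-2/3}$ for cycles of length $v\ge4$ by \eqref{eq:quadratic-long-cycle-activity}. Triangle components contribute $\Delta_n^2\le b^2$ each. I would then sum over isomorphism types with at most $K$ edges. A generating-function argument (exponential formula over components) bounds the total by
\[
e^{b^2}\Bigl(\exp\bigl\{\sum_{H\text{ conn.},\,H\ne C_3,\,e(H)\le K}\mathsf{Act}(H)\bigr\}-1\Bigr).
\]
The inner sum is dominated by small $k$ and is $O(N^{-1/6})$, because the number of connected types with $k$ edges is $e^{O(k\log k)}$. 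The factor $N^{-\sqrt{2k}/6}$ beats $e^{Ck\log k}$ only while $k\log k\ll\sqrt k\log N$, i.e.\ $k\lesssim(\log N/\log\log N)^2$. That range covers $k\le K$ since $K\asymp\log N$. The same constraint restricts $C_0$ only mildly.

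\textbf{Triangle unions $\mathrm{III}$.} The exact count of $m$ disjoint triangles is $(n)_{3m}/(m!\,6^m)$, with $t_d(C_3)^2\beta^6=\beta^6N^{-4}$. The ratio $(n)_{3m}/n^{3m}\cdot n^{3m}/\binom n3^m=1+O(m^2/n)$. Summed against $\Delta_n^{2m}/m!$ for $m\le K$, this gives $O(\log^2(eN)/n)$, which is the last term of $\varepsilon_{n,N,\beta}$.

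The main obstacle is the bookkeeping in $\mathrm{II}$: the combinatorial entropy $e^{Ck\log k}$ of graph types must stay below $N^{-\sqrt{2k}/6}$ uniformly up to $k=K$, while remaining compatible with $\mathrm{I}$ and $\mathrm{IV}$. That is where $\gamma_0(a,b)$ gets fixed. If the $\sqrt k$ exponent proves too weak, I would first try sharpening \eqref{eq:quadratic-noncycle-activity} to $N^{-ck}$ using $v\ge 2k/\Delta_{\max}$-type counts.
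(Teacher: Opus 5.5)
Your proposal follows the paper's proof essentially step for step. It uses the same split into the conditioning error, the nontriangle components (handled by the component exponential formula, giving $e^{\Delta_n^2}(e^{CN^{-c_2}}-1)$), the triangle collisions via $1-(n)_{3m}/(n)_3^m\le Cm^2/n$, and the tail via coefficient nonnegativity and $R^{-K}\{\mathcal Z_n^{\mathcal E}(R)+e^{\Delta_n^2R^3}\}$.

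The tension you found over $C_0$ is resolved in the paper exactly as in your second fallback. It takes $K=\lfloor c_1\log(eN)\rfloor$ with $c_1$ small enough for the conditioning error, and then chooses $\gamma_0<\min\{c_2,c_1\log R\}$. So you should drop the large-$C_0$ choice. Drop the refined-count alternative as well: it would not help, because $\mathbb E G_F$ need not vanish for leaf-containing $F$ once the indicator is present.
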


The proof is given in Section~S4.3 of the Supplementary Material.
\subsection{A quantitative two-model triangle theorem}
\label{subsec:quant-triangle}

Proposition~\ref{prop:quant-second-moment} controls the second moment of the
truncated likelihood ratio.  To obtain the finite-sample total-variation and
likelihood approximations, we also need Gaussian bounds for the signed
triangle under both graph laws.
For probability measures \(\nu,\eta\) on \(\mathbb R\) with finite first
moments, recall their \(1\)-Wasserstein distance as
\[
  d_W(\nu,\eta)
  :=\sup_{\|f\|_{\mathrm{Lip}}\le1}
  \left|\int f\,d\nu-\int f\,d\eta\right|.
\]
Here
\(\|f\|_{\mathrm{Lip}}=\sup_{x\ne y}|f(x)-f(y)|/|x-y|\).
Recall
from Section~\ref{subsec:growing-critical} that
\begin{equation}
  W_n
  =
  \frac{\operatorname{sgn}(\beta)}{\sqrt{\binom n3}}
  \sum_{i<j<k}Y_{ij}Y_{jk}Y_{ki}.
  \label{eq:quant-signed-triangle}
\end{equation}
Condition \eqref{eq:quant-critical-range} excludes \(\beta=0\).
Proposition~\ref{prop:quant-triangle} gives the two quantitative Gaussian
approximations that will be combined with
Proposition~\ref{prop:quant-second-moment}.

\begin{proposition}[Finite-sample signed-triangle approximation]
\label{prop:quant-triangle}
One has
\begin{equation}
\begin{split}
  d_W\left(
  \mathcal L_{\Qq_n}(W_n),\Normal(0,1)
  \right)
  &\le\frac Cn,\\
  d_W\left(
  \mathcal L_{\Pp_n}(W_n),\Normal(\Delta_n,1)
  \right)
  &\le C_{a,b}\varepsilon_{n,N,\beta}.
  \label{eq:quant-two-model-W}
\end{split}
\end{equation}
The second bound also holds, after changing \(C\), under
\(\Pp_n^{\mathcal E_n}\) whenever
\(\varepsilon_{n,N,\beta}\le c_{a,b}\), with \(\mathcal E_n\) as in
Lemma~\ref{lem:quant-critical-bounds}.
\end{proposition}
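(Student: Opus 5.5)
The proof is a quantitative version of Lemma~\ref{lem:triangle-two-model-CLT}: a fourth-moment Gaussian approximation for a degree-three Rademacher chaos, applied once under \(\Qq_n\) and once conditionally on the latent configuration under \(\Pp_n\).

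\emph{Null bound.} Under \(\Qq_n\), \(W_n\) is a homogeneous degree-three multilinear polynomial in the independent Rademacher variables \(Y_{ij}\), with unit variance. For such a chaos, the Wasserstein form of the fourth-moment theorem of D\"obler and Peccati \cite{DoblerPeccati2017} gives
\[
d_W\bigl(\mathcal L(W_n),\Normal(0,1)\bigr)
\le C\bigl(\sqrt{|\mathbb EW_n^4-3|}+\max_e\mathrm{Inf}_e^{1/2}\bigr).
\]
Step~1 of Lemma~\ref{lem:triangle-two-model-CLT} computes both inputs exactly: \(\mathbb EW_n^4-3=O(n^{-1})\) and \(\max_e\mathrm{Inf}_e=O(n^{-2})\). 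This alone gives only \(O(n^{-1/2})\), not the claimed \(C/n\). To reach \(C/n\) I would use the sharper form of the bound, in which the fourth-cumulant term enters linearly, or else verify by direct Stein-method calculation that the contraction norms of the triangle kernel are \(O(n^{-1})\). If neither works, a rate \(n^{-1/2}\) would still be absorbed by \(\varepsilon_{n,N,\beta}\), because \(\varepsilon_{n,N,\beta}\ge(\sqrt n|\beta|)^{-1}\ge n^{-1/2}\).

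\emph{Alternative bound.} Use the decomposition \eqref{eq:triangle-chaos-decomposition} and the Lipschitz property of test functions:
\[
d_W\bigl(\mathcal L_{\Pp_n}(W_n),\Normal(\Delta_n,1)\bigr)
\le d_W\bigl(\mathcal L(S_n/\sqrt{\tbinom n3}),\Normal(0,1)\bigr)
+\frac{\|R_{1,n}\|_2+\|R_{2,n}\|_2+\|R_{3,n}-\mathbb ER_{3,n}\|_2}{\sqrt{\binom n3}}.
\]
The remainder terms are bounded by \eqref{eq:triangle-remainder-moments}, and their square roots are of order
\[
|\beta|N^{-1/2}+\beta^2(N^{-1}+\sqrt n\,N^{-3/2})+|\beta|^3(N^{-3/2}+\sqrt n\,N^{-2}).
\]
Under the critical relation \(n\beta^2\asymp N^{4/3}\), each of these is a negative power of \(N\) times a power of \(|\beta|\), and should be dominated by \(N^{-\gamma_0}\) or by \(1/(\sqrt n|\beta|)\). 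For the leading term \(S_n\), condition on \(q\). Applying the same chaos bound to the conditional array yields a conditional distance of order \(\sqrt{V_n(q)^{-1}+n^4V_n(q)^{-2}+\max_eI_e/V_n}\), plus the normalization error \(|\sqrt{V_n(q)/\binom n3}-1|\). The latter is controlled in expectation by \eqref{eq:leading-variance-defect}, i.e.\ by \(O(\beta^2/N)\). I would handle the deleted triangles \(\mathcal D_n(q)\) through \eqref{eq:deleted-leading-chaos}, taking expectations over \(q\) rather than arguing in probability. Averaging over \(q\) then gives the rate.

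\emph{Conditioned law \(\Pp_n^{\mathcal E_n}\).} Write \(\mathbb E_{\Pp^{\mathcal E}}f=\mathbb E_{\Pp}[f\mid q\in\mathcal E_n]\). For \(f\) 1-Lipschitz with \(f(\Delta_n)=0\), Cauchy--Schwarz gives
\[
|\mathbb E_{\Pp}f-\mathbb E_{\Pp^{\mathcal E}}f|
\le C\,\|W_n-\Delta_n\|_{L^2(\Pp_n)}\,\mathbb P(\mathcal E_n^c)^{1/2}.
\]
Since \(\|W_n-\Delta_n\|_{L^2(\Pp_n)}=O(1)\) and \(\mathbb P(\mathcal E_n^c)\le C\exp\{-c\log^2(en)\}\) by \eqref{eq:quant-prior-failure}, this difference is negligible.

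The main obstacle is the quantitative control of the conditional Gaussian approximation for \(S_n\). The conditional variances \(V_n(q)\) and the edge weights are random, so the fourth-moment bound must be averaged over \(q\) with explicit rates. This requires moment bounds on \(\sum_eB_e^2\), on \(\max_e|B_e|\), and on the fraction of deleted edges, which I would import from the matrix bounds \eqref{eq:quant-matrix-bounds} and the logarithmic factors in \(\varepsilon_{n,N,\beta}\).
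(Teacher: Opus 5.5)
Your route is the paper's. You apply the D\"obler--Peccati bound \(d_W\le C(|\mathbb EU^4-3|^{1/2}+\rho)\) under \(\Qq_n\), and conditionally on \(q\) to the retained centered triangle sum under \(\Pp_n\). You then use the decomposition \eqref{eq:triangle-chaos-decomposition} with the remainder moments \eqref{eq:triangle-remainder-moments}, the normalization and deletion errors \eqref{eq:leading-variance-defect} and \eqref{eq:deleted-leading-chaos}, and finally remove the prior conditioning.

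\textbf{The null step.} This step contains an error, and it is the only reason you think the rate \(C/n\) needs extra work. Step~1 of Lemma~\ref{lem:triangle-two-model-CLT} gives exactly
\[
  \mathbb E_{\Qq_n}W_n^4-3=-\frac2{\binom n3}+\frac{24\binom n4}{\binom n3^2}.
\]
Since \(\binom n4/\binom n3^2\sim 3/(2n^2)\), the fourth cumulant is \(O(n^{-2})\), not \(O(n^{-1})\). Apart from the paired terms, only the \(O(n^4)\) copies of \(K_4\) contribute, and they are divided by \(\binom n3^2\asymp n^6\).

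Hence \(|\mathbb EW_n^4-3|^{1/2}=O(n^{-1})\), the same order as \(\rho=\{(n-2)/\binom n3\}^{1/2}\), and the bound gives \(C/n\) at once. You need neither a linear-in-cumulant refinement nor a contraction computation. Your fallback rate \(n^{-1/2}\) would be absorbed by \(\varepsilon_{n,N,\beta}\) in Lemma~\ref{lem:quant-sufficiency}, but it would not prove the first inequality as stated. Your own conditional expression \(\{V_n^{-1}+n^4V_n^{-2}+\max_eI_e/V_n\}^{1/2}\) already shows the correct rate: the null is the case \(B\equiv0\), \(V_n=\binom n3\).

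\textbf{The alternative and conditioning steps.} These are sound, up to two small points.
\begin{itemize}
\item The bounds \eqref{eq:quant-matrix-bounds} you want to import hold only on the event of Lemma~\ref{lem:quant-critical-bounds}, which exists only when \(\varepsilon_{n,N,\beta}\le c_{a,b}\). The paper handles the other case of the unconditioned bound trivially, using \(\operatorname{Var}_{\Pp_n}(W_n)\le C_{a,b}\) from Proposition~\ref{prop:quadratic-triangle-moments}.
\item If you instead average over all \(q\), as you suggest, you need a bad set on which \(V_n^\circ(q)<\binom n3/2\). On that set the conditional distance is at most \(\{V_n(q)/\binom n3\}^{1/2}+\sqrt{2/\pi}\le2\). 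Markov's inequality on \(\beta^2\sum_eB_e^2\) gives the set probability \(O(\beta^2/N)\). This version avoids \(\mathcal E_n\) and the case split entirely.
\end{itemize}
Your Cauchy--Schwarz removal of the conditioning, with error \(O(\mathbb P(\mathcal E_n^c)^{1/2})\), is correct and slightly sharper than the paper's \(2\sqrt{\binom n3}\,\mathbb P(\mathcal E_n^c)\).
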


We apply the quantitative de Jong bound under the null law and,
conditionally on the latent features, under the alternative law.
The moment estimates in Lemma~\ref{lem:triangle-two-model-CLT}
control the fourth moments and influences; the remaining errors
come from deleting small-variance edges, restoring the normalization
and removing prior conditioning.
The proof is given in Section~S4.4 of the Supplementary Material.

\subsection{Quantitative likelihood projection}
\label{subsec:quant-projection}

Proposition~\ref{prop:quant-second-moment} bounds the second moment of the
conditioned likelihood ratio.  Proposition~\ref{prop:quant-triangle} controls
in \(1\)-Wasserstein distance the signed-triangle laws under the null and the
conditioned alternative.  Lemma~\ref{lem:quant-sufficiency} converts these
three quantitative inputs into total-variation and log-likelihood bounds.

\begin{lemma}[Quantitative likelihood-ratio projection]
\label{lem:quant-sufficiency}
Let \(P\ll Q\), \(L=dP/dQ\), and let \(W\) be real-valued.  Fix
\(\delta\in[0,b]\) and \(0<\eta\le e^{-1}\).  Suppose
\[
  \mathbb E_QL^2\le e^{\delta^2}+\eta,
\]
\[
  d_W(\mathcal L_Q(W),\Normal(0,1))\le\eta,
  \qquad
  d_W(\mathcal L_P(W),\Normal(\delta,1))\le\eta.
\]
Then
\begin{equation}
  \left|
  \TV(P,Q)-\{2\Phi(\delta/2)-1\}
  \right|
  \le C_b\eta^{1/4},
  \label{eq:quant-TV-abstract}
\end{equation}
and, for \(0<t\le1\),
\begin{equation}
  Q\left\{
  \left|
  \log L-\left(\delta W-\frac12\delta^2\right)
  \right|>t
  \right\}
  \le
  C_bt^{-2}\eta^{1/4}.
  \label{eq:quant-LAN-abstract}
\end{equation}
\end{lemma}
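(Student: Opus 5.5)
The plan is to make the Hilbert-space projection argument of Lemma~\ref{lem:hilbert-likelihood-projection} quantitative, comparing $L$ with the Gaussian likelihood $g(W)=e^{\delta W-\delta^2/2}$ in $L^2(Q)$. The key identity is
\[
\mathbb E_Q(L-g(W))^2=\mathbb E_QL^2-2\mathbb E_P g(W)+\mathbb E_Q g(W)^2 .
\]
If $W$ were exactly $\Normal(0,1)$ under $Q$ and $\Normal(\delta,1)$ under $P$, the last two terms would be $-2e^{\delta^2}+e^{\delta^2}$, and the first hypothesis would make the whole expression at most $\eta$. The Wasserstein hypotheses are meant to control the deviation of the two cross terms. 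Since $g$ is not Lipschitz, I will replace it by the truncated version $g_T(w)=g(w)\chi_T(w)$, where $\chi_T$ is the piecewise-linear cutoff that equals one on $[-T,T]$ and vanishes outside $[-T-1,T+1]$, exactly as used later in Theorem~\ref{thm:quantitative-triangle-LAN}. On $[-T-1,T+1]$ the function $g_T$ and $g_T^2$ have Lipschitz constants at most $C_be^{2bT}$. So $|\mathbb E_Pg_T(W)-\mathbb E g_T(\Normal(\delta,1))|\le C_be^{2bT}\eta$, and the analogous bound holds for $\mathbb E_Qg_T(W)^2$. The Gaussian truncation errors $\mathbb E g(Z+\delta)(1-\chi_T)$ and $\mathbb E g(Z)^2(1-\chi_T)$ are at most $C_be^{-T^2/4}$. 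Altogether this gives
\[
\mathbb E_Q(L-g_T(W))^2\le \eta+C_b\bigl(e^{2bT}\eta+e^{-T^2/4}\bigr).
\]
Choosing $T=4b+1+\sqrt{\log(1/\eta)}$ should make the right side $C_b\eta^{1/2}$, up to checking that $e^{2bT}\eta\lesssim\eta^{1/2}$. That check holds because $2b\sqrt{\log(1/\eta)}\le\frac12\log(1/\eta)+2b^2$, and any leftover constants are absorbed into $C_b$.

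For the TV bound, I will write $\TV(P,Q)=\tfrac12\mathbb E_Q|L-1|$ and compare it with $\tfrac12\mathbb E_Q|g_T(W)-1|$. The difference is at most $\tfrac12\|L-g_T(W)\|_{L^2(Q)}\le C_b\eta^{1/4}$. The function $w\mapsto|g_T(w)-1|$ is Lipschitz with constant $C_be^{bT}$, so $\mathbb E_Q|g_T(W)-1|$ is within $C_be^{bT}\eta$ of its Gaussian value. That Gaussian value is within $e^{-cT^2}$ of $\mathbb E|g(Z)-1|=2\{2\Phi(\delta/2)-1\}$. Both of these errors are $O_b(\eta^{1/2})$, so the total is dominated by $\eta^{1/4}$.

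For the log-likelihood bound, I will intersect with the event $\{|W|\le T\}$. On this event $g_T(W)=g(W)\ge c_be^{-bT}$. If $|\log L-\log g(W)|>t$ with $t\le1$, then $|L-g(W)|\ge c\,t\,g(W)\ge c_bte^{-bT}$, which uses $|e^x-1|\ge c\min(|x|,1)$. Chebyshev's inequality then bounds the probability of this event by $C_bt^{-2}e^{2bT}\mathbb E_Q(L-g_T(W))^2\le C_bt^{-2}e^{2bT}\eta^{1/2}$. I also need the tail $Q\{|W|>T\}$. A Lipschitz smoothing of the indicator gives $Q\{|W|>T\}\le P(|Z|>T-1)+\eta\le C\eta$. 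Finally, $e^{2bT}\eta^{1/2}\le C_b\eta^{1/4}$ by the same elementary inequality, now with $4b\sqrt{\log(1/\eta)}\le\frac14\log(1/\eta)+16b^2$.

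\emph{Main obstacle.} I expect the only delicate step to be the bookkeeping that trades the exponential Lipschitz constants $e^{O(bT)}$ against the Gaussian tails $e^{-T^2}$ while keeping a power of $\eta$. This is resolved by the choice $T\asymp\sqrt{\log(1/\eta)}$. I will also need to check that $\eta\le e^{-1}$ keeps $T\ge1$, so that the sub-Gaussian tail bounds apply with absolute constants.
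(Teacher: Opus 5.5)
Your route is the paper's proof essentially line by line. You use the same $L^2(Q)$ expansion of $L-g_T(W)$, the same Lipschitz cutoff $g_T=\ell_\delta\chi_T$ with $T=4b+1+\sqrt{\log(1/\eta)}$, the same triangle-inequality comparison for $\TV$, and the same Chebyshev argument on $\{|W|\le T\}$ with a Lipschitz ramp for the tail.

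One estimate must be corrected, because as written it does not give the rate you claim. You bound the Gaussian truncation errors by $C_be^{-T^2/4}$. That is true but too weak. With $s=\sqrt{\log(1/\eta)}$ it gives only $e^{-T^2/4}\le\eta^{1/4}$, and $e^{-T^2/4}/\eta^{1/2}\to\infty$ as $\eta\to0$. If you kept it, you would get only $\mathbb E_Q(L-g_T(W))^2\lesssim\eta^{1/4}$, so a TV error of order $\eta^{1/8}$. Worse, the Chebyshev bound $t^{-2}e^{2bT}\eta^{1/4}$ would be only $O_b(t^{-2})$, since $e^{2bT}\le C_b\eta^{-1/4}$. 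The fix is to use the tilting identity exactly, as the paper does. Both truncation errors are at most $\mathbb E[\ell_\delta(Z)^2\{1-\chi_T(Z)\}]=e^{\delta^2}\,\mathbb E[1-\chi_T(Z+2\delta)]\le e^{b^2}\,\mathbb P\{|Z|>T-2b\}\le 2e^{b^2}e^{-(T-2b)^2/2}$. Because $T-2b\ge s$, this is at most $C_b\eta^{1/2}$; the offset $4b$ in $T$ exists precisely to absorb the tilt shift $2\delta$. With this bound, the rest of your bookkeeping ($e^{2bT}\eta\le C_b\eta^{1/2}$ and $e^{2bT}\eta^{1/2}\le C_b\eta^{1/4}$) goes through and yields both conclusions. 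A minor second slip: $\mathbb P\{|Z|>T-1\}$ is $O(\eta^{1/2})$, not $O(\eta)$. That is harmless, since $\eta^{1/2}\le t^{-2}\eta^{1/4}$ for $0<t\le1$.
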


\begin{proof}
We approximate $L$ in $L^2(Q)$ by a Lipschitz truncation of the Gaussian
likelihood. This gives the TV bound; for LAN, we also use a lower bound
for the truncation on a central interval.
Put
\[
  \ell_\delta(w)=e^{\delta w-\delta^2/2},\qquad
  T_\eta=4b+1+\sqrt{\log(1/\eta)},\qquad
  g_\eta(w)=\ell_\delta(w)\chi_\eta(w),
\]
where $\chi_\eta$ is one on $[-T_\eta,T_\eta]$, zero outside
$[-T_\eta-1,T_\eta+1]$, and linear in between. The Lipschitz constants
of $g_\eta$ and $g_\eta^2$ are at most $C_be^{bT_\eta}$ and
$C_be^{2bT_\eta}$, respectively.

For $Z\sim\Normal(0,1)$, Gaussian tilting gives
\[
  \mathbb E\ell_\delta(Z)^2=e^{\delta^2},\qquad
  \mathbb Eg_\eta(Z+\delta)=\mathbb E[\ell_\delta(Z)^2\chi_\eta(Z)].
\]
Consequently, both
$e^{\delta^2}-\mathbb Eg_\eta(Z+\delta)$ and
$e^{\delta^2}-\mathbb Eg_\eta(Z)^2$ lie in the interval
\[
  [0,e^{\delta^2}\mathbb P\{|Z+2\delta|>T_\eta\}]
  \subseteq[0,C_b\eta^{1/2}].
\]
The Wasserstein assumptions and
$\eta e^{2bT_\eta}\le C_b\eta^{3/4}$ therefore imply
\[
  \mathbb E_Pg_\eta(W)\ge e^{\delta^2}-C_b\eta^{1/2},\qquad
  \mathbb E_Qg_\eta(W)^2\le e^{\delta^2}+C_b\eta^{3/4}.
\]
Together with the second-moment hypothesis, these bounds yield
\begin{equation}
  \mathbb E_Q(L-g_\eta(W))^2
  =\mathbb E_QL^2-2\mathbb E_Pg_\eta(W)+\mathbb E_Qg_\eta(W)^2
  \le C_b\eta^{1/2}.
  \label{eq:quant-L2-dominance}
\end{equation}

Since $w\mapsto|g_\eta(w)-1|$ has the same Lipschitz bound as $g_\eta$,
the triangle inequality gives
\[
\begin{aligned}
  \left|\TV(P,Q)-\frac12\mathbb E|\ell_\delta(Z)-1|\right|
  &\le\frac12\mathbb E_Q|L-g_\eta(W)|+C_b\eta e^{bT_\eta}
     +\frac12\mathbb E|g_\eta(Z)-\ell_\delta(Z)|\\
  &\le C_b\eta^{1/4}.
\end{aligned}
\]
Here the last term is at most
$\mathbb P\{|Z+\delta|>T_\eta\}/2\le C_b\eta^{1/2}$ by Gaussian
tilting. The identity
$\frac12\mathbb E|\ell_\delta(Z)-1|=2\Phi(\delta/2)-1$
proves \eqref{eq:quant-TV-abstract}.

For LAN, the $1$-Lipschitz ramp
$w\mapsto\min\{1,(|w|-T_\eta+1)_+\}$ gives
\[
  Q\{|W|>T_\eta\}\le\mathbb P\{|Z|>T_\eta-1\}+\eta
  \le C_b\eta^{1/2}.
\]
On $\{|W|\le T_\eta\}$ we have
$g_\eta(W)=\ell_\delta(W)\ge e^{-bT_\eta-b^2/2}$.
For $0<t\le1$, the logarithmic deviation in
\eqref{eq:quant-LAN-abstract} thus implies
$|L-g_\eta(W)|\ge c_bt e^{-bT_\eta}$, including when $L=0$.
Chebyshev's inequality and \eqref{eq:quant-L2-dominance} give
\[
\begin{aligned}
  Q\left\{\left|\log L-\left(\delta W-\frac12\delta^2\right)\right|>t\right\}
  &\le C_b\eta^{1/2}
  +C_bt^{-2}e^{2bT_\eta}\mathbb E_Q(L-g_\eta(W))^2\\
  &\le C_bt^{-2}\eta^{1/4},
\end{aligned}
\]
where $e^{2bT_\eta}\eta^{1/2}\le C_b\eta^{1/4}$. This proves
\eqref{eq:quant-LAN-abstract}.
\end{proof}

\end{appendix}

\begin{acks}[Acknowledgments]
Ziqian Lin is the corresponding author.

Generative AI tools were used to assist with manuscript editing,
organization, and polishing the presentation of proofs. The authors are
responsible for the mathematical arguments and all other content.
\end{acks}

\begin{supplement}
\stitle{Supplement to ``Sharp detection and triangle likelihood limits for quadratic spherical kernel graphs''}
\sdescription{Spherical spectral background, standard feature and matrix estimates, the truncated
likelihood identity, elementary weighted-moment and Taylor bounds, and
critical truncation constructions and parameter checks for the dense,
critical and finite-sample arguments, together with
the quantitative graph-expansion and signed-triangle approximation proofs.
The supplementary material is included after the references in this arXiv version.}
\end{supplement}

\bibliographystyle{imsart-number}
\bibliography{references}
\end{document}


\begin{frontmatter}
\title{Supplement to ``Sharp detection and triangle likelihood limits for quadratic spherical kernel graphs''}
\runtitle{Supplement: quadratic spherical geometry}
\begin{aug}
\author[A]{\fnms{Ziqian}~\snm{Lin}\ead[label=e1]{linziqian@fzu.edu.cn}}
\author[B]{\fnms{Huangang}~\snm{Lu}\ead[label=e2]{luhuangang@pku.edu.cn}}
\address[A]{School of Mathematics and Statistics, Fuzhou University\printead[presep={,\ }]{e1}}
\address[B]{School of Physics, Peking University\printead[presep={,\ }]{e2}}
\end{aug}
\begin{abstract}
This supplement provides standard feature and matrix estimates, auxiliary
likelihood and critical truncation calculations, quantitative approximation
proofs and general spherical spectral background.
\end{abstract}
\end{frontmatter}

\tableofcontents

We use the notation of the main article, which precedes this supplement
in the arXiv version. Section, result, equation and table
references with an S prefix point to this supplement; all others point to
the main article.

\section{Standard feature and matrix estimates}
\label{supp:features}
We use the feature and matrix notation of
Appendix~A.

\subsection{Quadratic feature representation}
\label{supp:proof-feature-lift}
\begin{proof}[Proof of Lemma~A.1]
Since \(\|x\|=\|y\|=1\),
\[
  \tr\left(xx^\top-\frac1dI_d\right)^2
  =1-\frac1d
\]
and
\[
  \tr\left[
    \left(xx^\top-\frac1dI_d\right)
    \left(yy^\top-\frac1dI_d\right)
  \right]
  =\langle x,y\rangle^2-\frac1d.
\]
Multiplication by \(d/(d-1)\) proves the norm and inner-product identities.
Rotational invariance gives \(\E(XX^\top)=I_d/d\), and hence
\(\E\varphi_d(X)=0\).  Finally, for every \(H\in\mathbb V_d\), the spherical
fourth-moment identity \cite[Section~1.5]{Muirhead1982} gives
\[
  \E\langle\varphi_d(X),H\rangle_{\HS}^2
  =\frac d{d-1}\E(X^\top HX)^2
  =\frac d{d-1}\frac{2\tr(H^2)}{d(d+2)}
  =\frac1N\|H\|_{\HS}^2.
\]
This quadratic-form identity is equivalent to
\(\E[\varphi_d(X)\otimes\varphi_d(X)]=I_{\mathbb V_d}/N\).
\end{proof}

\subsection{Feature identities and moments}
\label{supp:proof-complete-feature-identities}
\begin{proof}[Proof of Lemma~A.2]
Lemma~A.1 gives the unit norm, centering, and covariance
identities. Since \((XX^\top)^2=XX^\top\) almost surely, the matrix square
satisfies
\[
\begin{aligned}
  q^2
  &=
  \frac d{d-1}\left[
  \left(1-\frac2d\right)XX^\top+\frac1{d^2}I_d
  \right]\\
  &=
  \frac1dI_d+\frac{d-2}{\sqrt{d(d-1)}}q,
\end{aligned}
\]
which is \textup{(A.1)}.

For traceless symmetric matrices \(A_1,A_2,A_3\), the corresponding
sixth-order moment identity \cite[Section~1.5]{Muirhead1982} yields directly
\[
  \mathbb E\prod_{a=1}^3(X^\top A_aX)
  =
  \frac{
  8\operatorname{tr}_{\mathrm{sym}}(A_1A_2A_3)
  }{d(d+2)(d+4)}
\]
Since $\langle q,H\rangle_{\HS}=\sqrt{d/(d-1)}X^\top HX$ for $H\in V$,
\textup{(A.2)} follows.  Finally,
$N\asymp d^2$ gives $\gamma_d\asymp d^{-3}\asymp N^{-3/2}$.
\end{proof}

\subsection{Quadratic-form moment and tail bounds}
\label{supp:proof-local-subgaussian}
\begin{proof}[Proof of Lemma~A.3]
The case $H=0$ is immediate.  For $H\ne0$, put
$a_d=\sqrt{d/(d-1)}$ and $W=\langle H,q\rangle_{\HS}=a_dX^\top HX$.
Write a standard Gaussian vector as $G=RX$, where $R$ and $X$ are
independent and $\mathbb ER^2=d$.  For
$2a_d|\lambda|\|H\|_{\mathrm{op}}/d<1$, conditional Jensen's inequality
in $R$, followed by the Gaussian quadratic-form integral, gives
\[
\begin{aligned}
  \mathbb E_\mu e^{\lambda W}
  &\le\mathbb E_{X,R}
       \exp\{\lambda a_dR^2X^\top HX/d\}\\
  &=\det(I_d-2\lambda a_dH/d)^{-1/2}.
\end{aligned}
\]
Since $\operatorname{tr}H=0$ and
$|\operatorname{tr}(H^k)|\le\|H\|_{\HS}^2\|H\|_{\mathrm{op}}^{k-2}$
for $k\ge2$, expansion of the log determinant yields
\[
\begin{aligned}
  \log\mathbb E_\mu e^{\lambda W}
  &\le\sum_{k\ge2}\frac{(2\lambda a_d/d)^k}{2k}
                    \operatorname{tr}(H^k)\\
  &\le\frac{a_d^2\lambda^2\|H\|_{\HS}^2/d^2}
                {1-2a_d|\lambda|\|H\|_{\mathrm{op}}/d}.
\end{aligned}
\]
Taking $\lambda=1$, using $N\asymp d^2$, and choosing $c_0$ sufficiently
small proves \textup{(A.3)}.

For later use, the same calculation also gives a tail bound without any
smallness assumption on $H$.  Apply exponential Markov inequality to $W$
and $-W$ with
\[
  \lambda=c\min\left\{
  \frac{Nu}{\|H\|_{\HS}^2},\frac d{\|H\|_{\mathrm{op}}}\right\}
\]
for a sufficiently small absolute $c>0$;
this gives \textup{(A.4)}.
Replacing $q$ by $-q$ replaces $H$ by $-H$ and leaves both bounds unchanged.
Averaging the bounds gives them under any convex combination of the two laws.
\end{proof}

\subsection{Inner-product tails}
\label{supp:proof-quadratic-inner-product-tail}
\begin{proof}[Proof of Lemma~A.4]
By rotation invariance take $x=e_1$ and put $t_X=X_1$.  A coordinate of a
uniform spherical vector has the sub-Gaussian tail
\[
  \mathbb P\{|t_X|\ge s\}\le2e^{-cds^2},
  \qquad 0\le s\le1.
\]
See, for example, \cite[Theorem~3.4.6]{Vershynin2018}.  The event
\(\{Z_{2,d}(\langle x,X\rangle)\ge u\}\) is exactly
\[
  t_X^2\ge\frac{1+(d-1)u}{d},
\]
which proves the exponential bound after changing constants.
Furthermore $B_{ij}\ge-1/(d-1)$.  A union bound over at most $n^2/2$
pairs proves the second assertion when $d\gtrsim\log n$.  When
$d\lesssim\log n$ the displayed threshold is an absolute constant and
the claim follows from $|B_{ij}|\le1$ after enlarging $C_D$.
\end{proof}

\subsection{Simultaneous row occupancy}
\label{supp:proof-multiscale-occupancy}
\begin{proof}[Proof of Lemma~A.5]
By Lemma~A.4, except on an event of
probability $n^{-D-2}$,
\[
  \max_{i<j}B_{ij}\le u_{\max}
  :=C_{D+2}\min\left(1,\frac{\log n}{d}\right).
\]
On this event, $\mathcal N_i(u)=0$ for $u>u_{\max}$.  There are at most
$2+\log_2(C_{D+2}\log n)$ remaining levels in $\mathcal U_d$.
For each such level, conditional on $X_i$, rotational invariance and the
same lemma give
\[
  \mathcal N_i(u)\sim\operatorname{Bin}(n-1,p_u),
  \qquad p_u\le Ce^{-cdu},
\]
where $p_u$ does not depend on $X_i$.  A binomial Chernoff bound yields
\[
  \mathbb P\{\mathcal N_i(u)>C_D(np_u+\log n)\mid X_i\}
  \le n^{-D-3}.
\]
A union bound over the $n$ rows and these levels has error at most
$n^{-D-1}$ for large $n$.  Together with the maximum event this proves
\textup{(A.5)}; enlarging $C_D$ covers the remaining $n$.
For $u\in[1/d,1]$, the largest dyadic level $u'\le u$ satisfies
$u/2\le u'\le u$, so
\[
  \mathcal N_i(u)\le\mathcal N_i(u')
  \le C_D(ne^{-cdu'}+\log n)
  \le C_D(ne^{-cdu/2}+\log n).
\]
For $u>1$ the occupancy is zero.  This proves \textup{(A.6)}.
\end{proof}

\subsection{Uniform row moments}
\label{supp:proof-uniform-row-moments}
Fix \(D>0\), as in the preceding occupancy estimate.
\begin{corollary}[Uniform row moments]
\label{cor:uniform-row-moments}
On an event of probability $1-O(n^{-D})$, for every integer $r\ge2$,
\begin{equation}
  \max_i\sum_{j\ne i}|B_{ij}|^r
  \le
  \frac{(C_Dr)^r}{d^r}
  \left(n+(\log n)^{r+1}\right).
  \label{eq:row-r-moment}
\end{equation}
In particular, for each fixed $r$ and all sufficiently large $n$, there is a
constant \(C_{D,r}\) such that
\[
  \max_i\sum_{j\ne i}|B_{ij}|^r
  \le C_{D,r}\frac n{N^{r/2}}.
\]
\end{corollary}
\begin{proof}
The negative part obeys
$0\le(-B_{ij})_+\le1/(d-1)$, and hence contributes at most
$2^rn/d^r$.  For the positive part, layer cake gives
\[
  \sum_{j\ne i}(B_{ij})_+^r
  =
  \int_0^{u_{\max}}ru^{r-1}\mathcal N_i(u)\,du,
  \qquad
  u_{\max}=C_D\min\left(1,\frac{\log n}{d}\right).
\]
Use $\mathcal N_i(u)\le n$ for $u\le1/d$ and
\textup{(A.6)} above $1/d$.  The exponential term is
bounded by
\[
  n\int_0^\infty ru^{r-1}e^{-cdu/2}\,du
  \le\frac{n(Cr)^r}{d^r},
\]
and the additive $\log n$ term contributes at most
$C_D^r(\log n)^{r+1}/d^r$.  This proves
\eqref{eq:row-r-moment}.  Since $N\asymp d^2$ and
$(\log n)^{r+1}=o(n)$ for fixed $r$, the stated fixed-$r$ bound follows.
\end{proof}

\subsection{First-replica matrix bounds}
\label{supp:proof-row-gram-environment}
\begin{proof}[Proof of Proposition~A.6]
Corollary~\ref{cor:uniform-row-moments} with $r=2,6$ gives the row bounds in
\textup{(A.8)} and \textup{(A.9)}; summing the
squared row norms gives $\|A\|_{\HS}^2\le n\max_i\|A_{i\cdot}\|_2^2$.
Lemma~A.4 and $N\asymp d^2$ give
$\|A\|_{\max}\lesssim\beta^2\min(1,\log n/d)$, proving
\textup{(A.10)}.

For the spectral bound, set
$\widehat\Sigma=\sum_iq_i^\ast\otimes q_i^\ast$ and
$Y_i=q_i^\ast\otimes q_i^\ast-I_V/N$.  Unit norm and isotropy imply
\[
  \mathbb EY_i=0,\qquad \|Y_i\|_{\mathrm{op}}\le1,\qquad
  \mathbb EY_i^2=\frac{N-1}{N^2}I_V\preceq\frac1N I_V.
\]
Applying matrix Bernstein to $\pm\sum_iY_i$
\cite[Theorem~6.1]{Tropp2012} gives
\begin{equation}
  \mathbb P\left\{\left\|\widehat\Sigma-\frac nN I_V\right\|_{\mathrm{op}}>t\right\}
  \le2N\exp\left[-c\min\left\{\frac{Nt^2}{n},t\right\}\right].
  \label{eq:environment-bernstein-tail}
\end{equation}
Take $t=C\{n/N+\sqrt{n\log N/N}+\log N\}$ with $C$ large.
The probability is $o(1)$: for bounded $N$, the $n/N$ term makes the
exponent diverge; as $N\to\infty$, the other terms absorb the prefactor.
The full Gram matrix $B^\ast+I_n$ and $\widehat\Sigma$ have the same nonzero
eigenvalues, so
\[
  \|B^\ast\|_{\mathrm{op}}\le\|\widehat\Sigma\|_{\mathrm{op}}+1
  \lesssim\frac nN+\sqrt{\frac{n\log N}{N}}+\log N.
\]
Multiplying by $\beta^2$ proves \textup{(A.11)}.
\end{proof}

\subsection{Uniform covariance concentration}
\label{supp:proof-uniform-covariance-tail}
\begin{proof}[Proof of Lemma~A.7]
Put \(m=|I|\).
Every summand is positive semidefinite, has operator norm one, and has
expectation $I_V/N$.  The matrix Chernoff inequality
\cite[Corollary~5.2]{Tropp2012} therefore gives, for every \(u\ge0\),
\[
  \mathbb P\left\{
  \left\|\sum_{i\in I}q_i\otimes q_i\right\|_{\mathrm{op}}
  >
  C\left(\frac mN+u\right)
  \right\}
  \le
  N e^{-cu}.
\]
Taking $u=T+C\log N$ absorbs the prefactor.  Uniformity over the convex
mixtures follows from the sign invariance just noted.  Because \(I\) is
arbitrary, the same
bound gives the stated estimates for \(\mathcal C_m\) and, with
\(I=[n]\setminus\{i\}\), for every \(\mathcal C_{-i}\).

To prove \textup{(A.12)}, assume \(m\ge1\), since the
empty-set case is immediate.  For \(x=(x_1,\ldots,x_m)\), define
\(\mathsf S_x:\mathbb R^m\to V\) by
\(\mathsf S_x a=\sum_i a_i\varphi_d(x_i)\), and put
\(F(x)=\|\mathsf S_x\|_{\mathrm{op}}\).  Then
\[
  F(x)^2=\left\|\sum_i\varphi_d(x_i)\otimes\varphi_d(x_i)
       \right\|_{\mathrm{op}}.
\]
The identity \(xx^\top-yy^\top=(x-y)x^\top+y(x-y)^\top\) gives
\(\|\varphi_d(x)-\varphi_d(y)\|_{\HS}\le2\sqrt2\|x-y\|_2\).
Consequently,
\[
\begin{aligned}
  |F(x)-F(y)|
  &\le\|\mathsf S_x-\mathsf S_y\|_{\mathrm{op}}
   \le\left(\sum_i
     \|\varphi_d(x_i)-\varphi_d(y_i)\|_{\HS}^2\right)^{1/2}\\
  &\le2\sqrt2\left(\sum_i\|x_i-y_i\|_2^2\right)^{1/2}.
\end{aligned}
\]
Thus \(F\) has an absolute Lipschitz constant for the product Riemannian
metric.  Let \(\sigma_d\) be normalized surface measure on
\(\mathbb S^{d-1}\).  The spherical logarithmic Sobolev inequality
\cite[(LS)]{BrigatiDolbeaultSimonov2024}, with sphere dimension \(d-1\),
and entropy tensorization give
\[
  \operatorname{Ent}_{\sigma_d^{\otimes m}}(f^2)
  \le\frac{2}{d-1}\mathbb E\sum_i\|\nabla_i f\|_2^2,
  \qquad d\ge2.
\]
Here \(\operatorname{Ent}(G)=\mathbb E[G\log(G/\mathbb EG)]\), and
\(\nabla_i\) is the spherical gradient in the \(i\)-th coordinate.
Applying this inequality to \(f=e^{\lambda F/2}\), by smooth approximation
if necessary, and using the Lipschitz bound above yields
\[
  \operatorname{Ent}(e^{\lambda F})
  \le\frac{4\lambda^2}{d-1}\mathbb E e^{\lambda F}.
\]
Integrating the resulting differential inequality for
\(\lambda^{-1}\log\mathbb E e^{\lambda F}\) gives
\(\log\mathbb E e^{\lambda(F-\mathbb EF)}\le4\lambda^2/(d-1)\).
Exponential Markov inequality therefore implies
\[
  \mathbb P\{F>\mathbb EF+r\}\le e^{-cd r^2},\qquad r\ge0.
\]
Integrating the preceding matrix Chernoff bound gives
\(\mathbb EF^2\le C\{m/N+\log(eN)\}\), so
\(\mathbb EF\le C\sqrt{m/N+\log(eN)}\).  Take \(r=\sqrt{T/d}\)
and square the threshold to obtain \textup{(A.12)},
after changing absolute constants.  Finally, independent sign changes of
the columns of \(\mathsf S_x\) leave its operator norm unchanged.  This
proves the asserted uniformity, including different mixture laws at
different coordinates.
\end{proof}

\subsection{Hollow--Schur update bounds}
\label{supp:proof-hollow-schur-update-bounds}
\begin{proof}[Proof of Lemma~A.8]
Removing a diagonal cannot increase the Hilbert--Schmidt norm, and
\[
  \|D\|_{\HS}
  \le\|A_{I,J}\|_{\mathrm{op}}\|A_{I,J}\|_{\HS}
  \le\|A\|_{\mathrm{op}}\|A\|_{\HS}.
\]
Because $D$ is positive semidefinite,
\[
  \|\operatorname{off}(D)\|_{\mathrm{op}}
  \le\|D\|_{\mathrm{op}}+
  \|\operatorname{diag}(D)\|_{\mathrm{op}}
  \le2\|A\|_{\mathrm{op}}^2.
\]
For every $j\in J$, symmetry of $A$ gives
\[
  \sum_{k\in J}|D_{jk}|
  \le
  \sum_{i\in I}|A_{ij}|\sum_{k\in J}|A_{ik}|
  \le\left(\max_i\sum_j|A_{ij}|\right)^2.
\]
For the final bound, the vector identity
\(D_{j\cdot}^{\mathsf T}=A_{I,J}^{\mathsf T}A_{I,j}\) implies
\[
  \|D_{j\cdot}\|_2
  \le\|A_{I,J}\|_{\mathrm{op}}\|A_{I,j}\|_2
  \le\|A\|_{\mathrm{op}}\max_i\|A_{i\cdot}\|_2.
\]
Removing the diagonal can only decrease a row norm.  Multiplication by $c/N$
proves \textup{(A.13)} and
\textup{(A.14)}.
\end{proof}

\section{Likelihood identity and elementary remainder bounds}
\label{supp:elementary}

\subsection{The truncated likelihood identity}
\label{supp:proof-truncated-likelihood}
\begin{proof}[Proof of Lemma~B.1]
Conditional on $q$, an observed edge $a_{ij}\in\{0,1\}$ has likelihood
ratio $1+\beta Y_{ij}B_{ij}(q)$, where $Y_{ij}=2a_{ij}-1$: the alternative
and null success probabilities are $(1+\beta B_{ij}(q))/2$ and $1/2$.
Conditional independence of the edges and prior conditioning give
\[
  L_{\mathcal E}=\frac1\pi\mathbb E_q\left[
  \mathbf1_{\{q\in\mathcal E\}}
  \prod_{i<j}\{1+\beta Y_{ij}B_{ij}(q)\}\right].
\]
Squaring introduces an independent copy $q^\ast$.  Under $\Qq_n$, the
$Y_{ij}$ are independent, centered, and have variance one, so
\[
  \mathbb E_{\Qq_n}\left[
  \{1+\beta Y_{ij}B_{ij}(q)\}\{1+\beta Y_{ij}B_{ij}(q^\ast)\}\right]
  =1+\beta^2B_{ij}(q)B_{ij}(q^\ast).
\]
Multiplication over edges and averaging over $q,q^\ast$ prove
\textup{(B.1)}.  The uniform bound in the lemma gives
$\mathbb E_{\Qq_n}L_{\mathcal E}^2\le(1+o(1))/\pi$.  Since
$\mathbb E_{\Qq_n}L_{\mathcal E}=1$ and $\pi\to1$,
\[
  0\le\chi^2(\Pp^{\mathcal E}\|\Qq_n)
  =\mathbb E_{\Qq_n}L_{\mathcal E}^2-1=o(1).
\]
\end{proof}

\subsection{Square-exponential weighted moments}
\label{supp:proof-square-exponential-moments}
\begin{proof}[Proof of Lemma~C.8]
The case $H=0$ is immediate.  Otherwise \textup{(A.4)},
which also holds under $\bar\mu$, gives
\begin{equation}
  \mathbb P\{|D|\ge u\}\le2\left(e^{-cu^2/\sigma^2}
  +e^{-cdu/\|H\|_{\mathrm{op}}}\right).
  \label{eq:square-exp-tail}
\end{equation}
The feature representation in Appendix~A
also gives $|D|\le C\|H\|_{\mathrm{op}}$.  On this support, the two
smallness assumptions absorb $Ku^2$ into the respective tail exponents:
\[
  Ku^2-cu^2/\sigma^2\le-c'u^2/\sigma^2,\qquad
  Ku^2-cdu/\|H\|_{\mathrm{op}}\le-c'du/\|H\|_{\mathrm{op}}.
\]
For $p\ge1$, layer-cake integration and then the substitutions
$u=\sigma v$ and $u=\|H\|_{\mathrm{op}}v/d$ yield
\[
\begin{aligned}
  \mathbb E[|D|^pe^{KD^2}]
  &\le C\int_0^\infty(pu^{p-1}+2Ku^{p+1})
  \left(e^{-c'u^2/\sigma^2}+e^{-c'du/\|H\|_{\mathrm{op}}}\right)\,du\\
  &\le C_{K,p}\left\{\sigma^p+K\sigma^{p+2}
  +(\|H\|_{\mathrm{op}}/d)^p+K(\|H\|_{\mathrm{op}}/d)^{p+2}\right\}
  \le C_{K,p}\sigma^p.
\end{aligned}
\]
The last step uses $\|H\|_{\mathrm{op}}/d\lesssim\sigma$ and
$K\sigma^2\le c$.  For $p=0$, the same calculation starts from
$\mathbb E e^{KD^2}=1+\int_0^{C\|H\|_{\mathrm{op}}}
2Ku e^{Ku^2}\mathbb P\{|D|\ge u\}\,du$ and gives a constant bound.
\end{proof}

\subsection{Odd and even Taylor remainders}
\label{supp:proof-global-local-remainders}
For $D\in\mathbb R$ and $Q\ge0$, put $W=(D^2-Q)/2$ and
$W_+=\max(W,0)$, as in the main article.
\begin{proof}[Proof of Lemma~D.1]
Taylor's formula gives
\[
  \sinh D=D+\frac16D^3+R_s,\qquad
  |R_s|\le C|D|^5e^{|D|},
\]
while
\[
  |e^{-Q/2}-1+Q/2|\le CQ^2,\qquad
  |e^{-Q/2}-1|\le CQ.
\]
Subtracting the cubic polynomial from the product gives the exact identity
\[
  e^{-Q/2}\sinh D-D-\tfrac16D^3+\tfrac12DQ
  =e^{-Q/2}R_s+\tfrac16D^3(e^{-Q/2}-1)
  +D(e^{-Q/2}-1+\tfrac12Q).
\]
Since \(e^{W_+}\ge1\), the last two terms are bounded by
\(C(|D|^3Q+|D|Q^2)e^{W_+}\).  The first term is at most
\(C|D|^5e^{|D|-Q/2}\).  To control this factor, put \(y=Q/2\).
If \(y\le D^2/2\), then
\(e^{|D|-y}\le e^{1/2}e^{D^2/2-y}\).  If \(y>D^2/2\), then
\(e^{|D|-y}\le e^{|D|-D^2/2}\le e^{1/2}\).
Thus \(e^{|D|-Q/2}\le Ce^{W_+}\), which proves the odd bound.

For the even bound, the scalar Taylor remainder gives
\[
  |e^W-1-W|\le CW^2e^{W_+}.
\]
Also,
\[
  1+\frac{D^2}{2}\le\cosh D\le e^{D^2/2},
\]
and hence
\[
  0\le e^{D^2/2}-\cosh D
  \le e^{D^2/2}-1-\frac{D^2}{2}
  \le CD^4e^{D^2/2}.
\]
Multiplying this inequality by \(e^{-Q/2}\) gives
\[
  0\le
  e^W-e^{-Q/2}\cosh D
  =
  e^{-Q/2}(e^{D^2/2}-\cosh D)
  \le CD^4e^W
  \le CD^4e^{W_+}.
\]
Since $W^2\lesssim D^4+Q^2$, the result follows.
\end{proof}

\section{Scale calculations for the dense branch}
\label{supp:dense-scales}
Throughout this section we assume \textup{(6.1)}
and work on the first-replica event of
Proposition~A.6. All recursive quantities
and cutoff parameters are those of Appendix~C.

\subsection{Adaptive deletion remainders}
\label{supp:proof-cubic-adaptive-remainder}
\begin{proof}[Proof of Lemma~C.7]
We first bound the two terms in \textup{(C.9)}, then
deduce pointwise smallness and sum the original compensation coefficients.
The standing assumptions imply
\begin{equation}
  N=h_nn^{3/4}|\beta|^{3/2},\qquad n\beta^2\gtrsim h_n^4,
  \qquad 1\le n\beta^2\le n.
  \label{eq:adaptive-regime-relations}
\end{equation}
For the parameter calculations in this proof, put $x=n\beta^2$.
Then $N=h_nx^{3/4}$, $1\le x\le n$, and $x\gtrsim h_n^4$.
Proposition~A.6 gives
$\|A\|_{\HS}^2/N^2\lesssim h_n^{-3}x^{-1/4}\lesssim h_n^{-4}=o(1)$,
so Proposition~C.5 applies.  The environment
bounds and its maximum-entry estimate \textup{(C.6)} give
\begin{equation}
  \|A\|_{\HS}^2\lesssim\frac{n^{5/4}|\beta|^{5/2}}{h_n},\qquad
  R_0^2\lesssim\frac{n^{1/4}|\beta|^{5/2}}{h_n},\qquad
  M_\ast\lesssim\frac{|\beta|^{5/4}\log(nN)}{h_n^{1/2}n^{3/8}}
  +\frac{|\beta|}{h_n^2n^{1/2}}.
  \label{eq:adaptive-environment-scales}
\end{equation}
The quartic estimate in that proposition yields
\begin{equation}
  \sum_m(r_m^2/N)^2\lesssim
  \frac{R_0^2\|A\|_{\HS}^2}{N^2}
  \lesssim\frac{x}{h_n^4n}=\frac{\beta^2}{h_n^4}=o(1).
  \label{eq:unweighted-quartic-small}
\end{equation}
Equations~\textup{(6.4)} and
\eqref{eq:adaptive-environment-scales} yield
\begin{equation}
\begin{aligned}
  T^2&\lesssim x/h_n^4+\log^2(nN),\qquad
  \Lambda\lesssim\frac{n}{h_nx^{3/4}}+\frac{x^{1/2}}{h_n^2}+\log(nN),\\
  M_\ast^2&\lesssim\frac{x^{5/4}\log^2(nN)}{h_nn^2}
  +\frac{x}{h_n^4n^2}
  \lesssim\frac{x^{5/4}\log^2(nN)}{h_nn^2}.
\end{aligned}
  \label{eq:adaptive-parameter-scales}
\end{equation}
The first term of \textup{(C.9)} is therefore at most
\[
  C\left\{\frac{x}{h_n^4n}+\frac{x^{1/2}}{h_n^{10}n}
  +\frac{\log^2(nN)}{h_n^6nx^{1/2}}\right\}=o(1).
\]
For the second, $d^2\asymp N$ gives
\begin{equation}
\begin{aligned}
  \frac{T^2\Lambda M_\ast^2\|A\|_{\HS}^2}{d^2N^2}
  &\lesssim\frac{T^2\Lambda x^{1/4}\log^2(nN)}{h_n^5n^2}\\
  &\lesssim\frac{C\{1+\log^5(nN)\}}{h_n^5n^{1/4}}=o(1).
\end{aligned}
  \label{eq:adaptive-mixed-product}
\end{equation}
For completeness, multiplying the two bounds for $T^2$ and $\Lambda$
in \eqref{eq:adaptive-parameter-scales} produces, before the last bound,
\[
\begin{gathered}
  \frac{x^{1/2}\log^2(nN)}{h_n^{10}n}
  +\frac{x^{7/4}\log^2(nN)}{h_n^{11}n^2}
  +\frac{x^{5/4}\log^3(nN)}{h_n^9n^2}\\
  {}+\frac{\log^4(nN)}{h_n^6nx^{1/2}}
  +\frac{x^{3/4}\log^4(nN)}{h_n^7n^2}
  +\frac{x^{1/4}\log^5(nN)}{h_n^5n^2}.
\end{gathered}
\]
Each term has the stated bound because $1\le x\le n$ and $h_n\ge1$;
also $\log(nN)=O(\log n)$ since $N\lesssim n$.  This proves $\sum_mU_m=o(1)$.

The row bound in Proposition~C.5 and
\textup{(C.7)} also give
\begin{equation}
  \max_m\frac{k_{m,\mathrm{ad}}r_m^2}{d}
  \le C\left\{\left(1+\frac TN\right)\frac{R_0^2}{d}
  +\frac{T\sqrt\Lambda M_\ast R_0}{N}\right\}.
  \label{eq:pointwise-adaptive-bound}
\end{equation}
Its first term is bounded by
\[
  C\left\{\frac{x^{7/8}}{h_n^{3/2}n}
  +\frac{x^{5/8}}{h_n^{9/2}n}
  +\frac{x^{1/8}\log(nN)}{h_n^{5/2}n}\right\}=o(1).
\]
For the second, the same environment bounds give
\[
  \left(\frac{T\sqrt\Lambda M_\ast R_0}{N}\right)^2
  \lesssim\frac{N}{n}\,
  \frac{T^2\Lambda x^{1/4}\log^2(nN)}{h_n^5n^2}=o(1),
\]
by \eqref{eq:adaptive-mixed-product} and $N\lesssim n$.
This proves pointwise smallness, including the version with denominator
$N$, since $d/N\asymp N^{-1/2}$.

Finally, restriction of $P^{(n)}=A^{\circ2}$ counts each original edge once:
\begin{equation}
  \sum_{m=2}^np_m=\sum_{i<j}A_{ij}^2=\tfrac12\|A\|_{\HS}^2.
  \label{eq:compensation-edge-count}
\end{equation}
Since $\max_mp_m\le R_0^2$, \eqref{eq:unweighted-quartic-small} implies
$\sum_mV_m\le R_0^2\|A\|_{\HS}^2/(2N^2)=o(1)$.
The definition of $\mathcal E_m$ now gives \textup{(C.11)}.
\end{proof}

\subsection{The nineteen cubic-moment terms}
\label{supp:adamczak-scales}
The component indices are the pairs \((|I|,|\mathcal J|)\)
in Table~D.1 of the main article.
Proposition~A.6 provides all the bounds on \(A\)
used in \textup{(D.22)}.  More precisely,
\textup{(A.8)} controls the maximum row norm and the
Hilbert--Schmidt norm, \textup{(A.10)} controls the maximum entry,
and \textup{(A.11)} controls the operator norm.  In addition, the
definition \(p_{\mathrm H}=2\vee K_{\mathrm H}\|A\|_{\HS}^2/N\) and
\textup{(A.8)} give
\(p_{\mathrm H}\lesssim1+\beta^4n^2/N^2\).  Since
\(N\lesssim n\beta^2\) in \textup{(6.1)}, we have
\(\beta^4n^2/N^2\gtrsim1\), so this quantity absorbs the constant term.
Consequently,
\[
\begin{gathered}
  p_{\mathrm H}\lesssim\frac{\beta^4n^2}{N^2},\qquad
  \max_i\|A_{i\cdot}\|_2\lesssim\beta^2\sqrt{\frac nN},\qquad
  \|A\|_{\HS}^2\lesssim\frac{\beta^4n^2}{N},\\
  \|A\|_{\max}\lesssim\frac{\beta^2\log n}{\sqrt N},\qquad
  \|A\|_{\mathrm{op}}\lesssim
  \frac{\beta^2 n}{N}
  +\beta^2\sqrt{\frac{n\log N}{N}}
  +\beta^2\log N.
\end{gathered}
\]
The \((3,1)\) component contributes one term.  Each of the five
components containing \(\|A\|_{\mathrm{op}}\) contributes three terms, and
the three summands in \textup{(D.14)} contribute the final three terms.
Thus there are nineteen bounds to check.
For example, before substituting for \(N\), the \((3,1)\) term and the
\(\beta^2n/N\) part of the \((3,2)\) term satisfy
\[
\begin{aligned}
  p_{\mathrm H}^{1/2}N^{-9/4}
  \max_i\|A_{i\cdot}\|_2\|A\|_{\HS}^2
  &\lesssim |\beta|^8n^{7/2}N^{-19/4},\\
  p_{\mathrm H}N^{-11/4}
  \max_i\|A_{i\cdot}\|_2(\beta^2n/N)^2
  &\lesssim |\beta|^{10}n^{9/2}N^{-29/4}.
\end{aligned}
\]
Using the definition of \(h_n\), these
become the first two corresponding
rows of Table~\ref{tab:adamczak-exponent-patterns}.  Applying the same
substitution to the other seventeen terms gives patterns of the form
$n^a|\beta|^bh_n^{-c}(\log n)^u$.

\begin{table}[tbp]
\centering
\caption{Exponent patterns for the nineteen terms obtained from
\textup{(D.22)}.  The source-term column identifies the quantity
expanded to obtain each row.}
\label{tab:adamczak-exponent-patterns}
\small
\renewcommand{\arraystretch}{0.92}
\begin{tabular}{@{}ccc@{}}
\toprule
Adamczak component & Source term & $(a,b,-c;u)$\\
\midrule
$(3,1)$ & first term of \textup{(D.22)} & $(-1/16,7/8,-19/4;0)$\\
$(3,2)$ & $\beta^2 n/N$ & $(-15/16,-7/8,-29/4;0)$\\
$(3,3)$ & $\beta^2 n/N$ & $(-7/8,-3/4,-17/2;0)$\\
$(2,1)$ & $\beta^2 n/N$ & $(-7/16,9/8,-29/4;0)$\\
$(2,2)$ & $\beta^2 n/N$ & $(-3/4,1/2,-9;1)$\\
$(1,1)$ & $\beta^2 n/N$ & $(-1/4,5/2,-9;1)$\\
$(3,2)$ & $\beta^2\sqrt{n\log N/N}$ & $(-19/16,5/8,-25/4;1)$\\
$(3,3)$ & $\beta^2\sqrt{n\log N/N}$ & $(-9/8,3/4,-15/2;1)$\\
$(2,1)$ & $\beta^2\sqrt{n\log N/N}$ & $(-9/16,15/8,-27/4;1/2)$\\
$(2,2)$ & $\beta^2\sqrt{n\log N/N}$ & $(-1,2,-8;2)$\\
$(1,1)$ & $\beta^2\sqrt{n\log N/N}$ & $(-3/8,13/4,-17/2;3/2)$\\
$(3,2)$ & $\beta^2\log N$ & $(-23/16,17/8,-21/4;2)$\\
$(3,3)$ & $\beta^2\log N$ & $(-11/8,9/4,-13/2;2)$\\
$(2,1)$ & $\beta^2\log N$ & $(-11/16,21/8,-25/4;1)$\\
$(2,2)$ & $\beta^2\log N$ & $(-5/4,7/2,-7;3)$\\
$(1,1)$ & $\beta^2\log N$ & $(-1/2,4,-8;2)$\\
$(0,0)$ & $\beta^6n(\log n)^{3/2}/N^{9/4}$ & $(-5/16,27/8,-39/4;3/2)$\\
$(0,0)$ & $\beta^6\sqrt{n\log n}/N^{3/2}$ & $(-1/4,9/2,-9;1/2)$\\
$(0,0)$ & $\beta^6(\log n)^4/N^{3/2}$ & $(-3/4,9/2,-9;4)$\\
\bottomrule
\end{tabular}
\end{table}

As Table~\ref{tab:adamczak-exponent-patterns} shows, the only negative powers
of \(|\beta|\) occur in the second and third rows.
Since \(n\beta^2\gtrsim h_n^4\),
\[
\begin{aligned}
  n^{-15/16}|\beta|^{-7/8}h_n^{-29/4}
  &\lesssim n^{-1/2}h_n^{-9},\\
  n^{-7/8}|\beta|^{-3/4}h_n^{-17/2}
  &\lesssim n^{-1/2}h_n^{-10}.
\end{aligned}
\]
Both right-hand sides tend to zero because \(n\to\infty\) and
\(h_n\to\infty\) by \textup{(6.1)}.  In every other row
of Table~\ref{tab:adamczak-exponent-patterns}, the exponent of \(|\beta|\) is
nonnegative, so \(|\beta|^b\le1\) by \(0<|\beta|\le1\) in
\textup{(A.7)}.  Each exponent of \(n\) is negative and
the logarithmic exponent is fixed, which gives
\(n^a(\log n)^u\to0\).  The remaining factor \(h_n^{-c}\) also tends to zero
by the definition of \(h_n\).  Thus every term on the right-hand side of
\textup{(D.22)} is \(o(1)\), proving \textup{(D.23)}.

\section{Proofs for the finite-sample bounds}
\label{supp:quantitative}
We retain the critical range \textup{(9.1)},
the error scale \textup{(9.2)}, and the smallness
constant \(c_\ast\) specified in Appendix~F.

\subsection{Quantitative prior truncation}
\label{supp:proof-quant-critical-bounds}
\begin{proof}[Proof of Lemma~F.1]
We use the concentration family at \(M\asymp\log n\), verify the
inputs of Lemma~\ref{lem:replica-moment-transfer}, and apply its product
conclusion. The smallness assumption is used in the quantitative checks
below; no fixed-\(M\) limit is applied to a varying \(M\).

\noindent\textit{Step 1. Choose the concentration event.}
The critical relation and a sufficiently small \(c_{a,b}\) give
\begin{equation}
 N\asymp n^{3/4}|\beta|^{3/2},\qquad
 \sqrt n\,|\beta|\asymp N^{2/3},\qquad \log^{14}(en)\le n.
 \label{eq:quant-critical-substitution}
\end{equation}
Choose an integer \(M\asymp\log(en)\), so that
\(x_M\asymp\log^2(en)\), and take \(\mathcal E_n=\mathcal E_{n,M}\)
from Lemma~\ref{lem:critical-concentration-family}. The row bounds have
\(n+x_M^7\le Cn\), and \(\log N\le C_{a,b}\log(en)\). Thus the
maximum-entry, row, operator, and coefficient estimates of that lemma give
\textup{(F.1)} and \textup{(F.2)}. Its failure
bound gives
\(\mathbb P(\mathcal E_n^c)\le n^{-2M}
\le C_{a,b}e^{-c_{a,b}\log^2(en)}\), proving \textup{(F.4)}.
Use the corresponding covariance cutoff
\[
 \Lambda^\sharp=C_{\mathrm{cov}}
      \{n/N+\log^2(en)+\log N\},
\]
with an absolute adjustment of constants so that
\(\mathcal E_n\subset\mathcal C_n(\Lambda^\sharp+1)\).

\noindent\textit{Step 2. Verify the spectral and recursive inputs.}
Fix \(q^\ast\in\mathcal E_n\), write \(A=\beta^2B(q^\ast)\),
\(A_s=sA\), and take \(s_\star=3/2\). Substitution of
\textup{(F.1)} and \eqref{eq:quant-critical-substitution}
gives, for \(0\le s\le2s_\star\),
\begin{equation}
\begin{aligned}
  \frac{\sqrt{\Lambda^\sharp+1}\,R_2(sA)}d
  &\le C_{a,b}s_\star\left(
    N^{-1/6}
    +\frac{\log(en)}{N^{1/3}}
    \right)\le c_\ast,\\
  \frac{\|sA\|_{\mathrm{op}}}{N}
  &\le s\frac{\|A\|_{\mathrm{op}}}{N}\le c_\ast.
\end{aligned}
\label{eq:quant-spectral-smallness}
\end{equation}
\begin{equation}
  \frac{(\Lambda^\sharp+1)R_2(A)^2}{N}
  \le C_{a,b}\left(
    N^{-1/6}+
    \frac{\log(en)}{N^{1/3}}
    \right)^2
  \le c_\ast.
  \label{eq:quant-cutoff-replacement}
\end{equation}

Equation~\eqref{eq:quant-spectral-smallness} verifies both hypotheses of
Theorem~C.2, including for the moment with
exponent \(2K_{A_s}\). Equation~\eqref{eq:quant-cutoff-replacement} bounds
the local exponential factor in \textup{(7.3)}.
The spherical boundary tail is
\(\exp[-cd\{n/N+\log^2(en)+\log N\}]\), and
\[
 \frac{\|A_s\|_{\HS}^2/N+\log(2n)}
 {d\{n/N+\log^2(en)+\log N\}}
 \le C_{a,b}(N^{-1/6}+N^{-1/2}).
\]
Taking \(c_{a,b}\) small, using \(\varepsilon_{n,N,\beta}\ge N^{-\gamma_0}\),
absorbs the weight and proves
\begin{equation}
  \sup_{s,t}|\operatorname{Rem}_{\mathrm{cov}}(t)|
  \le C_{a,b}\exp\{-c_{a,b}\log^2(en)\}.
  \label{eq:quant-weighted-covariance-remainder}
\end{equation}

Choose the deletion threshold
\(T^\sharp=K_0\{C_{\mathrm{mat}}\beta^4n^2/N^2+\log^2(en)\}\),
with its absolute constant enlarged to cover \(s\le s_\star\).
The adaptive bounds \textup{(C.7)} and \textup{(C.9)},
applied to \(A_s\), yield
\begin{equation}
\begin{aligned}
  \max_m\frac{k_{m,\mathrm{ad}}r_m^2}{N}
  +\max_m\frac{k_{m,\mathrm{ad}}r_m^2}{d}
  &\le C_{a,b}\varepsilon_{n,N,\beta}
  \le c_\ast,\\
  \sum_m\left[
    k_{m,\mathrm{ad}}^2\left(\frac{r_m^2}{N}\right)^2
    +\left(\frac{p_m}{N}\right)^2
  \right]
  &\le C_{a,b}.
\end{aligned}
\label{eq:quant-recursion-scales}
\end{equation}

These verify the local deletion conditions and the summed remainder in
\eqref{eq:replica-recursive-input}. Also,
\(\|A_s\|_{\HS}^2/N^2\le C_{a,b}N^{-1/3}\le1/20\), and its
trace-approximation term is at most \(C_{a,b}\beta^2/N\).
The trace potential satisfies
\[
 |\mathcal P_N(A_s)|\le C\frac{\|A_s\|_{\HS}^2}{N}
                       \frac{\|A_s\|_{\mathrm{op}}}{N}
 \le C_{a,b}\left\{1+\sqrt{\frac Nn}\log(en)
                         +\frac{N\log^2(en)}n\right\}\le C_{a,b}.
\]
Moreover, \textup{(C.8)} and the spectral second moment
give \(\eta_{\mathrm{rec}}(s)
\le C\sqrt n\exp\{C_0\|A_s\|_{\HS}^2/N-T^\sharp/2\}
\le C_{a,b}e^{-c_{a,b}\log^2(en)}\). Thus the endpoint input is bounded.
For the local scores, use \(T_{\mathrm{loc}}\) from
Section~D.2 in the same adaptive estimate:
\begin{equation}
\begin{aligned}
  \max_i\frac{k_{i,\mathrm{ad}}\|A_{i\cdot}\|_2^2}{N}
  +\max_i\frac{k_{i,\mathrm{ad}}\|A_{i\cdot}\|_2^2}{d}
  &\le C_{a,b}\varepsilon_{n,N,\beta}\le c_\ast,\\
  \sum_i\left(\frac{k_{i,\mathrm{ad}}\|A_{i\cdot}\|_2^2}{N}\right)^2
  &\le C_{a,b}.
\end{aligned}
  \label{eq:quant-local-row-scales}
\end{equation}
The $d$-normalization follows from \eqref{eq:critical-pointwise-local-d}
with the quantitative logarithmic levels: by
\eqref{eq:quant-critical-substitution}, its three powers
$N^{7/6}/n$, $N^{5/6}/n$, and $N^{5/3}/n^{3/2}$ are bounded by
$C_{a,b}n^{-1/16}|\beta|^{7/8}$, and their logarithmic factors are
absorbed by \textup{(9.2)}.
These bounds verify \textup{(7.7)}.
For exceptional rows, \textup{(7.13)}, with $A_s$ and $p_s$,
gives
\[
 |\mathcal E_{\mathrm{bad}}(t)|
 \le C_{a,b}n^{-2}\max\{1,Z_s(t)\}.
\]
Indeed, $R_2(A_s)\le\sqrt n\,s_\star$ bounds the cubic prefactor by $Cn^4$,
and $T_{\mathrm{loc}}/p_s\ge c_{K_{\mathrm H}}K_1\log n$ absorbs it
when $K_1$ is sufficiently large.

\noindent\textit{Step 3. Verify the cubic, score, and product inputs.}
Uniformly over \(q^\ast,q\in\mathcal E_n\), the sixth-row bound gives the
first term below, while the maximum-entry and Hilbert--Schmidt bounds give
the other two:
\begin{equation}
\begin{aligned}
 \sup_{0\le s\le s_\star}\bigg\{
  &\sum_{i<j}|s\beta^2B_{ij}^\ast B_{ij}|^3
  +\frac{\|A_s\|_{\max}\|A_s\|_{\HS}^2}{N^{3/2}}
  +\frac1{N^{3/2}}\sum_{i,j}|(A_s)_{ij}|^3
 \bigg\}
 \le C_{a,b}\varepsilon_{n,N,\beta}.
\end{aligned}
 \label{eq:quant-product-interpolation-ledger}
\end{equation}

More explicitly, they are bounded by \(C_{a,b}N/n\) and
\(C_{a,b}\{\log^2(en)N/n+\beta^2/\sqrt N\}\), respectively. Use
\begin{equation}
  \frac Nn\lesssim\frac{|\beta|^{7/8}}{n^{1/16}},
  \qquad
  \frac{\beta^2}{\sqrt N}\lesssim
  \frac{|\beta|^{7/8}}{n^{1/16}},
  \label{eq:quant-product-critical-ratios}
\end{equation}

and \(\log^2(en)\le\log^{12}(en)\) to obtain the stated bound.
The three terms control, respectively, the product Taylor remainder, the
entry part of \textup{(7.6)}, and the repeated-index correction.

For \(g_s,\mathcal T_{g_s}\) formed from \(A_s\), use the exponent
\(p_{\mathrm H}=p_s\) of Lemma~\ref{lem:replica-moment-transfer}.
In Table~\ref{tab:adamczak-exponent-patterns}, the ratio
\(h=N/(n^{3/4}|\beta|^{3/2})\) is bounded above and away from zero.
The only negative powers of \(|\beta|\) occur in
\[
 n^{-15/16}|\beta|^{-7/8}
 =n^{-1/2}(\sqrt n|\beta|)^{-7/8},\qquad
 n^{-7/8}|\beta|^{-3/4}
 =n^{-1/2}(\sqrt n|\beta|)^{-3/4}.
\]
Here \(\sqrt n|\beta|\ge c_{a,b}^{-1}\). Every other row has a
nonnegative power of \(|\beta|\), at most twelve logarithmic factors after
the quantitative substitutions, and a power of \(n\) at most \(-1/16\).
The nineteen terms therefore give
\begin{equation}
    \sup_{\substack{0\le s\le s_\star\\0\le t\le1}}
    \|\mathcal T_{g_s}\|_{L^{p_{\mathrm H}}(\mu_t^{\otimes n})}
    \le C_{a,b}\left\{
      n^{-1/2}+n^{-1/16}\log^{12}(en)
    \right\}
    \le C_{a,b}.
    \label{eq:quant-nineteen-term-audit}
  \end{equation}

For the scores, \((x+y)^2\le2x^2+2y^2\) in \textup{(7.6)} and
\(\sum_i\|A_{i\cdot}\|_2^4\le\|A\|_{\HS}^4\) give
\begin{equation}
    \sup_{\substack{0\le s\le s_\star\\0\le t\le1}}
    \sum_i|\varepsilon_{i,s}(t)|
    \le C_{a,b}\left\{
      \sum_i\left(
        \frac{k_{i,\mathrm{ad}}\|A_{i\cdot}\|_2^2}{N}
      \right)^2
      +\left(
        \frac{\|A\|_{\max}\|A\|_{\HS}^2}{N^{3/2}}
      \right)^2
    \right\}
    \le C_{a,b}.
    \label{eq:quant-score-error-sum}
  \end{equation}

The last inequality uses \eqref{eq:quant-local-row-scales} and
\eqref{eq:quant-product-interpolation-ledger}; it also bounds
\(\sum_i\varepsilon_{i,s}^{\ast}\), since the rowwise majorants are
independent of \(t\) and the good configuration. Combining the two estimates,
\begin{equation}
   \sup_{\substack{0\le s\le s_\star\\0\le t\le1}}
   \left\{
   \|\mathcal T_{g_s}\|_{L^{p_{\mathrm H}}(\mu_t^{\otimes n})}
   +\sum_i|\varepsilon_{i,s}(t)|
 \right\}
   \le C_{a,b}.
   \label{eq:quant-odd-aggregate-ledger}
  \end{equation}

Together with Step 2, these are all inputs of
Lemma~\ref{lem:replica-moment-transfer}, with \(B_0,B_1\le C_{a,b}\).
The product factors are positive once \(c_{a,b}\) is small enough that
\(d\ge3\), since \(B_{ij}^\ast B_{ij}\ge-1/(d-1)\) and \(s_\star=3/2\).
Its product assertion, using \eqref{eq:quant-product-interpolation-ledger},
proves \textup{(F.3)}.
\end{proof}

\subsection{Polynomial signal scaling}
\label{supp:proof-quant-polynomial-rate}
\begin{proof}[Proof of Corollary~F.2]
The two terms inside braces in \textup{(9.2)} have orders
\(n^{-(1-r)/2}\) and \(n^{-(1+7r)/16}\).  Moreover,
\(N^{-\gamma_0}\asymp n^{-3\gamma_0(1-r)/4}\), while
\(\log^2(eN)/n\) has order \(n^{-1}\) up to logarithms.  The factor
\(\log^{12}(en)\) can be absorbed by decreasing the smallest of these four
positive exponents.  This gives some \(\eta_r>0\) and proves
\textup{(F.5)}.
\end{proof}

\subsection{Quantitative graph expansion and second moment}
\label{supp:proof-quant-second-moment}
The four errors are conditioning, nontriangle components,
intersections between triangles, and the higher-degree tail.
\begin{proof}[Proof of Proposition~F.3]
Fix $K=\lfloor c_1\log(eN)\rfloor$, with $c_1>0$ sufficiently small.
We bound the four errors in the order just described, using
$\mathfrak c_{k,n}^{\circ}$ and $\mathsf{Act}_n(H)$ from
\textup{(8.6)} and
\textup{(8.10)}.

\noindent\textit{Step 1. Remove conditioning up to degree $K$.}
Bounded edge factors and $|x^2-y^2|\le2|x-y|$ for $|x|,|y|\le1$ give
\begin{equation}
  |\mathfrak c_{k,n}-\mathfrak c_{k,n}^{\circ}|
  \le2n^{2k}\mathbb P(\mathcal E_n^c),\qquad 0\le k\le K.
  \label{eq:quant-coefficient-conditioning}
\end{equation}
Since $N\le C_{a,b}n^{3/4}$, \textup{(F.4)} makes the sum
over $k\le K$ at most $C_{a,b}e^{-c\log^2(en)}$ when $c_1$ is small.

\medskip\noindent\textit{Step 2. Sum nontriangle components.}
Lemma~8.2 applies because
$n\beta^2\le C_{a,b}N^{4/3}$. It gives zero for a component with a leaf,
and bounds every other connected nontriangle graph with $j$ edges by
$e^{C_{\mathrm{act}}j\log(ej)}N^{-\sqrt{2j}/6}$.
Such a graph has at most $2j$ vertices, so the number of isomorphism types is
at most
\[
  \sum_{v=2}^{2j}\binom{\binom v2}{j}
  \le2j\binom{2j^2}{j}\le e^{C_0j\log(ej)}.
\]
Consequently, with $c=\sqrt2/6$,
\begin{equation}
  \sum_{\substack{H\ \mathrm{connected},\ e(H)=j\\H\not\cong C_3}}
  \mathsf{Act}_n(H)
  \le e^{Cj\log(ej)}N^{-c\sqrt j}.
  \label{eq:quant-nontriangle-connected-sum}
\end{equation}
The sum is zero for $j\le3$. For $4\le j\le K$, decreasing $c_{a,b}$
ensures $C\sqrt j\log(ej)\le(c/2)\log N$, since $K=O(\log N)$.
Thus, for some absolute $c_2>0$,
\begin{equation}
  \sum_{j=1}^K\sum_{\substack{H\ \mathrm{connected},\ e(H)=j\\H\not\cong C_3}}
  \mathsf{Act}_n(H)
  \le\sum_{j=4}^K N^{-c\sqrt j/2}\le C_{a,b}N^{-c_2}.
  \label{eq:quant-nontriangle-connected-total}
\end{equation}

Let $m_G$ be the multiplicity of connected type $G$ in a graph $H$.
Independence, the component automorphisms, and disjointness of the vertex
images give
\[
\begin{gathered}
  t_d(H)=\prod_Gt_d(G)^{m_G},\qquad
  |\operatorname{Aut}(H)|=\prod_G|\operatorname{Aut}(G)|^{m_G}m_G!,\\
  (n)_{v(H)}\le\prod_G(n)_{v(G)}^{m_G},\qquad
  \mathsf{Act}_n(H)\le\prod_G\frac{\mathsf{Act}_n(G)^{m_G}}{m_G!}.
\end{gathered}
\]
Here the falling-factorial inequality compares disjoint embeddings with
separately chosen embeddings. Sum over component multiplicities and drop
the restriction $\sum_Gm_Ge(G)\le K$. The nonnegative sum then factorizes:
\[
  \sum_{(m_G)}\prod_G\frac{\mathsf{Act}_n(G)^{m_G}}{m_G!}
  =\prod_G e^{\mathsf{Act}_n(G)}
  =\exp\left\{\sum_G\mathsf{Act}_n(G)\right\},
\]
where $G$ ranges over connected types with at most $K$ edges.
This is the exponential formula for components
\cite[Section~II.5]{FlajoletSedgewick2009}; the embedding inequality makes it
an upper bound for the graph sum. Decrease $c_{a,b}$ so that $K\ge3$.
Since $\mathsf{Act}_n(C_3)=\Delta_n^2$, restricting the multiplicity sum
to families with at least one nontriangle type gives
\begin{equation}
\begin{aligned}
  \sum_{\substack{H:\ e(H)\le K,\ v(H)\le n\\
                  H\ \text{has a nontriangle component}}}\mathsf{Act}_n(H)
  &\le e^{\Delta_n^2+C_{a,b}N^{-c_2}}-e^{\Delta_n^2}\\
  &\le C_{a,b}N^{-c_2},
\end{aligned}
  \label{eq:quant-nontriangle-total}
\end{equation}
by \eqref{eq:quant-nontriangle-connected-total} and $\Delta_n\le b$.

\medskip\noindent\textit{Step 3. Bound intersections between triangles.}
The count of $m$ vertex-disjoint triangles and their moment give
\[
  \mathfrak c_{3m,n}^{\triangle}
  :=\frac{(n)_{3m}}{m!(3!)^m}\beta^{6m}N^{-4m}
  =\frac{\Delta_n^{2m}}{m!}\frac{(n)_{3m}}{[(n)_3]^m}.
\]
The term $\log^2(eN)/n$ in \textup{(9.2)} ensures $K\le n/2$
after decreasing $c_{a,b}$. For $0\le3m\le K$, all factors below lie
in $[0,1]$, and $1-\prod_i(1-x_i)\le\sum_ix_i$ gives
\[
  0\le1-\frac{(n)_{3m}}{[(n)_3]^m}
  =1-\prod_{j=0}^{m-1}\prod_{r=0}^2\left(1-\frac{3j}{n-r}\right)
  \le\sum_{j=0}^{m-1}\sum_{r=0}^2\frac{3j}{n-r}
  \le C\frac{m^2}{n}.
\]
Therefore
\begin{equation}
  \sum_{0\le3m\le K}\left|
  \mathfrak c_{3m,n}^{\triangle}-\frac{\Delta_n^{2m}}{m!}\right|
  \le\frac{CK^2}{n}\sum_{m\ge0}\frac{\Delta_n^{2m}}{m!}
  \le C_{a,b}\frac{K^2}{n}.
  \label{eq:quant-triangle-collision}
\end{equation}

\medskip\noindent\textit{Step 4. Control the tail and combine the errors.}
For fixed $1<R\le s_\star$, coefficient nonnegativity
(Lemma~8.1) and
\textup{(F.3)} yield
\[
  \sum_{k>K}\mathfrak c_{k,n}
  +\sum_{3m>K}\frac{\Delta_n^{2m}}{m!}
  \le R^{-K}\{\mathcal Z_n^{\mathcal E}(R)+e^{\Delta_n^2R^3}\}
  \le C_{a,b}R^{-K}.
\]
By \textup{(8.10)}, the unconditioned sum
up to degree $K$ consists of the triangle unions in Step 3 and the graphs
in \eqref{eq:quant-nontriangle-total}; the empty graph is counted by $m=0$.
Splitting $\mathcal Z_n^{\mathcal E}(1)$ and $e^{\Delta_n^2}$ at degree $K$
and applying Steps 1--3 therefore gives
\begin{equation}
\begin{aligned}
  |\mathcal Z_n^{\mathcal E}(1)-e^{\Delta_n^2}|
  &\le C_{a,b}\left\{e^{-c\log^2(en)}+N^{-c_2}+K^2/n+R^{-K}\right\}\\
  &\le C_{a,b}\left\{N^{-\gamma_0}+\log^2(eN)/n\right\},
\end{aligned}
  \label{eq:quant-partition-expansion}
\end{equation}
where $0<\gamma_0<\min\{c_2,c_1\log R\}$. The last line uses
$R^{-K}\le C_RN^{-c_1\log R}$ and $N\le C_{a,b}n^{3/4}$, which absorbs
the superpolynomial conditioning error. Finally,
\textup{(F.4)} gives
$|\pi_n^{-2}-1|\le C_{a,b}\varepsilon_{n,N,\beta}$.
Since $\Delta_n\le b$, division by $\pi_n^2$ in
\textup{(8.3)} proves \textup{(F.6)}.
\end{proof}

\subsection{Quantitative signed-triangle approximation}
\label{supp:proof-quant-triangle}
We use the quantitative de Jong bound in Theorem~1.3 of
\cite{DoblerPeccati2017}. For independent variables indexed by
\([m]=\{1,\ldots,m\}\), let \(\binom{[m]}{k}\) denote the
\(k\)-element subsets. If \(U=\sum_{J\in\binom{[m]}{k}}U_J\) is a
unit-variance degenerate \(U\)-statistic of fixed order \(k\) with finite
fourth moment, define
\[
  \rho^2
  :=\max_{1\le i\le m}
  \sum_{\substack{J\in\binom{[m]}{k}\\ i\in J}}\mathbb EU_J^2.
\]
That theorem gives
\begin{equation}
  d_W(\mathcal L(U),\Normal(0,1))
  \le
  C_k\left(
  |\mathbb EU^4-3|^{1/2}+\rho
  \right).
  \label{eq:quant-deJong}
\end{equation}

We apply \eqref{eq:quant-deJong} to the signed triangle under the null law and,
after conditioning on the latent features, under the alternative law.
In both applications, \(m=\binom n2\) counts the independent edge variables,
\(k=3\), and \(U_J=0\) unless the three edges in \(J\) form a retained triangle.
\begin{proof}[Proof of Proposition~F.4]
We reuse the finite-sample moment calculations in the proof of
Lemma~8.4. The quantitative steps are to apply
\eqref{eq:quant-deJong}, track the normalization error, and remove the
prior conditioning.

\medskip
\noindent\textit{Step 1. Null approximation.}
Step 1 of that proof gives maximum influence
\((n-2)/\binom n3=O(n^{-2})\) and
\(|\mathbb E_{\Qq_n}W_n^4-3|\le Cn^{-2}\).
Equation \eqref{eq:quant-deJong} proves the first bound.

\medskip
\noindent\textit{Step 2. Conditional leading term.}
For the alternative bound, it suffices to consider
\(\varepsilon_{n,N,\beta}\le c_{a,b}\). Indeed,
Proposition~4.1 gives
\(\mathbb E_{\Pp_n}W_n=\Delta_n\) and
\[
 \operatorname{Var}_{\Pp_n}(W_n)
 \le1+\frac{3(n-3)\beta^4}{N^3}\le C_{a,b},
\]
since \textup{(9.1)} gives
\(n\beta^2\le3b^{2/3}N^{4/3}\), \(N\ge2\), and \(|\beta|\le1\).
The triangle inequality through the
point mass at \(\Delta_n\) bounds the Wasserstein distance by
\(\sqrt{\operatorname{Var}_{\Pp_n}(W_n)}+\sqrt{2/\pi}\le C_{a,b}\).
This proves the alternative bound when
\(\varepsilon_{n,N,\beta}>c_{a,b}\), after enlarging the constant.
We henceforth assume \(\varepsilon_{n,N,\beta}\le c_{a,b}\).

Use \(S_n,V_n,S_n^\circ,V_n^\circ,\mathcal D_n\) and \(I_e^\circ\) from
Steps 2--3 of Lemma~8.4. On
\(q\in\mathcal E_n\), \textup{(F.1)} gives
\(\beta^2\sum_{i<j}B_{ij}^2\le C_{a,b}\beta^2n^2/N\).
Equations \textup{(8.22)} and
\textup{(8.24)} therefore yield
\begin{equation}
 0\le1-\frac{V_n(q)}{\binom n3}\le C_{a,b}\frac{\beta^2}{N},
 \label{eq:quant-variance-defect}
\end{equation}
\begin{equation}
 \frac{\mathbb E[(S_n-S_n^\circ)^2\mid q]}{\binom n3}
 \le\frac{(n-2)|\mathcal D_n(q)|}{\binom n3}
 \le C_{a,b}\frac{\beta^2}{N}.
 \label{eq:quant-deleted-chaos}
\end{equation}
Consequently, \(V_n^\circ(q)/\binom n3=1+O_{a,b}(\beta^2/N)\).
By \eqref{eq:quant-critical-substitution} and
\textup{(9.2)}, this ratio lies in \([1/2,1]\) after
decreasing \(c_{a,b}\). The retained edge variables are independent,
centered, variance one and bounded by \(2\sqrt2\), as in the cited proof.
Their maximum influence is
\(\rho_n^2\le(n-2)/V_n^\circ(q)\le Cn^{-2}\).
Equation \textup{(8.25)}, together with
\(\sum_{\mathfrak t}a_{\mathfrak t}^4\le V_n^\circ\) and
\(\sum_e(I_e^\circ)^2\le3\rho_n^2(V_n^\circ)^2\), gives
\[
 \left|\mathbb E\left[\left.
  \left(\frac{S_n^\circ}{\sqrt{V_n^\circ(q)}}\right)^4\right|q\right]-3\right|
 \le C\left\{\frac1{V_n^\circ(q)}+
       \frac{n^4}{V_n^\circ(q)^2}+\rho_n^2\right\}\le\frac C{n^2}.
\]
Thus \eqref{eq:quant-deJong} gives conditional Wasserstein error \(C/n\)
for \(S_n^\circ/\sqrt{V_n^\circ(q)}\). Put
\(\sigma_n(q)=\sqrt{V_n^\circ(q)/\binom n3}\). Coupling the deleted
and retained sums on the same edges, and using the same
\(Z\sim\Normal(0,1)\) at the two Gaussian scales, gives
\begin{equation}
\begin{aligned}
 &d_W\left(\mathcal L\left(\frac{S_n}{\sqrt{\binom n3}}\Bigm|q\right),
                  \Normal(0,1)\right)\\
 &\quad\le
 \left\{\frac{\mathbb E[(S_n-S_n^\circ)^2\mid q]}{\binom n3}\right\}^{1/2}
 +\frac{C\sigma_n(q)}n+|\sigma_n(q)-1|\mathbb E|Z|\\
 &\quad\le C_{a,b}\left(\frac1n+\frac{|\beta|}{\sqrt N}
                                  +\frac{\beta^2}{N}\right).
\end{aligned}
\label{eq:quant-conditional-leading}
\end{equation}
The last inequality uses \eqref{eq:quant-variance-defect},
\eqref{eq:quant-deleted-chaos} and
\(|\sigma_n-1|\le|\sigma_n^2-1|\).
Outside \(\mathcal E_n\), the pointwise bound
\(|S_n|\le8\binom n3\) bounds the conditional Wasserstein distance by
\(8\sqrt{\binom n3}+\mathbb E|Z|\).
Integrating \eqref{eq:quant-conditional-leading} and using
\textup{(F.4)} yields
\begin{equation}
 d_W\left(\mathcal L_{\Pp_n}\left(\frac{S_n}{\sqrt{\binom n3}}\right),
                  \Normal(0,1)\right)
 \le C_{a,b}\left(\frac1n+\frac{|\beta|}{\sqrt N}
                         +\frac{\beta^2}{N}\right)+Cn^{-18}.
 \label{eq:quant-unconditional-leading}
\end{equation}
Indeed, the omitted contribution is at most
\(C n^{3/2}\mathbb P(\mathcal E_n^c)\), which is absorbed by \(Cn^{-18}\)
in the range where \(\varepsilon_{n,N,\beta}\le c_{a,b}\).

\medskip
\noindent\textit{Step 3. Remainders and conditioning.}
The decomposition \textup{(8.20)} and the
finite-sample bounds \textup{(8.26)} hold without a
limit assumption on \(\Delta_n\). Taking square roots, then applying
Cauchy--Schwarz and \(d_W(\mathcal L(X),\mathcal L(Y))\le\mathbb E|X-Y|\),
gives
\[
\begin{aligned}
 &d_W\left(\mathcal L_{\Pp_n}(W_n),\Normal(\Delta_n,1)\right)\\
 &\quad\le C_{a,b}\bigg[\frac1n+\frac{|\beta|}{\sqrt N}+\frac{\beta^2}{N}
       +\beta^2\left(\frac1N+\frac{\sqrt n}{N^{3/2}}\right)\\
 &\hspace{4em}+|\beta|^3\left(\frac1{N^{3/2}}+\frac{\sqrt n}{N^2}\right)
       +n^{-18}\bigg]\le C_{a,b}\varepsilon_{n,N,\beta}.
\end{aligned}
\]
Here normal symmetry handles \(\operatorname{sgn}(\beta)S_n\),
\textup{(8.26)} identifies the mean shift as
\(\Delta_n\), and the last inequality follows from
\eqref{eq:quant-critical-substitution} and \textup{(9.2)}.

Finally, decomposing the prior over \(\mathcal E_n\) and its complement
gives \(\TV(\Pp_n,\Pp_n^{\mathcal E_n})\le\mathbb P(\mathcal E_n^c)\).
Since \(|W_n|\le\sqrt{\binom n3}\), subtracting \(f(0)\) in the dual
definition of \(d_W\) shows that
\[
 d_W\left(\mathcal L_{\Pp_n}(W_n),
          \mathcal L_{\Pp_n^{\mathcal E_n}}(W_n)\right)
 \le2\sqrt{\binom n3}\,\mathbb P(\mathcal E_n^c)
 \le C_{a,b}n^{3/2}e^{-c_{a,b}\log^2(en)}.
\]
This is absorbed by \(\varepsilon_{n,N,\beta}\ge n^{-1/2}\), proving
the conditioned bound.
\end{proof}

\section{Spectral background for spherical kernel graphs}
\label{supp:spectral-background}

For a measurable connection function \(K:[-1,1]\to[0,1]\), retain
the graph laws \(\Pp_{n,d,K}\) and \(\Qq_{n,p}\) from the introduction,
where \(p=\mathbb E K(\langle X_1,X_2\rangle)\). Assume \(0<p<1\).
Define the centered normalized kernel and edge variables by
\[
  \kappa(t)=\frac{K(t)-p}{\sqrt{p(1-p)}},
  \qquad
  Y_{ij}=\frac{a_{ij}-p}{\sqrt{p(1-p)}}.
\]
The integral operator on \(L^2(\Sph^{d-1},\sigma_d)\) associated with
$\kappa$ is
\[
  (\Kop_\kappa f)(x)
  =
  \int_{\Sph^{d-1}}
  \kappa(\langle x,y\rangle)f(y)\,d\sigma_d(y).
\]
The bounded symmetric kernel makes $\Kop_\kappa$ self-adjoint and
Hilbert--Schmidt; hence $\Kop_\kappa^\ell$ is trace class for every integer
$\ell\ge2$.  Rotational invariance and the definition of $p$ give
$\int\kappa(\langle x,y\rangle)\,d\sigma_d(x)=0$ for every $y$, so constants
belong to the zero eigenspace.  Writing the eigenvalues with multiplicity as
$(\lambda_j)_j$, set
\[
  \tau:=\tr(\Kop_\kappa^3)=\sum_j\lambda_j^3,
  \qquad \zeta:=\tr(\Kop_\kappa^4)=\sum_j\lambda_j^4.
\]
The Funk--Hecke formula makes $\Kop_\kappa$ scalar on each spherical harmonic
subspace \cite[Chapters~1--2]{DaiXu2013}.  These traces therefore decompose
by harmonic degree; the cubic contributions may cancel, while the fourth
trace is nonnegative.

For a simple cycle on distinct vertices $i_1,\ldots,i_\ell$, $\ell\ge3$,
put $i_{\ell+1}=i_1$ and $\mathcal X=\sigma(X_1,\ldots,X_n)$.
Its edges are conditionally independent given $\mathcal X$, with
$\E(Y_{ij}\mid\mathcal X)=\kappa(\langle X_i,X_j\rangle)$, so
\[
  \E\!\left(\prod_{r=1}^\ell Y_{i_ri_{r+1}}\,\middle|\,\mathcal X\right)
  =\prod_{r=1}^\ell\kappa(\langle X_{i_r},X_{i_{r+1}}\rangle).
\]
Integrating the independent latent points, with $x_{\ell+1}=x_1$, gives
\begin{equation}
  \E_{\Pp_{n,d,K}}\prod_{r=1}^{\ell}Y_{i_ri_{r+1}}
  =\int\prod_{r=1}^{\ell}\kappa(\langle x_r,x_{r+1}\rangle)
  \,d\sigma_d(x_1)\cdots d\sigma_d(x_\ell)
  =\tr(\Kop_\kappa^\ell).
  \label{eq:supp-cycle-trace-identity}
\end{equation}
The last equality follows by kernel composition and the trace identity for
products of Hilbert--Schmidt operators; boundedness justifies the integrals.
Under $\Qq_{n,p}$ the moment is zero, since the distinct cycle edges are
independent and centered.  In particular, the signed-triangle means are
$\tau$ under $\Pp_{n,d,K}$ and zero under $\Qq_{n,p}$.

\subsection{Spherical harmonic layers}

For $d\ge2$, let $\cH_{k,d}$ be the real spherical harmonics of degree $k$
on $\Sph^{d-1}$.  The dimension formula \cite[Chapter~1]{DaiXu2013} is
\begin{equation}
  N_{k,d}:=\dim\cH_{k,d}
  =\binom{d+k-1}{k}-\binom{d+k-3}{k-2},
  \label{eq:harmonic-multiplicity}
\end{equation}
with the second binomial interpreted as zero for $k<2$.  Thus $N_{0,d}=1$
and, for $k\ge1$,
\[
  N_{k,d}=\frac{(2k+d-2)(k+d-3)!}{k!(d-2)!}.
\]
For an orthonormal basis $(\psi_{k,m})_{m=1}^{N_{k,d}}$ in $L^2(\sigma_d)$,
the addition formula \cite[Chapter~1]{DaiXu2013} defines the zonal polynomial
\begin{equation}
  Z_{k,d}(\langle x,y\rangle)
  :=\frac1{N_{k,d}}\sum_{m=1}^{N_{k,d}}\psi_{k,m}(x)\psi_{k,m}(y).
  \label{eq:spherical-harmonic-addition-formula}
\end{equation}
The sum is basis independent and rotationally invariant; its normalization
gives $Z_{k,d}(1)=1$ and, by Cauchy--Schwarz, $|Z_{k,d}(t)|\le1$.
Completeness in the zonal $L^2$ space \cite[Chapter~1]{DaiXu2013} gives
\[
  \kappa(t)=\sum_{k\ge1}b_{k,d}Z_{k,d}(t)\quad\text{in }L^2.
\]
The constant term vanishes because $\kappa$ has spherical mean zero.
For $d\ge3$, $Z_{k,d}$ are normalized Gegenbauer polynomials; for $d=2$,
$Z_{k,2}(t)=\cos(k\arccos t)$, with the same addition and expansion formulas.

\begin{definition}[Active harmonic layers and spectral blocks]
\label{def:harmonic-layers}
A degree \(k\ge1\) is an \emph{active harmonic layer} of \(\kappa\) if
\(b_{k,d}\ne0\).  The corresponding invariant subspace
\(\cH_{k,d}\) is its \emph{spectral block}.  A kernel is
\emph{single-layer}, or \emph{pure of degree \(k\)}, if exactly one degree is
active, so that \(\kappa=\beta Z_{k,d}\) for some \(\beta\ne0\).  It is
\emph{multi-layer} if at least two degrees are active.  The number of active
degrees may be finite or infinite.
\end{definition}

By the addition formula,
$\left.\Kop_\kappa\right|_{\cH_{k,d}}=(b_{k,d}/N_{k,d})I_{\cH_{k,d}}$,
with rank $N_{k,d}$ whenever $b_{k,d}\ne0$.  For a single layer this gives
\begin{equation}
  \left.\Kop_{\beta Z_{k,d}}\right|_{\cH_{k,d}}
  =\frac{\beta}{N_{k,d}}I_{\cH_{k,d}},
  \qquad
  \left.\Kop_{\beta Z_{k,d}}\right|_{\cH_{k,d}^{\perp}}=0.
  \label{eq:single-layer-spectrum}
\end{equation}
It follows that
\begin{equation}
  \tr(\Kop_{\beta Z_{k,d}}^\ell)
  =\frac{\beta^\ell}{N_{k,d}^{\ell-1}},
  \qquad \ell\ge2.
  \label{eq:supp-single-layer-traces}
\end{equation}
Here $\beta$ is the pointwise signal amplitude and $N_{k,d}$ is the
multiplicity fixed by $(k,d)$.  All nonzero eigenvalues in a single layer
have the same sign; cancellation of odd traces can occur between layers.

\section{Critical truncation: transfer, concentration and scale checks}
\label{supp:critical-checks}

We use the notation of Appendix~E
and Sections~6--7.
Unless a finite-parameter statement specifies otherwise,
\(0<a<b<\infty\), \(a\le\Delta_n\le b\), and \(N\to\infty\).
The covariance cutoffs are chosen below for the critical scale.
The first two subsections give the common moment-transfer argument and
its preliminary critical application. The concentration family then
permits the fixed-tail verification used in the proof of
Lemma~E.1 of the main article.
\subsection{A common moment-transfer argument}
\label{sec:replica-moment-transfer}

We use the coefficient identity \textup{(8.4)}
from the main text. The coarse, generating-function and finite-sample
truncations will be checked separately against the following common inputs.

For a fixed first-replica matrix \(A=\beta^2B(q^\ast)\), write
\(A_s=sA\) and retain \(K_{A_s}\) from \textup{(6.3)}.
For a covariance cutoff \(\Lambda\), define
\begin{equation}
 Z_s(t)=\mathbb E_{\mu_t^{\otimes n}}
 [\mathbf1_{\mathcal C_n(\Lambda+1)}e^{K_{A_s}}],\qquad 0\le t\le1.
 \label{eq:fixed-M-interpolating-partition}
\end{equation}
All recursive quantities below are formed from \(A_s\). Let
\(\mathcal G_s^{\mathrm{rec}}\) be the event in
\textup{(6.15)}, and set
\[
 \eta_{\mathrm{rec}}(s)=\mathbb E_{\bar\mu^{\otimes n}}
 [\mathbf1_{\mathcal C_n(\Lambda+1)}
  \mathbf1_{(\mathcal G_s^{\mathrm{rec}})^c}e^{K_{A_s}}].
\]
Assume \(\|A_s\|_{\HS}^2/N^2\le1/20\) and the local hypotheses of the
compensated deletion estimates. Then
\textup{(6.14)} and the remainder bound from
Section~C.6 can be iterated on
\(\mathcal G_s^{\mathrm{rec}}\), using the nested covariance events and
conditional integrals of Section~6.4.
The compensation calculation
\textup{(6.17)}, with the finite error bound
\textup{(6.22)}, bounds the defect sum by the trace
potential, the quartic remainder, and the trace-approximation error.
Adding the complementary integral gives
\begin{equation}
\begin{aligned}
 Z_s(0)\le
 \exp\bigg\{&|\mathcal P_N(A_s)|+
 C\sum_m\left[k_{m,\mathrm{ad}}^2\left(\frac{r_m^2}{N}\right)^2
                +\left(\frac{p_m}{N}\right)^2\right]\\
 &+C\frac{R_2(A_s)^2\|A_s\|_{\HS}^2}{N^3}\bigg\}
 +\eta_{\mathrm{rec}}(s).
\end{aligned}
 \label{eq:replica-recursive-input}
\end{equation}
The local smallness conditions and every term on this right side will be
verified in each application; no asymptotic regime is imposed in this
implication. The second moment used below follows from
Theorem~C.2 applied to \(2A_s\), since
\(2K_{A_s}\le2\sum_{i<j}(A_s)_{ij}B_{ij}\).

Under the coordinate-deletion and local-score hypotheses, the buffered
identity \textup{(7.2)}, the score expansion in
Lemma~7.2, and the aggregation formula
\textup{(7.9)} give the derivative bound below.
Here \(\varepsilon_{i,s}^{\ast}\) is the deterministic right side of
\textup{(7.6)}, formed with \(A_s\); it bounds the score remainder
uniformly in \(t\) and on the good row event:
\begin{equation}
\begin{aligned}
 |Z_s'(t)|\le{}&|\operatorname{Rem}_{\mathrm{cov}}(t)|
 +|\mathcal E_{\mathrm{bad}}(t)|
 +\mathbb E_{\mu_t^{\otimes n}}
   [\mathbf1_{\mathcal C_n(\Lambda+1)}e^{K_{A_s}}|\mathcal T_{g_s}|]\\
 &+Z_s(t)\left\{\sum_i\varepsilon_{i,s}^{\ast}
       +CN^{-3/2}\sum_{i,j}|(A_s)_{ij}|^3\right\}.
\end{aligned}
\label{eq:fixed-M-derivative-decomposition}
\end{equation}
As in Section~7.4, \(\mathcal E_{\mathrm{bad}}\)
accounts for the complementary row events. Dependence on \(s\) is suppressed
in the two boundary terms.

\begin{lemma}[Transfer of uniform replica-moment bounds]
\label{lem:replica-moment-transfer}
Suppose \eqref{eq:replica-recursive-input} and
\eqref{eq:fixed-M-derivative-decomposition} hold for amplitudes
\(s\in S\subset[0,s_\star]\). Assume, uniformly over these amplitudes,
that the right side of \eqref{eq:replica-recursive-input} is at most
\(B_0\ge1\), and that the spectral second-moment estimate
\[
 \mathbb E_{\mu_t^{\otimes n}}
 [\mathbf1_{\mathcal C_n(\Lambda+1)}e^{2K_{A_s}}]
 \le\exp\{C\|A_s\|_{\HS}^2/N\}
\]
holds for \(0\le t\le1\). Put
\(p_s=2\vee K_{\mathrm H}\|A_s\|_{\HS}^2/N\). Suppose also that, for
some \(B_1\ge1\),
\[
\begin{aligned}
 |\operatorname{Rem}_{\mathrm{cov}}(t)|+|\mathcal E_{\mathrm{bad}}(t)|
 &\le B_1\max\{1,Z_s(t)\},\\
 \|\mathcal T_{g_s}\|_{L^{p_s}(\mu_t^{\otimes n})}
 +\sum_i\varepsilon_{i,s}^{\ast}
 +N^{-3/2}\sum_{i,j}|(A_s)_{ij}|^3&\le B_1.
\end{aligned}
\]
Then \(\sup_{s\in S,\,0\le t\le1}Z_s(t)\le C(B_0,B_1)\).
If, in addition, a common prior event \(\mathcal E\) is contained in
\(\mathcal C_n(\Lambda+1)\), all factors \(1+(A_s)_{ij}B_{ij}\) are
nonnegative, and
\[
 \sup_{q^\ast,q\in\mathcal E}\sup_{s\in S}
 \sum_{i<j}|(A_s)_{ij}B_{ij}|^3\le R,
\]
with the preceding bounds uniform over \(q^\ast\in\mathcal E\), then
\(\sup_{s\in S}\mathcal Z_n^{\mathcal E}(s)
\le e^{CR}C(B_0,B_1)\).
\end{lemma}

\begin{proof}
The weighted H\"older calculation \textup{(7.12)}, with
\(A_s\) and \(p_s\), bounds the weighted cubic term in
\eqref{eq:fixed-M-derivative-decomposition} by
\(C\max\{1,Z_s(t)\}\|\mathcal T_{g_s}\|_{p_s}\). Indeed, the second-moment
factor is at most \(\exp\{C\|A_s\|_{\HS}^2/(Np_s)\}\le e^{C/K_{\mathrm H}}\).
Thus \(|Z_s'(t)|\le CB_1\max\{1,Z_s(t)\}\). Gronwall gives
\[
 1+Z_s(t)\le(1+B_0)e^{CB_1t}.
\]
For the product assertion, a zero factor makes the upper bound immediate.
Otherwise \(\log(1+v)\le v-v^2/2+C|v|^3\), valid for \(v>-1\), gives
\[
 \mathbf1_{\mathcal E}(q)\prod_{i<j}\{1+(A_s)_{ij}B_{ij}\}
 \le e^{CR}\mathbf1_{\mathcal C_n(\Lambda+1)}e^{K_{A_s}}.
\]
Integrate in \(q\), then in \(q^\ast\in\mathcal E\), and apply the bound
for \(Z_s(1)\).
\end{proof}

\subsection{Critical truncation and preliminary recursive bounds}
\label{sec:preliminary-critical-second-moment}

This section establishes the high-probability framework for the critical
analysis.  It constructs the prior event \(\mathcal E_n\),
checks that the matrix and cutoff estimates from Appendices
C and
D remain valid at criticality, and verifies the inputs of Lemma~\ref{lem:replica-moment-transfer}.
Section~\ref{sec:critical-truncation-family} strengthens the prior event;
Section~8.2 then identifies the limiting second moment.

\begin{lemma}[Critical truncation and recursive bounds]
\label{lem:critical-truncation-recursion}
For every \(0<a<b<\infty\), suppose
\[
  a\le\Delta_n\le b,
  \qquad N\to\infty.
\]
There are prior events \(\mathcal E_n\) with
\(\mathbb P(\mathcal E_n)=1-o(1)\), a covariance cutoff \(\Lambda\), and a
constant \(C_{a,b}<\infty\) such that the following estimates hold uniformly
over \(q^\ast\in\mathcal E_n\).  For the matrices \(B^\ast\) and
\(A=A(q^\ast)=\beta^2B^\ast\) in \textup{(A.7)},
\[
  \frac{\|A\|_{\HS}^2}{N^2}=o(1),
  \qquad
  \frac{\|A\|_{\mathrm{op}}}{N}=o(1),
  \qquad
  |\mathcal P_N(A)|\le C_{a,b}.
\]
For the covariance-truncated compensated deletion recursion given by
\textup{(6.12)} and
\textup{(6.14)}, let \(k_{m,\mathrm{ad}}\) be the thresholds from
Section~C.4, and let
\(\mathcal G^{\mathrm{rec}}\) be the localization event in
\textup{(6.15)}, and recall \(p_m\) from
\textup{(6.9)}.  Then
\[
\begin{gathered}
  \max_m\frac{k_{m,\mathrm{ad}}r_m^2}{N}=o(1),
  \qquad
  \max_m\frac{k_{m,\mathrm{ad}}r_m^2}{d}=o(1),\\
  \sum_m\left[
    k_{m,\mathrm{ad}}^2\left(\frac{r_m^2}{N}\right)^2
    +\left(\frac{p_m}{N}\right)^2
  \right]\le C_{a,b}.
\end{gathered}
\]
For the configuration \(q\) integrated independently of \(q^\ast\) in
\textup{(B.1)}, write \(B=B(q)\).  Thus the compensated
exponent in \textup{(6.3)} is
\[
  K_A=\sum_{i<j}\left(A_{ij}B_{ij}
     -\frac12A_{ij}^2B_{ij}^2\right).
\]
The truncated exponential moment satisfies
\begin{equation}
  \mathbb E_{q\sim\mu^{\otimes n}}
  \left[
    \mathbf1_{\mathcal C_n(\Lambda+1)}e^{K_A}
  \right]
  \le C_{a,b}.
  \label{eq:critical-recursion-summary}
\end{equation}
Uniformly over \(q^\ast,q\in\mathcal E_n\),
\begin{equation}
  \sum_{i<j}|A_{ij}B_{ij}|^3=o(1),
  \qquad
  \mathbf1_{\{q\in\mathcal E_n\}}
  \prod_{i<j}(1+A_{ij}B_{ij})
  \le
  e^{o(1)}
  \mathbf1_{\mathcal C_n(\Lambda+1)}e^{K_A}.
  \label{eq:critical-product-comparison}
\end{equation}
The \(o(1)\) terms and the bounded recursive sums are uniform over
\(q^\ast\in\mathcal E_n\).  Both estimates in
\eqref{eq:critical-product-comparison} are uniform over pairs
\(q^\ast,q\in\mathcal E_n\).
\end{lemma}

\begin{proof}
We verify the common transfer inputs in five steps: matrix bounds,
local recursion and boundary estimates, the symmetric endpoint,
interpolation, and the exact-product comparison.  All constants below
may depend on $a,b$.

\noindent\textit{Step 1. Derive the matrix bounds.}
Use \textup{(8.1)} and \(N\lesssim n^{3/4}\).
Let $\mathcal G_n^{\mathrm{env}}$ be the event in
Proposition~A.6 giving
\textup{(A.8)}--\textup{(A.11)}.  It has probability
$1-o(1)$, and on it
\begin{equation}
\begin{aligned}
  \frac{\|A\|_{\HS}^2}{N^2}&\lesssim N^{-1/3}=o(1),
  &\frac{\|A\|_{\HS}^2}{N}&\lesssim N^{2/3},\\
  \frac{\|A\|_{\mathrm{op}}}{N}
  &\lesssim N^{-2/3}+\frac{\sqrt{\log N}}{\sqrt n\,N^{1/6}}
  +\frac{N^{1/3}\log N}{n}=o(1).
\end{aligned}
  \label{eq:critical-environment-scales}
\end{equation}
The first bound permits Proposition~C.5 for
all sufficiently large $n$.

\medskip\noindent\textit{Step 2. Verify the recursive local bounds.}
Keep the deletion level from \textup{(6.4)}, but
use the spherical covariance tail to choose the smaller cutoff
\begin{equation}
  T=K_0\{C_{\mathrm{mat}}\beta^4n^2/N^2+\log(nN)\},\qquad
  \Lambda=C_{\mathrm{cov}}\{n/N+\log(nN)\}.
  \label{eq:critical-cutoff-parameters}
\end{equation}
Taking tail parameter $d\{n/N+\log(nN)\}$ in
\textup{(A.12)} gives
\begin{equation}
  \sup_{i,\,0\le t\le1}\mathbb P_{\mu_t^{\otimes(n-1)}}
  \{\mathcal C_{-i}(\Lambda)^c\}
  \le\exp[-cd\{n/N+\log(nN)\}].
  \label{eq:critical-covariance-boundary-tail}
\end{equation}
Define $\mathcal E_n$ as the intersection of $\mathcal G_n^{\mathrm{env}}$,
$\mathcal C_n(\Lambda+1)$, and the event in Lemma~D.4.
That lemma, Proposition~A.6, and
Lemma~A.7 give
$\mathbb P(\mathcal E_n)=1-o(1)$.
On this event, Proposition~C.5 gives
\begin{equation}
  \sum_m(r_m^2/N)^2\lesssim\frac{R_2(A)^2\|A\|_{\HS}^2}{N^2}
  \lesssim\frac{\beta^8n^3}{N^4}\asymp\beta^2\le1.
  \label{eq:critical-quartic-sum}
\end{equation}
For the remaining substitutions, \textup{(C.6)} and
the environment bounds imply
\[
\begin{gathered}
  T\lesssim N^{2/3}+\log(nN),\qquad \Lambda\lesssim n/N+\log(nN),\\
  M_\ast\lesssim\frac{N^{5/6}\log n+N^{2/3}}n
  \lesssim\frac{N^{5/6}\log(nN)}n,\qquad
  \max_mr_m\lesssim N^{5/6}/\sqrt n.
\end{gathered}
\]
Both local normalizations follow from \textup{(C.7)}:
\begin{equation}
  \max_m\frac{k_{m,\mathrm{ad}}r_m^2}{N}
  \lesssim N^{-1/2}\max_m\frac{k_{m,\mathrm{ad}}r_m^2}{d}=o(1),
  \label{eq:critical-pointwise-local}
\end{equation}
where the stronger bound is
\begin{equation}
\begin{aligned}
  \max_m\frac{k_{m,\mathrm{ad}}r_m^2}{d}
  &\lesssim\left(1+\frac TN\right)\frac{R_2(A)^2}{d}
  +\frac{T\sqrt\Lambda M_\ast R_2(A)}{d^2}\\
  &\lesssim\frac{N^{7/6}}n
  +\log^{5/2}(nN)\left(\frac{N^{5/6}}n
  +\frac{N^{5/3}}{n^{3/2}}\right)=o(1).
\end{aligned}
  \label{eq:critical-pointwise-local-d}
\end{equation}
To obtain the second line, use $R_2(A)\lesssim N^{5/6}/\sqrt n$,
$d\asymp\sqrt N$, and
$\sqrt\Lambda\lesssim\sqrt{n/N}+\sqrt{\log(nN)}$.
Since $N\lesssim n^{3/4}$, its three powers of $n$ are at most
$n^{-1/8}$, $n^{-3/8}$, and $n^{-1/4}$, respectively.

For the sum in \textup{(C.9)}, the row term is at most
\[
  C\left\{\beta^2+\frac{N^{2/3}}n
  +\frac{\log^2(nN)}{nN^{2/3}}\right\}.
\]
The mixed term has the explicit bound
\[
\begin{aligned}
  \frac{T^2\Lambda M_\ast^2\|A\|_{\HS}^2}{d^2N^2}
  &\lesssim\frac{N^{1/3}\log^2(nN)}{n^2}
  (N^{4/3}+\log^2(nN))(n/N+\log(nN))\\
  &\lesssim\frac{N^{2/3}\log^2(nN)}n
  +\frac{N^{5/3}\log^3(nN)}{n^2}
  +\frac{\log^4(nN)}{nN^{2/3}}
  +\frac{N^{1/3}\log^5(nN)}{n^2}\\
  &\lesssim\frac{\log^5(nN)}{\sqrt n}=o(1).
\end{aligned}
\]
Thus
\begin{equation}
  \sum_m k_{m,\mathrm{ad}}^2(r_m^2/N)^2
  \le C\{\beta^2+\log^5(nN)/\sqrt n\}\le C_{a,b}.
  \label{eq:critical-GER-sum}
\end{equation}
Separately, \eqref{eq:compensation-edge-count} gives
$\sum_mp_m=\|A\|_{\HS}^2/2$ and $\max_mp_m\le R_2(A)^2$, so
\begin{equation}
  \sum_m(p_m/N)^2\le\frac{R_2(A)^2\|A\|_{\HS}^2}{2N^2}
  \lesssim\beta^2\le1.
  \label{eq:critical-compensation-sum}
\end{equation}

The spectral MGF hypotheses hold for every fixed multiple $sA$:
\begin{equation}
  \frac{\sqrt{\Lambda+1}R_2(sA)}d
  \lesssim_s N^{-1/6}+\frac{N^{1/3}\sqrt{\log(nN)}}{\sqrt n}=o(1),
  \qquad \frac{\|sA\|_{\mathrm{op}}}{N}=o(1).
  \label{eq:critical-spectral-smallness}
\end{equation}
Here the logarithmic term is $O(n^{-1/4}\sqrt{\log(nN)})$.
In particular,
\begin{equation}
  (\Lambda+1)R_2(A)^2/N=o(1).
  \label{eq:critical-cutoff-replacement-small}
\end{equation}
To control the weighted covariance boundary, combine
\textup{(7.3)} with
\eqref{eq:critical-covariance-boundary-tail}.  The exponent of the weight
and the factor $2n$ are absorbed because
\[
  \frac{\|A\|_{\HS}^2/N+\log(2n)}{d\{n/N+\log(nN)\}}
  \lesssim\frac{N^{7/6}}n+\frac1d
  \lesssim N^{-1/6}+N^{-1/2}=o(1).
\]
Consequently,
\begin{equation}
  \sup_t|\operatorname{Rem}_{\mathrm{cov}}(t)|
  \le C_{a,b}\exp[-cd\{n/N+\log(nN)\}]=o(1).
  \label{eq:critical-weighted-covariance-remainder}
\end{equation}
The same calculation is uniform over bounded ranges of $sA$.

\medskip\noindent\textit{Step 3. Verify the endpoint input.}
The potential series and \eqref{eq:critical-environment-scales} give
\begin{equation}
  |\mathcal P_N(A)|\lesssim
  \frac{\|A\|_{\HS}^2}{N}\frac{\|A\|_{\mathrm{op}}}{N}
  \lesssim1+\sqrt{N\log N/n}+N\log N/n\le C_{a,b}.
  \label{eq:critical-trace-potential-bound}
\end{equation}
The trace-approximation term in \eqref{eq:replica-recursive-input} is
$R_2(A)^2\|A\|_{\HS}^2/N^3\lesssim\beta^2/N=o(1)$.
Cauchy--Schwarz, \textup{(C.8)}, and the spectral
MGF, applicable by \eqref{eq:critical-spectral-smallness}, yield
\[
  \eta_{\mathrm{rec}}(1)\le C\sqrt n
  \exp\{C_0\|A\|_{\HS}^2/N-T/2\}=o(1),
\]
because $T\ge K_0\{\|A\|_{\HS}^2/N+\log(nN)\}$ with $K_0>2C_0+2$.
Together with \eqref{eq:critical-GER-sum} and
\eqref{eq:critical-compensation-sum}, these bounds verify
\eqref{eq:replica-recursive-input} and give
\begin{equation}
  Z(0)\le C_{a,b}.
  \label{eq:critical-symmetric-endpoint}
\end{equation}
Replacing $\Lambda$ by a number in $[\Lambda,\Lambda+1]$ changes the
bounds only by absolute factors, so this estimate holds at the required
enlarged cutoff.

\medskip\noindent\textit{Step 4. Interpolate to the original prior.}
For the local scores, use
$T_{\mathrm{loc}}=K_1(\|A\|_{\HS}^2/N+1)\log n\lesssim T\log n$.
The original rows satisfy the same row and entry bounds as the recursive
rows.  Thus \textup{(C.7)} and the preceding calculations give
\begin{equation}
\begin{aligned}
  \max_i\frac{k_{i,\mathrm{ad}}\|A_{i\cdot}\|_2^2}{N}
  +\max_i\frac{k_{i,\mathrm{ad}}\|A_{i\cdot}\|_2^2}{d}
  &\lesssim\frac{\log^{7/2}(nN)}{n^{1/8}}=o(1),\\
  \sum_i(k_{i,\mathrm{ad}}\|A_{i\cdot}\|_2^2/N)^2
  &\lesssim\beta^2+\frac{\log^7(nN)}{\sqrt n}\le C_{a,b}.
\end{aligned}
  \label{eq:critical-local-row-scales}
\end{equation}
Indeed, in \eqref{eq:critical-pointwise-local-d} the $T$ terms acquire
at most one extra factor $\log n$; in \eqref{eq:critical-GER-sum} the
$T$-dependent terms acquire at most $\log^2n$.  The unweighted term
$C\beta^2$ is unchanged.  The maximum-entry contribution satisfies
\begin{equation}
  \sum_i\frac{\|A\|_{\max}\|A_{i\cdot}\|_2^2}{N^{3/2}}
  =\frac{\|A\|_{\max}\|A\|_{\HS}^2}{N^{3/2}}
  \lesssim N\log n/n=o(1).
  \label{eq:critical-local-entry-sum}
\end{equation}
These estimates verify \textup{(7.7)} and bound the sum
of the deterministic score errors in \textup{(7.6)} by $C_{a,b}$.
For the cubic term, substitute \textup{(8.1)} into
\textup{(D.22)}.  The ratio
$h_n=N/(n^{3/4}|\beta|^{3/2})$ is now bounded above and below; the
negative powers of $n$ in Table~\ref{tab:adamczak-exponent-patterns}
of the Supplementary Material,
including the two negative-$|\beta|$ terms treated just after that table,
still give $\|\mathcal T_g\|_{p_{\mathrm H}}=o(1)$.
The repeated-index sum is bounded by
\eqref{eq:critical-local-entry-sum}.
The boundary estimate \eqref{eq:critical-weighted-covariance-remainder}
and \textup{(7.13)} supply the remaining transfer inputs.
For the latter, $R_2(A)\le\sqrt n$ bounds the cubic prefactor by $Cn^4$,
which is absorbed by $e^{-T_{\mathrm{loc}}/(2p_{\mathrm H})}$ for
sufficiently large $K_1$. Hence
Lemma~\ref{lem:replica-moment-transfer}, with $s=1$ and
$B_0,B_1\le C_{a,b}$, gives
\begin{equation}
  Z(1)\le C_{a,b},
  \label{eq:critical-Z-one}
\end{equation}
which is \eqref{eq:critical-recursion-summary}.

\medskip\noindent\textit{Step 5. Verify the product-comparison error.}
The sixth-row bound of Proposition~A.6
and Cauchy--Schwarz give, uniformly over $q^\ast,q\in\mathcal E_n$,
\begin{equation}
  \sum_{i<j}|A_{ij}B_{ij}|^3
  \le |\beta|^6\Big(\sum_{i<j}|B_{ij}^\ast|^6\Big)^{1/2}
  \Big(\sum_{i<j}|B_{ij}|^6\Big)^{1/2}
  \lesssim\frac{\beta^6n^2}{N^3}\asymp N/n=o(1).
  \label{eq:critical-simple-edge-error}
\end{equation}
Since every edge factor is nonnegative and
$\mathcal E_n\subseteq\mathcal C_n(\Lambda+1)$, the logarithmic
comparison used in \textup{(6.5)} proves
\eqref{eq:critical-product-comparison}.
\end{proof}

The estimates in Lemma~\ref{lem:critical-truncation-recursion} are uniform in
the fixed configuration \(q^\ast\in\mathcal E_n\).  Combining
\eqref{eq:critical-product-comparison} with
\eqref{eq:critical-recursion-summary}, and then averaging over \(q^\ast\) in
\textup{(B.1)}, gives the following \(L^2\) consequence.

\begin{corollary}[Bounded truncated second moment]
\label{cor:critical-truncated-second-moment}
Under the assumptions of Lemma~\ref{lem:critical-truncation-recursion}, the
events constructed there satisfy
\[
  \mathbb E_{\Qq_n}\left(L_n^{\mathcal E}\right)^2\le C_{a,b}
\]
for all sufficiently large \(n\), after increasing \(C_{a,b}\) if necessary.
\end{corollary}

\begin{proof}
With $\pi_n=\mathbb P(\mathcal E_n)=1-o(1)$,
\textup{(8.3)}, \eqref{eq:critical-product-comparison},
and \eqref{eq:critical-recursion-summary} give
\[
  \mathbb E_{\Qq_n}(L_n^{\mathcal E})^2
  \le\pi_n^{-1}e^{o(1)}C_{a,b}\le2C_{a,b}
\]
for all sufficiently large $n$. Renaming the constant proves the claim.
\end{proof}

\subsection{A common family of critical truncations}
\label{sec:critical-truncation-family}

The estimates below retain the integer tail parameter \(M\) explicitly. Fixed values give
the qualitative construction after a diagonal choice; a value of order
\(\log n\) will give the quantitative construction in
Section~F.1.
For a generic latent configuration \(u\), write
\begin{equation}
 g_{jk\ell}(u)=\beta^6\sum_iB_{ij}(u)B_{ik}(u)B_{i\ell}(u),\qquad
 G_\infty(u)=\max_{j,k,\ell\ \mathrm{pairwise\ distinct}}|g_{jk\ell}(u)|.
 \label{eq:fixed-M-cubic-coefficient}
\end{equation}
Fix \(s_\star=3/2\). For an integer \(M\ge1\), put
\begin{equation}
 x_M=K_{\mathrm{tail}}(M+5)\log(en),\qquad
 \Lambda_M=C_{\mathrm{cov}}\left\{\frac nN+x_M+\log N\right\}.
 \label{eq:fixed-M-tail-levels}
\end{equation}

\begin{lemma}[Concentration with an adjustable tail level]
\label{lem:critical-concentration-family}
For every \(0<a<b<\infty\), there is \(n_0(a,b)\) such that, whenever
\(n\ge n_0(a,b)\), \(N\ge2\), \(0<|\beta|\le1\), and
\(a\le\Delta_n\le b\), the events defined in \eqref{eq:fixed-M-maximum-entry-event},
\eqref{eq:fixed-M-row-moment-event}, \eqref{eq:fixed-M-operator-norm-event},
\eqref{eq:fixed-M-covariance-event}, and
\eqref{eq:fixed-M-maximum-coefficient-event} satisfy their displayed bounds
with constants independent of \(M\ge1\). Their intersection
\(\mathcal E_{n,M}\) satisfies
\(\mathbb P(\mathcal E_{n,M}^c)\le n^{-2M}\).
The threshold \(n_0(a,b)\) is independent of \(N,\beta,M\).
\end{lemma}

\begin{proof}
We construct the entry and row events together, then add the operator,
covariance, and coefficient bounds. Set
\(u_M=\min\{1,C_{\max}x_M/d\}\) and use the dyadic grid \(\mathcal U_d\)
from Lemma~A.5. Lemma
A.4 gives
\begin{equation}
 \mathbb P(B_{ij}\ge u)\le Ce^{-cdu},\qquad 0\le u\le1.
 \label{eq:fixed-M-entry-tail}
\end{equation}
Choose \(C_{\max}\) so that \(u_M\ge1/(d-1)\). Since
\(B_{ij}\ge-1/(d-1)\), a union bound on the positive tail gives
\begin{equation}
 \mathcal E_{n,M}^{\max}=\{\max_{i<j}|B_{ij}|\le u_M\},\qquad
 \mathbb P((\mathcal E_{n,M}^{\max})^c)\le Ce^{-cx_M}.
 \label{eq:fixed-M-maximum-entry-event}
\end{equation}
If \(u_M=1\) the event holds deterministically; otherwise absorb the
\(n^2\) union factor using \(x_M\ge K_{\mathrm{tail}}\log(en)\).

Recall \(\mathcal N_i(u)=\#\{j\ne i:B_{ij}\ge u\}\).
Conditionally on \(q_i\), the indicators in this count are independent;
rotational invariance makes their success probabilities equal to the
unconditional probability in \eqref{eq:fixed-M-entry-tail}.
The Chernoff argument of Lemma~A.5 therefore gives
\begin{equation}
\begin{aligned}
 \mathcal E_{n,M}^{\mathrm{occ}}
 &=\bigcap_{\substack{i\in[n]\\u\in\mathcal U_d\cap[0,u_M]}}
   \{\mathcal N_i(u)\le C(ne^{-cdu}+x_M)\},\\
 \mathbb P((\mathcal E_{n,M}^{\mathrm{occ}})^c)&\le Ce^{-cx_M}.
\end{aligned}
\label{eq:fixed-M-occupancy-event}
\end{equation}
There are \(O(n\log d)\) pairs \((i,u)\). Since
\(N\lesssim n^{3/4}\) by \textup{(8.1)}, their union
factor is absorbed by the same tail level, uniformly in \(M\).
For \(1/d\le u\le u_M\), choose \(u'\in\mathcal U_d\) with
\(u/2\le u'\le u\). Monotonicity gives
\(\mathcal N_i(u)\le C(ne^{-cdu/2}+x_M)\); for \(u\le1/d\), use
\(\mathcal N_i(u)\le n\).

On the entry event, layer cake gives, for \(r=2,6\),
\begin{equation}
 \sum_{j\ne i}(B_{ij})_+^r
 =\int_0^{u_M}ru^{r-1}\mathcal N_i(u)\,du.
 \label{eq:fixed-M-layer-cake}
\end{equation}
The negative contribution is at most \(n/(d-1)^r\). Thus, on the
intersection of the entry and occupancy events,
\[
 \sum_{j\ne i}|B_{ij}|^r
 \le\frac{(Cr)^rn}{d^r}+Cx_Mu_M^r
 \le\frac{(Cr)^r}{d^r}(n+x_M^{r+1}).
\]
Consequently the event
\begin{equation}
 \mathcal E_{n,M}^{\mathrm{row}}
 =\bigcap_{r\in\{2,6\}}\left\{
   \max_i\sum_{j\ne i}|B_{ij}|^r
       \le\frac{(Cr)^r}{d^r}(n+x_M^{r+1})\right\}
 \label{eq:fixed-M-row-moment-event}
\end{equation}
has complement probability at most \(Ce^{-cx_M}\).

For \(\mathsf L_qa=\sum_i a_iq_i\),
\begin{equation}
 B+I_n=\mathsf L_q^\ast\mathsf L_q,\qquad
 \sum_iq_i\otimes q_i=\mathsf L_q\mathsf L_q^\ast.
 \label{eq:fixed-M-gram-covariance-factorization}
\end{equation}
Apply the matrix Bernstein tail \eqref{eq:environment-bernstein-tail}
with \(t=C\{\sqrt{n(x_M+\log N)/N}+x_M+\log N\}\).
The two operators in \eqref{eq:fixed-M-gram-covariance-factorization}
have the same nonzero eigenvalues, so \(\|B\|_{\mathrm{op}}\le1+n/N+t\)
outside an event of probability \(Ce^{-cx_M}\). This proves the bound for
\begin{equation}
 \mathcal E_{n,M}^{\mathrm{op}}
 =\left\{\|A(q)\|_{\mathrm{op}}\le C\beta^2
   \left(\frac nN+\sqrt{\frac{n(x_M+\log N)}N}+x_M+\log N\right)\right\}.
 \label{eq:fixed-M-operator-norm-event}
\end{equation}
Lemma~A.7, with a constant multiple of
\(x_M\) as tail parameter, also gives complement probability
\(Ce^{-cx_M}\) for
\begin{equation}
 \mathcal E_{n,M}^{\mathrm{cov}}
 =\mathcal C_n(\Lambda_M+1)
 =\left\{\left\|\sum_iq_i\otimes q_i\right\|_{\mathrm{op}}
                      \le\Lambda_M+1\right\}.
 \label{eq:fixed-M-covariance-event}
\end{equation}
Finally, \(\log(enN)\lesssim\log(en)\), so a sufficiently large
\(K_{\mathrm{tail}}\) permits Lemma~D.3 at
\(x=x_M\). Hence
\begin{equation}
\begin{aligned}
 \mathcal E_{n,M}^{\mathrm{coef}}
 &=\left\{G_\infty(q)\le C\beta^6\left[
   \frac{nx_M^{3/2}}{N^{9/4}}
   +\frac{\sqrt{nx_M}+x_M^4}{N^{3/2}}\right]\right\},\\
 \mathbb P((\mathcal E_{n,M}^{\mathrm{coef}})^c)&\le Cn^3e^{-cx_M}.
\end{aligned}
\label{eq:fixed-M-maximum-coefficient-event}
\end{equation}
Let \(\mathcal E_{n,M}\) be the intersection of the five events with
superscripts \(\max,\mathrm{row},\mathrm{op},\mathrm{cov},\mathrm{coef}\).
For sufficiently large absolute \(K_{\mathrm{tail}}\), their failure bounds give
\begin{equation}
 \mathbb P(\mathcal E_{n,M}^c)
 \le C(1+n^3)e^{-cx_M}
 \le n^{-2M-4}\le n^{-2M}.
 \label{eq:fixed-M-prior-failure}
\end{equation}
For example, choose \(cK_{\mathrm{tail}}\ge4\); then
\(3-cK_{\mathrm{tail}}(M+5)\le-4M-17\), which absorbs the fixed
prefactor for all sufficiently large \(n\), independently of \(M\).
\end{proof}

\subsection{Uniform moment estimates at a fixed tail level}
\label{supp:critical-fixed-M}

Fix an integer \(M\ge1\), set \(s_\star=3/2\), and use the events of
Lemma~\ref{lem:critical-concentration-family}.
In \eqref{eq:fixed-M-interpolating-partition}, put \(\Lambda=\Lambda_M\)
and \(A_s=s\beta^2B(q^\ast)\). Write \(o_M(1)\) for deterministic errors
tending to zero with \(M\) fixed. We prove
\eqref{eq:fixed-M-stable-endpoint} and
\eqref{eq:fixed-M-derivative-bound} uniformly over
\(q^\ast\in\mathcal E_{n,M}\), \(0\le s\le s_\star\), and \(0\le t\le1\).
The final constants are independent of \(M\); the lower bound on \(n\)
may depend on it.

Fix \(q^\ast\in\mathcal E_{n,M}\), write \(B^\ast=B(q^\ast)\), and form
\(A_s=s\beta^2B^\ast\). All bounds below are uniform over this choice.
Since \(x_M=O_M(\log n)\), the row bounds give
\[
 R_2(A_s)^2\le C_{a,b}\frac{\beta^4n}{N},\qquad
 \|A_s\|_{\HS}^2\le C_{a,b}\frac{\beta^4n^2}{N},\qquad
 \max_i\sum_{j\ne i}|B_{ij}^\ast|^6\le C\frac n{N^3}
\]
for all sufficiently large \(n\), uniformly over \(s\in[0,s_\star]\).
The constants here are independent of \(M\), because \(x_M^7/n=o_M(1)\).
In particular, \(\|A_s\|_{\HS}^2/N^2\le C_{a,b}N^{-1/3}\le1/20\)
for large \(n\), and the trace-approximation term in
\eqref{eq:replica-recursive-input} is at most \(C_{a,b}\beta^2/N\).
Use the deletion threshold in \eqref{eq:critical-cutoff-parameters},
enlarging its absolute constant to cover \(s\le s_\star\).
In the calculations of Lemma~\ref{lem:critical-truncation-recursion},
replace the maximum-entry level \(\log n\) by \(x_M\) and the covariance
level \(\log(nN)\) by \(x_M+\log N\). The spectral calculation is then
\[
 \frac{\sqrt{\Lambda_M+1}R_2(2A_s)}d
 \le C_{a,b}\left\{N^{-1/6}
       +\frac{N^{1/3}\sqrt{x_M+\log N}}{\sqrt n}\right\}=o_M(1).
\]
The operator bound gives \(\|2A_s\|_{\mathrm{op}}/N=o_M(1)\), so
Theorem~C.2 supplies the required second moment.
For the trace potential, the same substitution gives the explicit bound
\[
 |\mathcal P_N(A_s)|
 \le C_{a,b}\left\{1+\sqrt{\frac{N(x_M+\log N)}n}
                         +\frac{N(x_M+\log N)}n\right\}
 \le C_{a,b}+o_M(1).
\]
In \eqref{eq:critical-pointwise-local}--\eqref{eq:critical-GER-sum} and
\eqref{eq:critical-local-row-scales}, this replacement multiplies only the
vanishing logarithmic terms by constants depending on \(M\). The
nonvanishing terms are bounded by \(C_{a,b}\beta^2\le C_{a,b}\).
Thus the pointwise deletion and score hypotheses hold, and
\[
\begin{gathered}
 \max_m\frac{k_{m,\mathrm{ad}}r_m^2}{N}
 +\max_m\frac{k_{m,\mathrm{ad}}r_m^2}{d}=o_M(1),\\
 \sum_m\left[k_{m,\mathrm{ad}}^2\left(\frac{r_m^2}{N}\right)^2
                    +\left(\frac{p_m}{N}\right)^2\right]
 +\sum_i\left(\frac{k_{i,\mathrm{ad}}\|(A_s)_{i\cdot}\|_2^2}{N}\right)^2
 \le C_{a,b}+o_M(1).
\end{gathered}
\]
The compensation part uses \eqref{eq:critical-compensation-sum} directly.
The exceptional-recursion estimate from Step 3 of that proof is unchanged,
giving \(\sup_s\eta_{\mathrm{rec}}(s)=o_M(1)\). Hence
\begin{equation}
 \sup_{0\le s\le s_\star}Z_s(0)\le C_{a,b}.
 \label{eq:fixed-M-stable-endpoint}
\end{equation}

For the covariance boundary, the spherical tail
\textup{(A.12)} and
\textup{(7.3)} give
\[
 \sup_{s,t}|\operatorname{Rem}_{\mathrm{cov}}(t)|
 \le C_{a,b}\exp\left[-cd\left\{\frac nN+x_M+\log N\right\}\right]=o_M(1).
\]
Indeed, \(\|A_s\|_{\HS}^2/N\le C_{a,b}N^{2/3}\) and
\(N^{2/3}/(dn/N)\le C_{a,b}N^{-1/6}\), so the weight is absorbed.
The local-row bounds verify \textup{(7.7)}.
For exceptional rows, apply \textup{(7.13)} to $A_s$.
Since $\|A_s\|_{\max}\le s_\star$, its cubic prefactor is at most $Cn^4$;
the choice of $T_{\mathrm{loc}}$ gives
$|\mathcal E_{\mathrm{bad}}(t)|\le Cn^{-2}\max\{1,Z_s(t)\}$ uniformly
in $s,t$, with $C$ independent of $M$.
The entry estimate \eqref{eq:critical-local-entry-sum} and the coefficient
bound \eqref{eq:fixed-M-maximum-coefficient-event}, inserted into
\textup{(D.22)}, give the remaining cubic and score inputs.
Here again each logarithmic term in
Table~\ref{tab:adamczak-exponent-patterns} of the Supplementary Material
is multiplied by a fixed power
of \(M\); its negative powers still give \(o_M(1)\), while the score sum
is bounded by \(C_{a,b}+o_M(1)\).
The score bound is a sum of the deterministic rowwise majorants
\(\varepsilon_{i,s}^{\ast}\) from \textup{(7.6)}.
Consequently \eqref{eq:fixed-M-derivative-decomposition} and
Lemma~\ref{lem:replica-moment-transfer} yield
\begin{equation}
 |Z_s'(t)|\le C_{a,b}\max\{1,Z_s(t)\},\qquad
 \sup_{s,t}Z_s(t)\le C_{a,b}.
 \label{eq:fixed-M-derivative-bound}
\end{equation}
This argument uses only the parameter calculations in
Lemma~\ref{lem:critical-truncation-recursion}; its endpoint and interpolation
are supplied by the common transfer lemma.

\subsection{Triangle-kernel overlap moments}
\label{supp:proof-triangle-overlap-moments}

\begin{proof}[Proof of Lemma~E.2]
The spherical quadratic-form estimate used in Lemma~D.4 gives
\(\mathbb E|B_{12}|^6\le CN^{-3}\).  H\"older's inequality then proves the
first assertion.  For the second, condition on \((q_1,q_2)\).  Isotropy gives
\[
  \mathbb E(B_{23}B_{31}\mid q_1,q_2)=B_{12}/N,
  \qquad
  \mathbb E(B_{24}B_{41}\mid q_1,q_2)=B_{12}/N.
\]
Hence the joint expectation equals \(N^{-2}\mathbb EB_{12}^4\), whereas the
product of the two means equals \(N^{-4}\).  The fourth-moment bound
\(\mathbb EB_{12}^4\le CN^{-2}\) proves the claim.
\end{proof}

\bibliographystyle{imsart-number}
\bibliography{references}